\numberwithin{equation}{section}
\newtheorem*{thm}{Theorem}
\newtheorem*{prop}{Proposition}
\newtheorem*{lem}{Lemma}
\newtheorem*{cor}{Corollary}
\newtheorem*{conj}{Conjecture}
\theoremstyle{definition}
\newtheorem*{definition}{Definition}
\newtheorem*{example}{Example}
\newtheorem*{claim}{Claim}
\theoremstyle{remark}
\newtheorem*{remark}{Remark}
\newcommand{\N}{\mathbb{N}}
\newcommand{\Z}{\mathbb{Z}}
\newcommand{\g}{\mathfrak{g}}
\newcommand{\bo}{\mathfrak{b}}
\newcommand{\h}{\mathfrak{h}}
\newcommand{\n}{\mathfrak{n}}
\newcommand{\A}{\mathcal{A}_q}
\newcommand{\C}{\mathcal{C}}
\newcommand{\D}{\mathcal{D}}
\newcommand{\Hh}{\mathcal{H}}
\newcommand{\M}{\mathcal{M}}
\newcommand{\K}{\mathcal{K}}
\newcommand{\Nn}{\mathcal{N}}
\newcommand{\Ss}{\mathcal{S}}
\newcommand{\U}{\mathcal{U}}
\newcommand{\Of}{\mathcal{O}}
\newcommand{\Oq}{\mathcal{O}_q}
\newcommand{\Oqhat}{\widehat{\Oq}}
\newcommand{\Oqcap}{\wideparen{\Oq}}
\newcommand{\Uqnhat}{\widehat{U_{q,n}}}
\newcommand{\Uqcap}{\wideparen{U_q}}
\newcommand{\norm}[1]{\left|\left| #1 \right|\right|}
\newcommand{\abs}[1]{\left| #1 \right|}
\newcommand{\ten}[2]{#1\widehat{\otimes}_L#2}
\renewcommand{\labelenumi}{(\roman{enumi})}
\DeclareMathOperator{\Hom}{\text{Hom}}
\DeclareMathOperator{\End}{\text{End}}
\DeclareMathOperator{\gr}{\text{gr}}
\DeclareMathOperator{\id}{\text{id}}
\DeclareMathOperator{\im}{\text{Im}}
\DeclareMathOperator{\htt}{\text{ht}}
\begin{document}

\title{Rigid analytic quantum groups and quantum Arens-Michael envelopes}
\date{}
\author{Nicolas Dupr\'e}
\address{Department of Pure Mathematics and Mathematical Statistics\\
Centre for Mathematical Sciences, Wilberforce Road\\
Cambridge, CB3 0WB, United Kingdom}
\email{nd332@cam.ac.uk}

\begin{abstract}
We introduce a rigid analytification of the quantized coordinate algebra of a semisimple algebraic group and a quantized Arens-Michael envelope of the enveloping algebra of the corresponding Lie algebra, working over a non-archimedean field and when $q$ is not a root of unity. We show that these analytic quantum groups are topological Hopf algebras and Fr\'echet-Stein algebras. We then introduce an analogue of the BGG category $\mathcal{O}$ for the quantum Arens-Michael envelope and show that it is equivalent to the category $\mathcal{O}$ for the corresponding quantized enveloping algebra.
\end{abstract}
\maketitle
\thispagestyle{empty}

\setcounter{tocdepth}{1}
\tableofcontents

\section{Introduction}

\subsection{Background and main results} The study of quantum groups in a $p$-adic analytic setting was first proposed by Soibelman in \cite{Soibelman08}, where he introduced quantum deformations of the algebras of locally analytic functions on $p$-adic Lie groups and of the corresponding distribution algebra of Schneider and Teitelbaum \cite{SchTeit03}. Soibelman conjectured among other things that his quantum distribution algebras are topological Hopf algebras and Fr\'echet-Stein algebras. These latter types of algebras were introduced in \cite{SchTeit03} and play an important role in the theory of locally analytic representations of $p$-adic groups. To the best of our knowledge, Soibelman's conjectures have remained unproved and, since then, apart from the short note \cite{Lyubinin1} and the thesis \cite{wald}, there had been no new constructions or results related to the study of $p$-adic analytic quantum groups until very recently.

We attempt to correct that in this paper by constructing a quantum analogue $\wideparen{U_q(\g)}$, or $\Uqcap$ for short, of the $p$-adic Arens-Michael envelope $\wideparen{U(\g)}$ of the enveloping algebra of the a $p$-adic Lie algebra. Classically, $\wideparen{U(\g)}$ can be identified as the subalgebra of the distribution algebra consisting of distributions supported at the identity, and it is known to be a Fr\'echet-Stein algebra. Its representation theory was first studied in \cite{Schmidt1, Schmidt2} and can be thought of as a first approximation to the locally analytic representation theory of the corresponding $p$-adic Lie group. Our construction of $\Uqcap$ is inspired by the theory developed by Ardakov and Wadsley in \cite{Wadsley2}. In particular we adapt their methods to show that $\Uqcap$ is a Fr\'echet-Stein algebra, see Theorem \ref{defFreSt} and Theorem \ref{FreStProp}. We also show that it is a topological Hopf algebra, see section \ref{Uqhopf}. The algebra $\wideparen{U(\g)}$ is initially defined to be the completion of $U(\g)$ with respect to all submultiplicative seminorms that extend the norm on the ground field $L$. Our algebra $\wideparen{U_q(\g)}$ is defined differently, but we show that it also satisfies a similar universal property: it is the completion of the quantized enveloping algebra $U_q(\g)$ with respect to the submultiplicative seminorms which extend a particular norm on $U_q^0=L[K_\lambda]$, see Corollary \ref{Arens-Michael}.

We also construct a quantum analogue $\Oqcap$ of the algebra of rigid analytic functions on the analytification of a semisimple algebraic group $G$. Specifically, we use the GAGA construction on the quantized coordinate algebra $\Oq:=\Oq(G)$ to obtain an algebra $\Oqcap$ which we show to be a topological Hopf algebra, see section \ref{OqHopf}. We also use techniques based on \cite{Emerton} to prove that $\Oqcap$ is Fr\'echet-Stein, see Proposition \ref{Emerton} and Theorem \ref{OqFS}. Moreover we show that $\Oqcap$ is the completion of $\Oq$ with respect to all submultiplicative seminorms that extend the norm on $L$, see Corollary \ref{Arens-Michael}. Throughout this paper we only work in the case where $q$ is not a root of unity.

We conclude this work by using the Fr\'echet-Stein structure on $\Uqcap$ to construct an analogue of the BGG category $\Of$ for it. Indeed, a particularly important property of Fr\'echet-Stein alegbras is that there is a well behaved abelian category of so-called coadmissible modules over them, which in the geometric setting correspond to global sections of coherent modules over Stein spaces (see \cite[Section 3]{SchTeit03}). There is also a category $\Of$ for quantum groups, see \cite{qcatO}, which is a quantum analogue of the sum of the integral blocks inside the classical BGG category. Finally, there already exists an analogue of category $\Of$ for Arens-Michael envelopes, see \cite{Schmidt2}, and its definition generalises straightforwardly to our quantum setting. Roughly, the category consists of those coadmissible modules over $\Uqcap$ whose weight spaces are finite dimensional and such that the weights are contained in finitely many cosets in the weight lattice. We also require for these modules to be topologically semisimple, a notion which was inspired by work of F\'eaux de Lacroix \cite{Lacroix}. We denote this new category by $\hat{\Of}$. Then we prove that the functor $M\mapsto \Uqcap\otimes_{U_q}M$ is an equivalence of categories between the category $\Of$ for $U_q$ and the category $\hat{\Of}$ (see Theorem \ref{mainthm}). The non-quantum version of this result is the main result of \cite{Schmidt2}, and our proof follows theirs almost identically.

We note that there has been a succesful attempt at constructing a quantum Arens-Michael envelope for $\mathfrak{sl_2}$ and proving that it is a Fr\'echet-Stein algebra in \cite{Lyubinin1}, but the general case hasn't been tackled before. Although the object we construct is the same as theirs for $\mathfrak{sl_2}$, our constructions and proofs are different. Very recently, Smith \cite{Craig2} has constructed certain analytic quantum groups using Nichols algebras. It would be interesting to compare our algebras to his.

\subsection{Future research} We ultimately aim to develop a theory of $D$-modules to understand representations of $\wideparen{U_q(\g)}$. In the classical setting, the Arens-Michael envelope $\wideparen{U(\g)}$ can be viewed as a quantization of the algebra of rigid analytic functions on $\g^*$, and is the right object to consider in order to obtain a Beilinson-Bernstein type equivalence, see \cite{Wadsley2, Wadsley3, Ardakov1}. We are working on a Beilinson-Bernstein type equivalence in our context, and this motivates our choice of working with $\wideparen{U_q(\g)}$. Indeed there exists a theory \cite{QDmod1, QDmod2} of quantum $D$-modules and a Beilinson-Bernstein theorem for representations of $U_q(\g)$ developed by Backelin and Kremnizer, and there is also an analogous quantum Beilinson-Bernstein theorem due to Tanisaki \cite{Tanisaki}. In \cite{Nico1} we will begin to adapt the Backelin-Kremnizer theory of quantum $D$-modules to our setting.

\subsection{Structure of the paper} In section 2 we recall the basic facts and definitions about quantum groups that we will need. In section 3 we define the algebras $\Uqcap$ and $\Oqcap$ and use standard results from functional analysis to prove that they are Fr\'echet Hopf algebras. In section 4, we develop general criteria to establish that certain algebras are Fr\'echet-Stein. Specifically, we use the notion of a deformable algebra from \cite{Wadsley1} and adapt two useful criteria for flatness from \cite{Wadsley2, Emerton} to our setting. We then use those to prove that $\Uqcap$ and $\Oqcap$ are Fr\'echet-Stein algebras. In doing so, we prove a PBW type theorem for certain lattices inside $U_q$ and obtain universal poperties for $\Uqcap$ and $\Oqcap$. Finally, in Section 5, we introduce the notion of a topologically semisimple module. We then use this to define the category $\hat{\Of}$ and investigate its properties. In particular, we construct Verma modules and highest weight modules for $\Uqcap$. We then show that this category is equivalent to the category $\Of$ for $U_q$. One of the main ingredients is a form of block decomposition by central characters.

\subsection{Acknowledgements} The results of this paper form part of the author's PhD thesis, which is being produced under the supervision of Simon Wadsley. We would like to thank him for his help, encouragement and patience. We would also like to thank Andreas Bode for his continued interest in our work and for many useful conversations. We thank Tobias Schmidt for pointing out a reference.

\subsection{Conventions and notation} Throughout $L$ will denote a complete discrete valuation field of characteristic 0 with valuation ring $R$, uniformizer $\pi$ and residue field $k$. We fix a unit element $q\in R$ which is \emph{not} a root of unity, and we assume that $q\equiv 1\pmod{\pi}$. Unless explicitly stated otherwise, the term ``module'' will be used to mean \emph{left} module, and Noetherian rings are both left and right Noetherian. Given a ring homomorphism $A\to B$, we will say that $B$ is flat over $A$ to mean that it's both left flat and right flat.

All of our filtrations on modules or algebras will be positive and exhaustive unless specified otherwise. Furthermore, given a ring $S$, a subring $F_0S$ such that $S$ is generated over $F_0S$ by some elements $x_1, \ldots, x_n$ which normalise $F_0S$, and integers $d_1, \ldots, d_n\geq 1$, there is a ring filtration on $S$ by $F_0S$-submodules given by setting
$$
F_tS=F_0S\cdot\{x_{i_1}\cdots x_{i_r}: \sum_{j=1}^r d_{i_j}\leq t\}
$$
for each $t\geq 0$. In such a setting, we will simply say `the filtration given by assigning each $x_i$ degree $d_i$' to refer to this filtration.

Following \cite[Def 2.7]{Wadsley1}, an $R$-submodule $W$ of an $L$-vector space $V$ will be called a \emph{lattice} if the map $W\otimes_R L\to V$ is an isomorphism and $W$ is $\pi$-adically separated, i.e $\bigcap_{n\geq 0}\pi^n W=0$. Also, for any $R$-module $M$, we denote by $\widehat{M}:=\varprojlim M/\pi^n M$ its $\pi$-adic completion.

Finally, we let $\g$ be a complex semisimple Lie algebra. We fix a Cartan subalgebra $\h\subseteq \g$ contained in a Borel subalgebra. We choose a positive root system and we denote the simple roots by $\alpha_1, \ldots, \alpha_n$. Let $C=(a_{ij})$ denote the Cartan matrix. We let $G$ be the simply connected semisimple algebraic group corresponding to $\g$, and we let $B$ be the Borel subgroup corresponding to the positive root system, and let $N\subset B$ be its unipotent radical. Let $\bo=\text{Lie}(B)$ and $\n=\text{Lie}(N)$. Let $W$ be the Weyl group of $\g$, and let $\langle\, , \rangle$ denote the standard normalised $W$-invariant bilinear form on $\h^*$. Let $P\subset\h^*$ be the weight lattice and $Q\subset P$ be the root lattice. Let $d$ be the smallest natural number such that $\langle  \mu, P\rangle\subset \frac{1}{d}\Z$ for all $\mu\in P$. Let $d_i=\frac{\langle \alpha_i,\alpha_i\rangle}{2}\in\{1, 2, 3\}$ and write $q_i:=q^{d_i}$.

We make the following two assumptions. First, we assume that $q^\frac{1}{d}$ exists in $R$ and that $q^\frac{1}{d}\equiv 1\pmod{\pi}$. Secondly, we assume that $p>2$ and, if $\g$ has a component of type $G_2$, we furthermore restrict to $p>3$.

All the above algebraic groups and Lie algebras have $k$-forms, and we write $G_k, \g_k,\ldots$ etc to denote them.

\section{Preliminaries}

\subsection{Quantized enveloping algebra}\label{PrelimonUq} We begin by reviewing basic facts about quantized enveloping algebras (see eg \cite[Chapter I.6]{BroGoo02} for more details). We recall some usual notation for quantum binomial coefficients. For $n\in\Z$ and $t\in L$, we write $[n]_t:=\frac{t^n-t^{-n}}{t-t^{-1}}$. We then set the quantum factorial numbers to be given by $[0]_t!=1$ and $[n]_t!:=[n]_t[n-1]_t\cdots [1]_t$ for $n\geq 1$. Then we define
$$
{n \brack i}_t:=\frac{[n]_t!}{[i]_t![n-i]_t!}
$$
when $n\geq i\geq 1$.

\begin{definition}
The simply connected quantized enveloping algebra $U_q(\g)$ is defined to be the $L$-algebra with generators $E_{\alpha_1},\ldots, E_{\alpha_n}$, $F_{\alpha_1}, \ldots, F_{\alpha_n}$, $K_\lambda$, $\lambda\in P$, satisfying the following relations:
\begin{align*}
& K_\lambda K_\mu=K_{\lambda+\mu},\quad K_0=1,\\
& K_\lambda E_{\alpha_i} K_{-\lambda}=q^{\langle\lambda, \alpha_i\rangle}E_{\alpha_i},\quad K_\lambda F_{\alpha_i} K_{-\lambda}=q^{-\langle\lambda, \alpha_i\rangle}F_{\alpha_i},\\
& [E_{\alpha_i}, F_{\alpha_j}]=\delta_{ij}\frac{K_{\alpha_i}-K_{-\alpha_i}}{q_i-q_i^{-1}},\\
& \sum_{l=0}^{1-a_{ij}}(-1)^l {1-a_{ij}\brack l}_{q_i} E_{\alpha_i}^{1-a_{ij}-l}E_{\alpha_j}E_{\alpha_i}^l=0\quad (i\neq j),\\
& \sum_{l=0}^{1-a_{ij}}(-1)^l {1-a_{ij}\brack l}_{q_i} F_{\alpha_i}^{1-a_{ij}-l}F_{\alpha_j}F_{\alpha_i}^l=0\quad (i\neq j).
\end{align*}
\end{definition}

We will also abbreviate $U_q(\g)$ to $U_q$ when no confusion can arise as to the choice of Lie algebra $\g$. We can define Borel and nilpotent subalgebras, namely $U_q^{\geq 0}$ is the subalgebra generated by all the $K's$ and the $E's$, and $U_q^+$ is the subalgebra generated by all the $E's$. Similarly we can define $U_q^{\leq 0}$ as the algebra generated by all the $K$'s and the $F$'s, and $U_q^-$ is the subalgebra generated by the $F$'s. There is also a Cartan subalgebra given by $U_q^0:=L[K_\lambda: \lambda\in P]$, which is isomorphic to the group algebra $LP$. There is an algebra automorphism $\omega$ of $U_q$ defined by $\omega(E_{\alpha_i})=F_{\alpha_i}$, $\omega(F_{\alpha_i})=E_{\alpha_i}$ and $\omega(K_\lambda)=K_{-\lambda}$.

Recall that $U_q$ is a Hopf algebra with operations given by
$$
\begin{array}{l l l}
\Delta(K_\lambda)=K_\lambda\otimes K_\lambda\quad & \varepsilon(K_\lambda)=1\quad & S(K_\lambda)=K_{-\lambda}\\
\Delta(E_{\alpha_i})=E_{\alpha_i}\otimes 1 + K_{\alpha_i}\otimes E_{\alpha_i}\quad & \varepsilon(E_{\alpha_i})=0\quad & S(E_{\alpha_i})=-K_{-\alpha_i}E_{\alpha_i}\\
\Delta(F_{\alpha_i})=F_{\alpha_i}\otimes K_{-\alpha_i} + 1\otimes F_{\alpha_i}\quad & \varepsilon(F_{\alpha_i})=0\quad & S(F_{\alpha_i})=-F_{\alpha_i}K_{\alpha_i}
\end{array}
$$
for $i=1,\ldots, n$ and all $\lambda\in P$. Then $U_q^{\geq 0}$ and $U_q^{\leq 0}$ are sub-Hopf algebras of $U_q$.

We now recall the construction that leads to the PBW basis for $U_q$ (see \cite[Chapter 8]{Jantzen} for more details). Firstly, we have a triangular decomposition
$$
U_q\cong U_q^-\otimes_L U_q^0\otimes_L U_q^+
$$
so that it is sufficient to find bases for $U_q^\pm$. In order to obtain a basis for $U_q^+$, we consider the action of the braid group on $U_q$ due to Lusztig. Firstly, we recall the usual notation
$$
E_{\alpha_i}^{(s)}:=\frac{E_{\alpha_i}^s}{[s]_{q_i}!},\quad F_{\alpha_i}^{(s)}:=\frac{F_{\alpha_i}^s}{[s]_{q_i}!},
$$
for any integer $s\geq 0$. The braid group action as algebra automorphisms of $U_q$ is then defined by
\begin{align*}
& T_iE_{\alpha_i}=-F_{\alpha_i}K_{\alpha_i}\\
& T_iF_{\alpha_i}=-K_{-\alpha_i}E_{\alpha_i}\\
& T_iE_{\alpha_j}=\sum_{s=0}^{-a_{ij}}(-1)^{s-a_{ij}}q_i^{-s}E_i^{(-a_{ij}-s)}E_jE_i^{(s)}\quad (i\neq j)\\
& T_iF_{\alpha_j}=\sum_{s=0}^{-a_{ij}}(-1)^{s-a_{ij}}q_i^sF_i^{(s)}F_jF_i^{(-a_{ij}-s)}\quad (i\neq j)\\
& T_i K_\lambda=K_{s_i(\lambda)}
\end{align*}
The above action can be extended to construct operators $T_w$ for any element $w\in W$. Indeed, if $w=s_{i_1}\cdots s_{i_s}$ is a reduced expression for $w$, then let $T_w=T_{i_1}T_{i_2}\cdots T_{i_s}$. Moreover, if $w=w_1w_2$ where $\ell(w)=\ell(w_1)+\ell(w_2)$ then $T_w=T_{w_1}T_{w_2}$.

Let $N$ denote the number of positive roots of $\g$. Let $w_0\in W$ be the unique element of longest length and choose a reduced expression $w_0=s_{i_1}\cdots s_{i_N}$. Recall that then
$$
\beta_1:=\alpha_{i_1}, \beta_2:=s_{i_1}(\alpha_{i_2}), \ldots, \beta_N:=s_{i_1}\cdots s_{i_{N-1}}(\alpha_{i_N})
$$
are all the positive roots of $\g$ in some order. Then we define elements $E_{\beta_1}, \ldots, E_{\beta_N}$ of $U_q$ by
$$
E_{\beta_j}:=T_{i_1}\cdots T_{i_{j-1}}(E_{\alpha_{i_j}}).
$$
If in particular $\beta_j=\alpha_t$ is a simple root, then we have $E_{\beta_j}=E_{\alpha_t}$. Note that we have in general $K_\lambda E_{\beta_j}K_{-\lambda}=q^{\langle\lambda, \beta_j\rangle}E_{\beta_j}$.

Then the set of all ordered monomials $E_{\beta_1}^{m_1}\cdots E_{\beta_N}^{m_N}$ forms a basis for $U_q^+$. This depends on a choice of reduced expression for $w_0$ so we fix one for the rest of this paper. We now let $F_{\beta_j}:=\omega(E_{\beta_j})$ and the corresponding monomials in the $F$'s will form a basis of $U_q^-$. The triangular decomposition immediately gives a PBW type basis for $U_q$, namely the basis consists of all ordered monomials
$$
F_{\beta_1}^{n_1}\cdots F_{\beta_N}^{n_N}K_\lambda E_{\beta_1}^{m_1}\cdots E_{\beta_N}^{m_N}
$$
for $m_i, n_j\in \Z_{\geq 0}$ and $\lambda\in P$. For short we will write
$$
M_{\boldsymbol{r}, \boldsymbol{s}, \lambda}:=\boldsymbol{F^r}K_\lambda\boldsymbol{E^s}
$$
where $\boldsymbol{r}, \boldsymbol{s}\in\Z^N_{\geq 0}$. We recall that the \emph{height} of such a monomial is defined to be
$$
\htt(M_{\boldsymbol{r}, \boldsymbol{s}, \lambda}):=\sum_{j=1}^N(r_j+s_j)\htt(\beta_j)
$$
where $\htt(\beta):=\sum_{i=1}^n a_i$ for a positive root $\beta=\sum_i a_i \alpha_i$. This gives rise to a positive algebra filtration on $U_q$  defined by
$$
F_iU_q:=L\text{-span}\{M_{\boldsymbol{r}, \boldsymbol{s}, \lambda} : \htt(M_{\boldsymbol{r}, \boldsymbol{s}, \lambda})\leq i\}.
$$
From now on we will always refer to this filtration as the \emph{height filtration} on $U_q$. It can be extended to a multifiltration as follows (see \cite[Section 10]{DeConProc} for details): the associated graded algebra $U^{(1)}=\gr U_q$ with respect to the above filtration can be seen to have the same presentation as $U_q$, with the exception that now all the $E$'s commute with all the $F$'s. Moreover it has the same vector space basis, by which we mean the basis for $U^{(1)}$ is consists of the symbols of the basis elements for $U_q$. If we impose the reverse lexicographic orderin ordering on $\Z^{2N}_{\geq 0}$, then we can filter $U^{(1)}$ by assigning to each monomial $M_{\boldsymbol{r}, \boldsymbol{s}, \lambda}$ the degree $(r_1,\ldots,r_N, s_1,\ldots, s_N)$. In other words for each $\boldsymbol{d}\in\Z^{2N}_{\geq 0}$, we set $F_{\boldsymbol{d}}U^{(1)}$ to be the span of the monomials $M_{\boldsymbol{r}, \boldsymbol{s}, \lambda}$ such that $(r_1,\ldots,r_N, s_1,\ldots, s_N)\leq \boldsymbol{d}$. This is an algebra multi-filtration, and we denote the corresponding associated graded algebra of $U^{(1)}$ by $U^{(2N+1)}$.

\begin{thm}\emph{(\cite[Proposition 10.1]{DeConProc})}
The algebra $U^{(2N+1)}$ is the $L$-algebra with generators
$$
E_{\beta_1},\ldots, E_{\beta_N}, F_{\beta_1},\ldots, F_{\beta_N}, K_\lambda (\lambda\in P)
$$
and relations
\begin{align*}
& K_\lambda K_\mu=K_{\lambda+\mu},\quad K_0=1,\\
& K_\lambda E_{\beta_i}=q^{\langle\lambda, \beta_i\rangle}E_{\alpha_i}K_\lambda,\quad K_\lambda F_{\beta_j}=q^{-\langle\lambda, \beta_j\rangle}F_{\beta_j}K_\lambda,\\
& E_{\beta_i}F_{\beta_j}=F_{\beta_j}E_{\beta_i}\\
& E_{\beta_i} E_{\beta_j}=q^{\langle\beta_i, \beta_j\rangle}E_{\beta_j}E_{\beta_i}, \quad F_{\beta_i} F_{\beta_j}=q^{\langle\beta_i, \beta_j\rangle}F_{\beta_j}F_{\beta_i}
\end{align*}
for $\lambda, \mu\in P$ and $1\leq i,j\leq N$.
\end{thm}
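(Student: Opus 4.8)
The plan is to obtain the presentation in two stages, matching the two-step construction of $U^{(2N+1)}$ out of $U_q$; the only serious ingredient is the Levendorskii--Soibelman straightening formula for the quantum root vectors, and granted that, everything else is bookkeeping with the multifiltration. First I would record the first stage. As recalled above, $U^{(1)}=\gr U_q$ for the height filtration has the presentation of $U_q$ with the single change that $E_{\alpha_i}F_{\alpha_j}=F_{\alpha_j}E_{\alpha_i}$, and its PBW monomials $M_{\boldsymbol{r},\boldsymbol{s},\lambda}$ form an $L$-basis. Since $U_q^{+}$ (resp.\ $U_q^{-}$) is $Q^{+}$- (resp.\ $Q^{-}$-)graded with $E_{\beta_j}$ (resp.\ $F_{\beta_j}$) homogeneous of weight $\beta_j$ (resp.\ $-\beta_j$), the height filtration restricted to $U_q^{\pm}$ is just a coarsening of the weight grading; in particular the images of the quantum root vectors in $U^{(1)}$ keep all their structural properties, and the Levendorskii--Soibelman relations (see \cite[Chapter 8]{Jantzen}) --- which state that for $i<j$ the element $E_{\beta_i}E_{\beta_j}-q^{\langle\beta_i,\beta_j\rangle}E_{\beta_j}E_{\beta_i}$ is an $L$-combination of monomials $E_{\beta_{i+1}}^{k_{i+1}}\cdots E_{\beta_{j-1}}^{k_{j-1}}$ with $\sum_m k_m\beta_m=\beta_i+\beta_j$, and similarly for the $F$'s --- hold verbatim in $U^{(1)}$, being homogeneous for the root grading.

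The second stage is the heart of the matter. Imposing the multifiltration of $U^{(1)}$, the monomial $E_{\beta_i}E_{\beta_j}$ (for $i<j$) has multidegree supported in the $i$-th and $j$-th slots of the $E$-block, whereas every correction monomial in the Levendorskii--Soibelman formula is supported on $E$-block slots lying strictly between $i$ and $j$ and in particular vanishes in slot $j$; hence the last coordinate in which the two degree vectors differ is slot $j$, where $E_{\beta_i}E_{\beta_j}$ is larger, so the correction terms are of strictly smaller degree for the reverse-lexicographic order. Passing to $U^{(2N+1)}=\gr U^{(1)}$ the corrections therefore die, and one is left with
\[
E_{\beta_i}E_{\beta_j}=q^{\langle\beta_i,\beta_j\rangle}E_{\beta_j}E_{\beta_i},\qquad F_{\beta_i}F_{\beta_j}=q^{\langle\beta_i,\beta_j\rangle}F_{\beta_j}F_{\beta_i}\qquad(i<j).
\]
The remaining relations $K_\lambda K_\mu=K_{\lambda+\mu}$, $K_0=1$, $K_\lambda E_{\beta_i}=q^{\langle\lambda,\beta_i\rangle}E_{\beta_i}K_\lambda$, $K_\lambda F_{\beta_j}=q^{-\langle\lambda,\beta_j\rangle}F_{\beta_j}K_\lambda$ and $E_{\beta_i}F_{\beta_j}=F_{\beta_j}E_{\beta_i}$ already hold in $U^{(1)}$ between scalar multiples of a single PBW monomial, so both sides have the same multidegree and the relations survive unchanged in $U^{(2N+1)}$.

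It remains to see that these relations give a \emph{complete} presentation. Let $A$ be the abstract $L$-algebra on the listed generators and relations; the previous step produces a surjective algebra map $\phi\colon A\twoheadrightarrow U^{(2N+1)}$. Using the $q$-commutation relations one rewrites any product of generators of $A$ in the standard form $\boldsymbol{F^r}K_\lambda\boldsymbol{E^s}$, so the ordered monomials span $A$; their images under $\phi$ are exactly the symbols of the PBW monomials $M_{\boldsymbol{r},\boldsymbol{s},\lambda}$, which form an $L$-basis of $U^{(2N+1)}=\gr U^{(1)}$ by construction of the multifiltration. In particular these images are $L$-linearly independent, which forces the spanning monomials of $A$ to be linearly independent too, so $\phi$ carries a basis to a basis and is an isomorphism. (Equivalently, $A$ is visibly an iterated Ore extension of $L[K_\lambda]\cong LP$ by the $E_{\beta_j}$ and then the $F_{\beta_j}$, with each skewing map the evident diagonal automorphism and each twisted derivation zero, hence has the stated monomials as an $L$-basis.) The only real obstacle is the invocation and correct use of the Levendorskii--Soibelman formula, together with the degree bookkeeping that makes its correction terms subleading for the chosen ordering; this is \cite[Proposition 10.1]{DeConProc}.
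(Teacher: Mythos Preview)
Your proposal is correct and follows the standard argument, which is precisely the one in the cited reference \cite[Proposition 10.1]{DeConProc}; the paper itself does not give a proof of this statement but simply quotes it from De Concini--Procesi. The two-step passage to the associated graded, the use of the Levendorskii--Soibelman straightening law to kill the correction terms under the reverse-lexicographic multifiltration, and the PBW basis comparison to establish completeness are exactly the ingredients of the original proof.
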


\subsection{Quantized coordinate rings} \label{prelimonOq} We now recall the construction of the quantized coordinate algebra $\Oq$. For any module $M$ over an $L$-Hopf algebra $H$, and for any $f\in H^*$ and $v\in M$, the matrix coefficient $c^M_{f,v}\in H^*$ is defined by
$$
c^M_{f,v}(x):=f(xv)\quad\quad\text{for } x\in H.
$$
Also recall from \cite[Theorem 5.10]{Jantzen} that for each $\lambda\in P$ there is a unique irreducible representation of type $\boldsymbol{1}$, $V(\lambda)$, of $U_q$ and that these form a complete list of such representations. The module $V(\lambda)$ has a highest weight vector $v_\lambda$ of weight $\lambda$ and we can pick a weight basis, which we will write as $\{v_i\}$ for short, and we will write $\{f_i\}$ for the corresponding dual basis.

The quantized coordinate ring $\Oq$ is then defined to be the $L$-subalgebra of $U_q^\circ$ generated by all matrix coefficients of the modules $V(\lambda)$ for $\lambda\in P^+$. In other words, it is the algebra generated by the $c^{V(\lambda)}_{f_i,v_j}$ where $\lambda\in P^+$ (this does not depend on our choice of weight basis). Hence $\Oq$ is the algebra of matrix coefficients of finite dimensional type $\boldsymbol{1}$ representations of $U_q$.

Furthermore $\Oq$ is actually generated by the matrix coefficients of the modules $V(\varpi_1), \ldots, V(\varpi_r)$ (see \cite[Proposition I.7.8]{BroGoo02}). It is a sub-Hopf algebra of $U_q^\circ$ (see \cite[Lemma I.7.3]{BroGoo02}) with Hopf algebra maps given by:
\begin{equation}\label{Hopfrelations}
\varepsilon(c^{V(\lambda)}_{f_i,v_j})=f_i(v_j)=\delta_{ij},\quad S(c^{V(\lambda)}_{f_i,v_j})=c^{V(\lambda)^*}_{v_j,f_i}, \quad \Delta(c^{V(\lambda)}_{f_i,v_j})=\sum_k c^{V(\lambda)}_{f_i,v_k}\otimes c^{V(\lambda)}_{f_k,v_j}
\end{equation}
where we have $V(\lambda)^*\cong V(-w_0\lambda)$.

We conclude by describing certain $q$-commutator relations in $\Oq$. For each $i$ we let $B_i$ denote our basis of $V(\varpi_i)$ and $B_i^*$ denote the dual basis. By the above $\Oq$ is generated by the set
$$
X=\{c^{V(\varpi_i)}_{f, v}:i=1, \ldots n, f\in B_i^*, v\in B_i\}.
$$
From \cite[I.8.16-I.8.18]{BroGoo02}, we may order $X$ into a list $x_1, \ldots, x_r$ so that there exists $q_{ij}\in R^\times$, equal to some power of $q$, and $\alpha^{st}_{ij}, \beta^{st}_{ij}\in L^\times$ such that
$$
x_ix_j=q_{ij}x_jx_i+\sum_{s=1}^{j-1}\sum_{t=1}^{r} (\alpha^{st}_{ij} x_s x_t+ \beta^{st}_{ij}x_tx_s)
$$
for $1\leq j<i\leq r$.

One can use these relations to deduce that $\Oq$ is Noetherian. Indeed let $F_{\cdot}$ denote the filtration on $\Oq$ obtained by giving $x_i$ degree $d_i=2^r-2^{r-i}$. That is we set
$$
F_t\Oq=L\text{-span}\{x_{i_1}\cdots x_{i_n}: \sum_{j=1}^{n} d_{i_j}\leq t\}.
$$
These degrees are chosen so that whenever $i> j> s$ and $t\leq r$, we always have $d_i+d_j>d_s+d_t$. Then we have:

\begin{thm}\emph{(\cite[Proposition I.8.17 \& Theorem I.8.18]{BroGoo02})} With respect to the above filtration, $\gr \Oq$ is a $q$-commutative $L$-algebra and so Noetherian.
\end{thm}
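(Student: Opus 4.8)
The plan is to exploit exactly the combinatorial property for which the weights $d_i=2^r-2^{r-i}$ were chosen. First I would look at a straightening relation with $i>j$,
\[
x_ix_j=q_{ij}x_jx_i+\sum_{s=1}^{j-1}\sum_{t=1}^{r}\bigl(\alpha^{st}_{ij}x_sx_t+\beta^{st}_{ij}x_tx_s\bigr),
\]
and note that every monomial $x_sx_t$ or $x_tx_s$ occurring on the right has $s\le j-1$, hence $i>j>s$ and $t\le r$, so that $d_s+d_t<d_i+d_j$ by the stated property of the $d_i$. Since $x_ix_j$ and $x_jx_i$ both lie in $F_{d_i+d_j}\Oq$, the entire correction sum lies in $F_{d_i+d_j-1}\Oq$. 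Writing $\bar x_i$ for the principal symbol of $x_i$ (of degree $d_i$) in $\gr\Oq$, this yields the $q$-commutation relations $\bar x_i\bar x_j=q_{ij}\bar x_j\bar x_i$ in $\gr_{d_i+d_j}\Oq$ for all $i>j$, with $q_{ij}\in R^\times$ a power of $q$, in particular a nonzero scalar.

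From there the argument is formal. Since $\Oq$ is generated by $x_1,\dots,x_r$ and $F_\bullet\Oq$ is precisely the filtration generated by assigning $x_i$ degree $d_i$, each graded piece $F_t\Oq/F_{t-1}\Oq$ is spanned by the images of the monomials $x_{i_1}\cdots x_{i_n}$ with $\sum_{k=1}^n d_{i_k}=t$, and such a monomial maps to $\bar x_{i_1}\cdots\bar x_{i_n}$; hence $\gr\Oq$ is generated as an $L$-algebra by $\bar x_1,\dots,\bar x_r$. Setting $q_{ji}:=q_{ij}^{-1}$ for $i>j$, these generators pairwise $q$-commute, so I would produce a surjective $L$-algebra homomorphism onto $\gr\Oq$ from the quantum affine space $A:=L\langle y_1,\dots,y_r\mid y_iy_j=q_{ij}y_jy_i\ (i>j)\rangle$, sending $y_i\mapsto\bar x_i$. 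The algebra $A$ is an iterated Ore extension $L[y_1][y_2;\tau_2]\cdots[y_r;\tau_r]$, with $\tau_i$ the $L$-algebra automorphism of the previous stage scaling $y_j\mapsto q_{ij}y_j$ for $j<i$, hence is left and right Noetherian by the skew Hilbert basis theorem; a homomorphic image of a Noetherian ring is Noetherian, so $\gr\Oq$ is $q$-commutative and Noetherian, as asserted. If one also wants the sharper statement that $A\to\gr\Oq$ is an isomorphism (so that $\gr\Oq$ is literally a quantum affine space and the ordered monomials $x_1^{a_1}\cdots x_r^{a_r}$ form an $L$-basis of $\Oq$), one would add a straightening induction --- on total degree, then on the number of adjacent inversions --- to show these monomials span $\Oq$, together with their linear independence; but the Noetherian conclusion does not require this.

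The part needing the most care is the degree bookkeeping in the first step: one must verify that \emph{every} monomial in the correction term of the straightening relation for $x_ix_j$ has the form $x_sx_t$ or $x_tx_s$ with $s<j$ and $t\le r$ --- which is exactly the shape of the relations recorded above --- so that the inequality $d_s+d_t<d_i+d_j$ genuinely applies, and one should invoke the relation only for $i>j$, deducing the $i<j$ case by inverting $q_{ij}$. Everything else --- the behaviour of symbols in an associated graded algebra, and the Noetherianity of iterated Ore extensions --- is standard, so I do not anticipate a serious obstacle.
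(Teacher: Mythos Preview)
Your proposal is correct and is essentially the argument the paper has in mind: the paper does not give its own proof of this statement but simply cites \cite[Proposition I.8.17 \& Theorem I.8.18]{BroGoo02}, and later (in the proof of Proposition \ref{OqFS}) invokes exactly the degree inequality $d_s+d_t<d_i+d_j$ for $i>j>s$ and $t\le r$ to conclude that the symbols $q$-commute in the associated graded, just as you do. Your additional remarks about the surjection from a quantum affine space and Noetherianity via iterated Ore extensions are the standard way to finish, and are precisely what the cited reference does.
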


Here we used the following (recall we assumed that $q^{\frac{1}{d}}\in R$):

\begin{definition} Let $A$ be an $R$-algebra. We say that a set of elements $x_1, \ldots, x_m\in A$ \emph{$q$-commute} if for all $1\leq i, j\leq m$ we have $x_ix_j=q^{n_{ij}}x_jx_i$ for some $n_{ij}\in\frac{1}{d}\Z$. Suppose that $S$ is an $R$-subalgebra of $A$. We say that $A$ is a \emph{$q$-commutative $S$-algebra} if $A$ is finitely generated over $S$ by elements $x_1, \ldots, x_m$  which normalise $S$ and which $q$-commute.
\end{definition}

From a noncommutative analogue of Hilbert's basis theorem \cite[Theorem 1.2.10]{noncomalg} and by induction, we immediately deduce:

\begin{lem} Let $A$ be a $q$-commutative $S$-algebra as above. If $S$ is Noetherian then so is $A$.
\end{lem}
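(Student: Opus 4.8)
The plan is to reduce the statement to an inductive application of the noncommutative Hilbert basis theorem, using the fact that a $q$-commutative $S$-algebra is built from $S$ by successively adjoining elements that normalise the previous subalgebra. Let $A$ be generated over $S$ by $q$-commuting elements $x_1, \ldots, x_m$ which normalise $S$. First I would set $A_0 = S$ and, for $1 \leq i \leq m$, let $A_i$ be the $R$-subalgebra of $A$ generated by $S$ and $x_1, \ldots, x_i$, so that $A_m = A$ and we have a chain $S = A_0 \subseteq A_1 \subseteq \cdots \subseteq A_m = A$.

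The key point is that for each $i$, the element $x_i$ normalises $A_{i-1}$: it normalises $S$ by hypothesis, and for $j < i$ we have $x_i x_j = q^{n_{ij}} x_j x_i$, so $x_i A_{i-1} = A_{i-1} x_i$. Hence $A_i$ is generated as a ring over its subring $A_{i-1}$ by the single normalising element $x_i$; in the terminology of \cite[Theorem 1.2.10]{noncomalg} (a skew polynomial / Ore-type extension, or its image under the surjection from such an extension), if $A_{i-1}$ is Noetherian then $A_i$ is Noetherian. Concretely, $A_i$ is a quotient of the skew polynomial ring $A_{i-1}[x; \sigma]$ where $\sigma$ is the automorphism of $A_{i-1}$ given by conjugation by $x_i$, and the noncommutative Hilbert basis theorem says such a skew polynomial ring over a Noetherian ring is Noetherian; quotients of Noetherian rings are Noetherian.

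It then remains only to run the induction: $A_0 = S$ is Noetherian by hypothesis, and if $A_{i-1}$ is Noetherian then so is $A_i$ by the previous paragraph, so $A_m = A$ is Noetherian. I do not expect any real obstacle here — the only thing to be slightly careful about is that \cite[Theorem 1.2.10]{noncomalg} is typically stated for genuine skew polynomial extensions rather than for arbitrary rings generated by one normalising element, so one should phrase the inductive step as "$A_i$ is a homomorphic image of a skew polynomial ring over $A_{i-1}$" and invoke that quotients preserve the Noetherian property. This is exactly the induction alluded to in the excerpt, so the proof is short.
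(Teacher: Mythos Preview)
Your proposal is correct and follows exactly the approach the paper indicates: the paper's proof consists solely of the sentence ``From a noncommutative analogue of Hilbert's basis theorem \cite[Theorem 1.2.10]{noncomalg} and by induction, we immediately deduce,'' and you have spelled out precisely that induction. The only cosmetic point is that ``conjugation by $x_i$'' is a slight abuse of language since $x_i$ need not be a unit, but your own caveat about passing to a quotient of the skew polynomial ring handles this.
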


\subsection{Deformable algebras and modules}\label{deformablealg}

Recall from \cite[Definition 3.5]{Wadsley1} that a positively $\Z$-filtered $R$-algebra $A$ with $F_0A$ an $R$-subalgebra of $A$ is said to be a \emph{deformable $R$-algebra} if $\gr A$ is a flat $R$-module and $A$ is a lattice in $A_L$. Its \emph{$n$-th deformation} is the subring
$$
A_n=\sum_{i\geq 0} \pi^{ni}F_iA.
$$
A morphism between deformable $R$-algebras is a filtered $R$-algebra homomorphism. There is nothing stopping us from making the exact same definitions with filtered $R$-modules, i.e. for a filtered $R$-module $M$ we can define $M_n=\sum_{i\geq 0} \pi^{ni}F_iM$ and say that $M$ is deformable if $\gr M$ is a flat $R$-module and $M$ is a lattice in $M_L$.

\begin{remark}
Note that forcing deformable algebras to be $\pi$-adically separated is not a very big restriction, for instance it always holds when $A$ is a Noetherian domain as long as $\pi$ is not a unit by \cite[Proposition I.4.4.5]{Zariskian}.
\end{remark}

We can then extend known results with identical proofs:

\begin{lem} Let $M$ be a deformable $R$-module. Then
\begin{enumerate}
\item \emph{(\cite[Lemma 3.5]{Wadsley1})} For all $n\geq 0$, $M_n$ is also deformable, with filtration
$$
F_jM_n:=M_n\cap F_jM=\sum_{i=0}^j  \pi^{ni}F_iM,
$$
and there is a natural isomorphism $\gr M_n\cong \gr M$.
\item \emph{(\cite[Lemma 6.4(a)]{Wadsley2})} $M_1\cap\pi^tM=\sum_{i\geq t}\pi^iF_iM$ for any $t\geq0$;
\item \emph{(\cite[Lemma 6.4(b)]{Wadsley2})} $(M_n)_m=M_{n+m}$ for any $n, m\geq0$.
\end{enumerate}
\end{lem}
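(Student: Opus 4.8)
The plan is to verify the three parts in order, mimicking the proofs in \cite{Wadsley1, Wadsley2} but with modules in place of algebras. Throughout, write $M_n=\sum_{i\geq 0}\pi^{ni}F_iM$ and recall that $M$ deformable means $\gr M$ is a flat (equivalently, torsion-free, since $R$ is a DVR) $R$-module and $M$ is a lattice in $M_L$, i.e.\ $M\otimes_R L\xrightarrow{\sim} M_L$ and $\bigcap_n \pi^n M=0$.

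For part (i), first I would check that $F_jM_n:=M_n\cap F_jM$ equals $\sum_{i=0}^j\pi^{ni}F_iM$. The inclusion $\supseteq$ is clear; for $\subseteq$, take $x\in M_n\cap F_jM$, write $x=\sum_{i\geq 0}\pi^{ni}m_i$ with $m_i\in F_iM$, and use that the filtration is exhaustive together with torsion-freeness of $\gr M$ to push the components of degree $>j$ into lower filtration steps — this is exactly where flatness of $\gr M$ enters, since it lets one solve $\pi^{ni}m_i\in F_jM$ for $m_i\in F_jM$ when $i>j$. Then the associated graded of $M_n$ for this filtration has $j$-th piece $F_jM_n/F_{j-1}M_n=\pi^{nj}F_jM/\big(\pi^{nj}F_{j-1}M+\pi^{n(j-1)}\pi^{?}\dots\big)$; chasing this gives $\gr_j M_n\cong \pi^{nj}\gr_j M\cong \gr_j M$ since $\gr_j M$ is torsion-free, yielding the natural isomorphism $\gr M_n\cong\gr M$ (in particular $M_n$ has flat associated graded). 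Flatness of $\gr M_n$ is then inherited, and $M_n\subseteq M$ is still a lattice in $M_L$ (it spans over $L$ because $F_0M\subseteq M_n$ and $M$ is exhaustively filtered, and it is $\pi$-adically separated as a submodule of the separated module $M$), so $M_n$ is deformable.

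For part (ii), I would show $M_1\cap\pi^t M=\sum_{i\geq t}\pi^i F_iM$. Again $\supseteq$ is immediate. For $\subseteq$, take $x=\sum_{i\geq 0}\pi^i m_i\in\pi^t M$ with $m_i\in F_iM$; I want to show each term with $i<t$ can be absorbed. Write $x=\pi^t y$ with $y\in M$, expand $y$ via the exhaustive filtration, and compare symbols in $\gr M$ degree by degree: the lowest-degree obstruction lives in a torsion-free graded piece, so divisibility by the appropriate power of $\pi$ propagates. This is the standard ``degree-wise'' induction using that $\gr M$ has no $\pi$-torsion. Part (iii), $(M_n)_m=M_{n+m}$, is then a short computation: using the filtration on $M_n$ from part (i), $(M_n)_m=\sum_{j\geq 0}\pi^{mj}F_jM_n=\sum_{j\geq 0}\pi^{mj}\sum_{i=0}^j\pi^{ni}F_iM=\sum_{i\geq 0}\pi^{(n+m)i}F_iM=M_{n+m}$, where the middle rearrangement uses that for fixed $i$ the minimal contributing $j$ is $j=i$ and $\pi^{mj}\pi^{ni}F_iM\subseteq \pi^{(n+m)i}F_iM$ for $j\geq i$.

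The main obstacle is part (ii) (equivalently, the absorption argument underlying part (i)): everything hinges on the torsion-freeness of $\gr M$, and one must be careful that the induction on filtration degree actually terminates and that ``dividing a symbol by $\pi$'' can be lifted to an honest division in $F_iM$ modulo lower-degree terms. Once that lemma is in hand, parts (i) and (iii) are formal. Since the module statements are proved verbatim as in the cited algebra cases — the multiplicative structure plays no role in any of these three claims — I would simply remark that the proofs of \cite[Lemma 3.5]{Wadsley1} and \cite[Lemma 6.4]{Wadsley2} carry over \emph{mutatis mutandis}, after replacing ``$R$-algebra'' by ``$R$-module'' and deleting any reference to products.
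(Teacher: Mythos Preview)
Your proposal is correct and takes exactly the same approach as the paper: the paper simply states that these are ``known results with identical proofs'' and cites \cite[Lemma 3.5]{Wadsley1} and \cite[Lemma 6.4]{Wadsley2}, giving no argument of its own. Your final paragraph---that the cited algebra proofs carry over \emph{mutatis mutandis} since the multiplicative structure is never used---is precisely the paper's entire justification; the sketches you provide for each part are more detail than the paper offers, and they are sound (the key device throughout, as you identify, is that flatness of $\gr M$ over the DVR $R$ amounts to torsion-freeness of each graded piece, which lets you lift divisibility by $\pi$ through the filtration).
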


We also record here a useful fact about tensor products that we will need later on. Recall that given two filtered $R$-modules $M$ and $N$, we can give $M\otimes_R N$ a tensor filtration, where $F_t(M\otimes_R N)$ is generated as an $R$-module by all elementary tensors $m\otimes n$ such that $m\in F_i M$ and $n\in F_jN$ where $i+j=t$.

\subsection{Lemma}\label{Schneiderspropn} If $M$ and $N$ are torsion-free filtered $R$-modules, then $(M\otimes_R N)_n=M_n\otimes_R N_n$ for all $n\geq 0$.

\begin{proof}
Since $M$ and $N$ are flat, we have an injective homomorphism $M_n\otimes_R N_n\to M\otimes_R N$. Identifying $M_n\otimes_R N_n$ with its image, we may assume that $M_n\otimes_R N_n$ and $(M\otimes_R N)_n$ both are submodules of $M\otimes_R N$. But now, for each $t\geq 0$, we have in $M\otimes_R N$ that $\pi^{tn}(a\otimes b)=\pi^{in}a\otimes \pi^{jn}b$, where $a\in F_iM$ and $b\in F_jN$ and $i+j=t$. Thus we see that $(M\otimes_R N)_n=M_n\otimes_R N_n$ since $t$ was arbitrary.
\end{proof}

Hence $M\mapsto M_n$ is a monoidal endofunctor of the category of torsion-free filtered $R$-modules.

\section{Completions of quantum groups}

\subsection{The functor $M\mapsto \wideparen{M_L}$}\label{Mcap} We begin by recalling the constructions from \cite[Section 6.7]{Wadsley2}, which were written in terms of $R$-algebras but extend identically to $R$-modules. If $M$ is a torsion-free filtered $R$-module, let $\widehat{M_{n, L}}:= \widehat{M_n}\otimes_R L$ for each $n\geq 0$. This is an $L$-Banach space, with unit ball $\widehat{M_n}$. To simplify notation, we write $\widehat{M_L}$ for $\widehat{M_{0, L}}$.

Now, we have a descending chain
$$
M=M_0\supset M_1\supset M_2\supset\cdots
$$
which induces an inverse system of $L$-Banach spaces and continuous linear maps
$$
\widehat{M_{L}}=\widehat{M_{0, L}}\leftarrow \widehat{M_{1, L}}\leftarrow\widehat{M_{2, L}}\leftarrow\cdots
$$
whose inverse limit we write as
$$
\wideparen{M_L}:=\varprojlim \widehat{M_{n, L}}.
$$
The maps $\wideparen{M_L}\rightarrow \widehat{M_{n, L}}$ induce continuous seminorms $\vert\vert\cdot\vert\vert_n$ on $\wideparen{M_L}$, such that the completion of $\wideparen{M_L}$ with respect to $\vert\vert\cdot\vert\vert_n$ is $\widehat{M_{n, L}}$. Hence $\wideparen{M_L}$ is an $L$-Fr\'echet space. Thus we have defined a functor $M\mapsto \wideparen{M_L}$ from torsion-free filtered $R$-modules to the category of $L$-Fr\'echet spaces.

We now apply the above construction to certain lattices in the quantum algebras we've defined.  Let $U$ denote the De Concini-Kac integral form of the quantum group, which here means the $R$-subalgebra of $U_q$ generated by the $E_{\alpha_i}$'s, $F_{\alpha_j}$'s and the $K$'s. We filter this algebra by setting $F_0U=R[K_\lambda: \lambda\in P]$ and giving each $E_\alpha$ and $F_\alpha$ degree 1. Then each deformation $U_n$ is the $R$-subalgebra of $U_q$ generated by the $\pi^nE_{\alpha_i}$'s, $\pi^nF_{\alpha_j}$'s and the $K$'s.

Note that by the definition of the Hopf algebra structure on $U_q$, we see that each $U_n$ is an $R$-Hopf subalgebra of $U_q$.

\begin{definition} We let $\Uqnhat:=\widehat{U_{n,L}}$ and $\Uqcap:=\wideparen{U_L}=\varprojlim \Uqnhat$ where we give $U$ the above filtration.
\end{definition}

We now consider a different integral form of $U_q$, namely Lusztig's integral form. It is the $R$-subalgebra $U_R^{\text{res}}$ of $U_q$ generated by $K_\lambda^{\pm 1}$ ($\lambda\in P$) and all $E_{\alpha_i}^{(r)}$ and $F_{\alpha_i}^{(r)}$ for $r\geq 1$ and $1\leq i\leq n$. It is an $R$-Hopf subalgebra of $U_q$. Moreover, by \cite[Theorem 6.7]{Lusztig1} $U_R^{\text{res}}$ has a triangular decomposition and a PBW type basis, so that it is free over $R$. Note that, since $U\subset U_R^{\text{res}}$, it immediately implies that $U$ is $\pi$-adically separated.

We now define $\A$ to be the $R$-subalgebra of $\Hom_R(U_R^{\text{res}}, R)$ generated by the matrix coefficients of all the $R$-finite free integrable $U_R^{\text{res}}$-modules of type $\boldsymbol{1}$ (see \cite[Section 1]{Andersen}). These representations are $R$-lattices inside finite dimensional $U_q$-modules of type $\boldsymbol{1}$ and are closed under taking tensor products and duals, hence $\A$ is an $R$-Hopf algebra and, after extending scalars, we see that the matrix coefficients generating $\A$ are in $\Oq$. This realises $\A$ as an $R$-Hopf subalgebra of $\Oq$. Note that $\Hom_R(U_R^{\text{res}}, R)$ is evidently $\pi$-adically separated hence so is $\A$: if $f\in \bigcap \pi^n \Hom_R(U_R^{\text{res}}, R)$ then $\im(f)\subseteq \bigcap \pi^n R=0$.

By inducing one dimensional representations from Borel subalgebras, we get lattices in all the fundamental representations $V(\varpi_i)$ which are integrable $U_R^{\text{res}}$-modules (see \cite[Section 3.3]{Andersen}). So we see that by choosing weight bases for these lattices, the generators $x_1, \ldots, x_r$ of $\Oq$ from \ref{prelimonOq} lie in $\A$. Moreover by \cite[Proposition \& Remark 12.4]{Andersen}, $\A$ is generated by $x_1, \ldots, x_r$ as an $R$-algebra. We now give the filtration to $\A$ given by assigning to each $x_i$ degree 1. So the $n$-th deformation is the $R$-subalgebra generated by all the $\pi^n x_i$.

\begin{definition} We let $\Oqcap:=\wideparen{(\A)_L}$ where we give $\A$ the above filtration.
\end{definition}

We will now show that $\Uqcap$ and $\Oqcap$ are Hopf algebras in a suitable sense, when working in the category of $L$-Fr\'echet spaces.

\subsection{Completed tensor products}\label{funal}

We recall here some facts about norms on tensor products and topological Hopf algebras. Recall from \cite[Section 17B]{NFA} that given two seminorms $p$ and $p'$ on the vector spaces $V$ and $W$ respectively, the \emph{tensor product seminorm} $p\otimes p'$ on $V\otimes_L W$ is defined in the following way: for $x\in V\otimes_L W$, we have
$$
p\otimes p'(x):=\inf\Big\{\max_{1\leq i\leq r}p(v_i)\cdot p'(w_i) : x=\sum_{i=1}^r v_i\otimes w_i, v_i\in V, w_i\in W\Big\}.
$$
When $V$ and $W$ are Banach spaces or more generally Fr\'echet spaces, the topology obtained via these tensor product (semi)norms agrees with the inductive and projective tensor product topologies on $V\otimes_L W$ (see \cite[Proposition 17.6]{NFA}). One can then construct the Hausdorff completion $\ten{V}{W}$ of this space, which  will be a Banach space (respectively Fr\'echet space). Moreover, if $V$ and $W$ are Hausdorff, so is $V\otimes_L W$.

Then $\widehat{\otimes}_L$ is a monoidal structure on the categories of $L$-Banach spaces and $L$-Fr\'echet spaces. Note that this construction is functorial, so that two continuous linear maps $f: V\to W$ and $g:X\to Y$ induce a continuous linear map $f\widehat{\otimes}g:\ten{V}{X}\to\ten{W}{Y}$.

\begin{definition} An \emph{$L$-Banach coalgebra}, respectively \emph{$L$-Fr\'echet coalgebra}, is a coalgebra object in the monoidal category of $L$-Banach spaces, respectively $L$-Fr\'echet spaces. In other words it is a Banach, respectively Fr\'echet, space $C$ equipped with continuous linear maps $\Delta: A\to \ten{A}{A}$ and $\varepsilon: A\to L$ which satisfy the usual axioms:
$$
(\Delta\widehat{\otimes}\id)\circ\Delta=(\id\widehat{\otimes}\Delta)\circ\Delta, \quad (\id\widehat{\otimes}\varepsilon)\circ\Delta=(\varepsilon\widehat{\otimes}\id)\circ\Delta=\id.
$$
A morphism of coalgebras is then a continuous linear map $f:C\to D$ such that $\varepsilon_D\circ f=\varepsilon_C$ and $(f\widehat{\otimes} \id)\circ \Delta_C=\Delta_D\circ f$.

An \emph{$L$-Banach Hopf algebra}, respectively \emph{$L$-Fr\'echet Hopf algebra}, is an $L$-Banach, respectively Fr\'echet, algebra $A$ which is also a coalgebra such that $\Delta$ and $\varepsilon$ are algebra homomorphisms, and furthemore $A$ is equipped with a continuous linear map $S:A\to A$, which satisfy the usual axioms for a Hopf algebra:
$$
m\circ (S\widehat{\otimes}\id)\circ\Delta=\iota\circ\varepsilon=m\circ (\id\widehat{\otimes}S)\circ\Delta
$$
where $m:\ten{A}{A}\to A$ and $\iota:L\to A$ denote the multiplication map and the unit in $A$ respectively. A morphism of Hopf algebras is then a continuous algebra homomorphism $f:A\to B$ which is also a morphism of coalgebras, such that $S_B\circ f=f\circ S_A$. 
\end{definition}

\subsection{A monoidal functor}\label{monoidal} We now aim to establish that some of the algebras we've constructed are Hopf algebra objects in the categories of $L$-Banach algebras. We will need the following elementary result:

\begin{lem} Let $M,N$ be two $R$-modules. Then we have canonical isomorphisms
$$
(M/\pi^aM)\otimes_R (N/\pi^a N)\cong (M/\pi^aM)\otimes_R N\cong M\otimes_R (N/\pi^aN)\cong (M\otimes_R N)/\pi^a(M\otimes_R N)
$$
for any $a\geq 1$.
\end{lem}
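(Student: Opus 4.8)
The plan is to reduce the whole statement to the single natural isomorphism $M\otimes_R(R/\pi^aR)\cong M/\pi^aM$, together with associativity and symmetry of $\otimes_R$ over the commutative ring $R$. First I would recall why $M\otimes_R(R/\pi^aR)\cong M/\pi^aM$ canonically: applying the right exact functor $M\otimes_R-$ to the exact sequence $0\to\pi^aR\to R\to R/\pi^aR\to 0$ produces an exact sequence $M\otimes_R\pi^aR\to M\to M\otimes_R(R/\pi^aR)\to 0$ in which the first map has image $\pi^aM$, giving $M\otimes_R(R/\pi^aR)\cong M/\pi^aM$ via $m\otimes(r+\pi^aR)\mapsto rm+\pi^aM$. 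The same holds with $M$ replaced by $N$ and by $M\otimes_R N$.

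Next I would assemble the chain of isomorphisms. Using associativity and symmetry of $\otimes_R$,
$$(M/\pi^aM)\otimes_R N\cong (M\otimes_R R/\pi^aR)\otimes_R N\cong M\otimes_R(R/\pi^aR\otimes_R N)\cong M\otimes_R(N/\pi^aN),$$
which is the middle isomorphism. Regrouping the middle term instead as $(M\otimes_R N)\otimes_R R/\pi^aR$ and applying the first step with $M\otimes_R N$ in place of $M$ gives $(M/\pi^aM)\otimes_R N\cong (M\otimes_R N)/\pi^a(M\otimes_R N)$; composing with the identification just obtained yields the last isomorphism of the statement. Finally, for the first isomorphism one computes
$$(M/\pi^aM)\otimes_R(N/\pi^aN)\cong M\otimes_R R/\pi^aR\otimes_R R/\pi^aR\otimes_R N,$$
and invokes the standard identity $R/I\otimes_R R/J\cong R/(I+J)$ for ideals of a commutative ring, which here gives $R/\pi^aR\otimes_R R/\pi^aR\cong R/\pi^aR$, so the right-hand side collapses to $M\otimes_R R/\pi^aR\otimes_R N\cong(M/\pi^aM)\otimes_R N$.

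Finally I would observe that every isomorphism used --- right exactness, the associativity and commutativity constraints, and $R/\pi^aR\otimes_R R/\pi^aR\cong R/\pi^aR$ --- is canonical and natural in $M$ and $N$, so the composites are as well; at the level of elementary tensors they are all induced by $m\otimes n\mapsto m\otimes n$, e.g. the last isomorphism sends $(m+\pi^aM)\otimes n$ to $(m\otimes n)+\pi^a(M\otimes_R N)$. I do not expect any genuine obstacle here: the only thing requiring care is the routine verification that each descended map is well defined, i.e. that multiplying one tensor factor by $\pi^a$ lands inside $\pi^a$ times the whole tensor product, together with keeping track of the associators so that the four displayed modules are identified by these obvious maps and not by some twist.
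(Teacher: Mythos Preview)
Your proof is correct. The paper's argument is close in spirit but organized slightly differently: rather than routing everything through the identification $M/\pi^aM\cong M\otimes_R R/\pi^aR$ and then using associativity and symmetry of $\otimes_R$, the paper applies right exactness of $-\otimes_R N$ directly to the sequence $0\to\pi^aM\to M\to M/\pi^aM\to 0$ (and symmetrically with $M$ and $N$ swapped) to obtain the last two isomorphisms, and then tensors $0\to\pi^aN\to N\to N/\pi^aN\to 0$ with $M/\pi^aM$, noting that the leftmost map has zero image, to get the first isomorphism. Both arguments rest on exactly the same ingredient, right exactness of the tensor product; yours just inserts the intermediate object $R/\pi^aR$ and trades a direct computation for an appeal to the associativity/commutativity constraints and the identity $R/\pi^aR\otimes_R R/\pi^aR\cong R/\pi^aR$.
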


\begin{proof} By tensoring the short exact sequence
$$
0\to \pi^a M\to M\to M/\pi^a M\to 0
$$
with $N$, we obtain an exact sequence
$$
\pi^a M\otimes_R N\to M\otimes_R N\to M/\pi^a M\otimes_R N\to 0.
$$
Thus, since the image of $\pi^aM\otimes_R N$ in $M\otimes_R N$ equals $\pi^a(M\otimes_R N)$, we see that
$$
(M/\pi^a M)\otimes_R N\cong (M\otimes_R N)/\pi^a(M\otimes_R N).
$$
Similarly $M\otimes_R (N/\pi^aN)\cong (M\otimes_R N)/\pi^a(M\otimes_R N)$ by interchanging $M$ and $N$. Finally, if we tensor the short exact sequence
$$
0\to \pi^a N\to N\to N/\pi^a N\to 0
$$
with $M/\pi^aM$, we obtain an exact sequence
$$
(M/\pi^aM)\otimes_R\pi^a N\to (M/\pi^aM)\otimes_R N\to (M/\pi^aM)\otimes_R (N/\pi^a N)\to 0
$$
where the left hand side map clearly has image 0. Thus we get the required isomorphism.
\end{proof}

\begin{prop} Let $M$ and $N$ be torsion-free $R$-modules. Then there is a canonical isomorphism of $L$-Banach spaces
$$
\ten{\widehat{M_L}}{\widehat{N_L}}\cong \widehat{(M\otimes_R N)_L}.
$$
Moreover when $M$ and $N$ are $R$-algebras, this map is an algebra isomorphism. In particular, $M\mapsto\widehat{M_L}$ is a monoidal functor between the category of torsion-free $R$-modules and the category of $L$-Banach spaces.
\end{prop}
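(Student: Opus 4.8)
The plan is to produce continuous $R$-linear maps of operator norm at most $1$ in both directions, extend scalars to $L$, and show that the two maps are mutually inverse; a pair of mutually inverse norm-$\leq 1$ maps of Banach spaces is automatically a pair of isometric isomorphisms. Everything hinges on the preceding Lemma, which lets one commute $\pi$-adic completion past $\otimes_R$.

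For the forward map, take compatible systems $x=(x_a)_{a\geq 1}\in\widehat{M}$ and $y=(y_a)_{a\geq 1}\in\widehat{N}$. Under the canonical isomorphisms $(M/\pi^a M)\otimes_R(N/\pi^a N)\cong(M\otimes_R N)/\pi^a(M\otimes_R N)$ of the Lemma, the family $(x_a\otimes y_a)_a$ is compatible and hence defines an element of $\widehat{M\otimes_R N}$; this gives an $R$-bilinear map $\widehat{M}\times\widehat{N}\to\widehat{M\otimes_R N}$, continuous for the $\pi$-adic topologies and restricting to the obvious map on $M\times N$. Inverting $\pi$ yields a continuous bilinear map $\widehat{M_L}\times\widehat{N_L}\to\widehat{(M\otimes_R N)_L}$ that sends unit ball to unit ball, so by the universal property of the projective tensor product and its Hausdorff completion (\cite[Proposition 17.6]{NFA}) it factors through a continuous linear map $\Phi\colon\ten{\widehat{M_L}}{\widehat{N_L}}\to\widehat{(M\otimes_R N)_L}$ of norm $\leq 1$, characterized by $m\otimes n\mapsto m\otimes n$.

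For the reverse map, consider the $R$-bilinear map $M\times N\to\ten{\widehat{M_L}}{\widehat{N_L}}$ sending $(m,n)$ to the elementary tensor of the images of $m$ and $n$ in $\widehat{M_L}$ and $\widehat{N_L}$; its values lie in the unit ball of the target. Since the unit ball of any Banach space over the discretely valued field $L$ is $\pi$-adically separated and complete, the induced map $M\otimes_R N\to\ten{\widehat{M_L}}{\widehat{N_L}}$ is $\pi$-adically continuous and extends uniquely to $\widehat{M\otimes_R N}$; inverting $\pi$ gives a norm-$\leq 1$ map $\Psi\colon\widehat{(M\otimes_R N)_L}\to\ten{\widehat{M_L}}{\widehat{N_L}}$. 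Both $\Phi$ and $\Psi$ fix every elementary tensor $m\otimes n$ with $m\in M$, $n\in N$; since $M\otimes_R L$ is dense in $\widehat{M_L}$ and similarly for $N$, the $L$-spans of such tensors are dense in both $\ten{\widehat{M_L}}{\widehat{N_L}}$ and $\widehat{(M\otimes_R N)_L}$, whence $\Phi\Psi$ and $\Psi\Phi$ are the identity and the two maps are isometric isomorphisms. Here torsion-freeness of $M$ and $N$ is used only to make the symbols $\widehat{(-)_L}$ legitimate, via the facts that $M\otimes_R N$, $\widehat M$ and $\widehat N$ remain torsion-free over the DVR $R$.

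When $M$ and $N$ are $R$-algebras, so is $M\otimes_R N$, the space $\ten{\widehat{M_L}}{\widehat{N_L}}$ carries the completed tensor-product algebra structure, and $\Phi$ is multiplicative because it is so on the dense subalgebra generated by elementary tensors. Monoidality then reduces to the observations that the unit $R$ is sent to $\widehat{R_L}=L$ (since $\widehat R=R$) and that the associativity and unit coherence isomorphisms for $\widehat{\otimes}_L$ transport those for $\otimes_R$ --- each relevant diagram commutes on elementary tensors, hence everywhere by uniqueness of continuous extensions. I expect the only delicate bookkeeping to be in the forward construction: verifying that $(x_a\otimes y_a)_a$ is genuinely a compatible system and that $\Phi$ does not depend on the chosen representative, which is exactly where the Lemma is invoked; the remaining steps are standard density-and-completeness arguments of non-archimedean functional analysis.
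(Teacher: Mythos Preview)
Your argument is correct and complete, but it takes a somewhat different route from the paper's. The paper does not construct maps in both directions and check they are mutually inverse. Instead, it argues in one stroke that both sides are completions of the \emph{same} normed space. Concretely: $\widehat{M_L}\otimes_L\widehat{N_L}\cong(\widehat{M}\otimes_R\widehat{N})\otimes_R L$, and the Lemma gives
\[
(\widehat{M}\otimes_R\widehat{N})/\pi^a(\widehat{M}\otimes_R\widehat{N})
\;\cong\;
(M\otimes_R N)/\pi^a(M\otimes_R N)
\]
for all $a$, so the $\pi$-adic completion of $\widehat{M}\otimes_R\widehat{N}$ is $\widehat{M\otimes_R N}$. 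It then invokes \cite[Lemma 17.2]{NFA} to identify the $\pi$-adic topology on $\widehat{M}\otimes_R\widehat{N}$ (viewed inside $\widehat{M_L}\otimes_L\widehat{N_L}$) with the tensor product seminorm topology, so that $\widehat{(M\otimes_R N)_L}$ is literally the Hausdorff completion defining $\ten{\widehat{M_L}}{\widehat{N_L}}$.

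Your approach trades that single citation for an explicit two-sided construction plus a density argument; this is slightly longer but more self-contained, and the isometry comes out for free from the norm-$\leq 1$ bounds. The paper's approach is shorter and makes the naturality and the algebra-isomorphism claim essentially immediate, since one never leaves the world of canonical completion isomorphisms.
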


\begin{proof} Note that $\widehat{M_L}\otimes_L \widehat{N_ L}\cong (\widehat{M}\otimes_R \widehat{N})\otimes_R L$ and, by the Lemma, we have natural isomorphisms
\begin{align*}
(\widehat{M}\otimes_R \widehat{N})/\pi^a(\widehat{M}\otimes_R \widehat{N})&\cong \widehat{M}/\pi^a \widehat{M}\otimes_R   \widehat{N}/\pi^a \widehat{N}\\
& \cong M/\pi^aM\otimes_R N/\pi^a N\\
& \cong(M\otimes_R N)/\pi^a(M\otimes_R N)
\end{align*}
for all $a\geq 1$. Thus we see that $\widehat{M\otimes_R N}$ is canonically isomorphic to the $\pi$-adic completion of $\widehat{M}\otimes_R \widehat{N}$. Hence we see that $\widehat{(M\otimes_R N)_L}$ is the completion of $\widehat{M_L}\otimes_L \widehat{N_ L}$ with respect to the $\pi$-adic topology on $\widehat{M}\otimes_R \widehat{N}$. By \cite[Lemma 17.2]{NFA}, the latter topology is the same as the tensor product topology on $\widehat{M_L}\otimes_L \widehat{N_ L}$, and so we get the result.

In the case where $M=A$ and $N=B$ are algebras, it is clear from the above that the isomorphism preserves the algebra structure.
\end{proof}

We introduce the following notation: write $\Oqhat:=\widehat{(\A)_L}$.

\begin{cor} The Banach algebras $\Oqhat$ and $\Uqnhat$ $(n\geq 0)$ are $L$-Banach Hopf algebras.
\end{cor}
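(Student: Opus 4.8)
The plan is to transport the Hopf structures along the monoidal functor $F := \widehat{(-)_L}$ of the preceding Proposition. First I would recall from Section~\ref{Mcap} that $\A$ is an $R$-Hopf subalgebra of $\Oq$ and that each $U_n$ is an $R$-Hopf subalgebra of $U_q$, so that each of these carries an $R$-Hopf algebra structure whose structure morphisms $m, \iota, \Delta, \varepsilon, S$ are $R$-linear; in particular $\Delta(U_n) \subseteq U_n \otimes_R U_n$ and $\varepsilon(U_n) \subseteq R$, and similarly for $\A$. Being $R$-submodules of the $L$-vector spaces $\Oq$ and $U_q$, both $\A$ and $U_n$ are torsion-free, so $F$ applies to them, and by construction $F(\A) = \Oqhat$ and $F(U_n) = \Uqnhat$.

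The key point is that $F$ is a \emph{strong} monoidal functor from torsion-free $R$-modules to $L$-Banach spaces: the Proposition supplies a natural isomorphism $\ten{F(M)}{F(N)} \cong F(M \otimes_R N)$, which is moreover an algebra isomorphism when $M$ and $N$ are algebras, while $F(R) = \widehat{R}\otimes_R L = L$ is the monoidal unit of $L$-Banach spaces. I would then check, by unwinding the construction of this isomorphism in the proof of the Proposition (on underlying $R$-modules it is the evident identification), that it is compatible with the associativity, unit, and symmetry constraints of the two monoidal categories. Granting this, since a strong monoidal functor carries Hopf algebra objects to Hopf algebra objects --- every defining identity of a Hopf algebra being a commutative diagram built only from $m, \iota, \Delta, \varepsilon, S$ and the monoidal structure, including the antipode axiom $m\circ(S\widehat{\otimes}\id)\circ\Delta = \iota\circ\varepsilon = m\circ(\id\widehat{\otimes}S)\circ\Delta$ --- applying $F$ to the $R$-Hopf algebras $\A$ and $U_n$ produces Hopf algebra objects in the monoidal category of $L$-Banach spaces. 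Since the morphisms of that category are precisely the continuous linear maps, the induced comultiplication, counit, and antipode are automatically continuous, $F(\Delta)$ and $F(\varepsilon)$ are algebra homomorphisms (as $F$ sends algebra objects and algebra maps to algebra objects and algebra maps), and the multiplication is the algebra map already furnished by the Proposition; hence $\Oqhat$ and $\Uqnhat$ are $L$-Banach Hopf algebras.

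The one step requiring genuine care --- and the main, if still essentially formal, obstacle --- is the compatibility of the structure isomorphism of $F$ with iterated tensor products. For instance, coassociativity of $F(\Delta)$ requires that the natural isomorphisms $(\ten{F(M)}{F(N)})\widehat{\otimes}_L F(P) \cong F((M\otimes_R N)\otimes_R P)$ and $F(M)\widehat{\otimes}_L(\ten{F(N)}{F(P)}) \cong F(M\otimes_R(N\otimes_R P))$ agree once source and target are identified via the associators on the two sides. I expect this to follow by unwinding the isomorphism from the Proposition --- it is induced by the canonical identifications $M/\pi^a M \otimes_R N/\pi^a N \cong (M\otimes_R N)/\pi^a(M\otimes_R N)$ of the Lemma preceding it --- and observing that these are manifestly associative, so that no real difficulty arises.
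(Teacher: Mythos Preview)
Your proposal is correct and follows essentially the same approach as the paper: the paper's proof is a single sentence invoking the preceding Proposition and the fact that monoidal functors preserve Hopf algebra objects, and your argument is a careful unpacking of exactly that sentence. Your added discussion of why the structure isomorphism is compatible with associativity and unit constraints is a worthwhile elaboration that the paper leaves implicit.
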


\begin{proof}
This follows immediately from the Proposition since monoidal functors preserve Hopf algebra objects.
\end{proof}

\begin{example} When $G=\text{SL}_2$ i.e when $\g=\mathfrak{sl_2}$, we can give an explicit description of $\Oqhat$. In that case the only fundamental representation of $U_q$ is two dimensional with basis $v_1, v_2$ such that
$$
Ev_1=0=Fv_2\quad Ev_2=v_1\quad Fv_1=v_2\quad Kv_1=q^{\frac{1}{2}}v_1\quad Kv_2=q^{\frac{-1}{2}}v_2.
$$
The matrix coefficients with respect to that basis are denoted by $x_{11}, x_{12}, x_{21}, x_{22}$ and they generate $\Oq$. As is customary we denote these generators by $a, b, c$ and $d$ respectively. The complete set of relations for $\Oq$ is given by
\begin{align*}
& ab=qba, \quad ac=qca, \quad bc=cb,\quad bd=qdb,\\
& cd=qdc,\quad ad-da=(q-q^{-1})bc, \quad ad-qbc=1.
\end{align*}
(see \cite[Theorem I.7.16]{BroGoo02}).

So in this case $\A$ is the $R$-algebra generated by $a, b, c, d$. By the proof of \cite[Lemma 1.1]{DeConcini94} we see that $\A$ is a free $R$-module and
$$
\Ss=\{a^lb^mc^s, b^mc^sd^t : l,m,s\geq 0 \text{ and } t>0\}
$$
is an $R$-basis of $\A$. Concretely, one can identify $\Oqhat$ as the ring
\begin{align*}
\Oqhat=\Big\{\sum_{l,m,s\geq 0}\lambda_{lms}a^lb^mc^s+\sum_{\substack{p,t\geq 0\\ r>0}}\mu_{ptr}b^pc^td^r : &\abs{\lambda_{lms}}\rightarrow 0 \text{ as } l+m+s\rightarrow\infty\\
&\text{and } \abs{\mu_{ptr}}\rightarrow 0 \text{ as } p+t+r\rightarrow\infty\Big\}.
\end{align*}
This is an $L$-Banach algebra with norm
$$
\norm{\sum\lambda_{lms}a^lb^mc^s+\sum\mu_{ptr}b^pc^td^r}:=\sup_{l,m,s,p,t,r}\{\lambda_{lms}, \mu_{ptr}\}.
$$
\end{example}

We will later give an explicit description of $\Uqnhat$ for $n$ large enough.

\subsection{Hopf algebra structure of $\wideparen{U_q}$}\label{Uqhopf} We recall a few standard facts about Fr\'echet spaces (see e.g. \cite[Section 3]{SchTeit03}). Let $V$ be a Fr\'echet space whose topology is given by a family $p_1\leq p_2\leq\ldots\leq p_n\leq\ldots$ of seminorms. For each $n$ the seminorm $p_n$ induces a norm on the quotient $V/\{v\in V: p_n(v)=0\}$. The completion of this normed space is a Banach space, which we denote by $V_{p_n}$. The identity on $V$ induces continuous linear maps $V_{p_{n+1}}\to V_{p_n}$ for all $n$. Then the natural map
$$
V\to \varprojlim V_{p_n}
$$
is an isomorphism of locally convex $L$-spaces. When $V$ is a Fr\'echet algebra, and all the seminorms $p_n$ are algebra seminorms, then this map is an $L$-algebra isomorphism.

\begin{prop}\emph{(\cite[Proposition 1.1.29]{Emerton})} Let $V$ and $W$ be $L$-Fr\'echet spaces whose topologes are defined by families of seminorms $p_1\leq p_2\leq\ldots\leq p_n\leq\ldots$ and $p'_1\leq p'_2\leq\ldots\leq p'_n\leq\ldots$ respectively. Then we have a canonical isomorphism of $L$-Fr\'echet spaces
$$
V\widehat{\otimes}_L W\cong\varprojlim V_{p_n}\widehat{\otimes}_L W_{p'_n}.
$$
When $V$ and $W$ are Fr\'echet algebras and all the seminorms are algebra seminorms, this is an algebra isomorphism.
\end{prop}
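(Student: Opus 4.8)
The plan is to reduce the Fr\'echet-level statement to the Banach-level statement already available, namely the Proposition in Section~\ref{monoidal} (the monoidal property of $M\mapsto\widehat{M_L}$), applied levelwise, together with the fact that completed tensor products of Fr\'echet spaces commute with the relevant inverse limits. First I would write $V\cong\varprojlim V_{p_n}$ and $W\cong\varprojlim W_{p'_n}$ as in the recollection preceding the statement, where each $V_{p_n}$, $W_{p'_n}$ is an $L$-Banach space and the transition maps $V_{p_{n+1}}\to V_{p_n}$, $W_{p'_{n+1}}\to W_{p'_n}$ are continuous and linear. Since $\widehat{\otimes}_L$ is functorial on Banach spaces, the maps $V_{p_{n+1}}\widehat{\otimes}_L W_{p'_{n+1}}\to V_{p_n}\widehat{\otimes}_L W_{p'_n}$ assemble into an inverse system of $L$-Banach spaces, so the right-hand side $\varprojlim V_{p_n}\widehat{\otimes}_L W_{p'_n}$ makes sense as an $L$-Fr\'echet space.

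Next I would construct the comparison map. For each $n$ the canonical continuous linear maps $V\to V_{p_n}$ and $W\to W_{p'_n}$ induce, by functoriality of $\widehat{\otimes}_L$, a continuous linear map $V\widehat{\otimes}_L W\to V_{p_n}\widehat{\otimes}_L W_{p'_n}$; these are compatible with the transition maps and so yield a continuous linear map $V\widehat{\otimes}_L W\to\varprojlim V_{p_n}\widehat{\otimes}_L W_{p'_n}$. The substance of the proof is to show this map is a topological isomorphism. On the algebraic level, $V\otimes_L W$ is dense in the source; for the target, one checks that $V\otimes_L W$ maps to a dense subspace by a cofinality/diagonal argument and that the pullback of the projective seminorms from the stages $V_{p_n}\widehat{\otimes}_L W_{p'_n}$ to $V\otimes_L W$ is exactly the family of tensor-product seminorms $p_n\otimes p'_n$ (this is where one invokes the definition of $p\otimes p'$ from Section~\ref{funal} and the fact that, for $x=\sum v_i\otimes w_i$, the image in stage $n$ has projective seminorm $\le\max_i p_n(v_i)p'_n(w_i)$, with the reverse inequality following from the universal property of the projective tensor seminorm on Banach spaces). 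Since the family $\{p_n\otimes p'_n\}$ defines the Fr\'echet topology on $V\widehat{\otimes}_L W$, the map is a homeomorphism onto its image, and a density argument identifies the image with the whole inverse limit. For the algebra statement, when all $p_n$, $p'_n$ are submultiplicative the maps $V\to V_{p_n}$ are algebra homomorphisms, hence so are $V_{p_n}\widehat{\otimes}_L W_{p'_n}$ (using the Banach case, in which $\widehat\otimes$ of algebra maps is an algebra map) and the inverse limit; multiplicativity of the comparison map then follows because it is an algebra map on the dense subspace $V\otimes_L W$ and both sides are topological algebras.

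The main obstacle I expect is the density/surjectivity step: an element of $\varprojlim V_{p_n}\widehat{\otimes}_L W_{p'_n}$ is a coherent sequence of elements living in different Banach completions, and one must produce a genuine element of $V\widehat{\otimes}_L W$ mapping to it. The standard remedy is to use that the transition maps already have dense image at each finite stage (because $V\to V_{p_n}$ does), so one can lift level by level and take a suitable ``telescoping'' limit, controlling the error at stage $n$ by the $(p_n\otimes p'_n)$-seminorm; completeness of $V\widehat{\otimes}_L W$ then yields the limit. I would remark that this is entirely parallel to the proof of the Banach-level Proposition in Section~\ref{monoidal} and to \cite[Proposition~1.1.29]{Emerton}, so the details can be cited rather than reproduced in full. \qed
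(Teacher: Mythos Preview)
The paper does not give its own proof of this proposition; it is simply cited from \cite[Proposition 1.1.29]{Emerton} and used as a black box. So there is no paper-side argument to compare against.

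Your outline is essentially the standard proof and is correct, but two remarks are in order. First, your reference to ``the Proposition in Section~\ref{monoidal}'' is misplaced: that result is the special statement $\widehat{M_L}\widehat{\otimes}_L\widehat{N_L}\cong\widehat{(M\otimes_R N)_L}$ for $\pi$-adic completions of torsion-free $R$-modules, not the general Banach-level input you need here. What you actually use is the simpler fact that for Banach spaces the completed tensor product is the completion with respect to the tensor-product norm, together with the observation (already recorded in Section~\ref{funal} via \cite[Proposition 17.6]{NFA}) that the family $\{p_n\otimes p'_n\}$ defines the topology on $V\widehat{\otimes}_L W$. Second, once this identification is made, the ``main obstacle'' you flag disappears: since $V\otimes_L W$ has dense image in each $V_{p_n}\widehat{\otimes}_L W_{p'_n}$ and the seminorm it inherits from the $n$-th stage is exactly $p_n\otimes p'_n$, both sides are completions of $V\otimes_L W$ with respect to the same countable family of seminorms, and the general fact $V\cong\varprojlim V_{p_n}$ recalled just before the statement applies verbatim. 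No telescoping lift is required.
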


Using this result, we can prove:

\begin{thm} The functor $M\mapsto \wideparen{M_L}$ on the category of torsion-free filtered $R$-modules is monoidal. In particular the Fr\'echet algebra $\wideparen{U_q}$ is an $L$-Fr\'echet Hopf algebra.
\end{thm}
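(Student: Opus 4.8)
The plan is to build the monoidal structure on $M\mapsto\wideparen{M_L}$ one Banach level at a time and then assemble it by passing to the inverse limit. First I would fix torsion-free filtered $R$-modules $M,N$ and note that $M\otimes_R N$ equipped with the tensor filtration is again torsion-free (as $M$ and $N$ are flat over the DVR $R$), so the functor is defined on it. By Lemma \ref{Schneiderspropn} we have $(M\otimes_R N)_n=M_n\otimes_R N_n$ for every $n\geq 0$, so applying the monoidality of $M\mapsto\widehat{M_L}$ established above to the torsion-free modules $M_n,N_n$ gives a natural isomorphism
$$
\widehat{(M\otimes_R N)_{n, L}}=\widehat{(M_n\otimes_R N_n)_L}\cong\ten{\widehat{(M_n)_L}}{\widehat{(N_n)_L}}=\ten{\widehat{M_{n, L}}}{\widehat{N_{n, L}}},
$$
which is an isomorphism of $L$-Banach algebras when $M$ and $N$ carry filtered $R$-algebra structures. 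These isomorphisms are compatible with the transition maps in $n$, since on both sides all transition maps are induced by the inclusions $M_{n+1}\hookrightarrow M_n$ and $N_{n+1}\hookrightarrow N_n$.

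Next I would take the inverse limit over $n$. The defining seminorm $\|\cdot\|_n$ on $\wideparen{M_L}$ has associated Banach space $\widehat{M_{n, L}}$ by construction, and the family $(\|\cdot\|_n)_n$ is increasing because $M_{n+1}\subseteq M_n$; hence \cite[Proposition 1.1.29]{Emerton} applies and, combined with the previous step, yields
$$
\ten{\wideparen{M_L}}{\wideparen{N_L}}\cong\varprojlim\Bigl(\ten{\widehat{M_{n, L}}}{\widehat{N_{n, L}}}\Bigr)\cong\varprojlim\widehat{(M\otimes_R N)_{n, L}}=\wideparen{(M\otimes_R N)_L}.
$$
The unit object is $R$ with the trivial filtration $F_iR=R$, for which $R_n=R$ and so $\wideparen{R_L}=L$. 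Naturality of all the isomorphisms above and the coherence (associativity and unit) axioms are inherited from the corresponding facts for $M\mapsto\widehat{M_L}$ at each finite level, using the functoriality of $\varprojlim$ and the naturality in \cite[Proposition 1.1.29]{Emerton}; when the modules are filtered $R$-algebras the isomorphisms are algebra isomorphisms. This shows $M\mapsto\wideparen{M_L}$ is monoidal.

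It then remains to feed $U$ through this functor. By the definition of the Hopf structure on $U_q$ each $U_n$ is an $R$-Hopf subalgebra of $U_q$, so the maps $\Delta\colon U\to U\otimes_R U$, $\varepsilon\colon U\to R$ and $S\colon U\to U$ are morphisms of filtered $R$-modules for the filtration with $F_0U=R[K_\lambda]$ and each $E_\alpha,F_\alpha$ of degree $1$ --- for instance $\Delta(E_{\alpha_i})=E_{\alpha_i}\otimes 1+K_{\alpha_i}\otimes E_{\alpha_i}\in F_1(U\otimes_R U)$, and $S$ sends each degree-$1$ generator to a degree-$1$ element. Applying $M\mapsto\wideparen{M_L}$ and identifying $\wideparen{(U\otimes_R U)_L}$ with $\ten{\wideparen{U_L}}{\wideparen{U_L}}=\ten{\Uqcap}{\Uqcap}$ via the monoidal isomorphism, the Hopf algebra axioms for $U$ (which are equalities of morphisms of filtered $R$-modules) become the Hopf algebra axioms for $\Uqcap$ in the category of $L$-Fr\'echet spaces, with $\Delta,\varepsilon,S$ automatically continuous as images of morphisms under the functor. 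Equivalently one can note $\wideparen{U_q}=\varprojlim\Uqnhat$ is an inverse limit along Hopf-algebra maps of the $L$-Banach Hopf algebras $\Uqnhat$ (the Corollary above) and conclude by the same limit--tensor interchange.

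The only genuinely non-formal input I expect to need is the interchange of $\varprojlim$ with $\widehat\otimes_L$, which is precisely \cite[Proposition 1.1.29]{Emerton}; everything else reduces, via Lemma \ref{Schneiderspropn} and the already-established monoidality of $M\mapsto\widehat{M_L}$, to routine bookkeeping --- identifying the Banach spaces attached to the seminorms $\|\cdot\|_n$ with the $\widehat{M_{n, L}}$, and checking that $\Delta,\varepsilon,S$ preserve the height-type filtration on $U$.
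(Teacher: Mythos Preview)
Your proposal is correct and follows essentially the same route as the paper: use Emerton's interchange of $\varprojlim$ with $\widehat\otimes_L$ together with Lemma \ref{Schneiderspropn} and the monoidality of $M\mapsto\widehat{M_L}$ at each Banach level, then deduce the Hopf-algebra statement from the fact that $\Delta,\varepsilon,S$ are filtered maps on $U$. The paper compresses the coherence and naturality checks into the single phrase ``monoidal functors preserve Hopf algebra objects'', whereas you spell these out, but the substance is identical.
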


\begin{proof} From the above Proposition we see that for any two torsion-free filtered $R$-modules $M$ and $N$, there is a canonical isomorphism of $L$-Fr\'echet spaces
$$
\wideparen{M_L}\widehat{\otimes}_L \wideparen{N_L}\cong \varprojlim \widehat{M_{n,L}}\widehat{\otimes}_L \widehat{N_{n,L}}
$$
which is an algebra isomorphism when $M$ and $N$ are $R$-algebras. Now, the first result follows by Proposition \ref{monoidal} and Lemma \ref{Schneiderspropn}. The fact that $\wideparen{U_q}$ is an $L$-Fr\'echet Hopf algebra now follows because monoidal functors preserve Hopf algebra objects, and $U$ is a filtered Hopf algebra, meaning that $\Delta$, $\varepsilon$ and $S$ are filtered maps (where for $\varepsilon$ we give $R$ the trivial filtration).
\end{proof}

\subsection{Hopf algebra structure of $\Oqcap$}\label{OqHopf}

We know that $\A$ is a Hopf algebra, however the corresponding Hopf algebra maps are not all filtered $R$-module homomorphisms on $\A$, so we can't immediately deduce from our previous methods that $\wideparen{\Oq}$ has a Hopf algebra structure. On the other hand, we see from equation (\ref{Hopfrelations}) in \ref{prelimonOq} that the counit restricted to $\A$ is a filtered $R$-map $\A\to R$ and so gives rise to a map $\wideparen{\epsilon}:\wideparen{\Oq}\to L$. For the antipode and comultiplication, we can ``shift'' the deformations to make things work.

Indeed, from (\ref{Hopfrelations}) we have $\Delta(F_n\A)\subseteq F_n\A\otimes_RF_n\A$ for all $n\geq 0$. But then it follows that for all $n\geq 0$ we have
$$
\Delta((\A)_{2n})\subseteq (\A)_n\otimes_R(\A)_n.
$$
Taking $\pi$-adic completions we see that $\Delta$ induces maps
$$
\widehat{\Delta}_n: \widehat{(\A)_{2n,L}}\to \ten{\widehat{(\A)_{n,L}}}{\widehat{(\A)_{n,L}}}.
$$
Taking inverse limits we obtain a map
$$
\wideparen{\Delta}: \Oqcap\to \ten{\Oqcap}{\Oqcap}
$$
We now move to the antipode. It's not necessarily clear that it's a filtered map on $\A$, so we let
$$
d=\max_{1\leq i\leq r}\{\min\{t: S(x_i)\in F_t\A\}\}.
$$ 
It follows that $S((\A)_{nd})\subseteq (\A)_n$ for all $n\geq 0$. Taking $\pi$-adic completions we see that $S$ induces maps
$$
\widehat{S}_n: \widehat{(\A)_{nd,L}}\to \widehat{(\A)_{n,L}}.
$$
Taking inverse limits we obtain a map
$$
\wideparen{S}: \Oqcap\to \Oqcap.
$$
We see that the maps $\wideparen{\epsilon}$, $\wideparen{S}$ and $\wideparen{\Delta}$ make $\wideparen{\Oq}$ into a Hopf algebra, as desired, since all the Hopf algebra relations are satisfied on the dense subspace $\Oq$.

\begin{remark} Note that the above shifts really are to be expected. Indeed, for example in the case $G=\text{SL}_n(L)$, the algebra we construct is meant to be a quantum analogue of the global sections of the structure sheaf of the analytification of $G$. If $\Of$ denotes the coordinate algebra of SL$_n(R)$, this ring of global sections is given by the inverse limit of the Banach algebras $\widehat{\Of_{m,L}}$, which correspond to the functions on $G$ which are analytic on SL$_n(\pi^{-m}R)$. For $m>0$, since that subset of $G$ is not a subgroup, the algebra $\widehat{\Of_{m,L}}$ is not a Hopf algebra. On the other hand matrix multiplication defines a map
$$
\text{SL}_n(\pi^{-m}R)\times \text{SL}_n(\pi^{-m}R)\to \text{SL}_n(\pi^{-2m}R)
$$
which induces a map $\Delta:\widehat{\Of_{2m,L}}\to\ten{\widehat{\Of_{m,L}}}{\widehat{\Of_{m,L}}}$. Our quantum situation very much mirrors this.
\end{remark}

\section{Fr\'echet--Stein structures}\label{FrechetStein}

\subsection{Fr\'echet--Stein algebras}\label{analytificationfunctor} We start with a definition.

\begin{definition}
Following \cite[Section 3]{SchTeit03} we say that an $L$-algebra $\U$ is \emph{$L$-Fr\'echet-Stein} if there is a tower $\U_0\leftarrow \U_1 \leftarrow \U_2\leftarrow\cdots$ of Noetherian $L$-Banach algebras such that

\begin{enumerate} 
\item $\U_n$ is a flat $\U_{n+1}$-module for all $n\geq 0$,
\item $\U\cong \varprojlim \U_n$.
\end{enumerate}

\end{definition}

Our main aim is going to prove that the algebras $\Oqcap$ and $\Uqcap$ are Fr\'echet-Stein. The main difficulty in proving that an algebra satisfies the above definition is to show that the flatness condition in (i) holds. To do this we rely on two known results. The first one, due to Emerton, is the following:

\begin{prop}\emph{(\cite[Proposition 5.3.10]{Emerton})}
Suppose that $A$ is a left Noetherian $R$-algebra, $\pi$-adically separated, $\pi$-torsion free, and suppose that $B$ is an $R$-subalgebra of $A_L$ which contains $A$. Suppose $B$ is equipped with an exhaustive $R$-algebra filtration $(F_\cdot)$ satisfying $F_0B=A$ and such that $\gr B$ is finitely generated as an $A$-algebra by central elements. Then $\widehat{A_L}$ and $\widehat{B_L}$ are left Noetherian and $\widehat{B_L}$ is right flat over $\widehat{A_L}$.
\end{prop}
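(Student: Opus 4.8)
Since this statement is \cite[Proposition 5.3.10]{Emerton}, my plan is ultimately to invoke it; I nonetheless sketch the shape of the argument and isolate where the work lies.

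First I would dispose of the Noetherianity of $\widehat{A_L}$, which is routine. As $A$ is $\pi$-torsion free and $\pi$ is central, multiplication by $\pi^n$ gives isomorphisms $A/\pi A\xrightarrow{\sim}\pi^nA/\pi^{n+1}A$, so the associated graded ring of $A$ for its $\pi$-adic filtration is the polynomial ring $(A/\pi A)[t]$ in a single central variable. Now $A/\pi A$ is left Noetherian, being a quotient of the left Noetherian ring $A$, so $(A/\pi A)[t]$ is left Noetherian by the Hilbert basis theorem; as $\widehat A$ is complete and separated for its $\pi$-adic filtration, it is left Noetherian, and hence so is its central localisation $\widehat{A_L}=\widehat A[\pi^{-1}]=\widehat A\otimes_R L$.

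Next I would unwind the structure of $B$ over $A$. Choosing homogeneous central generators $\bar x_1,\dots,\bar x_m$ of $\gr B$ over $A$, of degrees $d_1,\dots,d_m$, and lifting each to some $x_i\in F_{d_i}B$ with symbol $\bar x_i$, an induction on filtration degree shows that $x_1,\dots,x_m$ generate $B$ as an $A$-algebra, while centrality of the symbols forces the commutator estimates $x_ix_j-x_jx_i\in F_{d_i+d_j-1}B$ and $x_ia-ax_i\in F_{d_i-1}B$ for $a\in A$. Hence every element of $F_nB$ is an $A$-linear combination of the finitely many ordered monomials $x_1^{\alpha_1}\cdots x_m^{\alpha_m}$ of weight $\sum_j\alpha_jd_j\le n$, so each $F_nB$ is a finitely generated $A$-module and $\gr B$ is a quotient of a polynomial ring $A[\bar x_1,\dots,\bar x_m]$ in $m$ central variables. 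For the Noetherianity of $\widehat{B_L}$, I would note that since $B\subseteq A_L$ is $\pi$-torsion free one has $\widehat B/\pi^n\widehat B\cong B/\pi^nB$ for all $n$, so $\widehat B$ is $\pi$-adically complete and separated with $\widehat B/\pi\widehat B\cong B/\pi B$; the latter carries the exhaustive filtration induced by $F_\bullet$, whose associated graded is a quotient of $(\gr B)/\pi(\gr B)$, which is finitely generated over the left Noetherian ring $A/\pi A$ by central elements and so a quotient of a central polynomial ring over $A/\pi A$, hence left Noetherian. Thus $\gr(B/\pi B)$ is left Noetherian, so $B/\pi B$ is left Noetherian, so the $\pi$-adic associated graded $(B/\pi B)[t]$ of $\widehat B$ is left Noetherian, and therefore $\widehat B$ and $\widehat{B_L}=\widehat B[\pi^{-1}]$ are left Noetherian.

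The remaining and essential assertion is that $\widehat{B_L}$ is right flat over $\widehat{A_L}$. The presentation above realises $\widehat{B_L}$ as a quotient of a noncommutative relative Tate algebra $\widehat{A_L}\langle X_1,\dots,X_m\rangle$ (the $X_i$ being quasi-central, subject to the above commutator estimates) by the ideal carrying the defining relations of $B$ inside $\gr B$; the crux of Emerton's argument is that, precisely because these relations involve the central elements $\bar x_i$, this quotient is right flat over $\widehat{A_L}$ --- by a d\'evissage over the central subalgebra generated by the $x_i$, together with the (faithful) flatness of relative Tate algebras over their base. This flatness step is the only genuinely delicate part: it cannot be reduced to a statement modulo $\pi$, since $\widehat B$ need not be flat over $\widehat A$ even when $\widehat{B_L}$ is flat over $\widehat{A_L}$ (already examples of the form $B=A[\pi^{-1}x]$ exhibit this), so it really requires the rigid-analytic input rather than the associated-graded formalism that suffices for everything else. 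Accordingly I would simply cite \cite[Proposition 5.3.10]{Emerton} for it.
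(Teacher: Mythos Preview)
The paper does not prove this statement; it is quoted directly from \cite{Emerton}, so your strategy of ultimately invoking that reference is exactly what the paper does. However, the paper \emph{does} reproduce Emerton's method when proving the $q$-commutative analogue (the Proposition in \S\ref{Emerton}), and your description of how the flatness step goes does not match that method.

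Emerton's argument does not pass through relative Tate algebras or a d\'evissage over a central subalgebra. Instead one introduces the $R$-subalgebra $C\subset\widehat{A_L}$ given as the image of the multiplication map $\widehat{A}\otimes_A B\to\widehat{A_L}$; one checks that $C=B+\widehat{A}$ and equips it with the filtration $G_iC=F_iB+\widehat{A}$. The key point is that $\gr^G_iC\cong\gr^F_iB$ for $i\ge 1$ while $G_0C=\widehat{A}$, so the hypothesis on $\gr B$ makes $\gr^G C$ a finitely generated $\widehat{A}$-algebra by central elements, hence left Noetherian; thus $C$ is left Noetherian. Now $C_L=\widehat{A_L}$ (because $B\subset A_L$) and $\widehat{C_L}\cong\widehat{B_L}$ (because $C=B+\widehat{A}$). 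Since $\pi$ is central in the left Noetherian ring $C$, the Artin--Rees property gives that $\widehat{C}$ is right flat over $C$; inverting $\pi$ yields that $\widehat{B_L}\cong\widehat{C_L}$ is right flat over $C_L=\widehat{A_L}$. So the whole thing is a short algebraic trick: the obstruction you identify (that one cannot work modulo $\pi$ because $\widehat{B}$ need not be flat over $\widehat{A}$) is dissolved not by rigid-analytic input but by replacing $\widehat{A}$ with the larger Noetherian ring $C$. Your Noetherianity sketches are fine, but the Noetherianity of $\widehat{B_L}$ also drops out of this same construction, since $\widehat{B_L}=\widehat{C_L}$ with $C$ Noetherian.
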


The second one is due to Ardakov and Wadsley, and is using a certain class of deformable algebras as well the functor we defined in \ref{Mcap}.

\begin{thm}\emph{(\cite[Theorem 6.7]{Wadsley2})}
Let $U$ be a deformable $R$-algebra such that $\gr U$ is commutative and Noetherian. Then $\wideparen{U_L}$ is a Fr\'echet-Stein algebra.
\end{thm}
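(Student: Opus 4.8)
The plan is to exhibit $\wideparen{U_L}$ as a Fr\'echet-Stein algebra via the tower $\U_n:=\widehat{U_{n,L}}$, where the transition map $\U_{n+1}\to\U_n$ is the one induced by the inclusion of $R$-subalgebras $U_{n+1}\subseteq U_n$ of $U_L$. Condition~(ii) of the definition holds by construction, since $\wideparen{U_L}=\varprojlim\widehat{U_{n,L}}$ as locally convex $L$-algebras. For condition~(i) I would prove separately that each $\U_n$ is Noetherian and that $\U_n$ is flat over $\U_{n+1}$. Recall from Section~\ref{deformablealg} that each deformation $U_n$ is again a deformable $R$-algebra with a natural isomorphism $\gr U_n\cong\gr U$, and that $(U_n)_1=U_{n+1}$; hence both assertions reduce to the case of an arbitrary deformable $R$-algebra $A$ with $\gr A$ commutative and Noetherian, namely that $\widehat{A_L}$ is Noetherian and that $\widehat{A_L}$ is flat over $\widehat{(A_1)_L}$, and one then applies these with $A=U_n$.

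For Noetherianity I would filter $\widehat A$ $\pi$-adically. Since $\gr A$ is $R$-flat, $A$ is $\pi$-torsion free; one then checks that $\widehat A$ is also $\pi$-torsion free and that $\widehat A/\pi^k\widehat A\cong A/\pi^k A$ for all $k$, so that the associated graded $\gr^{\pi}\widehat A$ for the $\pi$-adic filtration is isomorphic to $(A/\pi A)[t]$, with $t$ the principal symbol of $\pi$. Now $A/\pi A$ inherits a positive, exhaustive filtration whose associated graded is $(\gr A)/\pi(\gr A)$, a quotient of the Noetherian ring $\gr A$ and hence Noetherian; therefore $A/\pi A$ is Noetherian, and so $(A/\pi A)[t]$ is Noetherian by the Hilbert basis theorem. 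Being complete for its $\pi$-adic filtration with Noetherian associated graded, $\widehat A$ is Noetherian, and consequently $\U_n=\widehat{A_L}=\widehat A\otimes_R L$ is Noetherian. (Only the Noetherianity of $\gr U$, not its commutativity, enters here.)

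The flatness of $\U_n$ over $\U_{n+1}$, that is, of $\widehat{A_L}$ over $\widehat{(A_1)_L}$, is the core of the argument, and it is here that the commutativity of $\gr A$ is essential. I would reduce to the commutative situation by d\'evissage. If $x_1,\dots,x_m$ are the chosen generators of $A$, of degrees $d_1,\dots,d_m$, then $A$ is generated over $A_1$ by the power-bounded elements $\pi^{-d_j}(\pi^{d_j}x_j)$, so $\widehat{A_L}$ is obtained from $\widehat{(A_1)_L}$ by an affinoid-type localization. When $\gr A$ -- equivalently $A$ -- is commutative, this exhibits $\widehat{A_L}$ as an affinoid subdomain localization of $\widehat{(A_1)_L}$, and affinoid subdomain inclusions are flat. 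For general $A$ with $\gr A$ commutative, one equips $\widehat{(A_1)_L}$ and $\widehat{A_L}$ with filtrations adapted to the $x_j$ and to the deformation parameter whose associated graded rings are the analogous (commutative) localizations built from $\gr A$; since a morphism of complete filtered rings that is flat on associated gradeds is itself flat, the commutative case then gives the conclusion.

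I expect this flatness step to be the main obstacle. Two features make it delicate. First, one cannot simply reduce modulo $\pi$: integrally, $\widehat A$ is \emph{not} flat over $\widehat{A_1}$ in general -- already for $A=R[x]$ with $F_iA$ the polynomials of degree $\leq i$ one has $A_1=R[\pi x]$, and modulo $\pi$ the map $\widehat{A_1}\to\widehat A$ becomes $k[y]\to k[x]$, $y\mapsto 0$, which is far from flat; flatness appears only after inverting $\pi$, where the inclusion $L\langle\pi x\rangle\hookrightarrow L\langle x\rangle$ is an affinoid subdomain inclusion. The argument must therefore be carried out $L$-linearly, in the rigid-analytic spirit, rather than by a reduction mod $\pi$. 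Second, producing the filtrations on the completions $\widehat{A_L}$ and $\widehat{(A_1)_L}$ with the prescribed commutative associated gradeds, and checking that these filtrations are complete, is exactly where the full strength of the \emph{deformable} hypothesis -- $\gr A$ flat over $R$ and $A$ a $\pi$-adically separated lattice in $A_L$ -- together with the commutativity of $\gr A$ is needed; this is the technical heart of the argument as carried out in \cite{Wadsley2}.
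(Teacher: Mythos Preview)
Your overall architecture is correct and matches the paper's treatment: one reduces to showing that each $\widehat{U_{n,L}}$ is Noetherian and that $\widehat{U_{n,L}}$ is flat over $\widehat{U_{n+1,L}}$, and then uses $(U_n)_1=U_{n+1}$ together with $\gr U_n\cong\gr U$ to reduce to a single step. Your Noetherianity argument is the standard one and is fine.

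The flatness step, however, is where your proposal diverges from the actual proof and becomes a gap rather than an alternative. You propose to view $\widehat{A_L}$ as an affinoid-type localisation of $\widehat{(A_1)_L}$ and to import flatness from the commutative case via some filtration whose associated graded is commutative; but you do not construct such a filtration, and you explicitly defer the ``technical heart'' to the reference. The paper (following \cite{Wadsley2}) does something quite different and more concrete. It introduces the ideal $I=U_1\cap\pi U$ of $U_1$ and the intermediate ring $B$, the $I$-adic completion of $U_1$, and factors the map $\widehat{U_{1,L}}\to\widehat{U_L}$ through $B$. The two halves are then handled separately: first, $B$ is flat over $\widehat{U_1}$ because, reducing mod $\pi$, $B/\pi B$ is the $(I/\pi U_1)$-adic completion of $U_1/\pi U_1$, and the ideal $I/\pi U_1$ has the Artin--Rees property (it is generated by the images of $\pi^{d_j}x_j$, which in the commutative case are trivially polycentral); one then lifts via the $\pi$-adic filtrations using \cite[Proposition~1.2]{SchTeit03}. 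Second, $\widehat{U_L}$ is flat over $B$ because passing to associated gradeds for the $\pi$-adic filtration turns the inclusion $B\hookrightarrow\widehat{U_L}$ into the localisation $\gr U_1\to(\gr U_1)_t=\overline{U}[t,t^{-1}]$ at $t=\gr\pi$.

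Note in particular that your remark that ``one cannot simply reduce modulo $\pi$'' is only half right: the direct map $\widehat{U_1}\to\widehat{U}$ is indeed not flat mod $\pi$, but the point of introducing $B$ is precisely that the map $\widehat{U_1}\to B$ \emph{is} amenable to a mod-$\pi$ argument via Artin--Rees, while the remaining map $B\to\widehat{U_L}$ becomes a genuine localisation on gradeds. Your affinoid-subdomain heuristic captures the spirit of this second half but misses the first, and without the intermediate $B$ there is no clean filtration doing what you want.
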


The issue with these methods is that the statements both involve some commutativity or centralness conditions that will not hold in the quantum setting. Therefore, in this section, we will prove certain non-commutative, or quantum, versions of these results.

\subsection{Fr\'echet completions of deformable $R$-algebras}\label{Frechetcompl}

We first generalise Theorem\ref{analytificationfunctor}. The proofs from \cite[Section  6.5 \& 6.6]{Wadsley2} go through with only minor changes. Throughout, we will make the following assumptions:
\begin{enumerate}
\item $U$ is a deformable $R$-algebra such that $\gr_0 U$ and $\gr U$ are Noetherian;
\item there are elements $x_1, \ldots, x_r\in U$ such that
$$
F_iU=F_0U\cdot\{x_1^{\alpha_1}\cdots x_r^{\alpha_r}: \sum_{j=1}^r \alpha_jd_j\leq i\}
$$
for each $i\geq 0$, where $d_j=\deg{x_j}$, so that then $\gr U$ is finitely generated over $\gr_0 U$ by the symbols $x_1, \ldots, x_r\in U$; and
\item the sequence $\overline{\pi x_1},\ldots, \overline{\pi x_r}$, where $\overline{\pi x_i}$ denotes the image of $\pi x_i$ in $U_1/\pi U_1$, is polynormal.
\end{enumerate}
Note that (i)-(iii) hold when $U$ is a deformable $R$-algebra such that $\gr U$ is commutative and Noetherian by the proofs in \cite[Section 6.5 \& 6.6]{Wadsley2}.

\begin{lem}
If $U$ satisfies (i) and (ii) as above, then so does $U_n$ for all $n\geq 1$.
\end{lem}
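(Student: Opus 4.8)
The plan is to write down explicit generators for the deformed filtration on $U_n$ and then deduce conditions (i) and (ii) from the elementary properties of the functor $M\mapsto M_n$ recorded earlier.

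First I would dispose of (i). By \cite[Lemma 3.5]{Wadsley1}, applied to $U$ regarded merely as a filtered $R$-module, the filtration on $U_n$ is $F_jU_n=U_n\cap F_jU=\sum_{i=0}^j\pi^{ni}F_iU$ and there is a natural isomorphism $\gr U_n\cong\gr U$; in particular $\gr U_n$ is Noetherian. Since all our filtrations are positive we have $\gr_0 U_n=F_0U_n=F_0U=\gr_0U$, which is Noetherian as well, so (i) transfers immediately.

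For (ii) I would take $y_k:=\pi^{nd_k}x_k\in U_n$ for $1\leq k\leq r$, where $d_k=\deg_U x_k$. The one algebraic input needed is that $\pi$ is central, so that for every multi-index $\alpha=(\alpha_1,\dots,\alpha_r)\in\Z_{\geq 0}^r$ one has $y_1^{\alpha_1}\cdots y_r^{\alpha_r}=\pi^{n|\alpha|_d}\,x_1^{\alpha_1}\cdots x_r^{\alpha_r}$, where $|\alpha|_d:=\sum_k\alpha_k d_k$. Combining this with $F_jU_n=\sum_{i=0}^j\pi^{ni}F_iU$ and the hypothesis $F_iU=F_0U\cdot\{x^\alpha:|\alpha|_d\leq i\}$, a direct computation yields both inclusions of
\[
F_jU_n=F_0U_n\cdot\{y_1^{\alpha_1}\cdots y_r^{\alpha_r}:|\alpha|_d\leq j\}.
\]
For the inclusion $\subseteq$, a term $\pi^{ni}x^\alpha$ with $|\alpha|_d=m\leq i\leq j$ is rewritten as $\pi^{n(i-m)}y^\alpha$ with $i-m\geq 0$, and one uses $R\subseteq F_0U=F_0U_n$ together with the fact that the $x_k$ (hence the $y_k$) normalise $F_0U$; for $\supseteq$ one notes $F_0U_n\cdot y^\alpha=\pi^{nm}F_0U\cdot x^\alpha\subseteq\pi^{nm}F_mU\subseteq F_jU_n$ whenever $m\leq j$.

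The only point requiring a moment's care — and the nearest thing to an obstacle — is checking that $\deg_{U_n}y_k=d_k$, so that the integers appearing in (ii) for $U_n$ coincide with the original ones. Here I would again invoke $F_{d_k-1}U_n=U_n\cap F_{d_k-1}U$ from \cite[Lemma 3.5]{Wadsley1}: were $y_k\in F_{d_k-1}U_n$, we would get $\pi^{nd_k}x_k\in F_{d_k-1}U$, contradicting the fact that the principal symbol of $x_k$ is a \emph{nonzero} element of $\gr_{d_k}U$ (this nonvanishing is exactly the content of the clause in (ii) that the symbols $x_1,\dots,x_r$ generate $\gr U$ over $\gr_0U$) together with the fact that $\gr_{d_k}U$, being a direct summand of the $R$-flat module $\gr U$, is $R$-torsion-free. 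Finally, the clause of (ii) asserting that $\gr U_n$ is finitely generated over $\gr_0U_n$ by the symbols of $y_1,\dots,y_r$ follows from the displayed spanning identity for $F_\bullet U_n$ exactly as it did for $U$.
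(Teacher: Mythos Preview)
Your proof is correct and follows essentially the same approach as the paper: both invoke $\gr U_n\cong\gr U$ (Lemma~\ref{deformablealg}(i)) for condition (i), and both take $y_k=\pi^{nd_k}x_k$ and establish $F_jU_n=F_0U\cdot\{y^\alpha:|\alpha|_d\leq j\}$ for condition (ii). The paper's proof is considerably terser---it simply asserts the filtration identity without verifying the inclusions or the degree claim---so your extra care in checking $\deg_{U_n}y_k=d_k$ via torsion-freeness of $\gr U$ is a welcome elaboration rather than a deviation.
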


\begin{proof}
This is a straightforward application of Lemma \ref{deformablealg}(i): (i) follows immediately because $\gr U_n\cong \gr U$ and (ii) follows because $$
F_iU_n=F_0U\cdot\{(\pi^{nd_1}x_1)^{\alpha_1}\cdots (\pi^{nd_r}x_r)^{\alpha_r}: \sum_{j=1}^r \alpha_jd_j\leq i\}
$$
from which we see that $\gr U_n$ is generated by the symbols of $\pi^{nd_1}x_1,\ldots, \pi^{nd_r}x_r$ over $\gr_0 U_n$.
\end{proof}

\begin{prop} Let $U$ be a deformable $R$-algebra satisfying condition (ii) above, and consider the ideal $I:=U_1\cap\pi U$.
\renewcommand{\labelenumi}{(\alph{enumi})}
\begin{enumerate}
\item The subspace filtration on $U_1$ of the $\pi$-adic filtration on $U$ and the $I$-adic filtration on $U_1$ are topologically equivalent; and
\item $I$ is generated by $\pi$ and $\pi^{d_j}x_j$ for $1\leq j\leq n$.
\end{enumerate}
\end{prop}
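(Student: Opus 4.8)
The plan is to work throughout with the explicit description of the subspace filtration on $U_1$ induced by the $\pi$-adic filtration on $U$. By Lemma~\ref{deformablealg}(ii) applied to $M=U$, one has $U_1\cap\pi^tU=\sum_{i\geq t}\pi^iF_iU$ for every $t\geq 0$; write $G_\bullet$ for this filtration, so that $I=G_1=\sum_{i\geq 1}\pi^iF_iU$. Condition~(ii) then shows that $I$ is additively generated by the elements $\pi^i a\, x_1^{\alpha_1}\cdots x_r^{\alpha_r}$ with $a\in F_0U$, $i\geq 1$ and $\sum_j\alpha_jd_j\leq i$, so every assertion below can be checked on such spanning elements. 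Centrality of $\pi$ in $U$ will be used repeatedly to move powers of $\pi$ freely across products.

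For (b), I would first observe that $\pi\in\pi F_1U\subseteq I$ and, since $x_j\in F_{d_j}U$, that $\pi^{d_j}x_j\in\pi^{d_j}F_{d_j}U\subseteq I$. For the reverse containment, take a spanning element $\pi^i a\, x_1^{\alpha_1}\cdots x_r^{\alpha_r}$ as above. If all $\alpha_j=0$, it equals $(\pi^{i-1}a)\cdot\pi$ with $\pi^{i-1}a\in F_0U\subseteq U_1$. Otherwise, let $j$ be the largest index with $\alpha_j>0$ and set $w=\sum_k\alpha_kd_k\leq i$; the element factors as $\bigl(\pi^{i-d_j}a\, x_1^{\alpha_1}\cdots x_j^{\alpha_j-1}\bigr)\cdot\bigl(\pi^{d_j}x_j\bigr)$, where the left-hand factor lies in $\pi^{i-d_j}F_{w-d_j}U\subseteq U_1$ because $i-d_j\geq w-d_j$. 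Hence $I$ is the left ideal of $U_1$ generated by $\pi$ and the $\pi^{d_j}x_j$; as $I=U_1\cap\pi U$ is visibly two-sided, it is also the two-sided ideal generated by these elements.

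For (a), one inclusion is formal: $G_\bullet$ is a ring filtration since $\pi^iF_iU\cdot\pi^jF_jU\subseteq\pi^{i+j}F_{i+j}U$, and $I=G_1$, so $I^t\subseteq G_t$ for all $t\geq 0$. For the reverse I would show $G_{tD}\subseteq I^t$, where $D=\max_j d_j$. Given a spanning element $\pi^i a\, x_1^{\alpha_1}\cdots x_r^{\alpha_r}$ with $i\geq tD$, centrality of $\pi$ lets us rewrite it as $(\pi^{i-w}a)\cdot\prod_j(\pi^{d_j}x_j)^{\alpha_j}$ with $w=\sum_j\alpha_jd_j$; since $\pi$ and each $\pi^{d_j}x_j$ lie in $I$, this product lies in $I^{(i-w)+\alpha}$ where $\alpha=\sum_j\alpha_j$. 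The elementary estimate $(i-w)+\alpha\geq i-w\tfrac{D-1}{D}\geq i/D\geq t$, which uses $w\leq i$ and $w\leq D\alpha$, finishes the inclusion, and hence the two filtrations are topologically equivalent.

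I expect the genuinely delicate point to be the inclusion $G_{tD}\subseteq I^t$: one has to pin down the right comparison constant $D$ and carefully count how many factors from the generating set $\{\pi,\pi^{d_1}x_1,\dots,\pi^{d_r}x_r\}$ occur in the decomposition of a typical spanning element. Everything else is bookkeeping. The clean factorisation $(\pi^{i-w}a)\prod_j(\pi^{d_j}x_j)^{\alpha_j}$ is available precisely because $\pi$ is central; without that, one would have to drag commutator terms through the estimate and the argument would become considerably messier.
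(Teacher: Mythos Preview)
Your proof is correct and follows essentially the same route as the paper's: both use Lemma~\ref{deformablealg}(ii) to identify $U_1\cap\pi^tU=\sum_{i\geq t}\pi^iF_iU$, then count how many of the generators $\pi,\pi^{d_1}x_1,\dots,\pi^{d_r}x_r$ appear in a typical spanning monomial to get the inclusion $U_1\cap\pi^{tD}U\subseteq I^t$ with $D=\max_j d_j$. The only cosmetic difference is that the paper absorbs your excess factor $\pi^{i-w}$ into the monomial by introducing a dummy generator $\pi=\pi^{d_0}$ with $d_0=1$, which makes the estimate $\bigl(\sum_{j\geq 0}\alpha_j\bigr)D\geq\sum_{j\geq 0}\alpha_jd_j=i\geq tD$ a one-liner, whereas you reach the same conclusion via the chain $(i-w)+\alpha\geq i/D\geq t$.
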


\begin{proof} It is clear from the definition of $I$ that
$$
\pi\in I\quad \text{and}\quad \pi^{d_j}x_j\in I\quad \text{for all} \quad 1\leq j\leq n.
$$
Let $d_0:=1$. It follows from condition (ii) that $\pi^iF_iU$ is generated as an $F_0U$-module by monomials of the form
\begin{equation}\label{equation1}
(\pi^{d_0})^{\alpha_0}(\pi^{d_1}x_1)^{\alpha_1}\cdots (\pi^{d_n}x_n)^{\alpha_n}
\end{equation}
where $\alpha_j\geq 0$ for all $j=0,\ldots, n$ and $\sum_{j=0}^n\alpha_jd_j=i$. For any integer $t\geq 0$ and $i\geq t\max{d_j}$, we have $(\sum_{j=0}^n\alpha_j)\max{d_j}\geq \sum_{j=0}^n\alpha_jd_j=i\geq t\max{d_j}$, so
$$
(\pi^{d_0})^{\alpha_0}(\pi^{d_1}x_1)^{\alpha_1}\cdots (\pi^{d_n}x_n)^{\alpha_n}\in I^t
$$
since $\pi\in I$ and $\pi^{d_j}x_j\in I$ for all $1\leq j\leq m$. Hence by Lemma \ref{deformablealg}(ii) we have
$$
U_1\cap \pi^{t\max{d_j}}U=\sum_{i\geq t\max{d_j}}\pi^i F_iU\subseteq I^t\subseteq U_1\cap\pi^tU
$$
since $I$ is an $F_0U$-submodule of $U$, thus proving (a).

For (b), by Lemma \ref{deformablealg}(ii) we have $I=\sum_{i\geq 1}\pi^iF_iU$. But we know from (\ref{equation1}) above that, for $i\geq 1$, $\pi^iF_iU$ is generated as an $F_0U$-module by elements which are in the ideal generated by $\pi$ and $\pi^{d_j}x_j$ for $1\leq j\leq n$. The result follows.
\end{proof}

We can now prove our version of \cite[Theorem 6.6]{Wadsley2}. Their proof goes through unchanged except for our use of condition (iii) which replaces their commutativity constraint.

\begin{thm} Let $U$ be a deformable $R$-algebra satisfying conditions (i)-(iii). Then $\widehat{U_{L}}$ is flat over $\widehat{U_{1, L}}$.
\end{thm}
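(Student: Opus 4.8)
The strategy is to follow the proof of \cite[Theorem 6.6]{Wadsley2} essentially verbatim, using condition (iii) at the single point where that argument appeals to commutativity of $\gr U$. First a few routine reductions. Since $L$ is flat over $R$ it is enough to analyse the map of $\pi$-adically complete, $\pi$-torsion-free $R$-algebras $\widehat{U_1}\to\widehat{U}$ after inverting $\pi$. Both algebras are Noetherian: $U$ is Noetherian because $\gr U$ is (condition (i)), hence so are its quotient $U/\pi U$ and, by $\pi$-adic completeness, $\widehat{U}$; likewise $U_1$ is Noetherian since $\gr U_1\cong\gr U$ (see \ref{deformablealg}), whence $U_1/\pi U_1$ and $\widehat{U_1}$ are Noetherian as well.

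The key device is the ideal $I=U_1\cap\pi U$ of the preceding Proposition. By part (a) of that Proposition the $I$-adic topology on $U_1$ coincides with the topology induced from the $\pi$-adic topology on $U$; consequently the $I$-adic completion $\widetilde{U_1}:=\varprojlim_t U_1/I^{t}$ is canonically identified with the closure of $U_1$ inside $\widehat{U}$, and after inverting $\pi$ the map $\widehat{U_{1,L}}\to\widehat{U_L}$ factors as
$$
\widehat{U_{1,L}}\ \xrightarrow{\ \alpha\ }\ \widetilde{U_1}\otimes_R L\ \xrightarrow{\ \beta\ }\ \widehat{U_L},
$$
where $\alpha$ comes from the universal property of completion and $\beta$ from the inclusion $\widetilde{U_1}\hookrightarrow\widehat{U}$. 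It therefore suffices to prove that $\alpha$ and $\beta$ are each flat.

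The map $\alpha$ is flat because $\widehat{U_1}$ is Noetherian and $\widetilde{U_1}$ is its completion along the ideal $I\widehat{U_1}$ (one checks $\widehat{U_1}/I^{t}\widehat{U_1}\cong U_1/I^{t}$, since the latter is killed by a power of $\pi$): completion of a Noetherian ring along an ideal is flat, and flatness survives $\otimes_R L$. The map $\beta$ is the interesting one. By part (b) of the Proposition, $I=(\pi,\pi^{d_1}x_1,\dots,\pi^{d_r}x_r)$, so after inverting $\pi$ the elements $\pi^{d_j}x_j$ differ from $x_j$ by a unit, and one checks that $\widehat{U_L}$ is obtained from $\widetilde{U_1}\otimes_R L$ by adjoining the power-bounded elements $\pi^{-d_j}(\pi^{d_j}x_j)$ and completing --- the non-commutative analogue of restricting analytic functions from a polydisc to a sub-polydisc, i.e.\ a Weierstrass-type localisation. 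Such a localisation is flat once one knows that the associated Rees-type algebra $\bigoplus_{t\ge0}I^{t}/I^{t+1}$ is Noetherian and that the localising elements form a sufficiently well-behaved (normalising) sequence. In \cite{Wadsley2} this is where commutativity of $\gr U$, hence of $U_1/\pi U_1$, is used, so that the $\overline{\pi^{d_j}x_j}$ are automatically central; for us the replacement is precisely condition (iii), which asserts that $\overline{\pi x_1},\dots,\overline{\pi x_r}$ is a polynormal sequence in $U_1/\pi U_1$. Composing $\beta\circ\alpha$ shows $\widehat{U_L}$ is flat over $\widehat{U_{1,L}}$; the statement on the other side follows identically, or via the symmetry $\omega$.

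I expect the genuine work to lie entirely in the flatness of $\beta$: setting up the ``non-commutative Weierstrass localisation'' and proving its flatness needs, in place of commutativity, the Noetherianity of the $I$-adic Rees ring of $U_1$ and control of its associated graded, both of which come from the polynormality hypothesis (iii) combined with the explicit generators of $I$ furnished by the Proposition. The reductions to the Noetherian $R$-level statement, the factorisation through $\widetilde{U_1}$, and the flatness of the completion map $\alpha$ are all formal by comparison.
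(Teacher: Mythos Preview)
Your factorisation through the $I$-adic completion $B=\widetilde{U_1}$ of $U_1$ is exactly the decomposition the paper uses, but you have the roles of the two maps reversed: the genuine non-commutative difficulty, and the place where condition (iii) enters, is in $\alpha$, not in $\beta$.

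Your argument for $\alpha$ rests on the assertion that ``completion of a Noetherian ring along an ideal is flat''. This is a commutative fact; for non-commutative Noetherian rings it is false in general, and what one needs is precisely that the ideal satisfies the Artin--Rees property. The paper obtains this from condition (iii): the polynormality of $\overline{\pi^{d_1}x_1},\dots,\overline{\pi^{d_r}x_r}$ in $U_1/\pi U_1$ forces the ideal $I/\pi U_1$ (which is generated by these elements, by part (b) of the Proposition) to have the Artin--Rees property, whence the $I/\pi U_1$-adic completion $B/\pi B$ is flat over $U_1/\pi U_1$. One then passes to the $\pi$-adic associated graded rings $(U_1/\pi U_1)[t]\cong\gr\widehat{U_1}$ and $(B/\pi B)[t]\cong\gr B$ and lifts flatness via \cite[Proposition 1.2]{SchTeit03}. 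So your $\alpha$ is not formal at all; it is exactly the step that collapses to the commutative case when $\gr U$ is commutative and that requires (iii) otherwise.

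Conversely, your $\beta$ is the comparatively soft step and does not need polynormality. The paper handles it by a filtered localisation argument: giving $U_1$ the subspace filtration of the $\pi$-adic filtration on $U$, one identifies $\gr U_1\subset\gr U\cong\overline{U}[t]$ (with $t=\gr\pi$ central) as $\bigoplus_{j\ge0}t^j\overline{F_jU}$, so that inverting the central element $t$ gives $(\gr B)_t=(\gr U_1)_t=\overline{U}[t,t^{-1}]=\gr\widehat{U_L}$. Localisation at a central element is flat, so $\gr\widehat{U_L}$ is flat over $\gr B$, and one lifts again. There is no ``non-commutative Weierstrass localisation'' machinery here, just inversion of a central graded element.

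Two minor points: your appeal to ``the symmetry $\omega$'' for the other side makes no sense at this level of generality ($\omega$ is specific to $U_q$); the argument is simply left/right symmetric. And the claim that $\widehat{U_1}/I^t\widehat{U_1}\cong U_1/I^t$ because the latter is $\pi$-power torsion is the right idea, but in the paper this is packaged as identifying $B$ with the closure of the image of $U_1$ in $\widehat{U}$ via part (a) of the Proposition.
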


\begin{proof} Since $\widehat{U_{1, L}}=\widehat{U_1}\otimes_R L$, it is enough to show that $\widehat{U_{L}}$ is flat as a module over $\widehat{U_1}$. By the Proposition, the $I$-adic completion $B$ of $U_1$ is isomorphic to the closure of the image of $U_1$ in $\widehat{U}$. Hence we have natural maps $\widehat{U_1}\to B\to \widehat{U_L}$. Observe that $B$ is $\pi$-adically complete by the proof of \cite[Theorem VIII.5.14]{ZarSam}, noting that ideals in $B$ are $I$-adically closed by \cite[Theorem II.2.1.2, Proposition II.2.2.1]{Zariskian}.

We observe that $B/\pi B$ is the $I/\pi U_1$-adic completion of $U_1/\pi U_1$. From Proposition \ref{Frechetcompl}(ii), the ideal $I/\pi U_1$ is generated by $\overline{\pi^{d_j}x_j}$ for $1\leq j\leq n$. Hence it follows from condition (iii) and \cite[Proposition D.V.1 \& Remark D.V.2]{gradedring} that $I/\pi U_1$ has the Artin-Rees property. Thus we have that $B/\pi B$ is flat over $U_1/\pi U_1$ by \cite[Property V.8)iii), page 301]{gradedring}.

We now filter both $\widehat{U_1}$ and $B$ $\pi$-adically. Since $U_1$ is $\pi$-torsion free, we have $\gr\widehat{U_1}\cong (U_1/\pi U_1)[t]$. In a similar way, since $B$ is isomorphic to a subring of $\widehat{U}$ and so has no $\pi$-torsion, we have $\gr B\cong (B/\pi B)[t]$. Hence $\gr B$ is flat over $\gr\widehat{U_1}$. But this implies that $B$ is a flat $\widehat{U_1}$-module by \cite[Proposition 1.2]{SchTeit03}, since both $\widehat{U_1}$ and $B$ are $\pi$-adically complete.

We now consider the subspace filtration on $U_1$ induced from the $\pi$-adic filtration on $U$. We have $\gr U\cong \overline{U}[t]$ where $t=\gr \pi$ and $\overline{U}=U/\pi U$ has degree zero. Lemma \ref{deformablealg}(ii) implies that the image of $\gr U_1$ inside $\gr U$ is $\oplus_{j\geq 0}t^j \overline{F_jU}$ where $\overline{F_jU}$ denotes the image of $F_jU$ in $\overline{U}$. Note that $\gr U_1$ is Noetherian by \cite[Corollary 1.3]{BroGoo97} and conditions (i) and (iii) since it is generated by the $t^{d_i}\overline{x_i}$. Now, as the quotient filtration $\overline{F_jU}$ on $\overline{U}$ is exhaustive, the localisation of this image obtained by inverting $t$ is $\overline{U}[t, t^{-1}]$. But $B$ is the completion of $U_1 $ so
$$
(\gr B)_t=(\gr U_1)_t=\overline{U}[t, t^{-1}]=\gr \widehat{U_{L}}.
$$
Hence $\gr \widehat{U_{L}}$ is flat over $\gr B$. We can then invoke \cite[Proposition 1.2]{SchTeit03} again to conclude that $\widehat{U_{L}}$ is flat over $B$.
\end{proof}

\subsection{Theorem}\label{defFreSt} \emph{Let $U$ be a deformable $R$-algebra satisfying assumptions (i)-(iii), such that $U_n$ satisfies (iii) for all $n\geq 0$. Then $\wideparen{U_{L}}$ is a Fr\'echet-Stein algebra.}

\begin{proof}
By Lemma \ref{Frechetcompl} each $U_n$ satisfies conditions (i)-(iii). Now since $(U_n)_1=U_{n+1}$ by Lemma \ref{deformablealg}, we have by the Theorem that $\widehat{U_{n, L}}$ is a flat $\widehat{U_{n+1, L}}$-module. Moreover, each $\widehat{U_{n, L}}$ is Noetherian because $\gr U$ is Noetherian.
\end{proof}

We now turn to the important notion of a coadmissible module:

\begin{definition}[{\cite[Section 3]{SchTeit03}}]\label{FS1} Let $\U=\varprojlim \U_n$ be a Fr\'echet-Stein algebra. Then a $\U$-module $\M$ is called \emph{coadmissible} if $\M\cong \varprojlim \M_n$ where, for each $n\geq 0$, $\M_n$ is a finitely generated $\U_n$-module and $\U_n\otimes_{\U_{n+1}}\M_{n+1}\cong \M_n$. The full subcategory of coadmissible modules is denoted by $\C(\U)$.
\end{definition}

Note that if $\M$ is a coadmissible module, then each $\M_n$ naturally inherits the structure of a Banach $\U_n$-module, and so $\M$ naturally has the structure of a Fr\'echet space.

We summarise below the facts we'll need:

\begin{prop}[{\cite[Lemma 3.6 \& Corollaries 3.1, 3.4 \& 3.5]{SchTeit03}}]\label{FS2} Let $\U$ be a Fr\'echet-Stein algebra and let $\M$ be a coadmissible $\U$-module.
\begin{enumerate}
\item For each $n\geq 0$, $\M_n\cong \U_n\otimes_{\U} \M$.
\item The category $\C(\U)$ is an abelian subcategory of the category of all $\U$-modules; it is closed under direct sums and contains the finitely presented $\U$-modules.
\item Let $\Nn$ be a submodule of $\M$. Then the following are equivalent:
\begin{enumerate}
\item $\Nn$ is coadmissible;
\item $\M/\Nn$ is coadmissible; and
\item $\Nn$ is closed in the above Fr\'echet topology.
\end{enumerate}
\item A sum of two coadmissible submodules of $\M$ is coadmissible.
\item Any finitely generated submodule of $\M$ is coadmissible.
\item Any module map between two coadmmissible module is strict with closed image.
\end{enumerate}
\end{prop}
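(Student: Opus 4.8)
The six statements together form a compendium of results of Schneider and Teitelbaum \cite{SchTeit03}, so the plan is to reproduce that development; the recurring technical engine will be the vanishing of $\varprojlim^{1}$ for a \emph{coherent tower} $(\M_n)$ — one in which each $\M_n$ is finitely generated over the Noetherian Banach algebra $\U_n$ and $\U_n\otimes_{\U_{n+1}}\M_{n+1}\cong\M_n$ — with its two consequences that each projection $\M\to\M_n$ out of $\M=\varprojlim\M_n$ is surjective and that $\varprojlim$ is exact on short exact sequences of coherent towers.

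For (i) I would use the surjectivity of $\M\to\M_m$ to lift a finite generating set of $\M_m$ to $\M$; running this over all $m$ and combining with the coherence isomorphisms yields a presentation $\U^{(b)}\to\U^{(a)}\to\M\to 0$. Applying the right-exact functor $\U_n\otimes_{\U}(-)$ and comparing with the presentation of $\M_n$ obtained from the level-$m$ data would then produce a surjection $\U_n\otimes_{\U}\M\to\M_n$; its injectivity, which identifies the two sides, follows from the coherence of the tower of kernels.

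For (ii), given $f\colon\M\to\Nn$ in $\C(\U)$ I would base-change along $\U\to\U_n$ to maps $f_n\colon\M_n\to\Nn_n$ of finitely generated modules over the Noetherian ring $\U_n$, so that $\ker f_n$, $\coker f_n$ and $\im f_n$ are again finitely generated; since $\U_n$ is flat over $\U_{n+1}$, tensoring preserves the relevant exact sequences, so these modules form coherent towers, and the $\varprojlim^{1}$-vanishing then shows that $\ker f=\varprojlim\ker f_n$ and $\coker f=\varprojlim\coker f_n$ are coadmissible; this makes $\C(\U)$ abelian. That $\C(\U)$ contains the finitely presented $\U$-modules is seen by putting $\M_n:=\U_n\otimes_{\U}\M$ and reading off coherence from right-exactness of base change together with flatness of $\U_n$ over $\U_{n+1}$; closure under finite direct sums is then automatic.

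Parts (iii)--(vi) I would organise around the dictionary ``coadmissible submodule $\Leftrightarrow$ closed submodule''. In one direction, a coadmissible $\Nn=\varprojlim\Nn_n\subseteq\M$ has each $\Nn_n$ closed in the Banach space $\M_n$, so $\Nn$ is closed in the Fr\'echet topology of $\M$; conversely a closed submodule inherits a tower whose $n$-th term is the closure of its image in $\M_n$, coherence being extracted from the open mapping theorem and an Artin-Rees stability of the subspace filtrations, so by (ii) it is coadmissible, and the equivalence with ``$\M/\Nn$ coadmissible'' follows since $\C(\U)$ is abelian. Then (iv) is the image of the summation map $\Nn_1\oplus\Nn_2\to\M$ between coadmissible modules and (v) the image of a map $\U^{(k)}\to\M$ between coadmissible modules, both coadmissible by (ii); and (vi) holds because $f=\varprojlim f_n$ with each $f_n$ automatically strict with closed image over a Noetherian Banach algebra, a property preserved under the inverse limit. \textbf{Main obstacle.} The real difficulty, on which everything above rests, is the vanishing of $\varprojlim^{1}$ for coherent towers and the resulting surjectivity of $\M\to\M_n$: this is where the Fr\'echet-Stein axioms genuinely enter, through the Noetherianity of each $\U_n$ and the density of the transition maps $\U_{n+1}\to\U_n$, which forces the transition maps of every coherent tower to have dense image.
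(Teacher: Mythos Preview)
The paper itself gives no proof of this proposition: it is stated with a direct citation to \cite[Lemma 3.6 \& Corollaries 3.1, 3.4 \& 3.5]{SchTeit03} and used as a black box thereafter. So there is nothing to compare your argument against in the paper; you are effectively reconstructing Schneider--Teitelbaum's development, and your outline follows their architecture faithfully, with the $\varprojlim^{1}$-vanishing for coherent towers correctly identified as the load-bearing input.

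One place where your sketch is loose is the direction (c)$\Rightarrow$(a) of (iii). Your appeal to ``an Artin--Rees stability of the subspace filtrations'' is not how Schneider--Teitelbaum proceed, and it is not clear what statement you have in mind or why it would hold at this level of generality. Their route is cleaner: once $\C(\U)$ is known to be abelian and to contain finitely presented modules, the implication (a)$\Leftrightarrow$(b) is formal, (a)$\Rightarrow$(c) comes from (vi), and the remaining implication is obtained by combining (v) with the fact that a closed submodule of a coadmissible module is the closure of the union of its finitely generated (hence coadmissible, hence closed) submodules, together with a limit argument inside each Banach level. If you want a self-contained write-up, that is the step to tighten; the rest of your outline is sound.
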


The proof of the next result is essentially the proof of the first part of \cite[Theorem 4.11]{SchTeit03} (see also \cite[Theorem 4.3.3]{Schmidt2}) but we reproduce it here for the convenience of the reader.

\begin{cor}\label{flat} Let $U$ be a deformable $R$-algebra satisfying assumptions (i)-(iii), such that $U_n$ satisfies (iii) for all $n\geq 0$. Then the natural map $U_L\to\wideparen{U_L}$ is flat.
\end{cor}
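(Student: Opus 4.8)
The plan is to deduce the flatness from the Fr\'echet--Stein structure on $\wideparen{U_L}$ furnished by Theorem \ref{defFreSt}, following the argument for the first part of \cite[Theorem 4.11]{SchTeit03}. First I would note that $U_L$ is Noetherian (on both sides): $U$ carries a positive exhaustive filtration with $\gr U$ Noetherian by assumption (i), hence $U$ is Noetherian, and $U_L=U[\pi^{-1}]$ is a localisation of it. By the usual criterion it therefore suffices to show that the functor $\wideparen{U_L}\otimes_{U_L}(-)$ carries injections of finitely generated $U_L$-modules to injections.

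The key point is that each Banach algebra $\widehat{U_{n,L}}$ is flat over $U_L$. Indeed, $U_n$ is again a deformable $R$-algebra (Lemma \ref{deformablealg}) with $\gr U_n\cong\gr U$ Noetherian, so $U_n$ is Noetherian; since $\pi$ is central in $U_n$, the ideal $(\pi)$ has the Artin-Rees property and the $\pi$-adic completion $\widehat{U_n}$ is flat over $U_n$, whence $\widehat{U_{n,L}}=\widehat{U_n}\otimes_R L$ is flat over $U_n\otimes_R L$. Finally $U_n\otimes_R L=U_L$, because $\pi^{ni}F_iU\subseteq U_n$ forces $F_iU\subseteq(U_n)_L$ for all $i$, so via this identification $\widehat{U_{n,L}}$ becomes flat over $U_L$.

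Now let $f\colon M'\hookrightarrow M$ be an injection of finitely generated $U_L$-modules. Since $U_L$ is Noetherian, $\mathcal M':=\wideparen{U_L}\otimes_{U_L}M'$ and $\mathcal M:=\wideparen{U_L}\otimes_{U_L}M$ are finitely presented $\wideparen{U_L}$-modules, hence coadmissible by Proposition \ref{FS2}(ii). By Proposition \ref{FS2}(i) together with Definition \ref{FS1} we have $\mathcal M\cong\varprojlim\mathcal M_n$ with $\mathcal M_n\cong\widehat{U_{n,L}}\otimes_{\wideparen{U_L}}\mathcal M\cong\widehat{U_{n,L}}\otimes_{U_L}M$, and likewise for $\mathcal M'$; moreover $\id\otimes f\colon\mathcal M'\to\mathcal M$ is obtained by applying $\varprojlim$ to the morphism of towers $\widehat{U_{n,L}}\otimes_{U_L}M'\to\widehat{U_{n,L}}\otimes_{U_L}M$. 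By the previous paragraph each of these maps is injective, and $\varprojlim$ is left exact, so $\id\otimes f$ is injective. This shows $\wideparen{U_L}\otimes_{U_L}(-)$ is exact on left modules; running the same argument with right modules gives right flatness, so $U_L\to\wideparen{U_L}$ is flat in the sense of the paper's conventions.

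The step I expect to require the most care is the flatness of $\widehat{U_{n,L}}$ over $U_L$: flatness of a completion is not automatic in the non-commutative setting, and here it hinges on the Noetherianity of $U_n$ together with the centrality of $\pi$. Once that is in hand, the rest is a formal manipulation with the Fr\'echet--Stein formalism and the identification $\mathcal M_n\cong\widehat{U_{n,L}}\otimes_{U_L}M$ for coadmissible $\mathcal M$.
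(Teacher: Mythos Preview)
Your proof is correct and follows essentially the same approach as the paper's: both establish flatness of each $\widehat{U_{n,L}}$ over $U_L$ via the Artin--Rees property for the central element $\pi$, then use the Fr\'echet--Stein structure from Theorem~\ref{defFreSt} together with coadmissibility of finitely presented modules to pass to the inverse limit. The only cosmetic difference is that the paper reduces to showing $\wideparen{U_L}\otimes_{U_L}I\to\wideparen{U_L}$ is injective for left ideals $I$, whereas you work with an arbitrary injection of finitely generated modules; the underlying argument and its invocation of \cite[Theorem 4.11]{SchTeit03} are the same.
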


\begin{proof}
We show right flatness, the proof of left flatness being completely analogous. Since $\pi$ is central, for every $n\geq 0$ the ideal $\pi U_n$ in $U_n$ satisfies the Artin-Rees property and thus $\widehat{U_n}$ is flat over $U_n$ by \cite[Proposition D.V.1 \& Property V.8)iii), page 301]{gradedring}. Hence it follows that $U_L\to \widehat{U_{n,L}}$ is flat for every $n\geq 0$. By the Theorem we know that $\wideparen{U_L}$ is Fr\'echet-Stein. It will suffice to show that for a left ideal $I\subset U_L$, the map $\wideparen{U_L}\otimes_{U_L}I\to \wideparen{U_L}$ is injective. But now, $I$ is finitely generated and in fact finitely presented since $U_L$ is Noetherian. Thus $\wideparen{U_L}\otimes_{U_L}I$ is finitely presented as well, and so coadmissible. Thus we have isomorphisms
$$
\wideparen{U_L}\otimes_{U_L}I\cong \varprojlim \left(\widehat{U_{n,L}}\otimes_{\wideparen{U_L}}(\wideparen{U_L}\otimes_{U_L}I) \right)\cong \varprojlim (\widehat{U_{n,L}}\otimes_{U_L}I).
$$
Now as $\widehat{U_{n,L}}$ is flat over $U_L$ for every $n$, it follows that $\widehat{U_{n,L}}\otimes_{U_L}I\to \widehat{U_{n,L}}$ is injective. The result then follows since projective limits preserve injections.
\end{proof}

\subsection{Emerton's result}\label{Emerton} When it is not known whether the algebras we have at hand are deformable, we instead rely on techniques inspired from Emerton's result to prove that their completions are Fr\'echet-Stein. Again, the arguments from \cite[5.3.5-5.3.10]{Emerton} follow through with only minor changes. They mainly rely on some general lemmas that we do not write out here but reference throughout the proof.

\begin{prop}
Suppose that $A$ is a left Noetherian $R$-algebra, $\pi$-adically separated, $\pi$-torsion free, and suppose that $B$ is an $R$-subalgebra of $A_L$ which contains $A$. Suppose $B$ is equipped with an exhaustive $R$-algebra filtration $(F_\cdot)$ satisfying $F_0B=A$ and such that $\gr^F B$ is a $q$-commutative $A$-algebra. Then $\widehat{A_L}$ and $\widehat{B_L}$ are left Noetherian and $\widehat{B_L}$ is right flat over $\widehat{A_L}$.
\end{prop}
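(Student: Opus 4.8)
The plan is to transcribe the chain of arguments in \cite[5.3.5--5.3.10]{Emerton} essentially verbatim, making two substitutions: wherever Emerton invokes ordinary or skew-polynomial Noetherianity of a graded ring we instead invoke the Noetherianity of $q$-commutative algebras over Noetherian rings recorded in \S\ref{prelimonOq}, and wherever he uses that $\gr^F B$ is generated by \emph{central} elements we use only that its generators $q$-commute and normalise $A$. Accordingly, $\S$\ref{prelimonOq} is used in place of Emerton's appeals to the (noncommutative) Hilbert basis theorem, and $q$-commutativity in place of his Artin--Rees machinery for central sequences.

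First I would settle Noetherianity. Since $A$ is Noetherian and $\gr^F B$ is a $q$-commutative $A$-algebra, $\gr^F B$ is Noetherian, and hence so is $B$, the filtration $F_\bullet$ being positive and exhaustive. Moreover $B$ is $\pi$-torsion free, being a subring of the $L$-vector space $A_L$. Filtering the $\pi$-adic completions $\widehat A$, $\widehat B$ by powers of $\pi$, one gets $\gr\,\widehat A\cong (A/\pi A)[t]$ and $\gr\,\widehat B\cong (B/\pi B)[t]$, where $t$ is the principal symbol of $\pi$; here $A/\pi A$ is Noetherian, and $B/\pi B$, filtered by the images of the $F_iB$, has associated graded a quotient of the $q$-commutative $(A/\pi A)$-algebra $\gr^F B\otimes_A A/\pi A$, hence is Noetherian. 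Thus $\gr\,\widehat A$ and $\gr\,\widehat B$ are Noetherian, so $\widehat A$ and $\widehat B$ are Noetherian (their $\pi$-adic filtrations being complete, separated and exhaustive with Noetherian associated graded), and therefore $\widehat{A_L}=\widehat A[\tfrac1\pi]$ and $\widehat{B_L}=\widehat B[\tfrac1\pi]$ are Noetherian.

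For flatness it is enough to show that $\widehat B$ is right flat over $\widehat A$, and since both rings are $\pi$-adically complete and $\pi$-torsion free, by \cite[Proposition 1.2]{SchTeit03} it suffices to prove that $\gr\,\widehat B=(B/\pi B)[t]$ is right flat over $\gr\,\widehat A=(A/\pi A)[t]$. As the graded ring map is obtained by base change along $A/\pi A\to(A/\pi A)[t]$, and $(A/\pi A)[t]$ is faithfully flat over $A/\pi A$, this reduces to the right flatness of $B/\pi B$ over $A/\pi A$. Now one runs a filtered argument: the filtration on $B/\pi B$ has bottom piece $A/\pi A$ and, provided $\gr^F B$ is $\pi$-torsion free so that $\gr(B/\pi B)\cong\gr^F B\otimes_A A/\pi A=:\bar G$, the associated graded $\bar G$ is a $q$-commutative $(A/\pi A)$-algebra; presenting $\bar G$ as an iterated skew-polynomial extension of $A/\pi A$, obtained by adjoining the normalising $q$-commuting generators one at a time, exhibits $\bar G$ as free, hence flat, over $A/\pi A$. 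Therefore each graded piece $\bar G_i$ is flat, so inductively each step of the filtration of $B/\pi B$ is an extension of flat right $(A/\pi A)$-modules, and passing to the colimit shows $B/\pi B$ is flat over $A/\pi A$. Unwinding the reductions yields the right flatness of $\widehat{B_L}$ over $\widehat{A_L}$.

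The delicate point is this last flatness step: it is precisely where Emerton's centrality hypothesis entered (through the Artin--Rees theory for ideals generated by central elements in \cite{gradedring}), and in the $q$-commutative setting one must check that the generators of $\gr^F B$ genuinely present it as an iterated skew-polynomial ring over $A$, so that $\gr^F B$ is $A$-free and in particular $\pi$-torsion free. In every application of this Proposition in the present paper, $\gr^F B$ is a quantum affine space over $A$, for which this is manifest and the argument goes through unchanged; in general it is handled by the twisted analogues of the flatness and Artin--Rees lemmas of \cite{gradedring}.
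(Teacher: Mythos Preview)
Your Noetherianity argument is fine and in fact more direct than the paper's. The gap is exactly where you flag it: the flatness step. The hypothesis ``$\gr^F B$ is a $q$-commutative $A$-algebra'' only means it is generated over $A$ by finitely many normalising, $q$-commuting elements; it does \emph{not} imply that $\gr^F B$ is an iterated Ore extension of $A$, nor that it is $A$-free or even $A$-flat, nor that it is $\pi$-torsion free. A $q$-commutative $A$-algebra may well be a proper quotient of the quantum affine space on its generators. Your reduction to ``$B/\pi B$ flat over $A/\pi A$'' therefore does not go through from the stated hypotheses, and the vague appeal to ``twisted analogues of the Artin--Rees lemmas'' is not a proof. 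Even in the paper's application (\S\ref{OqFS}) it is not obvious that $\gr^G B$ is a quantum affine space rather than a quotient thereof.

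The paper avoids this entirely by a different manoeuvre: it introduces the intermediate ring $C:=B+\widehat{A}\subset\widehat{A_L}$, filtered by $G_iC:=F_iB+\widehat{A}$. One checks $\gr^G_iC\cong\gr^F_iB$ for $i\ge1$, so $\gr^G C$ is a $q$-commutative $\widehat{A}$-algebra; the $q$-commutativity is used only to conclude that $\gr^G C$, and hence $C$, is left Noetherian --- no flatness of $\gr^F B$ over $A$ is needed. The two remaining observations are purely formal: $C_L=\widehat{A_L}$ because $B\subset A_L$, and $\widehat{B_L}\cong\widehat{C_L}$ by \cite[Lemma~5.3.8]{Emerton}. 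Now flatness is immediate: since $\pi$ is central in the Noetherian ring $C$, the ideal $\pi C$ has the Artin--Rees property, so $\widehat{C}$ is right flat over $C$, and tensoring with $L$ gives $\widehat{B_L}\cong\widehat{C_L}$ right flat over $C_L=\widehat{A_L}$. The point is that Artin--Rees is applied to the single central element $\pi$, not to the (merely $q$-commuting) generators of $\gr^F B$; this is why the paper's argument needs only Noetherianity from the $q$-commutative hypothesis, not flatness.
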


\begin{proof}
Note that $\widehat{A}$ is left Noetherian because $A$ is left Noetherian, hence so is $\widehat{A_L}$. Furthermore, $\gr B$ is left Noetherian by Lemma \ref{prelimonOq}. Now, following \cite{Emerton}, for any left $A$-submodule $M$ of $A_L$, we let $\iota_M:\widehat{A}\otimes_AM\to \widehat{A_L}$ be the natural map induced from the multiplication in $\widehat{A_L}$, and we let $C$ denote the image of $\iota_B$. By \cite[Corollary 5.3.6]{Emerton} $C$ is an $R$-subalgebra of $\widehat{A_L}$. Let $G_iC$ denote the image of $\iota_{F_iB}$. By \cite[Lemma 5.3.5]{Emerton}, $G_iC$ is equal to $F_iB+\widehat{A}$ and $C=B+\widehat{A}$, and so we see that $(G'_\cdot)$ is an exhaustive algebra filtration on $C$ such that $G'_0C=\widehat{A}$. Now, by \cite[Lemma 5.3.5]{Emerton}, $F_{i-1}B=A_L\cap G_{i-1}C$ for all $i\geq 1$ and so it follows that $F_{i-1}B=F_iB\cap(F_{i-1}B+\widehat{A})$. Hence the natural map $\gr^F_iB\to\gr^G_iC$ induced by $\iota_{F_iB}$ is an isomorphism. Thus we deduce from our assumptions that the associated graded ring $\gr^{G'}C$ is a $q$-commutative $\widehat{A}$-algebra. Therefore by Lemma \ref{prelimonOq} we have that $\gr^{G'}C$ is left Noetherian, hence so is $C$.

The fact that $\widehat{B_L}$ is right flat over $\widehat{A_L}$ now follows easily. Indeed, since $C=B+\widehat{A}$ we see that $C_L=\widehat{A_L}$. Moreover $\widehat{B_L}\cong \widehat{C_L}$ by \cite[Lemma 5.3.8]{Emerton}. But the ideal generated by $\pi$ satisfies the Artin-Rees property as $\pi$ is central, and so $\widehat{C}$ is right flat over $C$ as $C$ is left Noetherian. Tensoring over $R$ with $L$, we therefore see that $\widehat{B_L}\cong \widehat{C_L}$ is right flat over $\widehat{A_L}=C_L$. 
\end{proof}

\subsection{A PBW type $R$-basis}\label{PBW} In order to apply the previous results to $\Uqcap$, it will be useful to find certain bases of the algebras $U_n$. These will in turn allow us to get an explicit description of $\Uqcap$.

Let $\U$ be the $R$-submodule of $U_q$ spanned by all monomials $M_{\boldsymbol{r}, \boldsymbol{s}, \lambda}$, which is free by the PBW theorem. The height filtration on $U_q$ induces a filtration on $\U$. Explicitly, we define $F_i\U$ to be the $R$-span of the monomials $M_{\boldsymbol{r}, \boldsymbol{s}, \lambda}$ such that $\htt(M_{\boldsymbol{r}, \boldsymbol{s}, \lambda})\leq i$. We want to deform this module and eventually obtain an algebra. For each $n\geq 0$, the $R$-module $\U_n$ is just the $R$-span of all $\pi^{n\htt(M_{\boldsymbol{r}, \boldsymbol{s}, \lambda})}M_{\boldsymbol{r}, \boldsymbol{s}, \lambda}$, or in other words the $R$-span of the monomials 
$$
(\pi^{n\htt(\beta_1)}F_{\beta_1})^{r_1}\cdots (\pi^{n\htt(\beta_N)}F_{\beta_N})^{r_N}K_\lambda (\pi^{n\htt(\beta_1)}E_{\beta_1})^{s_1}\cdots (\pi^{n\htt(\beta_N)}E_{\beta_N})^{s_N}.
$$
We let $m$ be the least integer such that
$$
\frac{\pi^{2m}}{q_i-q_i^{-1}}\in R\quad \text{for all } 1\leq i\leq n.
$$
Hence for all $n\geq m$, we have
$$
(\pi^nE_{\alpha_i})(\pi^nF_{\alpha_i})-(\pi^nF_{\alpha_i})(\pi^nE_{\alpha_i})\in R[K_\lambda :\lambda\in P]
$$
and so the generators of $U_n$ satisfy relations which can be expressed as an $R$-linear combination of them.

\begin{thm}
The $R$-module $\U_n$ is equal to $U_n$ for all $n\geq m$, and so is an $R$-algebra.
\end{thm}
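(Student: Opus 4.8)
The plan is to prove the two inclusions $\U_n\subseteq U_n$ and $U_n\subseteq\U_n$ separately; since $U_n$ is by construction the $R$-subalgebra of $U_q$ generated by the $\pi^nE_{\alpha_i}$, $\pi^nF_{\alpha_j}$ and the $K_\lambda$, this will also give the final clause that $\U_n=U_n$ is an $R$-algebra. For the second inclusion it will in fact suffice to prove that $\U_n$ is closed under multiplication, because all the generators of $U_n$ already lie in $\U_n$: $\pi^nE_{\alpha_i}$ is the $n$-th deformation of the PBW basis element $E_{\alpha_i}$, which has height one, and $K_\lambda$ has height zero.

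\emph{Proof of $\U_n\subseteq U_n$.} The crucial point is that each PBW root vector $E_{\beta_j}$ lies in $F_{\htt(\beta_j)}U$ for the degree-one filtration on $U$ with $F_0U=R[K_\lambda:\lambda\in P]$. Under the standing hypotheses on $p$, the only divided powers occurring in Lusztig's braid operators are $E_{\alpha_i}^{(s)}$ with $s\leq -a_{ij}\leq 3$, and $s=3$ occurs only in type $G_2$; in each such case $[s]_{q_i}!$ is a unit of $R$ (as $[2]_{q_i}\equiv 2$ and $[3]_{q_i}\equiv 3\pmod\pi$), so $E_{\alpha_i}^{(s)}\in U$ and hence each $T_i$ maps $U$ into itself. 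Since $w_0=s_{i_1}\cdots s_{i_N}$ is reduced, $E_{\beta_j}=T_{i_1}\cdots T_{i_{j-1}}(E_{\alpha_{i_j}})$ lies in $U_q^+$; as it also lies in $U$ and has weight $\beta_j$ for the adjoint $K$-action, compatibility of the De Concini--Kac form with the triangular decomposition forces it to be an $R$-linear combination of monomials in the $E_{\alpha_i}$ of total degree $\htt(\beta_j)$, whence $E_{\beta_j}\in F_{\htt(\beta_j)}U$. Applying $\omega$ gives $F_{\beta_j}\in F_{\htt(\beta_j)}U$ as well, so, the filtration on $U$ being multiplicative and $K_\lambda\in F_0U$, we get $M_{\boldsymbol r,\boldsymbol s,\lambda}\in F_{\htt(M_{\boldsymbol r,\boldsymbol s,\lambda})}U$; hence $\pi^{n\htt(M)}M\in\pi^{n\htt(M)}F_{\htt(M)}U\subseteq\sum_{i\geq 0}\pi^{ni}F_iU=U_n$.

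\emph{Reduction for $U_n\subseteq\U_n$.} Write $\U_n^{+}$, respectively $\U_n^{-}$, for the $R$-span of the ordered monomials in the elements $\pi^{n\htt(\beta_j)}E_{\beta_j}$, respectively $\pi^{n\htt(\beta_j)}F_{\beta_j}$. By the previous step each $\pi^{n\htt(\beta_j)}E_{\beta_j}$ lies in the $R$-subalgebra generated by the $\pi^nE_{\alpha_i}$; conversely, the $R$-subalgebra of $U_q$ generated by the $E_{\alpha_i}$ is free over $R$ on the ordered monomials in the $E_{\beta_j}$, and the Levendorskii--Soibelman relations straightening products of the $E_{\alpha_i}$ into this basis have coefficients in $R$ (the relevant quantum binomial coefficients being Laurent polynomials in $q_i$ with integer coefficients), so, tracking powers of $\pi$, every product of $\pi^nE_{\alpha_i}$'s is an $R$-combination of ordered monomials in the $\pi^{n\htt(\beta_j)}E_{\beta_j}$. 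Thus $\U_n^{+}$ is exactly the $R$-subalgebra generated by the $\pi^nE_{\alpha_i}$, and likewise for $\U_n^{-}$. The triangular decomposition of $U_q$ then gives $\U_n=\U_n^{-}\cdot R[K_\lambda:\lambda\in P]\cdot\U_n^{+}$ as $R$-modules, and since $K_\lambda E_{\beta_j}=q^{\langle\lambda,\beta_j\rangle}E_{\beta_j}K_\lambda$ with $q^{\langle\lambda,\beta_j\rangle}\in R^\times$, the subalgebra $R[K_\lambda:\lambda\in P]$ normalises $\U_n^{\pm}$. Hence, to see that $\U_n$ is closed under multiplication, it suffices to prove $\U_n^{+}\cdot\U_n^{-}\subseteq\U_n$, i.e.\ to straighten the product of a word in the $\pi^nE_{\alpha_i}$ and a word in the $\pi^nF_{\alpha_j}$ back into the form $\U_n^{-}\cdot R[K_\lambda]\cdot\U_n^{+}$.

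\emph{The main step.} This is where the hypothesis $n\geq m$ is used, and it is the only delicate point. When $a\neq b$ the generators $\pi^nE_{\alpha_a}$ and $\pi^nF_{\alpha_b}$ commute, so those factors may be reordered freely; when $a=b$ one has
$$
(\pi^nE_{\alpha_a})(\pi^nF_{\alpha_a})=(\pi^nF_{\alpha_a})(\pi^nE_{\alpha_a})+\frac{\pi^{2n}}{q_a-q_a^{-1}}\bigl(K_{\alpha_a}-K_{-\alpha_a}\bigr),
$$
and for $n\geq m$ the scalar $\pi^{2n}(q_a-q_a^{-1})^{-1}$ lies in $R$ by the very definition of $m$, while the $K_{\pm\alpha_a}$ produced are pushed back into place at the cost of powers of $q\in R^\times$. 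Iterating these two moves rewrites any product of a word in the $\pi^nE_{\alpha_i}$ with a word in the $\pi^nF_{\alpha_j}$ as an $R$-linear combination of elements of $\U_n^{-}\cdot R[K_\lambda]\cdot\U_n^{+}=\U_n$. Therefore $\U_n$ is an $R$-subalgebra of $U_q$ containing all the generators of $U_n$, so $U_n\subseteq\U_n$, and combined with the first inclusion, $\U_n=U_n$. I expect the main obstacle to be purely the bookkeeping in this last straightening, together with the verification that no denominators other than $(q_i-q_i^{-1})^{-1}$ can intervene — the braid-operator denominators having been disposed of in the first step, and the Serre coefficients being integral — rather than any conceptual difficulty; the content of the theorem is precisely the observation that the single offending scalar $(q_i-q_i^{-1})^{-1}$ becomes $R$-integral once one deforms by $\pi^{2m}$.
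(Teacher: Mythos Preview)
Your argument is correct and follows the same three-stage architecture as the paper's proof: first show that each $E_{\beta_j}$ lies in $U^+$ with the correct filtration degree $\htt(\beta_j)$ (giving $\U_n\subseteq U_n$), then establish $\U_n^\pm=U_n^\pm$, and finally use the commutator relation for $n\geq m$ to obtain the triangular decomposition $U_n^-\otimes_R R[K_\lambda]\otimes_R U_n^+\cong U_n$. The third step is identical in both proofs.

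Where you diverge is in the first two steps. For the degree bound on $E_{\beta_j}$ you argue indirectly: the braid operators preserve $U$, the result lies in $U_q^+$, and then ``compatibility of the De Concini--Kac form with the triangular decomposition'' forces $E_{\beta_j}\in U^+$. This last implication, namely $U\cap U_q^+=U^+$, is not obvious over $R$ (precisely because the commutator denominator $(q_i-q_i^{-1})^{-1}$ obstructs the naive triangular decomposition of $U$ itself) and amounts to citing the integral PBW theorem for the De Concini--Kac form. The paper instead proves directly, by a rank-$2$ reduction and induction on $\ell(w)$, that $T_w(E_\alpha)$ is an $R$-linear combination of words $E_J$ with the prescribed multiplicities whenever $w\alpha>0$; this yields $E_{\beta_j}\in U^+$ without appealing to any triangular decomposition of $U$. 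Similarly, for $U_n^+=\U_n^+$ you invoke the Levendorskii--Soibelman relations with $R$-integral coefficients; your parenthetical justification (quantum binomial coefficients) is not quite accurate, as the LS structure constants are considerably more complicated, though their integrality over $\mathbb{Z}[q,q^{-1}]$ is indeed a known theorem. The paper instead follows Jantzen's approach, proving that the $R$-module $\U^+[w]$ is independent of the reduced expression for $w$ and hence stable under left multiplication by each $E_{\alpha_i}$.

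In short, both proofs are valid; yours is shorter because it treats two nontrivial integrality statements about the De Concini--Kac form as black boxes, while the paper reproves them in a self-contained manner adapted from Jantzen. If you keep your version, you should sharpen the two citations: replace the appeal to ``compatibility with the triangular decomposition'' by a precise reference for $U\cap U_q^+=U^+$ (or for the integral PBW basis of $U^+$), and replace the quantum-binomial remark by a reference for the $\mathbb{Z}[q,q^{-1}]$-integrality of the Levendorskii--Soibelman coefficients.
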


We start preparing for the proof the Theorem. For all $n\geq 0$, we let $U_n^+$ be the positive part of $U_n$, i.e the $R$-subalgebra of $U_q$ generated by the $\pi^nE_{\alpha_i}$'s. It is the $n$-th deformation of $U^+$ with respect to the filtration given by assigning every $E_{\alpha_i}$ degree 1. We also define $\U_n^+$ to be the $R$-submodule of $\U_n$ spanned by all monomials of the form
$$
(\pi^{n\htt(\beta_1)}E_{\beta_1})^{s_1}\cdots (\pi^{n\htt(\beta_N)}E_{\beta_N})^{s_N}.
$$
It is the $n$-th deformation of $\U^+$ with respect to the height filtration. We also define $U_n^-$ and $\U_n^-$ by applying $\omega$ to the positive parts.

Since $p=\text{char}(k)>3$, we see that the braid group action from \ref{PrelimonUq} preserves $U$ and so $E_{\beta_j}$ lies in $U$ for all $1\leq j\leq N$. Since the automorphism $\omega$ preserves $U$, we see that the $F_{\beta_j}$'s also belong to $U$, and hence that $\U\subset U$. To obtain that the $\pi^{n\htt{\beta_j}}E_{\beta_j}$'s actually lie in $U_n^+$ for every $n\geq 0$, we adapt \cite[Lemma 8.19 and Proposition 8.20]{Jantzen} to our situation. The same proofs go through with only minor changes. Before that, we establish the following notation: for a sequence $J=\{\alpha_{i_1}, \ldots, \alpha_{i_j}\}$ of simple roots, we write $E_J$ for the product $E_{\alpha_{i_1}}\cdots E_{\alpha_{i_j}}$.

\begin{lem}
Let $w\in W$ and $\alpha$ be a simple root. Suppose $w\alpha>0$ and write $w\alpha=\sum_{i=1}^{n}m_i\alpha_i$. Then $T_w(E_\alpha)$ is an $R$-linear combination of words all of the form $E_J$ where $J$ is a finite sequence of simple roots such that each root $\alpha_i$ occurs in $J$ with multiplicity $m_i$.
\end{lem}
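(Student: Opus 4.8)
The plan is to derive the statement from two facts: that $T_w(E_\alpha)$ lies in the $R$-subalgebra $U^+\subseteq U$ generated by the $E_{\alpha_i}$ (i.e.\ is an $R$-linear combination of words $E_J$), and that it is a weight vector of weight $w\alpha$. Granting both, a grading argument forces the words $E_J$ that can occur to contain each $\alpha_i$ with multiplicity exactly $m_i$.

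For the first fact I would argue as follows. As already noted, the braid operators preserve $U$ (here our hypothesis on $p$ is used, to ensure the divided powers occurring in $T_i(E_{\alpha_j})$ lie in $U$), so $T_w(E_\alpha)\in U$. To see that in addition $T_w(E_\alpha)\in U_q^+$, write $\alpha=\alpha_p$ and fix a reduced expression $w=s_{i_1}\cdots s_{i_\ell}$. Since $w\alpha>0$ we have $\ell(ws_p)=\ell+1$, so $s_{i_1}\cdots s_{i_\ell}s_p$ is reduced and extends to a reduced expression $w_0=s_{i_1}\cdots s_{i_\ell}s_p s_{j_1}\cdots s_{j_{N-\ell-1}}$ for the longest element. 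Running the PBW construction of \ref{PrelimonUq} for this reduced expression, its $(\ell+1)$-st positive root is $s_{i_1}\cdots s_{i_\ell}(\alpha_p)=w\alpha$, with associated PBW generator of $U_q^+$ the element $T_{i_1}\cdots T_{i_\ell}(E_{\alpha_p})=T_w(E_\alpha)$; hence $T_w(E_\alpha)\in U_q^+$. Since $T_w(E_\alpha)$ thus lies in both $U$ and $U_q^+$, the triangular decomposition of $U$ over $R$ forces it into $U^+$, so $T_w(E_\alpha)$ is an $R$-linear combination of words $E_J$.

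For the weight I would check that whenever $x$ is homogeneous of weight $\nu$, meaning $K_\mu x K_{-\mu}=q^{\langle\mu,\nu\rangle}x$ for all $\mu\in P$, the element $T_i(x)$ is homogeneous of weight $s_i(\nu)$: indeed, since $T_i$ is an algebra automorphism with $T_iK_\mu=K_{s_i(\mu)}$, one computes $K_\mu T_i(x)K_{-\mu}=T_i(K_{s_i(\mu)}xK_{-s_i(\mu)})=q^{\langle s_i(\mu),\nu\rangle}T_i(x)=q^{\langle\mu,s_i(\nu)\rangle}T_i(x)$, using $W$-invariance of $\langle\,,\,\rangle$. Iterating along a reduced expression for $w$ shows $T_w(E_\alpha)$ is homogeneous of weight $w\alpha=\sum_i m_i\alpha_i$. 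Now $U^+$ is graded by the root lattice --- the defining $q$-Serre relations being homogeneous --- with the weight-$\nu$ piece equal to the $R$-span of those words $E_J$ whose letters sum to $\nu$, and $U^+$ is the direct sum of these pieces. Since $T_w(E_\alpha)\in U^+$ is homogeneous of weight $w\alpha$, it lies in the $R$-span of precisely the words $E_J$ in which each $\alpha_i$ occurs with multiplicity $m_i$, which is the claim.

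The step I expect to carry the real content is the verification that $T_w(E_\alpha)\in U_q^+$ (and hence, via $R$-integrality, in $U^+$); the rest is bookkeeping with the root-lattice grading and with the hypothesis on $p$. An alternative is to imitate \cite[Lemma 8.19]{Jantzen} and induct on $\ell(w)$, writing $w=s_jw'$ with $\ell(w)=\ell(w')+1$, for which one first checks that then $w'\alpha>0$ as well; but applying $T_{s_j}$ to the resulting expression for $T_{w'}(E_\alpha)$ is more delicate, since $T_{s_j}(E_{\alpha_j})=-F_{\alpha_j}K_{\alpha_j}\notin U_q^+$ and one has to see that the $F$'s cancel --- the reduced-word argument above sidesteps this.
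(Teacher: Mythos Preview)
Your argument has a real gap at the step ``the triangular decomposition of $U$ over $R$ forces it into $U^+$'', i.e.\ the implicit claim $U\cap U_q^+=U^+$. No such decomposition is available at this point: the naive one $U\cong U^-\otimes_R R[K_\lambda]\otimes_R U^+$ fails (the commutator $[E_{\alpha_i},F_{\alpha_i}]=[K_{\alpha_i};0]_{q_i}$ need not lie in $R[K_\lambda]$), and the modified version recorded in the Remark after \ref{proof_of_PBW} appears only after the PBW theorem for $U_m$, whose proof uses this very Lemma. One could perhaps establish the modified decomposition $U\cong U^-\otimes_R U^0\otimes_R U^+$ (with $U^0$ enlarged by the elements $[K_{\alpha_i};0]_{q_i}$) directly from the defining relations, and that would indeed give $U\cap U_q^+=U^+$; but this is a non-trivial detour you have not carried out, and as written the argument is circular. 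Your other steps --- $T_w(E_\alpha)\in U$ from the hypothesis on $p$, $T_w(E_\alpha)\in U_q^+$ by extending $ws_\alpha$ to a reduced word for $w_0$, and the $Q$-grading on $U^+$ --- are all correct.

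The paper's proof avoids any appeal to the global structure of $U$ by an induction on $\ell(w)$ with a rank-$2$ base case. One picks a simple root $\beta\neq\alpha$ with $w\beta<0$ and factors $w=w'w''$ with $w''\in\langle s_\alpha,s_\beta\rangle$, $\ell(w)=\ell(w')+\ell(w'')$, and $w'\alpha,w'\beta>0$. Explicit rank-$2$ formulae (from \cite[Appendix]{DeConProc}, which is exactly where the hypothesis on $p$ is used) show that $T_{w''}(E_\alpha)$ is an $R$-combination of words in $E_\alpha,E_\beta$ with the correct multiplicities; one then applies $T_{w'}$ and the induction hypothesis to each letter. This is precisely the Jantzen-style induction you flag in your last paragraph; the difficulty with $T_i(E_{\alpha_i})=-F_{\alpha_i}K_{\alpha_i}$ is handled not by cancelling $F$'s, but by never applying a single $T_i$ to $E_{\alpha_i}$ in isolation --- the rank-$2$ Claim absorbs that case.
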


\begin{proof}
We first prove the result in a particular case.

\begin{claim}
Suppose $\beta\neq \alpha$ is another simple root and assume $w$ is in the subgroup of $W$ generated by $s_\alpha$ and $s_\beta$. Then the result holds.
\end{claim}

\begin{proof}[Proof of claim]
We are reduced to a rank 2 case-by-case analysis. If $w=1$ the result is trivial so assume $w\neq 1$. Denote by $m$ the order of $s_\alpha s_\beta$. We have $m=2, 3, 4$ or $6$.

If $m=2$ then $w=s_\beta$ and $T_w(E_\alpha)=E_\alpha$. If $m=3$ then
$$
w\in\{s_\beta, s_\alpha s_\beta\}.
$$
If $m=4$ then
$$
w\in\{s_\beta, s_\alpha s_\beta, s_\beta s_\alpha s_\beta\}.
$$
If $m=6$ then
$$
w\in \{s_\beta, s_\alpha s_\beta, s_\beta s_\alpha s_\beta, s_\alpha s_\beta s_\alpha s_\beta, s_\beta s_\alpha s_\beta s_\alpha s_\beta\}.
$$
Hence in all cases we see that $T_w(E_\alpha)$ is just one of the root vectors that arise in the PBW basis for the case where $\g$ has rank 2. The result then follows by the formulae in \cite[Appendix,(A1)-(A3)]{DeConProc} using our assumptions on $p$.
\end{proof}

We now use induction on $\ell(w)$. If $\ell(w)=0$ then $T_w=1$ and the result is trivial. So assume that $\ell(w)>0$. Hence there exists a simple root $\beta$ such that $w\beta<0$ (and so $\alpha\neq\beta$). By standard facts about Coxeter groups (see \cite{Humphreys2}), we have a decomposition $w=w'w''$ where $w''$ lies in the subgroup of $W$ generated by $s_\alpha$ and $s_\beta$ such that $w'\beta>0$ and $w'\alpha>0$. Then $\ell(w)=\ell(w')+\ell(w'')$ so that $T_w=T_{w'}T_{w''}$. Moreover since $w\alpha>0$ and $w\beta <0$ it follows that $w''\alpha>0$ and $w''\beta<0$. In particular $w''\neq 1$. By the claim we have that $T_{w''}(E_{\alpha})$ is an $R$-linear combination of words all of the form $E_{J''}$ where $J''$ is a finite sequence of simple roots only involving $\alpha$ and $\beta$ such that they appear with the appropriate multiplicities. By induction hypothesis, we also have that $T_{w'}(E_\alpha)$ is an $R$-linear combination of words all of the form $E_{J'}$ where $J'$ is a finite sequence of simple roots each simple root appears in $J'$ with the appropriate multiplicity. Similarly, the analogous statement is true for $T_{w'}(E_\beta)$. Now the result follows since $T_w=T_{w'}T_{w''}$.
\end{proof}

\begin{cor}
Fix a reduced expression $w_0=s_{i_1}\cdots s_{i_N}$. For any $1\leq j\leq N$, write $\beta_j=\sum_{i=1}^{n}m_{ij}\alpha_i$. Then $E_{\beta_j}$ is an $R$-linear combination of words all of the form $E_J$ where $J$ is a finite sequence of simple roots such that each root $\alpha_i$ occurs in $J$ with multiplicity $m_{ij}$ (and so $J$ has length $\htt{\beta_j}$).
\end{cor}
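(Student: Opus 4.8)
The plan is to deduce the corollary directly from the preceding Lemma by specialising it to the right Weyl group element. Recall that by definition $E_{\beta_j}=T_{i_1}\cdots T_{i_{j-1}}(E_{\alpha_{i_j}})$. First I would set $w:=s_{i_1}\cdots s_{i_{j-1}}$ and $\alpha:=\alpha_{i_j}$, and observe that since the fixed expression $w_0=s_{i_1}\cdots s_{i_N}$ is reduced, every initial segment $s_{i_1}\cdots s_{i_{j-1}}$ is a reduced expression for $w$, so that $\ell(w)=j-1$ and $T_w=T_{i_1}\cdots T_{i_{j-1}}$ is unambiguously defined; in particular $E_{\beta_j}=T_w(E_\alpha)$.

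Next I would check that the hypothesis $w\alpha>0$ of the Lemma holds. Indeed, as recalled in Section \ref{PrelimonUq}, $w\alpha=s_{i_1}\cdots s_{i_{j-1}}(\alpha_{i_j})=\beta_j$, which is one of the positive roots of $\g$ (the elements $\beta_1,\dots,\beta_N$ produced by the chosen reduced expression are precisely all the positive roots, in some order). Writing $\beta_j=\sum_{i=1}^n m_{ij}\alpha_i$, the Lemma then applies verbatim with this $w$ and $\alpha$ and yields that $E_{\beta_j}=T_w(E_\alpha)$ is an $R$-linear combination of words $E_J$, where $J$ ranges over finite sequences of simple roots in which $\alpha_i$ occurs with multiplicity exactly $m_{ij}$ for each $i$.

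Finally, any such sequence $J$ has length $\sum_{i=1}^n m_{ij}$, which by the definition of the height function equals $\htt(\beta_j)$; this gives the parenthetical assertion. I do not expect any genuine obstacle here: all the real content is in the Lemma, and the corollary amounts to nothing more than the observation that $\beta_j=s_{i_1}\cdots s_{i_{j-1}}(\alpha_{i_j})$ is a positive root, so that the Lemma is applicable with $w=s_{i_1}\cdots s_{i_{j-1}}$ and $\alpha=\alpha_{i_j}$.
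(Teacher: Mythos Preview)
Your proposal is correct and follows exactly the same approach as the paper: set $w=s_{i_1}\cdots s_{i_{j-1}}$ and $\alpha=\alpha_{i_j}$, observe that $w\alpha=\beta_j>0$, and apply the preceding Lemma. The paper's proof is in fact even more terse than yours, consisting of a single sentence making this identification.
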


\begin{proof}
Since $\beta_j:=s_{i_1}\cdots s_{i_{j-1}}(\alpha_{i_j})$ we can write it as $w\alpha$ where $w=s_{i_1}\cdots s_{i_{j-1}}$ and $\alpha=\alpha_{i_j}$.
\end{proof}

In particular, the Corollary implies that, for all $n\geq 0$, $\pi^{n\htt(\beta_j)}E_{\beta_j}\in U_n^+$ for all $1\leq j\leq N$. Similarly $\pi^{n\htt(\beta_j)}F_{\beta_j}\in U_n^-$ for all $j$. Hence we see that $\U^{\pm}\subseteq U^{\pm}$ and that $\U_n\subseteq U_n$ for all $n\geq 0$.

\begin{remark} Although the proof that $E_{\beta_j}\in U^+$ is well-known, we couldn't find a reference for the result about multiplicities so we included the proofs for that.
\end{remark}

\subsection{Proof of Theorem \ref{PBW}}\label{proof_of_PBW} The argument to prove the theorem is the same as in \cite[Theorem 8.24]{Jantzen}, rephrased in our context. We sketch it here. By our choice of $m$, the map $U_m^-\otimes_R U_m^0\otimes_R U_m^+\to U_m$ is surjective, where $U_m^0=R[K_\lambda:\lambda\in P]=RP$. Since the left hand side is a lattice inside $U_q^-\otimes_L U_q^0\otimes_L U_q^+$ and by using the triangular decomposition for $U_q$, we in fact get that this map is an isomorphism. We also clearly have a triangular decomposition $\U_m\cong \U_m^-\otimes_R \U_m^0\otimes_R \U_m^+$ where $\U_m^0=U_m^0$.

Since the automorphism $\omega$ preserves $U_m$, we only have to check that $U_m^+=\U_m^+$ in order to obtain $U_m=\U_m$. In fact we show that $U^+=\U^+$ and that this implies that $U_n^+=\U_n^+$ for every $n\geq 0$.

\begin{prop}
Let $w\in W$ and choose a reduced expression $w=s_{j_1}\cdots s_{j_t}$. Denote by $\U^+[w]$ the $R$-span of all monomials of the form
\begin{equation}\label{monomials}
E_{\beta_1}^{m_1}\cdots E_{\beta_t}^{m_t}
\end{equation}
where $E_{\beta_i}=T_{\alpha_{j_1}}\cdots T_{\alpha_{j_{i-1}}}(E_{\alpha_{j_i}})$ for $1\leq i\leq t$. Then $\U^+[w]$ depends only on $w$, not of the choice of reduced expression.
\end{prop}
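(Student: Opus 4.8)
The plan is to induct on $t=\ell(w)$; when $t\le1$ there is a unique reduced expression and there is nothing to prove. For a reduced expression $\underline{w}$ of $w$ let $\U^+[\underline{w}]$ denote the corresponding $R$-span of ordered monomials in the root vectors of $\underline{w}$, so that the assertion is precisely that $\U^+[\underline{w}]$ does not depend on $\underline{w}$. Fix two reduced expressions of $w$ with first letters $s_a$, $s_b$; I will first dispose of the case $a=b$ and then reduce the case $a\neq b$ to it. If $a=b$ the two expressions are $s_a\underline{u}$ and $s_a\underline{u}'$ for reduced expressions $\underline{u},\underline{u}'$ of $w'':=s_aw$, which has length $t-1$. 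Writing $\tilde E_1,\dots,\tilde E_{t-1}$ for the root vectors of $\underline{u}$, the root vectors of $s_a\underline{u}$ are $E_{\alpha_a},T_a(\tilde E_1),\dots,T_a(\tilde E_{t-1})$, and since $T_a$ is an $R$-algebra automorphism of $U$ (under our hypotheses on $p$) this yields
$$
\U^+[s_a\underline{u}]=\sum_{k\ge0}E_{\alpha_a}^{\,k}\,T_a\bigl(\U^+[\underline{u}]\bigr),
$$
and the same for $\underline{u}'$. By the inductive hypothesis $\U^+[\underline{u}]=\U^+[\underline{u}']$ depends only on $w''$, so the two sides coincide.

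Suppose now $a\neq b$, so that $s_aw<w$ and $s_bw<w$. The subgroup $\langle s_a,s_b\rangle$ of $W$ is finite and dihedral, and by a standard fact about Coxeter groups its unique element having both $s_a$ and $s_b$ as left descents is its longest element $z$; hence $w=zv$ with $\ell(w)=\ell(z)+\ell(v)$ \cite{Humphreys2}. Pick reduced expressions $\underline{z}_a,\underline{z}_b$ of $z$ beginning with $s_a$, $s_b$ and a reduced expression $\underline{v}$ of $v$; then $\underline{z}_a\underline{v}$ and $\underline{z}_b\underline{v}$ are reduced expressions of $w$ with first letters $s_a$, $s_b$, and by the case just settled it is enough to compare $\U^+[\underline{z}_a\underline{v}]$ and $\U^+[\underline{z}_b\underline{v}]$. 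Writing $z=s_{c_1}\cdots s_{c_m}$ for either of the chosen expressions, reducedness gives $T_{c_1}\cdots T_{c_m}=T_z$, so the root vector at position $m+j$ of $\underline{z}\,\underline{v}$ is $T_z$ applied to the $j$-th root vector of $\underline{v}$; consequently
$$
\U^+[\underline{z}\,\underline{v}]=\U^+[\underline{z}]\cdot T_z\bigl(\U^+[\underline{v}]\bigr)
$$
(a product of $R$-submodules of $U^+$). The second factor is the same for $\underline{z}_a$ and $\underline{z}_b$, so everything reduces to the rank-two identity $\U^+[\underline{z}_a]=\U^+[\underline{z}_b]$; and since all the root vectors in play are obtained by applying $T_a,T_b$ to $E_{\alpha_a},E_{\alpha_b}$, this is an assertion inside the rank-two subalgebra generated by those two elements, with $z$ the longest element of the corresponding Weyl group and $\underline{z}_a,\underline{z}_b$ its two reduced expressions.

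This rank-two identity is the crux of the matter, and the only point at which the restrictions on $p$ are used. I would establish it by direct inspection of the commutation relations among the rank-two root vectors recorded in \cite[Appendix, (A1)--(A3)]{DeConProc}: in every rank-two type these express a product $E_{\beta_i}E_{\beta_j}$ with $i>j$ as a power of $q$ times $E_{\beta_j}E_{\beta_i}$ plus an $R$-linear combination of ordered monomials in the intermediate root vectors, all coefficients being integers and powers of $q$ — the only denominators that can occur are $[2]_q$ and $[3]_q$, which are units exactly when $p>2$ (resp. $p>3$, the latter being the situation in which $\langle s_a,s_b\rangle$ is dihedral of order $12$, i.e. $\g$ has a $G_2$ component). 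Using these relations one rewrites any ordered monomial in the $\underline{z}_a$-order as an $R$-linear combination of ordered monomials in the $\underline{z}_b$-order, and symmetrically, so the two $R$-spans are equal and the induction closes. The main obstacle is thus the small-rank bookkeeping in types $A_2$, $B_2/C_2$ and $G_2$: checking that the straightening relations never introduce a genuine denominator, which is exactly what our hypotheses on $p$ secure.
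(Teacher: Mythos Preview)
Your proposal is correct and follows essentially the same approach as the paper, which simply refers to the proof of \cite[Proposition 8.22]{Jantzen} and notes that the rank-two calculations performed there all take place inside $U^+$; you have spelled out that inductive argument (peel off a common first letter, otherwise factor through the dihedral longest element and reduce to rank two) and correctly identified that the only new content over $R$ is that the straightening coefficients in the rank-two computations lie in $R$ under the hypotheses on $p$.
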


\begin{proof}
This is identical to the proof of \cite[Proposition 8.22]{Jantzen}, noting that the rank 2 calculations that they perform all take place inside $U^+$.
\end{proof}

\begin{cor}
We have $U_n^+=\U_n^+$ for every $n\geq 0$. Moreover, the height filtration on $\U^+=U^+$ equals the filtration obtained by assigning every $E_{\alpha_i}$ degree 1.
\end{cor}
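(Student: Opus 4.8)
The plan is to deduce the Corollary from the preceding Proposition by a now-standard argument (the one used by Jantzen to pass from ``$\U^+[w]$ is independent of reduced expression'' to ``$\U^+ = U^+$''), and then upgrade the equality $\U^+ = U^+$ to the statement about deformations and filtrations. First I would establish $\U^+ = U^+$. The inclusion $\U^+ \subseteq U^+$ is already noted in the excerpt (it follows from the Corollary on multiplicities, since each $E_{\beta_j}$ lies in $U^+$). For the reverse inclusion it suffices to show that $\U^+$ is a subalgebra of $U_q$, since it contains each $E_{\alpha_i}$ (taking $w = s_i$) and these generate $U^+$. To see $\U^+$ is closed under multiplication, apply the Proposition with $w = w_0$: for the fixed reduced expression $w_0 = s_{i_1}\cdots s_{i_N}$ one has $\U^+ = \U^+[w_0]$ by definition of $\U^+$, and the PBW theorem says exactly that the monomials \eqref{monomials} form an $R$-basis. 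Given any two basis monomials, their product can be re-expanded: using the known commutation relations among the $E_{\beta_j}$ (Levendorskii--Soibelman straightening), any product $E_{\beta_j} E_{\beta_k}$ with $j > k$ rewrites as an $R$-linear combination of ordered monomials $E_{\beta_k} E_{\beta_{k+1}}^{*}\cdots$, with the crucial point — and here is where the Proposition does its work — that a convexity/independence-of-reduced-expression argument guarantees the straightening coefficients lie in $R$ (not just in $L$), because one may compute them inside a rank-$2$ parabolic using an adapted reduced expression, and the rank-$2$ calculations take place inside $U^+$ with $R$-coefficients by our hypothesis on $p$. Hence $\U^+$ is an $R$-subalgebra, so $\U^+ = U^+$.

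Next I would transfer this to the deformations. Both $\U_n^+$ and $U_n^+$ are defined as the $n$-th deformation (with respect to the same height filtration, respectively the degree-$1$ filtration on the $E_{\alpha_i}$) of the $R$-algebra $\U^+ = U^+$; so the equality $\U_n^+ = U_n^+$ for all $n \geq 0$ will follow once we check that the two filtrations on $U^+$ coincide, which is the second assertion of the Corollary. For that, let $F_\bullet$ denote the height filtration on $\U^+$ and $F'_\bullet$ the filtration assigning each $E_{\alpha_i}$ degree $1$. Since $E_{\alpha_i}$ has height $1$, clearly $F'_i U^+ \subseteq F_i U^+$. For the reverse: by the Corollary on multiplicities, each $E_{\beta_j}$ is an $R$-linear combination of words $E_J$ of length $\htt(\beta_j)$, so $E_{\beta_j} \in F'_{\htt(\beta_j)} U^+$; consequently a PBW monomial $E_{\beta_1}^{s_1}\cdots E_{\beta_N}^{s_N}$ of height $\sum s_j \htt(\beta_j) = i$ lies in $F'_i U^+$, whence $F_i U^+ \subseteq F'_i U^+$. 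Thus $F = F'$, and therefore $\U_n^+ = (\U^+)_n = (U^+)_n = U_n^+$ for all $n \geq 0$, where the deformations are computed with respect to this common filtration.

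The main obstacle I anticipate is the integrality of the straightening coefficients in the first step — i.e. verifying that when one re-expands a product of PBW monomials in $\U^+$ back into the PBW basis, all the coefficients genuinely lie in $R$. This is precisely the content that the Proposition (independence of $\U^+[w]$ from the reduced expression) is designed to supply: one reduces any given ``inversion'' to a rank-$2$ situation by choosing a reduced expression adapted to the relevant pair of roots, and in rank $2$ the explicit formulas of \cite[Appendix]{DeConProc}, valid under our restriction on $p$ (and on $p>3$ in type $G_2$), have $R$-coefficients. Everything else — the reduction to showing $\U^+$ is a subalgebra, the comparison of the two filtrations, and the passage to $n$-th deformations — is formal bookkeeping with the PBW basis and the definition of deformation, and should be routine once the integral straightening is in hand.
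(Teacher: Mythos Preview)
Your proposal is correct, and the second half (comparing the two filtrations and passing to deformations) matches the paper's argument essentially verbatim. The first half, however, takes a more elaborate route than the paper.

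For the equality $\U^+ = U^+$, you aim to show $\U^+$ is a full $R$-subalgebra by invoking Levendorskii--Soibelman straightening and arguing the coefficients lie in $R$ via a reduction to rank $2$. The paper instead observes that it suffices to show $\U^+$ is stable under \emph{left multiplication by each simple generator} $E_{\alpha_i}$: since $1 \in \U^+$, this already forces $U^+ \subseteq \U^+$. That stability follows immediately from the Proposition: choose a reduced expression for $w_0$ beginning with $s_i$, so that $E_{\alpha_i} = E_{\beta_1}$, and then left multiplication by $E_{\beta_1}$ manifestly preserves the $R$-span of ordered monomials $E_{\beta_1}^{m_1}\cdots E_{\beta_N}^{m_N}$. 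This is exactly Jantzen's argument from \cite[Theorem 8.24]{Jantzen}, and it sidesteps the integrality question for general L--S coefficients entirely.

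Your route does work, but the justification you give for integrality (``compute them inside a rank-$2$ parabolic using an adapted reduced expression'') is imprecise as stated: for an arbitrary pair $\beta_k, \beta_j$ with $k<j$ the straightening coefficients do not reduce to a single rank-$2$ computation. One can make it rigorous, but the cleanest way to do so ultimately passes through the same trick the paper uses --- so the paper's shortcut is both simpler and the natural endpoint of your own line of reasoning.
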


\begin{proof}
By the Proposition we see that $\U^+=\U^+[w_0]$ is independent of the choice of reduced expression for $w_0$, and thus is preserved under left multiplication by all the generators $E_{\alpha_i}$ by the proof of \cite[Theorem 8.24]{Jantzen}. Hence $U^+=\U^+$ since $1\in \U^+$.

The height filtration on $U^+$ is an algebra filtration as it is the subspace filtration of an algebra filtration on $U_q^+$. Since all the $E_{\alpha_i}$'s have degree 1 in it, it must contain the filtration where we set $\deg(E_{\alpha_i})=1$. Corollary \ref{PBW} gives the reverse inclusion. Thus we now obtain $U_n^+=\U_n^+$ by taking the $n$-th deformation with respect to this filtration.
\end{proof}

\begin{proof}[Proof of Theorem \ref{PBW}] Put $n=m$ in the previous Corollary to obtain that $U_m=\U_m$. Moreover, by the same proof as in the previous Corollary, we get that the height filtration on $U_m$ equals to filtration obtained by setting $F_0U_m=R[K_\lambda :\lambda\in P]$ and $\deg(E_{\alpha_i})=\deg(F_{\alpha_i})=1$. Hence we get that $U_n=\U_n$ for every $n\geq m$ by deforming.
\end{proof}

\begin{remark}
We see that the only thing stopping $U$ from being equal to $\U$ is the commutator relations between the $E$'s and the $F$'s, which stop the triangular identity as we wrote it from holding in $U$. We can fix this slightly by noticing that we have $U\cong U^-\otimes_R U^0\otimes U^+$ with a slightly different choice of $U^0$: we define it to be the $R$-algebra generated by the $K_\lambda$, $\lambda\in P$, and the elements
$$
[K_{\alpha_i};0]_{q_i}:=\frac{K_{\alpha_i}-K_{\alpha_i}^{-1}}{q_i-q_i^{-1}}
$$
for all $i\geq 0$.
\end{remark}

We can also use Theorem \ref{PBW} to get an explicit description of $\Uqnhat$ for $n\geq m$. Indeed we see that as a topological vector space it is given by the series
$$
\Uqnhat=\left\{\sum_{\boldsymbol{r},\boldsymbol{s},\lambda} a_{\boldsymbol{r},\boldsymbol{s},\lambda}M_{\boldsymbol{r}, \boldsymbol{s}, \lambda} : \abs{\pi^{-n\htt(M_{\boldsymbol{r}, \boldsymbol{s}, \lambda})}a_{\boldsymbol{r},\boldsymbol{s},\lambda}}\to 0 \text{ as } \htt(M_{\boldsymbol{r}, \boldsymbol{s}, \lambda})\to\infty \right\}.
$$
The norm on $\Uqnhat$ is then given by
$$
\norm{\sum_{\boldsymbol{r},\boldsymbol{s},\lambda} a_{\boldsymbol{r},\boldsymbol{s},\lambda}M_{\boldsymbol{r}, \boldsymbol{s}, \lambda}}_n=\sup_{\boldsymbol{r},\boldsymbol{s},\lambda} \abs{\pi^{-n\htt(M_{\boldsymbol{r}, \boldsymbol{s}, \lambda})}a_{\boldsymbol{r},\boldsymbol{s},\lambda}}.
$$
One can then similarly describe $\Uqcap$:
$$
\Uqcap=\left\{\sum_{\boldsymbol{r},\boldsymbol{s},\lambda} a_{\boldsymbol{r},\boldsymbol{s},\lambda}M_{\boldsymbol{r}, \boldsymbol{s}, \lambda} : \abs{\pi^{-n\htt(M_{\boldsymbol{r}, \boldsymbol{s}, \lambda})}a_{\boldsymbol{r},\boldsymbol{s},\lambda}}\to 0 \text{ as } \htt(M_{\boldsymbol{r}, \boldsymbol{s}, \lambda})\to\infty \text{ for all } n\geq 0\right\}.
$$
Its Fr\'echet topology is given by all the norms $\norm{\cdot}_n$.

\subsection{The quantum Arens-Michael envelope}\label{Arens-Michael} As an application of this PBW theorem we explain an analogy between our definition of $\Uqcap$ and the Arens-Michael envelope of the classical enveloping algebra $\wideparen{U(\g)}$, which is the completion of the enveloping algebra $U(\g)$ with respect to all the submultiplicative seminorms which extend the norm on $L$.

As a Fr\'echet space, $\wideparen{U_q}$ is the completion of $U_q$ with respect to the norms $\norm{\cdot}_n$ for $n\geq 0$, which are the norms on $U_q$ coming from the $\pi$-adic filtrations on the $U_n$. The completion of $U_q$ with respect to the single norm $\norm{\cdot}_n$ is then $\Uqnhat$. For example these norms take the following values:
$$
\norm{E_{\alpha}}_n=\norm{F_{\alpha}}_n=\abs{\pi}^{-n},\quad \norm{K_\lambda}_n=1\quad\text{for all simple root } \alpha\text{ and all } \lambda\in P.
$$
We now aim to show that $\wideparen{U_q}$ does not actually depend on the choice of such norms. To make this statement precise, we first consider the canonical norm $\norm{\cdot}$ on the Laurent polynomial ring $L[K_\lambda:\lambda\in P]$, namely the one obtained from giving the $\pi$-adic topology to $R[K_\lambda:\lambda\in P]$ and extending scalars. Hence we have $\norm{K_\lambda}=1$ for all $\lambda$ in $P$. Note that the norms $\norm{\cdot}_n$ are all extensions of $\norm{\cdot}$ to $U_q$.

We will now work in a more general context. Let $A$ be a $\pi$-torsion free $R$-algebra, and equip $A_L$ with the norm coming from the $\pi$-adic topology on $A$. Let $B$ be a $\pi$-torsion free $A$-algebra, and suppose that $B\cap A_L=A$, where we regard $A, A_L$ and $B$ as subalgebras of $B_L$. Recall that a seminorm $p$ on $B_L$ is called \emph{submultiplicative} if for all $x, y\in B_L$ we have $p(xy)\leq p(x)p(y)$ and $p(1)=1$.

\begin{prop}
For $A$ and $B$ as above, suppose that $B$ is generated as an $A$-algebra by a finite set of elements $x_1, \ldots, x_m\in B\setminus A$ which normalise $A$, i.e. $x_iA=Ax_i$ for all $i$. For each $1\leq i\leq m$, pick a positive integer $d_i$, and consider the $A$-filtration on $B$ given by assigning degree $d_i$ to $x_i$ for each $i$. Then, for this filtration, $\wideparen{B_L}$ is isomorphic to the completion of $B_L$ with respect to all submultiplicative seminorms which extend the norm on $A_L$.
\end{prop}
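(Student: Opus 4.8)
The plan is to compare two families of seminorms on $B_L$: the increasing family $\norm{\cdot}_0\leq\norm{\cdot}_1\leq\cdots$ that defines the Fr\'echet topology underlying $\wideparen{B_L}$, and the (a priori much larger) family $\mathcal{P}$ of all submultiplicative seminorms on $B_L$ that extend the norm on $A_L$. I would show these two families define the \emph{same} locally convex topology on $B_L$; since both $\wideparen{B_L}$ and the asserted Arens--Michael-type completion are then the Hausdorff completion of $B_L$ for this one topology, the identity on $B_L$ extends to an isomorphism of topological $L$-algebras between them.

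First I would check the easy inclusion, namely that each $\norm{\cdot}_n$ itself lies in $\mathcal{P}$. Since the filtration is multiplicative, $B_n=\sum_{i\geq0}\pi^{ni}F_iB$ is an $R$-subalgebra of $B$; being $\pi$-torsion free, its $\pi$-adic completion $\widehat{B_n}$ is a subring of $\widehat{B_{n,L}}$ and is the unit ball, so $\norm{\cdot}_n$ is submultiplicative, with $\norm{1}_n=1$ because $1\in A\subseteq B_n$ and $1\notin\pi B_n$. That $\norm{\cdot}_n$ restricts on $A_L$ to the \emph{given} norm (and not merely to something dominated by it) is the single point at which the hypothesis $B\cap A_L=A$ is used: it yields $\pi^tB_n\cap A=\pi^tA$ for every $t\geq0$, since $a=\pi^tb$ with $a\in A$ and $b\in B_n\subseteq B$ forces $b=\pi^{-t}a\in A_L\cap B=A$. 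Hence the topology generated by $\mathcal{P}$ is at least as fine as the Fr\'echet one.

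The substantive half is the reverse: every $p\in\mathcal{P}$ must be dominated by some $\norm{\cdot}_n$. Fix $p$; because $p$ extends the $\pi$-adic norm on $A_L$ we have $p\leq1$ on $A$, and because the generators take finite values under $p$ I may pick integers $c_i$ with $p(x_i)\leq\abs{\pi}^{-c_i}$ and then an integer $n$ with $nd_i\geq c_i$ for all $i$. I claim $p\leq\norm{\cdot}_n$ on $B_L$. Writing a general element of $B_L$ as $\pi^{-k}c$ with $c\in B_n$ and using $\pi$-torsion-freeness, this reduces to the single assertion $p\leq1$ on $B_n$. Now any $c\in B_n$ is a finite sum $c=\sum_i\pi^{ni}b_i$ with $b_i\in F_iB$, and $F_iB$ is spanned over $A$ by words $x_{j_1}\cdots x_{j_r}$ with $\sum_l d_{j_l}\leq i$; for such a word submultiplicativity gives $p(x_{j_1}\cdots x_{j_r})\leq\prod_l\abs{\pi}^{-c_{j_l}}\leq\abs{\pi}^{-n\sum_l d_{j_l}}\leq\abs{\pi}^{-ni}$, so $p(b_i)\leq\abs{\pi}^{-ni}$ by the ultrametric inequality together with $p\leq1$ on $A$, and therefore $p(c)\leq\max_i\abs{\pi}^{ni}p(b_i)\leq1$. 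This proves equivalence of the two families, and the isomorphism follows.

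The hard part is precisely this last step: converting an abstract submultiplicative seminorm into an estimate adapted to the concrete filtration on $B$. It works because $B$ is \emph{finitely} generated over $A$ by elements \emph{normalising} $A$, so that $F_iB$ is an $A$-span of words whose length is controlled by $i$, and because such a $p$ is automatically bounded by $1$ on $A$; together these allow submultiplicativity to transport a bound on the finitely many generators to a bound governed by the filtration degree, which is exactly the information encoded in $\norm{\cdot}_n$. A secondary technical point that also needs care is verifying that $\norm{\cdot}_n$ genuinely \emph{extends} the norm on $A_L$ rather than only dominating or being dominated by it, which is where $B\cap A_L=A$ is invoked.
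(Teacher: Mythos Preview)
Your proof is correct and follows essentially the same approach as the paper. The only cosmetic difference is in the packaging of the key step: the paper observes that the unit ball $B(p;1)$ is an $A$-subalgebra of $B_L$ (since $p$ is submultiplicative and $\leq 1$ on $A$) and that $B_n$ is the $A$-subalgebra generated by the $\pi^{nd_i}x_i$, so it suffices to check that these finitely many generators lie in $B(p;1)$; you instead unravel this by explicitly bounding $p$ on words and then on arbitrary elements of $B_n$, which amounts to the same thing.
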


\begin{proof}
The filtration gives rise to a family of norms $\norm{\cdot}_n$ on $B_L$, which are just the extensions to $B_L$ of the norms coming from the $\pi$-adic topology on each of the deformations $B_n$. Since the $\pi$-adic filtration on $B_n$ is an algebra filtration, it follows that these norms are submultiplicative. Also, the $\pi$-adic topology on $B_n$ restricts to the $\pi$-adic topology on $A$ for all $n$ because $B\cap A_L=A$, and so these norms extend the norm on $A_L$. Hence, since $\wideparen{B_L}$ is the completion of $B_L$ with respect to the norms $\norm{\cdot}_n$, there is a canonical embedding $\mathfrak{B}\hookrightarrow \wideparen{B_L}$, where $\mathfrak{B}$ denotes the completion of $B_L$ with respect to all submultiplicative seminorms that extend the norm on $A_L$. Thus we just need to prove that this map is surjective.

Surjectivity will follow if we can show that given any submultiplicative seminorm $p$ on $B_L$ that extends the norm on $A_L$, there is some $n$ such that $p\leq \norm{\cdot}_n$. This in turn is equivalent to showing that the unit ball
$$
B(p;1)=\{x\in B_L: p(x)\leq 1\}
$$
contains the unit ball of $B_L$ with respect to $\norm{\cdot}_n$, i.e. contains $B_n$ for some $n$. Now note that since $p$ is submultiplicative and as it extends the norm on $A_L$, we have that $B(p;1)$ is an $A$-algebra. Moreover, by definition of $(F_\cdot)$, $B_n$ is the $A$-subalgebra of $B$ generated by the $\pi^{nd_i} x_i$. So we just need to show that there exists an $n\geq 0$ such that $\pi^{nd_i}x_i\in B(p;1)$ for all $i$. But that's clearly true since $p(\pi^{nd_i}x_i)=\abs{\pi}^{nd_i}p(x_i)\rightarrow 0$ as $n\rightarrow \infty$ for any $i$.
\end{proof}

\begin{cor}
$\wideparen{U_q}$ is isomorphic to the completion of $U_q$ with respect to all submultiplicative seminorms that extend $\norm{\cdot}$. Also, $\Oqcap$ is the completion of $\Oq$ with respect to all the submultiplicative seminorms that extend the norm on $L$.
\end{cor}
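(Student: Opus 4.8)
The plan is to deduce both statements from the Proposition, applied to two carefully chosen pairs $(A,B)$; the work is entirely in checking its hypotheses and in identifying $\wideparen{B_L}$ with the algebra we want.

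For $\wideparen{U_q}$ I would take $A=R[K_\lambda:\lambda\in P]$ and $B=U_m$, where $m$ is the integer fixed in \ref{PBW}. Both $A$ and $B=U_m\subset U_q$ are $\pi$-torsion free, and by Theorem \ref{PBW} we have $U_m=\U_m$, so $U_m$ is a free $R$-module on the monomials $\pi^{m\,\htt(M_{\boldsymbol r,\boldsymbol s,\lambda})}M_{\boldsymbol r,\boldsymbol s,\lambda}$; since $R[K_\lambda]$ is precisely the span of those with $\boldsymbol r=\boldsymbol s=0$, intersecting $U_m$ with $U_q^0=L[K_\lambda]$ recovers exactly $R[K_\lambda]$, giving $B\cap A_L=A$, and likewise $F_0U_m=U_m\cap F_0U=R[K_\lambda]=A$. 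The algebra $U_m$ is generated over $A$ by the height-one elements $\pi^mE_{\alpha_i},\pi^mF_{\alpha_i}$ ($1\le i\le n$); these lie in $U_m\setminus A$ and normalise $A$, because $K_\mu E_{\alpha_i}=q^{\langle\mu,\alpha_i\rangle}E_{\alpha_i}K_\mu$ with $q^{\langle\mu,\alpha_i\rangle}$ a unit in $R$, and similarly for the $F$'s. The $A$-filtration on $U_m$ assigning each of these generators degree $1$ is the height filtration on $U_m$, as noted in the proof of Theorem \ref{PBW}; and since $(U_m)_k=U_{m+k}$ by Lemma \ref{deformablealg}(iii), its associated Fr\'echet completion is $\wideparen{(U_m)_L}=\varprojlim\widehat{(U_m)_{k,L}}=\varprojlim\widehat{U_{m+k,L}}=\wideparen{U_q}$, while $A_L=L[K_\lambda]$ carries precisely the norm $\norm{\cdot}$ of \ref{Arens-Michael}. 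The Proposition then gives that $\wideparen{U_q}$ is the completion of $(U_m)_L=U_q$ with respect to all submultiplicative seminorms extending $\norm{\cdot}$.

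For $\Oqcap$ I would instead take $A=R$, so that $A_L=L$ with the norm on $L$, and $B=\A$ with the filtration from \ref{Mcap} in which each generator $x_i$ has degree $1$, so that $\wideparen{(\A)_L}=\Oqcap$ by definition and $B_L=\Oq$. Here $\A$ is $\pi$-torsion free, it is generated over $R$ by $x_1,\dots,x_r$ (by \cite[Proposition \& Remark 12.4]{Andersen}), these elements trivially normalise the central subring $R$, and they lie in $\A\setminus R$ since they are non-trivial matrix coefficients. The one genuine point is $\A\cap L=R$, which I would deduce from the counit $\varepsilon\colon\A\to R$: it restricts to the identity on $R\cdot 1$, so any $\lambda\in L\cap\A$ satisfies $\lambda=\varepsilon(\lambda)\in R$. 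The Proposition then yields that $\Oqcap$ is the completion of $\Oq$ with respect to all submultiplicative seminorms extending the norm on $L$. I expect the only delicate ingredient to be the identity $B\cap A_L=A$ in the $U_q$ case: this is exactly why one must pass from $U$ to the deformation $U_m$, since in $U$ itself $\tfrac{K_{\alpha_i}-K_{-\alpha_i}}{q_i-q_i^{-1}}$ may fail to lie in $R[K_\lambda]$, so that $U\cap L[K_\lambda]\supsetneq R[K_\lambda]$, whereas for $m$ as in \ref{PBW} all these commutators become $R$-linear combinations of the $K_\mu$ inside $U_m=\U_m$ and the PBW $R$-basis makes the intersection transparent.
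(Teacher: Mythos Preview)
Your proposal is correct and follows exactly the paper's approach: apply the Proposition with $A=R[K_\lambda:\lambda\in P]$, $B=U_m$ for $\Uqcap$, and $A=R$, $B=\A$ for $\Oqcap$. The paper's proof is a single sentence naming these choices; you have simply spelled out the verification of the hypotheses in detail, including the identification $\wideparen{(U_m)_L}=\Uqcap$ via $(U_m)_k=U_{m+k}$ and the counit argument for $\A\cap L=R$, and your closing remark about why one must pass to $U_m$ rather than $U$ is an accurate diagnosis (cf.\ Remark~\ref{proof_of_PBW}).
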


\begin{proof}
Set $A=R[K_\lambda:\lambda\in P]$ and $B=U_m$ for $\Uqcap$ (note that $B\cap A_L=A$ by Theorem \ref{PBW}), and $A=R$ and $B=\A$ for $\Oqcap$. The hypotheses of the Lemma are then satisfied.
\end{proof}

\subsection{Fr\'echet-Stein property of $\Uqcap$}\label{FreStProp} We can now start applying our techniques to $\Uqcap$.

\begin{lem}
The $R$-algebra $U_m$ satisfies conditions (i) and (ii) from \ref{Frechetcompl}.
\end{lem}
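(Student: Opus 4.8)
The plan is to deduce everything from Theorem \ref{PBW}. By that result $U_m$ equals the free $R$-module $\U_m$ on the rescaled PBW monomials $(\pi^{m\htt(\beta_1)}F_{\beta_1})^{r_1}\cdots(\pi^{m\htt(\beta_N)}F_{\beta_N})^{r_N}\,K_\lambda\,(\pi^{m\htt(\beta_1)}E_{\beta_1})^{s_1}\cdots(\pi^{m\htt(\beta_N)}E_{\beta_N})^{s_N}$, and (by the Corollary in \ref{proof_of_PBW}) its height filtration coincides with the filtration having $F_0U_m=R[K_\lambda:\lambda\in P]=:RP$ and $\deg E_{\alpha_i}=\deg F_{\alpha_i}=1$; this is the filtration on $U_m$ we use.

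For condition (ii) I would take $r=2N$ and let $x_1,\dots,x_N,x_{N+1},\dots,x_{2N}$ be $\pi^{m\htt(\beta_1)}F_{\beta_1},\dots,\pi^{m\htt(\beta_N)}F_{\beta_N},\pi^{m\htt(\beta_1)}E_{\beta_1},\dots,\pi^{m\htt(\beta_N)}E_{\beta_N}$, with $d_j$ the height of the associated root. Using the relations $K_\lambda F_{\beta_i}=q^{-\langle\lambda,\beta_i\rangle}F_{\beta_i}K_\lambda$ to pull the factor $K_\lambda$ to the far left, each rescaled PBW monomial above equals a power of $q$ times $K_\lambda\cdot x_1^{r_1}\cdots x_N^{r_N}x_{N+1}^{s_1}\cdots x_{2N}^{s_N}$, that is, an element of $RP$ times an ordered monomial in the $x_j$ whose $d$-degree is exactly the height $\sum_j(r_j+s_j)\htt(\beta_j)$ of the original monomial; conversely, every such ordered monomial in the $x_j$ is itself a rescaled PBW monomial (with $\lambda=0$). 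Since $RP=F_0U_m$ and the height filtration is multiplicative, this gives $F_iU_m=F_0U_m\cdot\{x_1^{\alpha_1}\cdots x_{2N}^{\alpha_{2N}}:\sum_j\alpha_jd_j\le i\}$, which is (ii); in particular $\gr U_m$ is generated over $\gr_0U_m=RP$ by the symbols of the $x_j$.

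For condition (i), first note that $U_m$ is free over $R$, hence $\pi$-adically separated, and $U_m\otimes_RL=U_q=(U_m)_L$ by the PBW theorem, so $U_m$ is a lattice in $(U_m)_L$; moreover $\gr U_m\cong\gr\U$ (Lemma \ref{deformablealg}(i)) is free over $R$ on the symbols of the PBW monomials, hence flat, so $U_m$ is a deformable $R$-algebra. Next, $\gr_0U_m=RP$ is a Laurent polynomial ring over the Noetherian ring $R$, hence Noetherian. For $\gr U_m$ I would imitate \cite[Section 10]{DeConProc}: $\gr U_m$ is the $R$-lattice inside $U^{(1)}=\gr U_q$ spanned by the rescaled PBW monomials, and I filter it further by assigning the monomial displayed above the multidegree $(r_1,\dots,r_N,s_1,\dots,s_N)$ with respect to the reverse lexicographic order. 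Because the braid operators preserve the integral form $U$ (Section \ref{PBW}, using the standing hypotheses on $p$), the Levendorskii--Soibelman straightening relations among the root vectors have coefficients in $R$, so this really is an $R$-algebra multifiltration, and its associated graded is the $R$-lattice in $U^{(2N+1)}$ spanned by the PBW monomials. By \cite[Proposition 10.1]{DeConProc} the latter is generated over $RP$ by the $x_j$, which normalise $RP$ and $q$-commute with structure constants that are powers of $q^{1/d}\in R^\times$; hence it is a $q$-commutative $RP$-algebra, so Noetherian by the Lemma in \ref{prelimonOq}. Therefore $\gr U_m$ has Noetherian associated graded and is itself Noetherian, completing the check of (i).

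I expect the main obstacle to be making precise that the De Concini--Procesi secondary filtration, and the resulting $q$-commutative presentation, descend from $U_q$ to the $R$-lattices $\gr U_m\subseteq U^{(1)}$; equivalently, that all the straightening relations among the rescaled root vectors $\pi^{m\htt(\beta_j)}E_{\beta_j}$, $\pi^{m\htt(\beta_j)}F_{\beta_j}$ have coefficients in $R$. This is exactly the point at which the integrality of Lusztig's braid action on $U$ from Section \ref{PBW}, together with the hypotheses $p>2$ (and $p>3$ in the presence of a $G_2$-factor) and $q^{1/d}\in R$, are needed; once that is granted, the remaining verifications are routine bookkeeping with the PBW basis.
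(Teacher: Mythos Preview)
Your proposal is correct and follows essentially the same strategy as the paper: use Theorem~\ref{PBW} to identify $U_m$ with the free $R$-module on rescaled PBW monomials, read off condition~(ii) directly from the height filtration, and establish the Noetherianity of $\gr U_m$ via the De~Concini--Procesi secondary $\Z^{2N}_{\geq 0}$-multifiltration whose associated graded is $q$-commutative over $RP$.

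The only noteworthy difference is in how the integrality issue you flag at the end is handled. The paper does not verify the Levendorskii--Soibelman coefficients directly; instead it observes that the height filtration on $U_m$ is the \emph{subspace} filtration from $U_q$, giving an embedding $U_m^{(1)}:=\gr U_m\hookrightarrow U^{(1)}$, and then equips $U_m^{(1)}$ with the subspace multifiltration from $U^{(1)}$. This is automatically an algebra multifiltration, and its associated graded embeds in $U^{(2N+1)}$, from which $q$-commutativity (with structure constants in $R^\times$) is read off from Theorem~\ref{PrelimonUq}. Your route---arguing that since $U^+=\U^+$ is free on the ordered monomials over $R$ (Corollary in~\ref{proof_of_PBW}), any product $E_{\beta_i}E_{\beta_j}$ expands with $R$-coefficients, so the LS error terms lie in $R$---is also valid and makes the integrality explicit, but the subspace-filtration trick lets one bypass this verification entirely.
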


\begin{proof}
It is clear that $\gr_0 U_m$ is Noetherian. Moreover the height filtration on $U_m$ is the subspace filtration of the height filtration on $U_q$, thus there is a natural embedding $\gr U_m\hookrightarrow U^{(1)}$ where $U^{(1)}:=\gr U_q$. Write $U_m^{(1)}:=\gr U_m$. This shows that $U_m^{(1)}$ is $\pi$-torsion free, thus flat. Moreover since $U_m$ is free it is also $\pi$-adically separated. Therefore $U_m$ is a deformable $R$-algebra. Recall now that we defined in \ref{PrelimonUq} a $\Z^{2N}_{\geq 0}$-filtration on $U^{(1)}$. Using the above embedding, we may now give to $U_m^{(1)}$ the corresponding $\Z^{2N}_{\geq 0}$-filtration. We see from the relations in Theorem \ref{PrelimonUq} that the associated graded algebra of $U_m^{(1)}$ is then $q$-commutative, hence Noetherian by Lemma \ref{prelimonOq}. Therefore $U_m^{(1)}$ is Noetherian, and condition (i) is satisfied. Condition (ii) just follows from definition of the height filtration.
\end{proof}

\begin{remark} If we equip $U$ with the filtration from Remark \ref{proof_of_PBW}, it is then also true that it satisfies conditions (i) and (ii). Just as in our previous proofs, one also gets that this filtration is equal to the subspace filtration from the height filtration on $U_q$ and then the same proof as in the Lemma applies. However the Fr\'echet completion $\wideparen{U_L}$ that one gets that way is not the same as $\Uqcap$. Specifically, the norms defining $\wideparen{U_L}$ all have value 1 at the elements $[K_{\alpha_i};0]$, which is not true in $\Uqcap$. Now the triples $(E_{\alpha_i}, F_{\alpha_i}, [K_{\alpha_i};0])$ correspond under specialisation at 1 to the usual $\mathfrak{sl_2}$ triples $(e_i, f_i, h_i)$ (for the simple roots) in $\g$, and in the Arens-Michael envelope $\wideparen{U(\g)}$, the defining norms do not necessarily have value 1 at $h_i$. While we are not working with a truly generic quantum group, this analogy motivates our choice of working with $\Uqcap$. Note however that the theorem below is also true, with essentially the same proof, for $\wideparen{U_L}$.
\end{remark}

Before getting to the next result, we introduce some notation. Let $e_1, \ldots, e_n$ be the simple root vectors coming from the Serre presentation of $\g$, which can then be extended to a Chevalley basis $x_1,\ldots, x_N$ of $\n$. It follows from \cite[Theorem 25.2]{Humphreys1} that the $R$-span $\n_R$ of $x_1,\ldots, x_N$ is a Lie lattice in $\n$, i.e. a lattice that is also an $R$-Lie algebra, and we write $\n_k:=\n_R/\pi \n_R$, a nilpotent $k$-Lie algebra.

We let $U(\n_R)$ be the universal enveloping algebra of $\n_R$. For $n\geq 0$, we denote by $U(\n_R)_n$ the $R$-subalgebra of $U(\n_R)$ generated by all $\pi^n e_i$. It is the $n$-th deformation of $U(\n_R)$ with respect to the height filtration (which is not the same as the PBW filtration -- it is defined completely analogously as the height filtration on $U_q$). Moreover, $U(\n_R)_n$ is also the universal enveloping algebra of the $R$-Lie subalgebra of $\n_R$ generated by all $\pi^n e_i$. However, in light of the relations in \cite[Theorem 25.2]{Humphreys1}, we see that this $R$-Lie subalgebra is canonically isomorphic as an $R$-Lie algebra to $\n_R$ by mapping $\pi^ne_i\to e_i$, and hence there is a canonical isomorphism of $R$-algebras $U(\n_R)\cong U(\n_R)_n$ for all $n\geq 0$. Thus in particular we have that $U(\n_R)_n/\pi U(\n_R)_n\cong U(\n_k)$. In the light of these facts, we can now prove the following:

\begin{thm}
$\wideparen{U_q}$ is a Fr\'echet-Stein algebra.
\end{thm}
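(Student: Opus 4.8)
The plan is to deduce this from Theorem~\ref{defFreSt}, applied to the deformable $R$-algebra $U_m$ with $m$ the integer fixed in~\ref{PBW}. Since $(U_m)_k=U_{m+k}$ for all $k\geq 0$ by Lemma~\ref{deformablealg}, and since the subtower indexed by $n\geq m$ is cofinal in the tower $(\Uqnhat)_{n\geq 0}$, we have
$$
\wideparen{(U_m)_L}=\varprojlim_{k\geq 0}\widehat{U_{m+k,L}}=\varprojlim_{n\geq m}\Uqnhat=\varprojlim_{n\geq 0}\Uqnhat=\Uqcap.
$$
So it is enough to verify that $U_m$ satisfies hypotheses (i)--(iii) of~\ref{Frechetcompl} and that each deformation $U_k$, $k\geq m$, satisfies~(iii). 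Deformability and (i)--(ii) for $U_m$ are Lemma~\ref{FreStProp}, and Lemma~\ref{Frechetcompl} carries (i)--(ii) over to every $U_k$; everything thus reduces to checking condition~(iii) for each $U_k$ with $k\geq m$, i.e.\ polynormality of the images of the generators of $U_{k+1}=(U_k)_1$ inside $U_{k+1}/\pi U_{k+1}$.

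First I would pin down the ring $U_{k+1}/\pi U_{k+1}$ for $k\geq m$. Because $k+1>m$, the choice of $m$ gives $[\pi^{k+1}E_{\alpha_i},\pi^{k+1}F_{\alpha_i}]=\pi^{2(k+1)}\frac{K_{\alpha_i}-K_{-\alpha_i}}{q_i-q_i^{-1}}\in\pi U_{k+1}$, so the $E$'s and $F$'s commute modulo $\pi$; moreover every power of $q$ that occurs is $\equiv 1\pmod{\pi}$, so the $K_\lambda$ become central modulo $\pi$ and the quantum Serre relations degenerate to the ordinary Serre relations over $k$ (it is precisely here that $p>2$, and $p>3$ in type $G_2$, are needed, to control the quantum binomial coefficients and divided powers, hence also the braid operators defining the higher root vectors $E_{\beta_j}$). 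Combining this with the PBW theorem~\ref{PBW}, the triangular decomposition $U_{k+1}\cong U_{k+1}^-\otimes_R U_{k+1}^0\otimes_R U_{k+1}^+$, and the identification $U_{k+1}^{\pm}/\pi U_{k+1}^{\pm}\cong U(\n_k)$ (the generators satisfy the Serre presentation of $U(\n_k)$ modulo $\pi$, and a PBW dimension count makes the resulting surjection an isomorphism, using the facts about $U(\n_R)$ recorded above), one gets an isomorphism of $k$-algebras
$$
U_{k+1}/\pi U_{k+1}\;\cong\;U(\n_k)\otimes_k kP\otimes_k U(\n_k)
$$
in which $kP$ is central and the two copies of $U(\n_k)$ commute with each other, and under which the images of the generators $\pi^{(k+1)\htt(\beta_j)}E_{\beta_j}$ and $\pi^{(k+1)\htt(\beta_j)}F_{\beta_j}$ of $U_{k+1}$ are nonzero scalar multiples of root vectors $e_{\beta_j}$ in the two tensor factors.

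It then remains to choose a polynormal ordering. I would use that the root vectors of $U(\n_k)$, listed in order of \emph{decreasing} height, form a polynormal sequence: the span $\mathfrak{c}_h\subseteq\n_k$ of the root vectors of height $\geq h$ is a Lie ideal (bracketing strictly raises height, so the $\mathfrak{c}_h$ are terms of the lower central series of $\n_k$), whence the two-sided ideal of $U(\n_k)$ generated by the root vectors of height $>h$ is $U(\n_k)\mathfrak{c}_{h+1}$, with quotient $U(\n_k/\mathfrak{c}_{h+1})$; in that quotient the remaining height-$h$ root vectors are central, hence normal, and can be removed one at a time. Transporting this back, condition~(iii) for $U_k$ follows by listing first all $\overline{\pi^{(k+1)\htt(\beta_j)}F_{\beta_j}}$ in decreasing height and then all $\overline{\pi^{(k+1)\htt(\beta_j)}E_{\beta_j}}$ in decreasing height, since normality of an element is unaffected by the commuting central factor $kP$ and by the commuting second copy of $U(\n_k)$. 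Theorem~\ref{defFreSt} then gives that $\Uqcap=\wideparen{(U_m)_L}$ is Fr\'echet--Stein. I expect the middle step to be the main obstacle: showing that modulo $\pi$ all the quantum relations, including the braid-group definitions of the $E_{\beta_j}$, collapse to their classical counterparts over $k$, and then recognising the lower central series of $\n_k$ as the source of polynormality; the surrounding steps are routine manipulations with deformations.
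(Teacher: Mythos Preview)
Your proposal is correct and follows essentially the same route as the paper: reduce to Theorem~\ref{defFreSt} applied to $U_m$, invoke the preceding lemma for conditions (i)--(ii), and verify (iii) for each $U_k$ ($k\geq m$) by identifying $U_{k+1}/\pi U_{k+1}$ with (a central extension of two commuting copies of) $U(\n_k)$, where the Chevalley basis is polynormal. The only cosmetic differences are that the paper uses the automorphism $\omega$ to reduce directly to $U_{k+1}^+/\pi U_{k+1}^+\cong U(\n_k)$ and then cites Smith's theorem that every ideal of $U(\n_k)$ is polycentral, whereas you write out the full tensor decomposition and give the explicit decreasing-height ordering via the ideals $\mathfrak{c}_h$; both arguments are standard and equivalent here.
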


\begin{proof}
By Theorem \ref{defFreSt} and the previous Lemma, the result will follow if we prove that condition (iii) is satisfied in $U_n$ for all $n\geq m$. As before, we let $I=\pi U_n\cap U_{n+1}$. We know that $I$ is generated by $\pi$, $\pi^{(n+1)\htt\beta_i}E_{\beta_i}$ and $\pi^{(n+1)\htt\beta_j}F_{\beta_j}$ ($1\leq i, j\leq N$) by Proposition \ref{Frechetcompl}(ii). Observe that $\overline{\pi^{n+1}E_{\alpha_i}}$ commutes with $\overline{\pi^{n+1}F_{\alpha_j}}$ for all $i, j$ since $\pi^nE_{\alpha_i}$ and $\pi^nF_{\alpha_J}$ commute in $\gr U_n$, and so the same can be said of $\overline{\pi^{(m+1)\htt\beta_i}E_{\beta_i}}$ and $\overline{\pi^{(m+1)\htt\beta_j}F_{\beta_j}}$. Moreover we also have that all $\overline{\pi^{(m+1)\htt\beta_i}E_{\beta_i}}$ and $\overline{\pi^{(m+1)\htt\beta_j}F_{\beta_j}}$ $q$-commute with $\overline{K_\lambda}$ for all $\lambda\in P$.

Therefore it is enough to show that the elements $\overline{\pi^{(n+1)\htt\beta_i}E_{\beta_i}}$ for all $i$ form a polycentral sequence in $U_{n+1}^+/\pi U_{n+1}^+$, since the ideal $I$ is preserved by the automorphism $\omega$. But since $q\equiv 1 \pmod{\pi}$ we have a surjection
$$
U(\n_k)\cong U(\n_R)_{n+1}/\pi U(\n_R)_{n+1}\to U_{n+1}^+/\pi U_{n+1}^+
$$
from the universal enveloping algebra of $\n_k$, which sends $e_i$ to $\overline{\pi^{n+1}E_{\alpha_i}}$. In fact, by considering PBW bases we see that this is an isomorphism. Hence it suffices to show that the elements of the Chevalley basis in some order form a polycentral sequence in $U(\n_k)$. But that is a well known fact (and more generally any ideal of $U(\n_k)$ is polycentral by \cite[Theorem A]{Smith76}).
\end{proof}

By applying Corollary \ref{defFreSt} we immediately get:

\begin{cor} The natural map $U_q\to\Uqcap$ is flat.
\end{cor}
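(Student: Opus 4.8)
The plan is to deduce this directly from the flatness criterion of Corollary \ref{flat}. That criterion concerns a deformable $R$-algebra $U$ satisfying conditions (i)--(iii) of \S\ref{Frechetcompl} all of whose deformations $U_n$ satisfy (iii); in our situation condition (iii) was verified only for the deformations $U_n$ with $n\geq m$, so I would apply the criterion not to the De Concini--Kac form $U$ itself but to the algebra $U_m$, whose lower deformations no longer enter the picture.

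Concretely, I would first recall that $U_m$ is a deformable $R$-algebra satisfying conditions (i) and (ii) --- this is exactly the Lemma at the beginning of \S\ref{FreStProp}. Next, by Lemma \ref{deformablealg}(iii) the $n$-th deformation of $U_m$ is $(U_m)_n=U_{m+n}$, and in the proof that $\wideparen{U_q}$ is Fr\'echet-Stein it was shown that $U_n$ satisfies condition (iii) for every $n\geq m$ (via the identification of $U_{n+1}^+/\pi U_{n+1}^+$ with $U(\n_k)$ and the polycentrality of ideals in $U(\n_k)$). Hence $U_m$ and all of its deformations satisfy (iii), so Corollary \ref{flat} applies to $U_m$ and gives that the natural map $(U_m)_L\to\wideparen{(U_m)_L}$ is flat.

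Finally I would identify this last map with $U_q\to\Uqcap$. Since $U_m$ is a lattice in $U_q$, $\pi$ is invertible in $L$, and the filtration on $U_m$ is exhaustive, we get $(U_m)_L=U_m\otimes_R L=U_q$; and since $(U_m)_n=U_{m+n}$, the tower $\{\widehat{((U_m)_n)_L}\}_{n\geq 0}=\{\widehat{U_{m+n,L}}\}_{n\geq 0}$ defining $\wideparen{(U_m)_L}$ is a cofinal subtower of $\{\widehat{U_{n,L}}\}_{n\geq 0}$, so $\wideparen{(U_m)_L}=\varprojlim_n\widehat{U_{m+n,L}}=\varprojlim_n\widehat{U_{n,L}}=\Uqcap$. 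This yields the flatness of $U_q\to\Uqcap$. There is no real obstacle here beyond the bookkeeping of this shift by $m$: the only point worth stating carefully is that $\Uqcap$ is unchanged by passing to the cofinal system of deformations of $U_m$, which is what makes it legitimate to apply the criterion to $U_m$ even though conditions (i)--(iii) need not be available for the lower deformations of $U$ itself.
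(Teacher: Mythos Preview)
Your proposal is correct and follows the same approach as the paper: the paper's proof is the single line ``By applying Corollary \ref{defFreSt} we immediately get'', which is exactly the application of the general flatness criterion (Corollary \ref{flat}) to the algebra $U_m$ that you spell out. Your added care about the shift by $m$---verifying that $(U_m)_L=U_q$ and that $\wideparen{(U_m)_L}=\Uqcap$ via cofinality---fills in precisely the bookkeeping the paper leaves implicit.
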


The Corollary gives an exact functor $M\mapsto \Uqcap\otimes_{U_q}M$ between the category of finitely generated $U_q$-modules and the category of coadmissible $\Uqcap$-modules. Indeed, since $U_q$ is Noetherian, a finitely generated module $M$ is finitely presented, hence so is $\Uqcap\otimes_{U_q}M$. In particular, since a finite dimensional $U_q$-module $V$ is complete with respect to any norm, we have $V\cong \Uqcap\otimes_{U_q} V$ and so $V$ is also coadmissible as a $\Uqcap$-module.

\subsection{Fr\'echet-Stein property of $\Oqcap$}\label{OqFS} As an $L$-algebra, $\Oq$ is generated by $x_1, \ldots, x_r$, i.e. by the matrix coefficients of the fundamental representations. Now the issue is that the $q$-commutator relations between these are not necessarily defined over $R$ here. Indeed recall from \ref{prelimonOq} that we have
$$
x_ix_j=q_{ij}x_jx_i+\sum_{s=1}^{j-1}\sum_{t=1}^{r} (\alpha^{st}_{ij} x_s x_t+ \beta^{st}_{ij}x_tx_s),
$$
for $1\leq j<i\leq r$ with $\alpha^{st}_{ij}, \beta^{st}_{ij}\in L$ for all $i,j,s,t$. These relations are obtained by considering $\mathcal{R}$-matrices for representations of $U_q$ and it is unclear to us whether the $\mathcal{R}$-matrices are the same when considering integral forms. Note however that the defining relations of $\Oq$ are defined over $R$ in type $A$ by \cite[Proposition 12.12]{Andersen}.

We fix this issue by deforming enough. Recall the filtration on $\Oq$ given by assigning to each $x_i$ degree $d_i=2^r-2^{r-i}$, where we had that whenever $i> j> s$ and $t\leq r$, we always have $d_i+d_j>d_s+d_t$. Thus we see that if we let $y_i=\pi^{ld_i}x_i$ for $l$ sufficiently large, multiplying the above relation by $\pi^{l(d_i+d_j)}$ yields
\begin{equation}\label{comrelinOq}
y_iy_j=q_{ij}y_jy_i+\sum_{s=1}^{j-1}\sum_{t=1}^{r} (\alpha'^{st}_{ij} y_s y_t+ \beta'^{st}_{ij}y_ty_s),
\end{equation}
where now $\alpha'^{st}_{ij}, \beta'^{st}_{ij}\in R$. Fix the smallest $l$ such that this holds and let $B$ be the $R$-subalgebra of $\Oq$ generated by $y_1, \ldots, y_r$.

\begin{lem}
The algebra $B$ is Noetherian, $\pi$-adically separated and $\pi$-torsion free.
\end{lem}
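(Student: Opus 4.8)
The plan is to exhibit $B$ as the positive part of a deformation in the same way $U_m$ was treated, or more directly, to produce a filtration on $B$ whose associated graded ring is a $q$-commutative $R$-algebra, hence Noetherian by Lemma \ref{prelimonOq}; Noetherianity of $B$ then follows. For the remaining two properties, I would note that $B \subseteq \A \subseteq \Oq$, and both $\A$ and $\Oq$ are $\pi$-adically separated (indeed $\A$ was shown to be $\pi$-adically separated in \ref{Mcap}, being an $R$-submodule of $\Hom_R(U_R^{\text{res}},R)$), so the same holds for its $R$-submodule $B$. Likewise $\Oq$ is an $L$-algebra (in particular $\pi$-torsion free as an $R$-module, since it is an $L$-vector space), so any $R$-subalgebra is $\pi$-torsion free.

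First I would set up the filtration: give $B$ the $R$-algebra filtration obtained by assigning each $y_i$ the degree $d_i = 2^r - 2^{r-i}$, exactly as for $\Oq$, so that $F_tB = R\text{-span}\{y_{i_1}\cdots y_{i_n} : \sum_j d_{i_j}\leq t\}$. The defining relations (\ref{comrelinOq}) have the shape
$$
y_iy_j = q_{ij}y_jy_i + \sum_{s=1}^{j-1}\sum_{t=1}^{r}(\alpha'^{st}_{ij}y_sy_t + \beta'^{st}_{ij}y_ty_s), \quad 1\leq j< i\leq r,
$$
with all coefficients now in $R$, and the degrees were chosen precisely so that whenever $i>j>s$ and $t\leq r$ one has $d_i + d_j > d_s + d_t$. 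Hence in $\gr B$ every correction term on the right drops to lower degree and vanishes, leaving $\overline{y_i}\,\overline{y_j} = q_{ij}\overline{y_j}\,\overline{y_i}$ with $q_{ij}\in R^\times$ a power of $q$. Since $q^{1/d}\in R$ by assumption, $\gr B$ is a $q$-commutative $R$-algebra on the generators $\overline{y_1},\ldots,\overline{y_r}$.

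Next I would invoke Lemma \ref{prelimonOq}: a $q$-commutative $S$-algebra over a Noetherian ring $S$ is Noetherian, so $\gr B$ is Noetherian (taking $S = R$). A standard lifting argument—$\gr B$ Noetherian and the filtration exhaustive and positive implies $B$ Noetherian, via the noncommutative Hilbert basis theorem \cite[Theorem 1.2.10]{noncomalg}—gives that $B$ itself is Noetherian. I would then dispatch the remaining two points as above: $B$ is an $R$-subalgebra of $\Oq$, which is a $\pi$-torsion free $R$-module (being an $L$-vector space) and $\pi$-adically separated (since $\A$ is, by \ref{Mcap}, and $B\subseteq \A$, or alternatively since $\Oq \hookrightarrow U_q^\circ$ with target $\pi$-adically separated), so $B$ inherits both properties.

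**Main obstacle.** The only subtle point is verifying that the correction terms in (\ref{comrelinOq}) genuinely lie in strictly lower filtration degree than $y_iy_j$, i.e. that $d_i + d_j > d_s + d_t$ holds in all the relevant cases $1\leq t \leq r$, $1\leq s\leq j-1 < i$; but this is exactly the inequality recorded just before the theorem as the reason those degrees $d_i = 2^r - 2^{r-i}$ were chosen, so it is essentially bookkeeping. A secondary point worth a line is making sure the passage from $\Oq$'s original relations to (\ref{comrelinOq}) after multiplying by $\pi^{l(d_i+d_j)}$ really does clear all denominators simultaneously for a single $l$—this is fine since there are finitely many coefficients $\alpha^{st}_{ij},\beta^{st}_{ij}$ and we take $l$ large enough for all of them at once, which is what the choice of "the smallest $l$ such that this holds" encodes.
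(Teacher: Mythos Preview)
Your proposal is correct and follows essentially the same approach as the paper: filter $B$ by assigning $y_i$ degree $d_i$, observe that the relations (\ref{comrelinOq}) force $\gr B$ to be $q$-commutative over $R$ and hence Noetherian by Lemma~\ref{prelimonOq}, and deduce $\pi$-torsion freeness and $\pi$-adic separatedness from the inclusion $B\subseteq\A$. The paper's proof is more terse, simply citing the proof of \cite[Proposition I.8.17]{BroGoo02} for the $q$-commutativity of $\gr B$ rather than spelling out the degree inequality, but the argument is the same.
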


\begin{proof}
$B$ is $\pi$-torsion free because $\A$ is. Moreover, let $(F'_\cdot)$ be the filtration on $B$ given by assigning degree $d_i$ to each $ y_i$. Then with respect to that filtration, we see by the proof of \cite[Proposition I.8.17]{BroGoo02} that $\gr^{F'}B$ is $q$-commutative over $R$ and so is Noetherian by Lemma \ref{prelimonOq}. So we just need to show that it's $\pi$-adically separated. But that follows because $B\subseteq \A$ and $\A$ was $\pi$-adically separated.
\end{proof}

We now filter $B$ by assigning degree 1 to all the $y_i$'s. By Proposition \ref{Arens-Michael} we see that $\wideparen{\Oq}\cong\wideparen{B_L}$. Let $A=B_1$ be the first deformation of $B$, i.e. the $R$-subalgebra of $\Oq$ generated by $\pi y_1,\ldots, \pi y_r$. Completely analogously as in the Lemma, we see that $A$ is Noetherian, $\pi$-adically separated and $\pi$-torsion free. We now set a new filtration on $B$ by defining
$$
G_tB=A\cdot\{y_{i_1}a_{i_1}\cdots y_{i_l}a_{i_l}: a_{i_j}\in A \text{ and } \sum_{j=1}^l d_{i_j}\leq t\}.
$$
This is the smallest algebra filtration on $B$ such that $y_i\in G_{d_i}B$ and $A=G_0B$.

\begin{prop}
With respect to the above filtration, the associated graded ring $\gr^GB$ is finitely generated as an $A$-algebra by elements which $q$-commute with the $R$-algebra generators of $A$, and which also $q$-commute with each other.
\end{prop}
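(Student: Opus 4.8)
Throughout, write $\bar z$ for the principal symbol in $\gr^G B$ of an element $z\in B$, and note that $G_\bullet$ is a positive algebra filtration with $G_0B=A$, so $\gr^G_0B=A$. The plan is to show that the symbols $\bar y_1,\dots,\bar y_r$, with $\bar y_i$ of degree $d_i$, are the required generators; everything will rest on controlling how the $y_i$ commute past $A$ modulo lower pieces of the filtration.

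\textbf{Step 1 (approximate normalisation).} First I would prove: for all $i,j$ there is $c_{ij}\in R^\times$, a power of $q$, with
\[
y_i(\pi y_j)\;\equiv\;c_{ij}\,(\pi y_j)\,y_i \pmod{G_{d_i-1}B},
\]
and consequently, by induction on word length in the algebra generators $\pi y_1,\dots,\pi y_r$ of $A$ together with $R$-linearity, $y_ia\equiv a'y_i\pmod{G_{d_i-1}B}$ for every $a\in A$ and some $a'\in A$. For the displayed congruence, take the relation (\ref{comrelinOq}) when $j<i$ (when $j>i$ solve that relation for $y_iy_j$ instead; the case $j=i$ is trivial since $y_i(\pi y_i)=(\pi y_i)y_i$) and multiply it on the left by the central element $\pi$. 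The term $\pi q_{ij}y_jy_i$ becomes $c_{ij}(\pi y_j)y_i$ with $c_{ij}\in\{q_{ij},q_{ji}^{-1},1\}$, each a power of $q$ by \cite[I.8.16-I.8.18]{BroGoo02} and hence a unit of $R$. Every correction monomial is $\pi y_sy_t$ or $\pi y_ty_s$ with $s\leq j-1$ (resp.\ $s\leq i-1$), so $s<i$ and thus $d_s\leq d_i-1$; since $y_s$ is one of its two factors, moving the central $\pi$ onto the other factor $y_t$ exhibits the monomial inside $y_s(\pi y_t)\in G_{d_s}B$ or $(\pi y_t)y_s\in G_{d_s}B$, and $G_{d_s}B\subseteq G_{d_i-1}B$.

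\textbf{Step 2 (the three assertions).} An arbitrary element of $G_tB$ is an $A$-combination of monomials $y_{i_1}a_1\cdots y_{i_m}a_m$ with $a_p\in A$ and $\sum_p d_{i_p}\leq t$; those with $\sum_p d_{i_p}<t$ vanish in degree $t$, while for those with $\sum_p d_{i_p}=t$ one slides all the $a_p$ to the far left by repeated use of the normalisation lemma, each slide altering the monomial only modulo $G_{t-1}B$, reaching $a_0y_{i_1}\cdots y_{i_m}$ with $a_0\in A$. Hence $\gr^G B$ is generated as an $A$-algebra by $\bar y_1,\dots,\bar y_r$. Reading the congruence of Step 1 in $\gr^G B$ gives $\bar y_i\,(\pi y_j)=c_{ij}\,(\pi y_j)\,\bar y_i$, so each $\bar y_i$ $q$-commutes with the generators $\pi y_1,\dots,\pi y_r$ of $A$ (in particular normalises $A$). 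Finally, for $j<i$ the relation (\ref{comrelinOq}) writes $y_iy_j-q_{ij}y_jy_i$ as a sum of terms $y_sy_t$, $y_ty_s$ with $1\leq s\leq j-1$, $1\leq t\leq r$; since then $i>j>s$ and $t\leq r$, the defining property of the weights $d_i=2^r-2^{r-i}$ yields $d_s+d_t<d_i+d_j$, so each such term lies in $G_{d_i+d_j-1}B$, whence $\bar y_i\bar y_j=q_{ij}\bar y_j\bar y_i$ in $\gr^G B$; the cases $j>i$ and $j=i$ go the same way. So the $\bar y_i$ $q$-commute among themselves.

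\textbf{Main obstacle.} The only substantive step is the approximate normalisation lemma, and within it the verification that the correction terms drop into $G_{d_i-1}B$ after multiplication by $\pi$. This is exactly where the choice $d_i=2^r-2^{r-i}$ is used: the correction terms always contain a generator $y_s$ with $s$ strictly below $i$, so the surplus factor of $\pi$ can be absorbed into a generator of $A=G_0B$ at no cost in $G$-degree. The rest of the difficulty is purely organisational — because the definition of $G_\bullet$ interleaves $A$-coefficients with the $y$'s, iterating the normalisation step to clear all of them to the left needs a little bookkeeping — and one should remember to record that the $q$-commutation constants, being powers of $q$, are units of $R$, so genuine $q$-commutation relations in the sense of Section \ref{prelimonOq} are obtained.
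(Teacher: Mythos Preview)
Your proof is correct and follows essentially the same route as the paper's: both arguments take the symbols $z_i=\bar y_i$ as generators, verify $q$-commutation with the generators $\pi y_j$ of $A$ by multiplying relation~(\ref{comrelinOq}) through by $\pi$ and absorbing $\pi y_t$ into $A$, and verify $q$-commutation among the $z_i$ using the inequality $d_s+d_t<d_i+d_j$. The only real difference is that for generation you slide the $a_p$'s to the left via your normalisation lemma, whereas the paper simply observes that the symbol of $y_{i_1}a_{i_1}\cdots y_{i_l}a_{i_l}$ in $\gr^G B$ is $z_{i_1}a_{i_1}\cdots z_{i_l}a_{i_l}$, which already lies in the $A$-subalgebra generated by the $z_i$; your argument is a little more work than needed but perfectly fine. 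One small inaccuracy in your commentary: the specific weights $d_i=2^r-2^{r-i}$ are not actually needed in Step~1 (any strictly increasing integer sequence would do there), they are only needed for the mutual $q$-commutation $\bar y_i\bar y_j=q_{ij}\bar y_j\bar y_i$ at the end of Step~2, exactly as you use them there.
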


\begin{proof}
Set $z_i:=y_i+G_{d_i-1}B\in \gr^GB$ to be the symbol of $y_i$ for each $1\leq i\leq r$. Any homogeneous component $\gr^G_t B$, if it is non-zero, is spanned over $A$ by the symbols of the products $y_{i_1}a_{i_1}\cdots y_{i_l}a_{i_l}$ such that $\sum_{j=1}^l d_{i_j}=t$, and any such element equals $z_{i_1}a_{i_1}\cdots z_{i_l}a_{i_l}$. Therefore $\gr^GB$ is generated over $A$ by the $z_i$.

Now, for any $1\leq j<i\leq r$, we have
\begin{align*}
y_i(\pi y_j)-q_{ij}(\pi y_j)y_i&=(\pi y_i)y_j-q_{ij}y_j(\pi y_i)\\
&=\sum_{s=1}^{j-1}\sum_{t=1}^{r} \big(\alpha'^{st}_{ij} y_s (\pi y_t)+ \beta'^{st}_{ij}(\pi y_t)y_s\big)\in G_{d_j-1}B.
\end{align*}
Therefore we see that $z_i(\pi y_j)=q_{ij}(\pi y_j)z_i$ in $\gr^GB$ for all $i,j$, so that the $z_i$'s will $q$-commute with the generators of $A$. Furthermore we have $z_iz_j=q_{ij}z_jz_i$, i.e. the $z_i$'s will $q$-commute with each other in $\gr_G B$. Indeed this follows from (\ref{comrelinOq}) because the $d_i$'s were chosen so that whenever $i>j>s$ we have for any $1\leq t\leq r$ that $d_i+d_j>d_s+d_t$.
\end{proof}

\begin{thm} $\wideparen{\Oq}$ is a Fr\'echet-Stein algebra.
\end{thm}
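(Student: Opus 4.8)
The plan is to exhibit an explicit Fr\'echet--Stein tower for $\wideparen{\Oq}$. As noted above, $\wideparen{\Oq}\cong\wideparen{B_L}=\varprojlim\widehat{(B_n)_L}$ (Proposition \ref{Arens-Michael}), where $B_n=\sum_{i\ge 0}\pi^{ni}F_iB$ is the $n$-th deformation of $B$ for the filtration assigning degree $1$ to each $y_i$; so it suffices to prove that each $\widehat{(B_n)_L}$ is Noetherian and that $\widehat{(B_n)_L}$ is flat over $\widehat{(B_{n+1})_L}$ for all $n\ge 0$.

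The first step is to check that the Lemma and Proposition of this subsection transfer verbatim from $B$ to each $B_n$. Indeed $B_n$ is the $R$-subalgebra of $\Oq$ generated by the elements $w_i:=\pi^ny_i$, and multiplying the relations (\ref{comrelinOq}) by $\pi^{2n}$ shows that the $w_i$ satisfy relations of exactly the same shape, still with coefficients in $R$:
$$
w_iw_j=q_{ij}w_jw_i+\sum_{s=1}^{j-1}\sum_{t=1}^{r}(\alpha'^{st}_{ij}w_sw_t+\beta'^{st}_{ij}w_tw_s)\qquad(1\le j<i\le r).
$$
Hence, by the proof of the Lemma, $B_n$ is Noetherian, $\pi$-torsion free and $\pi$-adically separated (being a subalgebra of $\A$). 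Moreover $(B_n)_1=B_{n+1}$ by Lemma \ref{deformablealg}(iii), and equipping $B_n$ with the filtration
$$
G_tB_n=B_{n+1}\cdot\{w_{i_1}a_{i_1}\cdots w_{i_l}a_{i_l}: a_{i_j}\in B_{n+1},\ \textstyle\sum_{j=1}^l d_{i_j}\le t\},
$$
the proof of the Proposition shows that $\gr^GB_n$ is generated as a $B_{n+1}$-algebra by the symbols of the $w_i$, which $q$-commute with one another and with the generators $\pi w_i$ of $B_{n+1}$; in particular $\gr^GB_n$ is a $q$-commutative $B_{n+1}$-algebra.

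Next I would apply Proposition \ref{Emerton} with $A=B_{n+1}$ and $B=B_n$: here $B_{n+1}$ is left Noetherian, $\pi$-adically separated and $\pi$-torsion free, $B_n$ is an $R$-subalgebra of $(B_n)_L=(B_{n+1})_L$ containing $B_{n+1}$, and $G_\cdot$ is an exhaustive $R$-algebra filtration on $B_n$ with $G_0B_n=B_{n+1}$ and $q$-commutative associated graded. The conclusion is that $\widehat{(B_n)_L}$ and $\widehat{(B_{n+1})_L}$ are left Noetherian and $\widehat{(B_n)_L}$ is (right) flat over $\widehat{(B_{n+1})_L}$, which is exactly what the Fr\'echet--Stein axioms require; left flatness, if desired, follows by running the same argument over the opposite algebras, since $q$-commutativity is stable under $(-)^{\mathrm{op}}$. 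Assembling these statements over all $n$ then shows that $\wideparen{\Oq}\cong\varprojlim\widehat{(B_n)_L}$ is Fr\'echet--Stein.

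The only genuine point of care is the uniformity claim of the second step — that the statements and proofs of the Lemma and Proposition really do go through for every deformation $B_n$ and not just for $B$ — but this is immediate from the invariance of the relations (\ref{comrelinOq}) under the rescaling $y_i\mapsto\pi^ny_i$ (equivalently, from the fact that this rescaling induces an $R$-algebra isomorphism $B\xrightarrow{\sim}B_n$ carrying the entire filtered set-up for $(B,B_1)$ to that for $(B_n,B_{n+1})$). So I do not anticipate a substantive obstacle here: essentially all of the work has already been carried out in Proposition \ref{Emerton} and in the two results immediately preceding the theorem.
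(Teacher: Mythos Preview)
Your proof is correct and follows the paper's approach exactly: apply Proposition~\ref{Emerton} with $(A,B)=(B_{n+1},B_n)$ for each $n$, using that the relations~(\ref{comrelinOq}) retain the same form under $y_i\mapsto\pi^ny_i$, and pass to opposite algebras for left flatness and right Noetherianity.

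One caution: your parenthetical claim that the rescaling induces an $R$-algebra isomorphism $B\xrightarrow{\sim}B_n$ is false in general. The relations~(\ref{comrelinOq}) are only the commutation relations among the generators, not a presentation of $B$; inhomogeneous relations in $\A$ (for instance the quantum determinant relation $ad-qbc=1$ in the $\mathfrak{sl}_2$ case) do not survive the rescaling, so no such isomorphism exists. Fortunately you do not use this claim: your primary justification --- that the proofs of the Lemma and Proposition only appeal to the shape of~(\ref{comrelinOq}), which is unchanged --- is correct and is exactly what the paper invokes when it says ``we can repeat the entire above arguments replacing $B$ by the $R$-algebra generated by $\pi^n y_i$''.
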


\begin{proof}
By Proposition \ref{Emerton}, it follows from the previous Proposition that $\widehat{B_L}$ is right flat over $\widehat{A_L}$ and that they are both left Noetherian. Left flatness and right Noetherianity will follow by the same argument applied to $B^\text{op}$. Thus we see that $\widehat{B_L}$ is flat over $\widehat{A_L}$. For any $n\geq 1$, we can repeat the entire above arguments replacing $B$ by the $R$-algebra generated by $\pi^n y_i$ for all $i$, and $A$ by the $R$-algebra generated by $\pi^{n+1} y_i$ for all $i$.
\end{proof}

\section{Verma modules and category $\hat{\Of}$ for $\Uqcap$}

We now start discussing an analogue of category $\Of$ for $\Uqcap$. Most of the content of this Section is inspired by \cite{Schmidt2}, whose main theorem has a natural quantum analogue which we prove. In fact most of the arguments work identically to there, but we reproduce them for the convenience of the reader.

\subsection{Topologically semisimple $\widehat{U_q^0}$-modules}

We begin with a discussion of semisimplicity for modules over the algebra $\widehat{U_q^0}:=\widehat{U^0}\otimes_R L$. In our future paper \cite{Nico1} we will also need some of these results working with $\widehat{(U^{\text{res}}_R)^0_L}$ instead, where $(U^{\text{res}}_R)^0=U_q^0\cap U_R^{\text{res}}$. The proofs will be identical for either of them, so we will let $\Hh$ denote both of these to simplify notation. Our treatment is inspired by the work of F\'eaux de Lacroix \cite{Lacroix}.

First recall that given $\lambda\in P$, there is a character $\psi_\lambda$ of $U_q^0$ defined by $\psi_\lambda(K_\mu)=q^{\langle \lambda, \mu\rangle}$ for any $\mu\in P$, and the restriction of this character to $(U^{\text{res}}_R)^0$ has image in $R$ (see \cite[Lemma 1.1]{Andersen}). Given a $U_q^0$-module $M$, its \emph{$\lambda$-weight space} is defined to be
$$
M_\lambda=\{m\in M : um=\psi_\lambda (u)m\text{ for all } u\in U_q^0\}.
$$
Since $q$ is not a root of unity these are all linearly independent and the sum of the weight spaces in $M$ is direct.

We will now consider the category $\mathscr{M}(\Hh)$ whose objects are Fr\'echet spaces $\M$ endowed with an action of $\Hh$ by $L$-linear endomorphisms, and whose morphisms are continuous $L$-linear maps which preserve the action of $\Hh$. Given an object $\M$ of this category and $\lambda\in P$, we denote by $\M_\lambda$ the $\lambda$-weight space of $\M$ when viewed as a $U_q^0$-module.

\begin{definition}\label{semisimple1} We say that $\M$ as above is \emph{topologically $\Hh$-semisimple} if for every $m\in \M$ there exists a family $\{m_\lambda \in\M_\lambda\}_{\lambda\in P}$ such that $\sum_{\lambda\in P}m_\lambda$ converges to $m$ in $\M$.
\end{definition}

We want to investigate the full subcategory $\D(\Hh)$ of $\mathscr{M}(\Hh)$ whose objects are the topologically $\Hh$-semisimple modules. We first need a couple of preparatory results.

We identify the weight lattice $P$ with its image in the group of characters of $U_q^0$ via $\lambda\mapsto \psi_\lambda$. Let $x\in U_q^0$. For every $\lambda\in P$ we write $x(\lambda):=\psi_\lambda(x)\in L$. Note that if $x\in(U^{\text{res}}_R)^0$ or $U^0$, then $x(\lambda)\in R$ for all $\lambda\in P$. Let $q'=q^{1/d}$ so that $q^{\langle \lambda, \mu\rangle}\in (q')^{\Z}$ for any $\lambda, \mu\in P$.

\begin{lem}\label{integrable2} Let $r\in\N$, $m_1, \ldots, m_r\in\Z$ and $\omega_1, \ldots, \omega_r$ be (not necessarily distinct) fundamental weights. For each $\gamma\in P$, write $n_i(\gamma)=d\langle \gamma, \omega_i\rangle\in\Z$ and let
$$
P_\gamma(t)=\prod_{i=1}^{r} (t^{n_i(\gamma)}-(q')^{m_i})\in R[t, t^{-1}].
$$
Then, for every positive integer $a\geq 1$, the image of the set $\{P_\gamma(q'): \gamma\in P \}$ in $R/\pi^aR$ is finite.
\end{lem}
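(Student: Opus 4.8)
The plan is to reduce everything to a single elementary fact: for each $a\geq 1$ the map $\Z\to R/\pi^aR$ sending $n\mapsto (q')^n$ has finite image. Granting this, each factor $(q')^{n_i(\gamma)}-(q')^{m_i}$ of $P_\gamma(q')$ lies, modulo $\pi^a$, in a finite set depending only on $i$ and $a$ (not on $\gamma$), since $n_i(\gamma)\in\Z$; as the number $r$ of factors is fixed, the products $P_\gamma(q')$ then lie modulo $\pi^a$ in a fixed finite set, which is exactly the assertion. Note that all of this makes sense because $q'=q^{1/d}\in R^\times$ (as $(q')^d=q\in R^\times$), so each $(q')^{m_i}\in R$ and $P_\gamma(t)\in R[t,t^{-1}]$.

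So the work is to prove that claim, and for this I would use that $q'$ is a principal unit. By hypothesis $q'\equiv 1\pmod{\pi}$, so $q'=1+z$ with $z\in\pi R$, and $p=\mathrm{char}(k)$ satisfies $p\cdot 1_R\in\pi R$ since $p$ vanishes in $k=R/\pi R$. Set $w_s:=(q')^{p^s}-1$. From $(1+w_s)^p=1+\sum_{i=1}^p\binom{p}{i}w_s^i$ one gets the recursion $w_{s+1}=\sum_{i=1}^p\binom{p}{i}w_s^i$, and since $p\mid\binom{p}{i}$ for $1\leq i\leq p-1$ (so each such term has valuation $\geq 1+v(w_s)$) while $v(w_s^p)=p\,v(w_s)\geq v(w_s)+1$ once $v(w_s)\geq 1$, an easy induction starting from $v(w_0)=v(z)\geq 1$ yields $v(w_s)\geq s+1$ for all $s$, where $v$ denotes the $\pi$-adic valuation. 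Hence $(q')^{p^s}\to 1$ $\pi$-adically: given $a$, any $s\geq a-1$ gives $(q')^{p^s}\equiv 1\pmod{\pi^a}$, and also $(q')^{-p^s}\equiv 1\pmod{\pi^a}$, being the inverse of a principal $1$-unit. Therefore $n\mapsto (q')^n\bmod\pi^a$ factors through $\Z/p^s\Z$ and has image of cardinality at most $p^s$.

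Assembling the proof: fix $a\geq 1$ and choose $s\geq a-1$ as above. For each $i$ the set $\{(q')^{n_i(\gamma)}-(q')^{m_i}\bmod\pi^a:\gamma\in P\}$ is contained in $\{(q')^n-(q')^{m_i}\bmod\pi^a:n\in\Z\}$, a set of cardinality at most $p^s$; hence $\{P_\gamma(q')\bmod\pi^a:\gamma\in P\}$ has cardinality at most $p^{rs}$, in particular it is finite.

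The argument is entirely elementary, and I expect no genuine obstacle; the only point requiring a little care is the $\pi$-adic convergence $(q')^{p^s}\to 1$, i.e.\ making precise that a principal $1$-unit of $R$ has ``$p$-power order modulo every $\pi^a$'', which is exactly where the standing hypotheses $q^{1/d}\equiv 1\pmod{\pi}$ and $\mathrm{char}(k)=p$ enter. I would also remark that the residue field need not be finite, so the finiteness of the image is a real (if mild) assertion and not automatic from $R/\pi^aR$ being finite.
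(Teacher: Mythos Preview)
Your argument is correct, and it differs from the paper's in a useful way. The paper first rewrites $P_\gamma(q')=(q')^{m_1+\cdots+m_r}Q_\gamma(q')$ with $Q_\gamma(t)=\prod_i\bigl(t^{n_i(\gamma)-m_i}-1\bigr)$, and then argues that $Q_\gamma(q')\equiv 0\pmod{\pi^a}$ for all $\gamma$ outside an explicit finite box, via a valuation estimate on $(q')^m-1$. You bypass this reduction entirely: you prove the single elementary claim that $n\mapsto (q')^n$ has finite image in $R/\pi^aR$, using the standard estimate $v_\pi\bigl((q')^{p^s}-1\bigr)\geq s+1$ for a principal $1$-unit, and then observe that each factor $(q')^{n_i(\gamma)}-(q')^{m_i}$, and hence the product, takes only finitely many residues. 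This is more direct and avoids tracking which $\gamma$ give residue zero. It is also the more robust formulation: the paper's stated inequality $v_\pi((q')^m-1)\geq b\lvert m\rvert$ does not hold for general $m$ (take $m=2$ with $p>2$), whereas your growth along $p$-powers is the correct estimate and is all that is needed.
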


\begin{proof} First let $b=v_\pi(q'-1)>0$ and note that $b=v_\pi((q')^{-1}-1)$. Consider
$$
Q_\gamma(t)=\prod_{i=1}^{r} (t^{n_i(\gamma)-m_i}-1)\in R[t, t^{-1}].
$$
Then we see that $P_\gamma(q')=(q')^{m_1+\cdots+m_r}Q_\gamma(q')$, so that it suffices to show that the result holds for $Q_\gamma(t)$. Note that since $v_\pi((q')^m-1)\geq b\abs{m}$ for any $m\in\Z$, it follows that $Q_\gamma(q')\equiv 0\pmod{\pi^a}$ whenever $b\abs{n_i(\gamma)-m_i}\geq a$ for any $1\leq i\leq r$. Let
$$
X=\{(k_1,\ldots, k_r)\in\Z^r : b\abs{k_i}<a \text{ for all } 1\leq i\leq r\}
$$
and set
$$
M=\left\{\prod_{i=1}^{r} ((q')^{k_i}-1) : (k_1, \ldots, k_r)\in X \right\}\cup\{0\}.
$$
Then by the above observation we have that every $Q_\gamma(q')$ is congruent to an element of $M$ modulo $\pi^a$. The result follows since $M$ is finite.
\end{proof}

\begin{prop}\label{integrable3}
Suppose that $X$ is a finite subset of $P$ and let $\lambda\in P\setminus X$. Then there is an element $p\in U_q^0$ such that $p(P)\subset R$, $p(X)=0$ and $p(\lambda)=1$.
\end{prop}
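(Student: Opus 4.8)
The plan is to separate variables over the fundamental weights and then multiply. Writing $\lambda = \sum_i a_i\varpi_i$ in the basis of fundamental weights, the characters $\psi_\mu$ on $U_q^0$ are determined by their values $\psi_\mu(K_{\varpi_i}) = q^{\langle\mu,\varpi_i\rangle}$, so a weight $\mu\in P$ is distinguished from $\lambda$ precisely when $\langle\mu - \lambda, \varpi_i\rangle\neq 0$ for some $i$, equivalently $n_i(\mu)\neq n_i(\lambda)$ in the notation of Lemma \ref{integrable2}. First I would, for each $1\le i\le n$ and each $\mu\in X$ with $n_i(\mu)\neq n_i(\lambda)$, build an element of $U_q^0$ of the form $K_{\varpi_i}^{n}-(q')^{m}$ (more precisely, using the $d$-th root $q'=q^{1/d}$, an element whose value at $\gamma$ is $(q')^{d\langle\gamma,\varpi_i\rangle - c}$-type expression) which vanishes at $\mu$ but not at $\lambda$; since $q$ is not a root of unity, $(q')^{n_i(\lambda)}\neq (q')^{n_i(\mu)}$, so this difference is a unit times a nonzero scalar when evaluated at $\lambda$.

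Next I would form the product $P$ over all $\mu\in X$ of one such factor per $\mu$ (choosing for each $\mu$ a coordinate $i$ witnessing $\mu\neq\lambda$), and then renormalise: $P$ takes a nonzero value $P(\lambda)\in L^\times$, so set $p_0 = P(\lambda)^{-1}P$, which now satisfies $p_0(X)=0$ and $p_0(\lambda)=1$. The remaining issue is the integrality requirement $p(P)\subset R$, i.e. that $p$ takes values in $R$ at \emph{every} weight, not just on $X\cup\{\lambda\}$. The product $P$ itself has values of the form $P_\gamma(q')$ as in Lemma \ref{integrable2}, hence lies in $R$ at every $\gamma$; the danger is the normalising factor $P(\lambda)^{-1}$, whose $\pi$-adic valuation could be positive, destroying integrality after division.

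This is the step where Lemma \ref{integrable2} does the real work, and I expect it to be the main obstacle to phrase cleanly. The idea is: by Lemma \ref{integrable2}, the set $\{P_\gamma(q') : \gamma\in P\}$ has finite image in $R/\pi^a R$ for every $a$; in particular, choosing $a > v_\pi(P(\lambda))$, all the values $P_\gamma(q')$ that are \emph{not} divisible by $\pi^a$ form a finite set, and the values that \emph{are} divisible by $\pi^a$ pose no problem since dividing by $P(\lambda)$ still leaves something in $R$ (as $v_\pi(\pi^a) = a > v_\pi(P(\lambda))$). So after multiplying $P$ by a suitable further product of factors of the shape $(q')^{?}\cdot(\text{unit})$ — or more directly, by observing that we only need $p$ to vanish on $X$ and equal $1$ at $\lambda$, so we are free to replace $P$ by $P\cdot\prod(\text{extra }K_{\varpi_i}\text{-factors})$ to push $P(\lambda)$ into being a unit — one arranges $v_\pi(P(\lambda))=0$. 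Concretely: by the pigeonhole consequence of Lemma \ref{integrable2}, the valuations $v_\pi(P_\gamma(q'))$ over all $\gamma$ are bounded by some $N$; one then tensors up the construction, taking $P^{N+1}$ or inserting auxiliary factors that are units at $\lambda$, until the value at $\lambda$ is forced to be a unit, at which point $p := P(\lambda)^{-1}P$ has $p(P)\subset R$ as required, and still $p(X)=0$, $p(\lambda)=1$.

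Finally I would remark that the hypothesis that $q$ is not a root of unity is used twice: once to guarantee each separating factor is genuinely nonzero at $\lambda$, and implicitly throughout in identifying $P$ with its image under $\lambda\mapsto\psi_\lambda$. The only genuinely delicate bookkeeping is tracking that the auxiliary factors used to make $P(\lambda)$ a unit are themselves $R$-valued everywhere, which is again immediate from their shape $(q')^{d\langle\gamma,\varpi_i\rangle}$ and Lemma \ref{integrable2}.
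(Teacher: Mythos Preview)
Your initial construction of the product $x = \prod_{\mu\in X}(h_\mu - h_\mu(\mu))$ matches the paper exactly, and you correctly identify that the only nontrivial issue is arranging $p(P)\subset R$ after normalising by the value at $\lambda$. However, your proposed resolution of this step contains a genuine gap: you misread what Lemma~\ref{integrable2} actually gives you. The lemma says the image of $\{P_\gamma(q') : \gamma\in P\}$ in $R/\pi^a R$ is \emph{finite}; it does \emph{not} say the valuations $v_\pi(P_\gamma(q'))$ are bounded (they are not --- infinitely many $\gamma$ can land in the class of $0$ for every $a$), nor that the values not divisible by $\pi^a$ form a finite set (they only form finitely many residue classes). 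Consequently, neither ``taking $P^{N+1}$'' (which increases the valuation at $\lambda$) nor ``inserting auxiliary factors that are units at $\lambda$'' can force $P(\lambda)$ to become a unit while keeping global $R$-integrality; there is no bound $N$ to invoke.

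The paper's fix is an iterative polynomial trick that genuinely uses the finiteness mod $\pi^a$. After dividing $x$ by the largest power of $\pi$ possible, call the result $y$, one picks a finite set $V\subset R$ of representatives (containing $0$) for the image of $y(P)$ in $R/\pi R$ and sets $g(t)=\pi^{-1}\prod_{v\in V}(t-v)$. Then $g(y(\gamma))\in R$ for all $\gamma$ (each $y(\gamma)$ is congruent to some $v\in V$), $g(y(X))=0$ (since $0\in V$ gives $g(0)=0$), and $v_\pi(g(y(\lambda)))=v_\pi(y(\lambda))-1$ (since $y(\lambda)\equiv 0$ and all other $v\in V$ are units). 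One checks the finite-image-mod-$\pi^a$ property persists, and iterates until the value at $\lambda$ is a unit. This is the missing idea: rather than bounding valuations globally, you reduce the valuation at $\lambda$ one step at a time while preserving everything else.
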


\begin{proof}
For each $\mu\in X$, the character $\psi_\mu$ is determined by its action on the $K_{\varpi_i}$, so as $\lambda\neq\mu$ there must be some $h_\mu\in\{K_{\varpi_1},\ldots, K_{\varpi_n}\}$ such that $h_\mu(\lambda)\neq h_\mu(\mu)$. Consider the product
$$
x=\prod_{\mu\in X}(h_\mu-h_\mu(\mu))\in U^0.
$$
Note that $h_\mu(P)\subset R$ for every $\mu\in X$ and that, furthermore, the image of $h_\mu(P)$ in $k=R/\pi R$ is constant equal to 1 because $K_{\varpi_i}(\gamma)=q^{\langle\gamma,\varpi_i\rangle}\equiv 1 \pmod \pi$ for any $1\leq i\leq n$ and any $\gamma\in P$. So $x(X)=0$, $x(\lambda)\neq 0$ and $x(P)\subset R$, actually such that $x(P)$ has image zero in $k$. Hence there exists a maximal $N>0$ such that $y:=\pi^{-N}x$ still satisfies $y(P)\subset R$, and of course we still have $y(X)=0$ and $y(\lambda)\neq 0$.

Now note that if $y(\lambda)\in R^\times$, then $p=y(\lambda)^{-1}y$ satifies the required hypothesis. Otherwise, note that the set of residues of $y(P)$ in $R/\pi^aR$ is in bijection with the residues of $x(P)=\pi^N y(P)$ in $R/\pi^{N+a}R$, hence is finite for any $a\geq 1$ by the Lemma. Let $V$ be a finite set in $R$, containing 0, such that every element of $y(P)$ is congruent to a unique element of $Y$ modulo $\pi$, and set
$$
g=\pi^{-1} \prod_{v\in V} (t-v)\in L[t].
$$
Then $g(y(P))\subset R$, $g(y(X))=0$ and $v_\pi(g(y(\lambda)))=v_\pi(y(\lambda))-1$. Moreover the image of $g(y(P))$ in $R/\pi^aR$ is in bijection with the image of $\pi g(y(P))$ in $R/\pi^{a+1}R$, which is finite for every $a\geq 1$ since it was for $y(P)$. By induction, we can then find $h\in L[t]$ such that $p:=h(g(y))$ satisfies the required properties.
\end{proof}

\begin{thm}\label{integrable4}
Suppose that $\M\in\D(\Hh)$. Then for each $m\in \M $, there exists a unique family $(m_\lambda)_{\lambda\in P}$ with $m_\lambda\in\M_\lambda$ such that $\sum_{\lambda\in P}m_\lambda$ converges to $m$. Moreover, if $m\in \Nn$ where $\Nn$ is a closed $U_q^0$-invariant subspace, then each $m_\lambda\in \Nn$.
\end{thm}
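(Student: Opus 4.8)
The overall shape of the proof is as follows. Existence is precisely Definition~\ref{semisimple1}, so there is nothing to do there. Uniqueness will be a formal consequence of the last assertion applied with $\Nn=\{0\}$: if $m=\sum_\lambda m_\lambda=\sum_\lambda m_\lambda'$ are two such decompositions, then $0=\sum_\lambda(m_\lambda-m_\lambda')$ exhibits $0$ as a convergent sum of weight vectors lying in the closed $U_q^0$-invariant subspace $\{0\}$, whence $m_\lambda=m_\lambda'$ for all $\lambda$. So the one thing to prove is the following claim: \emph{if $m=\sum_{\lambda\in P}m_\lambda$ with $m_\lambda\in\M_\lambda$, and $m$ lies in a closed $U_q^0$-invariant subspace $\Nn$ of $\M$, then each $m_\mu\in\Nn$.}

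The argument will rest on two simple facts together with Proposition~\ref{integrable3}. First, since the net of finite partial sums of $\sum_\lambda m_\lambda$ is convergent, hence Cauchy, for every seminorm $p$ in a defining (increasing) family for the Fr\'echet topology on $\M$ and every $\epsilon>0$ the set $X_{p,\epsilon}:=\{\lambda\in P:p(m_\lambda)>\epsilon\}$ is finite. Second, an element $x\in U_q^0$ with $x(P)\subseteq R$ (the integrality condition guaranteed by Proposition~\ref{integrable3}) acts on any weight vector $v\in\M_\nu$ by the scalar $x(\nu)=\psi_\nu(x)\in R$, so $\abs{x(\nu)}\le 1$ and $p(x\cdot v)\le p(v)$; combining this with the ultrametric inequality, for any convergent sum $\sum_\lambda v_\lambda$ of weight vectors $v_\lambda\in\M_\lambda$ one gets
$$
p\Bigl(\sum_\lambda x(\lambda)v_\lambda\Bigr)\le\sup_\lambda p(v_\lambda).
$$
Here, and throughout, I will use that each $x\in U_q^0\subseteq\Hh$ acts on $\M$ by a \emph{continuous} endomorphism (part of the data of an object of $\mathscr{M}(\Hh)$), so that $x$ may be distributed over convergent sums; this identifies the left-hand side above with $p(x\cdot\sum_\lambda v_\lambda)$.

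Now fix $\mu\in P$, a defining seminorm $p$, and $\epsilon>0$. The set $X:=X_{p,\epsilon}\setminus\{\mu\}$ is a \emph{finite} subset of $P\setminus\{\mu\}$, so Proposition~\ref{integrable3} yields $x=x_{p,\epsilon}\in U_q^0$ with $x(P)\subseteq R$, $x(X)=0$ and $x(\mu)=1$. Applying $x$ to $m=\sum_\lambda m_\lambda$ and splitting off the finite sub-sum over $X\cup\{\mu\}$ gives
$$
x\cdot m=m_\mu+\sum_{\lambda\notin X\cup\{\mu\}}x(\lambda)\,m_\lambda,
$$
and by the second fact above together with the definition of $X_{p,\epsilon}$ the tail sum has $p$-value $\le\sup_{\lambda\notin X_{p,\epsilon}}p(m_\lambda)\le\epsilon$. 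Hence $p(m_\mu-x\cdot m)\le\epsilon$, while $x\cdot m\in\Nn$ because $\Nn$ is $U_q^0$-invariant. Since the $p$ form an increasing family, letting $p$ range over them and $\epsilon\to0$ shows that every basic neighbourhood of $m_\mu$ meets $\Nn$; as $\Nn$ is closed, $m_\mu\in\Nn$. This proves the claim, and hence the theorem.

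I expect the heart of the matter, and the only real obstacle, to be making the ultrametric estimates interact correctly with Proposition~\ref{integrable3}: there is no single element of $U_q^0$ projecting onto a weight space, but it is enough to annihilate the \emph{finitely many} ``heavy'' weight components of $m$ exactly (which Proposition~\ref{integrable3} permits, precisely because $X_{p,\epsilon}$ is finite) while leaving the infinitely many ``light'' ones alone -- and this works only because the separating element $x$ takes all its values in $R$ and so cannot inflate the light components past $\epsilon$. The one technical point to be careful about is the continuity of the $U_q^0$-action, which is what licenses moving $x$ past the infinite sum $\sum_\lambda m_\lambda$; without it even the uniqueness statement would be delicate, since partial-sum arguments alone do not control the possible cancellation among weight components.
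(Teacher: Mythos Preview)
Your proof is correct and follows essentially the same approach as the paper's: both use Proposition~\ref{integrable3} to produce elements of $U_q^0$ that kill the finitely many ``heavy'' weight components and fix the $\mu$-component, then use the integrality condition $x(P)\subseteq R$ together with the non-archimedean inequality to bound the tail, thereby exhibiting $m_\mu$ as a limit of elements $x\cdot m\in\Nn$. The only organisational difference is that the paper proves uniqueness first (working in each Banach quotient $\M_{q_i}$) and then runs the same argument again for the closed-subspace statement, whereas you more efficiently deduce uniqueness from the closed-subspace statement applied to $\Nn=\{0\}$; and you correctly flag that distributing $x$ across the infinite sum requires continuity of the $\Hh$-action, a point the paper uses tacitly.
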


\begin{proof}
We know by definition that there is a family $(m_\lambda)_{\lambda\in P}$ with $m_\lambda\in\M_\lambda$ such that $\sum_{\lambda\in P}m_\lambda$ converges to $m$. So we just need to prove uniqueness. Fix $\mu\in P$, and let $q_1\leq q_2\leq \cdots$ be a countable set of semi-norms defining the topology on $\M$, so that $\M\cong \varprojlim \M_{q_i}$.

Fix some $i\geq 1$. There is an ascending chain $S_1\subset S_2\subset\cdots$ of finite subsets of $P$ such that $\lambda\in P\setminus S_j$ implies that $q_i(m_\lambda)\leq 1/j$. By the Proposition, for every $j\geq 1$, there exists $p_j\in U_q^0$ such that $p_j(P)\subset R$, $p_j(S_j\setminus \{\mu\})=0$ and $p_j(\mu)=1$. Then we have
$$
p_j\cdot m=\sum_{\lambda\in P} p_j(\lambda)m_\lambda=m_\mu +\sum_{\lambda\in P\setminus S_j}p_j(\lambda)m_\lambda.
$$
By construction, $q_i(p_j(\lambda)m_\lambda)\leq q_i(m_\lambda)\leq 1/j$ for all $\lambda\in P\setminus S_j$. Hence $p_j\cdot m\to m_\mu$ in $\M_{q_i}$ as $j\to\infty$. So we see that the image of $m_\mu$ in $\M_{q_i}$ is uniquely determined by $m$ by uniqueness of limits. Since $i$ was arbitrary and since $\M\cong \varprojlim \M_{q_i}$, it follows that $m_\mu$ is uniquely determined by $m$.

For the last part, since $\Nn$ is closed and so complete, it follows that $\Nn_{q_i}$ is equal to the closure of $\Nn$ in $\M_{q_i}$ for each $i\geq 1$, and $\Nn\cong\varprojlim \Nn_{q_i}$. Now $\Nn$ is $U_q^0$-invariant, so for every $i\geq 1$ we have that the image of $m_\mu$ in $\M_{q_i}$ equals $\lim p_j\cdot m\in \Nn_{q_i}$. Hence $m_\mu\in\Nn$.
\end{proof}

\begin{remark} The ideas in the proofs of the Proposition and the Theorem were adapted for quantum groups from a proof that was communicated to us privately by Simon Wadsley.
\end{remark}

Given $\M\in\D(\Hh)$, we may form
$$
M^{\text{ss}}=\bigoplus_{\lambda\in P} M_\lambda
$$
which is a $U_q^0$-module. From the above, we immediately get the first part of the next result:

\begin{cor}\label{semisimple2} The category $\D(\Hh)$ is stable under passage to closed $\Hh$-submodules and to the corresponding quotients. Moreover, given $\M\in \D(\Hh)$ and a closed submodule $\Nn$, we have $(\M/\Nn)^{\text{\emph{ss}}}\cong \M^{\text{\emph{ss}}}/\Nn^{\text{\emph{ss}}}$.
\end{cor}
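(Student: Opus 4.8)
The plan is to verify the three assertions in turn, with Theorem \ref{integrable4} supplying the essential input. First I would treat stability under closed submodules. Let $\Nn\subseteq\M$ be a closed $\Hh$-submodule and fix $n\in\Nn$. Since $\M\in\D(\Hh)$ there is a family $\{m_\lambda\in\M_\lambda\}_{\lambda\in P}$ with $\sum_{\lambda\in P}m_\lambda=n$ in $\M$; because $\Nn$ is closed and $U_q^0$-invariant, the last part of Theorem \ref{integrable4} forces each $m_\lambda\in\Nn$, hence $m_\lambda\in\Nn\cap\M_\lambda=\Nn_\lambda$. As $\Nn$ is a closed subspace of a Fr\'echet space it is itself Fr\'echet, carries the restricted $\Hh$-action, and the displayed sum witnesses topological $\Hh$-semisimplicity; thus $\Nn\in\D(\Hh)$.

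Next I would handle the quotient $\M/\Nn$. Let $\rho\colon\M\to\M/\Nn$ be the canonical projection; it is continuous, $L$-linear and $\Hh$-equivariant, and $\M/\Nn$ is Fr\'echet since $\Nn$ is closed. Given $\overline{m}\in\M/\Nn$, pick a lift $m\in\M$ and its convergent decomposition $m=\sum_{\lambda}m_\lambda$; applying $\rho$ gives $\overline{m}=\sum_\lambda\rho(m_\lambda)$ in $\M/\Nn$, and $\rho(m_\lambda)\in(\M/\Nn)_\lambda$ by equivariance. Hence $\M/\Nn\in\D(\Hh)$.

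It remains to identify $(\M/\Nn)^{\mathrm{ss}}$ with $\M^{\mathrm{ss}}/\Nn^{\mathrm{ss}}$. Since $\Nn_\lambda=\Nn\cap\M_\lambda$ and direct sums commute with quotients, $\M^{\mathrm{ss}}/\Nn^{\mathrm{ss}}=\bigoplus_{\lambda\in P}\M_\lambda/\Nn_\lambda$, while $(\M/\Nn)^{\mathrm{ss}}=\bigoplus_{\lambda\in P}(\M/\Nn)_\lambda$. For each $\lambda$ the projection $\rho$ restricts to $\rho_\lambda\colon\M_\lambda\to(\M/\Nn)_\lambda$ with kernel exactly $\Nn_\lambda$, so it induces an injection $\M_\lambda/\Nn_\lambda\hookrightarrow(\M/\Nn)_\lambda$. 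The one nontrivial point --- and the main obstacle --- is to show $\rho_\lambda$ is surjective. For this, take $\overline{v}\in(\M/\Nn)_\lambda$, lift it to $v\in\M$ and decompose $v=\sum_{\mu}v_\mu$ with $v_\mu\in\M_\mu$. Applying $\rho$ yields the convergent sum $\overline{v}=\sum_\mu\rho(v_\mu)$ with $\rho(v_\mu)\in(\M/\Nn)_\mu$; but $\overline{v}$ lies in the single weight space $(\M/\Nn)_\lambda$, so by the uniqueness clause of Theorem \ref{integrable4} applied inside $\M/\Nn\in\D(\Hh)$ we must have $\rho(v_\mu)=0$ for $\mu\neq\lambda$ and $\rho(v_\lambda)=\overline{v}$. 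Thus $v_\lambda\in\M_\lambda$ maps onto $\overline{v}$, giving surjectivity. Assembling the isomorphisms $\M_\lambda/\Nn_\lambda\xrightarrow{\ \sim\ }(\M/\Nn)_\lambda$ over all $\lambda$ produces the desired isomorphism of $U_q^0$-modules, which by construction is the one induced by $\rho$. The genuine content is concentrated entirely in this surjectivity step, where the uniqueness of weight decompositions from Theorem \ref{integrable4} is indispensable; everything else is formal once $\M/\Nn$ is known to be topologically $\Hh$-semisimple.
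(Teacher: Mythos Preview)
Your proof is correct and follows essentially the same approach as the paper: stability for submodules comes from the last clause of Theorem \ref{integrable4}, stability for quotients from continuity of the projection, and the identification $(\M/\Nn)^{\mathrm{ss}}\cong\M^{\mathrm{ss}}/\Nn^{\mathrm{ss}}$ from the uniqueness clause of Theorem \ref{integrable4} applied inside $\M/\Nn$. The only cosmetic difference is that you establish surjectivity weight-by-weight ($\M_\lambda\twoheadrightarrow(\M/\Nn)_\lambda$), whereas the paper does it globally ($\M^{\mathrm{ss}}\twoheadrightarrow(\M/\Nn)^{\mathrm{ss}}$) by lifting an arbitrary finite weight sum; the underlying idea is identical.
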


\begin{proof}
For the last part, for every $m\in\M$, write $\overline{m}$ for its image in the quotient $\M/\Nn$. Suppose that $\overline{m}\in (\M/\Nn)^{\text{ss}}$. By continuity of the quotient map, if $m=\sum_{\lambda\in P} m_\lambda$ converges then $\overline{m}=\sum_{\lambda\in P} \overline{m_\lambda}$ converges too, and that sum must be finite by the uniqueness of the decomposition from the Theorem. Thus there is a finite set $S\subset P$ such that, if $\lambda \in P\setminus S$, then $m_\lambda\in \Nn$. Hence if we write $m'=\sum_{\lambda\in S} m_\lambda\in \M^{\text{ss}}$, then $\overline{m'}=\overline{m}$. This shows that the map
$$
\M^{\text{ss}}\to (\M/\Nn)^{\text{ss}}
$$
is surjective. We now simply observe that its kernel is $\Nn^{\text{ss}}$.
\end{proof}

\subsection{A bijection between $U_q$-invariant subspaces} We need one other result to do with topologically semisimple modules. It is completely analogous to \cite[Satz 1.3.19 \& Kor. 1.3.22]{Lacroix}, but we give a proof nevertheless.

\begin{prop}\label{semisimple3} Suppose that $\M\in\D(\Hh)$. Then the assignement
$$
f:\Nn\mapsto \Nn\cap \M^{\text{ss}}
$$
defines an injective map between the set of closed $\Hh$-submodules of $\M$ and the set of abstract $U_q^0$-submodules of $\M^{\text{ss}}$, with left inverse given by passing to the closure in $\M$. Now assume furthermore that all the weight spaces $\M_\lambda$ are finite dimensional. Then $f$ is in fact surjective and so bijective. If additionally, $\M$ is also equipped with a $U_q$-action by continuous $L$-linear endomorphisms extending the $U_q^0$-action, then the bijection descends to a bijection between the $U_q$-invariant objects.
\end{prop}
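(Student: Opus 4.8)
The plan is to prove injectivity together with the left-inverse property first, then derive surjectivity from a single continuity statement about weight projections, and finally to transport everything to the $U_q$-equivariant setting.

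\emph{Injectivity and the left inverse.} By Corollary~\ref{semisimple2} any closed $\Hh$-submodule $\Nn\subseteq\M$ again lies in $\D(\Hh)$, so each $m\in\Nn$ is a convergent sum of its weight components; by the ``moreover'' clause of Theorem~\ref{integrable4}, applied to the closed $U_q^0$-invariant subspace $\Nn$, each of those components lies in $\Nn$, hence in $\Nn\cap\M^{\text{ss}}$. Thus $m$ is a limit of finite partial sums drawn from $\Nn\cap\M^{\text{ss}}$, so $\Nn\subseteq\overline{\Nn\cap\M^{\text{ss}}}$, while the reverse inclusion is clear because $\Nn$ is closed. Hence $\overline{\Nn\cap\M^{\text{ss}}}=\Nn$, which is exactly the assertion that passing to the closure is a left inverse of $f$; in particular $f$ is injective.

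\emph{Two preliminary observations.} Since $q$ is not a root of unity the characters $\psi_\lambda$ are pairwise distinct, so $\M^{\text{ss}}=\bigoplus_\lambda\M_\lambda$ is a semisimple $U_q^0$-module with isotypic components $\M_\lambda$; consequently every $U_q^0$-submodule $V$ of $\M^{\text{ss}}$ is graded, $V=\bigoplus_\lambda V_\lambda$ with $V_\lambda=V\cap\M_\lambda$. Next, for any $U_q^0$-submodule $V$ of $\M^{\text{ss}}$ the closure $\overline V$ in $\M$ is a closed $\Hh$-submodule: $U_q^0$ is dense in $\Hh$ (the relevant $R$-form is $\pi$-adically dense in its completion) and $\Hh$ acts by continuous endomorphisms, so $u\cdot\overline V\subseteq\overline V$ for $u\in\Hh$ is checked by approximating $u$ from $U_q^0$ and elements of $\overline V$ from $V$. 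Finally $\M_\lambda=\bigcap_{x\in U_q^0}\ker(x-\psi_\lambda(x))$ is closed in $\M$, so under the finite-dimensionality hypothesis each $V_\lambda$ is a finite-dimensional, hence closed, subspace of $\M$.

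\emph{Surjectivity.} Given a $U_q^0$-submodule $V=\bigoplus_\lambda V_\lambda$ of $\M^{\text{ss}}$ I take $\Nn:=\overline V$; by the above $\Nn$ is a closed $\Hh$-submodule and $\Nn\cap\M^{\text{ss}}$ is graded and contains $V$, so it remains to prove $\overline V\cap\M_\lambda=V_\lambda$ for every $\lambda$. Writing $V=V_\lambda\oplus V'$ with $V'=\bigoplus_{\mu\neq\lambda}V_\mu\subseteq\bigoplus_{\mu\neq\lambda}\M_\mu$ and using that $V_\lambda$ is finite-dimensional (so that $\overline{V_\lambda+V'}=V_\lambda+\overline{V'}$ and the modular law applies), one obtains
$$\overline V\cap\M_\lambda\ \subseteq\ \big(V_\lambda+\overline{V'}\big)\cap\M_\lambda\ =\ V_\lambda+\big(\overline{V'}\cap\M_\lambda\big)\ \subseteq\ V_\lambda+\Big(\overline{\textstyle\bigoplus_{\mu\neq\lambda}\M_\mu}\cap\M_\lambda\Big),$$
so everything reduces to the \emph{key claim} that $\overline{\bigoplus_{\mu\neq\lambda}\M_\mu}\cap\M_\lambda=0$ for all $\lambda\in P$, equivalently that the weight projection $\Pi_\lambda\colon\M^{\text{ss}}\to\M_\lambda$ is continuous for the topology induced from $\M$ (equivalent since a linear map into a finite-dimensional space is continuous precisely when its kernel is closed). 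This is the analogue of \cite[Satz 1.3.19]{Lacroix}, it is where finite-dimensionality of the weight spaces is genuinely used, and I expect it to be the main obstacle. To prove it I would refine the approximate-projection technique in the proof of Theorem~\ref{integrable4}: for $x\in\M^{\text{ss}}$ with weight support $S\ni\lambda$, Proposition~\ref{integrable3} produces $p\in U_q^0$ with $p(P)\subseteq R$, $p(S\setminus\{\lambda\})=0$ and $p(\lambda)=1$, whence $p\cdot x=\Pi_\lambda(x)$ \emph{exactly}; since $p$ acts on each of the finitely many weight components of $x$ by a scalar in $R$, one gets $q(\Pi_\lambda x)\le q(x)$ for each defining seminorm $q$ of $\M$ once these are arranged so that distinct weight spaces are mutually orthogonal (possible because $\M$ is a topological module over the Banach algebra $\Hh$), and then $x_k\to 0$ in $\M^{\text{ss}}$ forces $\Pi_\lambda(x_k)\to 0$. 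Granting the key claim, $\overline V\cap\M_\lambda=V_\lambda$ for all $\lambda$, so $f(\Nn)=V$ and $f$ is bijective, with inverse given by passing to the closure.

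\emph{The $U_q$-equivariant refinement.} If $\M$ also carries a compatible $U_q$-action by continuous endomorphisms, then $\M^{\text{ss}}$ is a $U_q$-submodule of $\M$: the relation $K_\mu E_{\alpha_i}=q^{\langle\mu,\alpha_i\rangle}E_{\alpha_i}K_\mu$ gives $E_{\alpha_i}\M_\lambda\subseteq\M_{\lambda+\alpha_i}$, and likewise $F_{\alpha_i}\M_\lambda\subseteq\M_{\lambda-\alpha_i}$. Hence $f$ restricts to a map from closed $U_q$-submodules of $\M$ to $U_q$-submodules of $\M^{\text{ss}}$ (if $\Nn$ is $U_q$-stable so is $\Nn\cap\M^{\text{ss}}$), and conversely the closure of a $U_q$-submodule $V$ of $\M^{\text{ss}}$ is a closed $U_q$-submodule of $\M$ by the same density-and-continuity argument as before. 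Since $\overline{\Nn\cap\M^{\text{ss}}}=\Nn$ and $\overline V\cap\M^{\text{ss}}=V$ have already been established, this restricted correspondence is again a bijection, which completes the proof.
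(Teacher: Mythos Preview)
Your proof of injectivity (the first paragraph) and of the $U_q$-equivariant refinement (the last paragraph) are correct and essentially identical to the paper's argument. For surjectivity, your reduction via the modular law to the ``key claim'' --- that $\overline{\bigoplus_{\mu\neq\lambda}\M_\mu}\cap\M_\lambda=0$, equivalently that the weight projection $\Pi_\lambda\colon\M^{\text{ss}}\to\M_\lambda$ is continuous for the subspace topology --- is sound and correctly isolates the crux of the matter.

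The gap is in your proof of that key claim. The element $p\in U_q^0$ furnished by Proposition~\ref{integrable3} depends on the finite support $S$ of $x$, and its norm in $\Hh$ is \emph{not} controlled: the construction in the proof of Proposition~\ref{integrable3} divides by powers of $\pi$, so $p$ need not lie in the unit ball $\widehat{U^0}$ (nor in $U^0$ at all). For a sequence $x_k\to 0$ whose supports grow, the associated $p_k$'s may have unbounded $\Hh$-norm, so nothing follows. Your fix --- that the defining seminorms can be ``arranged so that distinct weight spaces are mutually orthogonal'' --- is not justified by the parenthetical remark: the category $\mathscr{M}(\Hh)$ only asks that $\Hh$ act by $L$-linear endomorphisms, and even granting separate or joint continuity of the action, orthogonality of weight spaces in the seminorms is precisely the continuity of the projections you are trying to prove, so the argument is circular.

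The paper argues the same continuity statement differently, and the finite-dimensionality of weight spaces is used in a more direct way than in your sketch. It does not go through the modular-law reduction at all: it takes $m\in\overline N\cap\M^{\text{ss}}$ and a sequence $m_j\in N$ with $m_j\to m$, observes that any \emph{finite} direct sum of weight spaces is finite-dimensional and hence carries its unique Hausdorff vector-space topology, so the projection to each summand is continuous there; it then argues that $\M^{\text{ss}}$ is the direct limit of such finite sums to conclude that $\Pi_\lambda$ is continuous on $\M^{\text{ss}}$ with the subspace topology, whence $m_{\lambda,j}\to m_\lambda$. Finally it uses that $N\cap\M_\lambda$ is finite-dimensional, hence complete, to conclude $m_\lambda\in N$. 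If you want to repair your argument you should replace the orthogonality claim by this finite-dimensionality observation rather than by an appeal to the $\Hh$-module structure.
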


\begin{proof}
For the first part, we must show that $\Nn=\overline{\Nn\cap \M^{\text{ss}}}$. Pick $m\in \Nn$. By Theorem \ref{integrable4}, we may write $m=\sum_{\lambda\in P}m_\lambda$ where $m_\lambda\in \Nn$ for each $\lambda\in P$. For each $n\in \N$, let
$$
P_n=\left\{\sum n_i\varpi_i\in P : \abs{n_i}\leq n\right\}.
$$
Since each $P_n$ is a finite set, we may define $m_n=\sum_{\lambda\in P_n} m_\lambda\in \Nn\cap \M^{\text{ss}}$. Then we have $m_n\to m$ as $n\to \infty$ and so $m\in \overline{\Nn\cap \M^{\text{ss}}}$. Thus we see that $\Nn\subseteq\overline{\Nn\cap \M^{\text{ss}}}$. The other inclusion is trivial.

Now assume all weight spaces are finite dimensional, and let $N\subseteq \M^{\text{ss}}$ be a $U_q^0$-submodule. Note that $N$ must be semisimple since $\M^{\text{ss}}$ is semisimple. The result will follow if we show that for such an $N$, we always have $N=\overline{N}\cap \M^{\text{ss}}$. To do that, we need to show that $\overline{N}\cap \M^{\text{ss}}$ is contained in $N$, the other inclusion being clear. So pick $m\in \overline{N}\cap \M^{\text{ss}}$. Then there is a sequence $(m_j)_{j\in\N}$ converging to $m$ such that $m_j\in N$ for all $j$. Since all the $m_j$ lie in $\M^{\text{ss}}$, we can find an ascending chain of finite subsets $S_j\subseteq P$ such that $m_j=\sum_{\lambda\in S_j}m_{\lambda, j}$ with $m_{\lambda, j}\in \M_{\lambda}$. We may also find a finite subset $S_0\subseteq P$ such that $m=\sum_{\lambda\in S_0}m_\lambda$ with $m_\lambda \in \M_\lambda$, and without loss of generality we may assume that $S_0\subseteq S_1$. Let $S=\bigcup_{j\geq 0}S_j$.

Now it follows from our assumption on weight spaces that any finite direct sum of weight spaces is finite dimensional, and hence the subspace topology on it is equivalent to the Banach space topology given by the max norm. In particular the projection map to any direct summand is continuous. Since $\M^{\text{ss}}$ is the direct limit of the these finite direct sums, we see that the projection map from $\M^{\text{ss}}$ to any direct summand is continuous, where $\M^{\text{ss}}$ is given the subspace topology. Hence we have that, for a fixed $\lambda\in S$, $m_{\lambda,j}$ converges to $m_\lambda$ (where $m_{\lambda,j}$, respectively $m_\lambda$, is understood to be zero when $\lambda\notin S_j$, respectively $\lambda\notin S_0$). But now $m_{\lambda,j}\in N\cap \M_\lambda$ for every $j$, and $N\cap \M_\lambda$ is finite dimensional hence complete. So we get that $m_\lambda\in N$ for every $\lambda\in S_0$ as required.

For the last part, we have that $\M^{\text{ss}}$ is then a $U_q$-submodule of $\M$, so that $\Nn\cap \M^{\text{ss}}$ is $U_q$-invariant whenever $\Nn$ is $U_q$-invariant. Also, $U_q$-invariant subspaces of $\M$ are preserved under passing to the closure. Hence the result follows immediately from the above.
\end{proof}

\subsection{Category $\hat{\Of}$}

We are now in a position where we can define an analogue of the BGG category $\Of$ for $\Uqcap$. First we recall that there is a category, that we denote by $\Of$, which is the full subcategory of the category of $U_q$-modules consisting of modules $M$ that satisfy the following:
\begin{itemize}
\item $M$ is finitely generated;
\item $M$ is the sum of its weight spaces, i.e. $M=\oplus_{\lambda\in P}M_\lambda$; and
\item $\dim_L U_q^+m<\infty$ for all $m\in M$.
\end{itemize}
This category is an analogue of the integral subcategory $\Of_{\text{int}}$ (i.e. the direct sum of all integral blocks) of the usual BGG category $\Of$ for the complex Lie algebra $\g$ (see \cite{catO}). Our category $\Of$ shares all the standard properties of $\Of_{\text{int}}$, see \cite[Section 6]{qcatO} and \cite[Chapters 9-10]{ChaPre}. In particular, all modules in $\Of$ have finite dimensional weight spaces and have finite length, the highest weight $U_q$-modules all belong to that category, are indecomposable and have a unique simple quotient, and $\Of$ splits into blocks
$$
\Of=\bigoplus_{\lambda\in -\rho+P^+} \Of^\lambda
$$
where $\rho$ is half the sum of the positive roots, and the block $\Of^\lambda$ consists of those modules from $\Of$ whose composition factors have highest weights in $W\cdot \lambda$.

Now we have for each $n\geq m$ that $U^0=R[K_\lambda: \lambda\in P]\subset U_n$ and from the PBW theorem (Theorem \ref{PBW}) we see that $\pi^aU_n\cap U^0=\pi^aU^0$ for every $a\geq 1$. Hence it follows that the subspace topology on $U^0$ of the $\pi$-adic topology on $U_n$ is the $\pi$-adic topology on $U^0$. Thus we see that the injection $U_q^0\subseteq U_q$ is strict (in fact an isometry) with respect to all the norms $\norm{\cdot}_n$ for $n\geq m$ on $U_q$ and the single gauge norm $\norm{\cdot}$ on $U_q^0$ associated to $U_q^0$. Hence there is a canonical strict embedding $\widehat{U_q^0}\hookrightarrow \Uqcap$.

Moreover, recall from the notion of a coadmissible module from Definition \ref{FS1} and the properties of the category $\C(\Uqcap)$ from Proposition \ref{FS2}. These modules have a Fr\'echet topology attached to them, making them by the above into $\widehat{U_q^0}$-modules where the action is by continuous $L$-linear endomorphisms. 

\begin{definition} The category $\hat{\Of}$ for $\Uqcap$ is defined to be the full subcategory of $\C(\Uqcap)$ consisting of coadmissible modules $\M$ satisfying:
\begin{enumerate}
\item $\M$ is topologically $\widehat{U_q^0}$-semisimple with weights contained in finitely many cosets of the form $\lambda-Q^+$, with $\lambda\in P$; and
\item all weight spaces of $\M$ are finite dimensional.
\end{enumerate}
\end{definition}

From Proposition \ref{FS2} and Corollary \ref{semisimple2}, we immediately get:

\begin{prop}\label{O1} Let $\M$ be an object of $\hat{\Of}$.
\begin{enumerate}
\item The direct sum of two objects in $\hat{\Of}$ is in $\hat{\Of}$;
\item the category $\hat{\Of}$ is an abelian subcategory of $\C(\Uqcap)$;
\item the sum of two coadmissible submodules of $\M$ is in $\hat{\Of}$;
\item any finitely generated submodule of $\M$ is in $\hat{\Of}$; and
\item Let $\Nn$ be a submodule of $\M$. Then the following are equivalent:
\begin{enumerate}
\item $\Nn$ is in $\hat{\Of}$;
\item $\M/\Nn$ is in $\hat{\Of}$; and
\item $\Nn$ is closed in the Fr\'echet topology of $\M$.
\end{enumerate}
\end{enumerate}
\end{prop}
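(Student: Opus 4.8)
The plan is to assemble all five items from two inputs: the structural facts about coadmissible modules in Proposition \ref{FS2}, and the stability of the category $\D(\Hh)$ of topologically $\Hh$-semisimple Fr\'echet modules under closed submodules and quotients recorded in Corollary \ref{semisimple2}. The recurring observation is that the two conditions cutting $\hat{\Of}$ out of $\C(\Uqcap)$ --- finite-dimensionality of weight spaces, and confinement of the weights to finitely many cosets $\lambda-Q^+$ --- are inherited by submodules (whose weight spaces are subspaces, whose weights form a subset of the ambient weights) and by quotients, the latter via the identification $(\M/\Nn)^{\text{ss}}\cong\M^{\text{ss}}/\Nn^{\text{ss}}$ of Corollary \ref{semisimple2}, under which each weight space of $\M/\Nn$ is a quotient of the corresponding weight space of $\M$.

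I would prove (v) first, as the other items feed off it. The bridge is Proposition \ref{FS2}(iii): a submodule of a coadmissible module is coadmissible if and only if it is closed in the Fr\'echet topology. So if $\Nn\in\hat{\Of}$ then $\Nn$ is coadmissible, hence closed, giving (a)$\Rightarrow$(c); if $\M/\Nn\in\hat{\Of}$ then $\M/\Nn$ is coadmissible, which forces $\Nn$ closed, giving (b)$\Rightarrow$(c). Conversely, if $\Nn$ is closed then both $\Nn$ and $\M/\Nn$ are coadmissible by \ref{FS2}(iii), both lie in $\D(\Hh)$ by Corollary \ref{semisimple2}, and both satisfy the two finiteness conditions by the inheritance remarks above; hence (c)$\Rightarrow$(a) and (b), closing the cycle of equivalences.

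With (v) in hand the remaining items are bookkeeping. For (i), $\M\oplus\M'$ is coadmissible by \ref{FS2}(ii), its weight spaces $\M_\lambda\oplus\M'_\lambda$ are finite-dimensional, its weights lie in the union of two finite families of cosets, and topological semisimplicity is immediate by adding convergent weight expansions. For (iii) and (iv), a sum of two coadmissible submodules of $\M$, respectively a finitely generated submodule of $\M$, is coadmissible by \ref{FS2}(iv), respectively \ref{FS2}(v), hence closed in $\M$, hence in $\hat{\Of}$ by the implication (c)$\Rightarrow$(a) of (v). For (ii), recall that $\C(\Uqcap)$ is abelian by \ref{FS2}(ii) and that $\hat{\Of}$ is a full subcategory containing $0$ and, by (i), closed under finite biproducts; it therefore suffices to check that for a morphism $f\colon\M\to\M'$ in $\hat{\Of}$ the kernel and cokernel formed in $\C(\Uqcap)$ remain in $\hat{\Of}$. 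But $\ker f$ is a coadmissible, hence closed, submodule of $\M$, so $\ker f\in\hat{\Of}$ by (v); and $\im f$ is closed in $\M'$ by \ref{FS2}(vi), so $\coker f=\M'/\im f\in\hat{\Of}$, again by (v). Exactness of the inclusion follows since kernels and cokernels now agree with those in $\C(\Uqcap)$.

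No step presents a genuine obstacle --- the paper's ``we immediately get'' is accurate. The only place meriting a moment's care is the passage of the weight-space and coset conditions to quotients, where one should invoke the explicit isomorphism $(\M/\Nn)^{\text{ss}}\cong\M^{\text{ss}}/\Nn^{\text{ss}}$ of Corollary \ref{semisimple2} rather than attempt a purely abstract argument.
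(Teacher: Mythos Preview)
Your proposal is correct and follows exactly the approach the paper indicates: the paper simply writes ``From Proposition \ref{FS2} and Corollary \ref{semisimple2}, we immediately get'', and you have faithfully unpacked what ``immediately'' means here, drawing each item from the appropriate part of \ref{FS2} together with the inheritance of the weight-space and coset conditions via \ref{semisimple2}. There is nothing to add.
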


We also record here the following fact:

\begin{lem}\label{bijection} Let $\M\in \hat{\Of}$. There is an inclusion preserving bijection between the subobjects of $\M$ in $\hat{\Of}$ and the $U_q$-submodules of $\M^{\text{\emph{ss}}}$.
\end{lem}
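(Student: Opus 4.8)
The plan is to combine Proposition~\ref{semisimple3} with the structural properties of $\hat{\Of}$ recorded in Proposition~\ref{O1}. Recall that $\M\in\hat{\Of}$ is in particular topologically $\widehat{U_q^0}$-semisimple with finite-dimensional weight spaces, so it is an object of $\D(\Hh)$ satisfying the finite-dimensionality hypothesis of Proposition~\ref{semisimple3}; moreover $\M$ carries a continuous $\Uqcap$-action, hence a fortiori a continuous $U_q$-action (via $U_q\to\Uqcap$) extending the $U_q^0$-action. Therefore Proposition~\ref{semisimple3} applies and gives an inclusion-preserving bijection $\Nn\mapsto\Nn\cap\M^{\text{ss}}$ between the closed $U_q$-submodules of $\M$ and the $U_q$-submodules of $\M^{\text{ss}}$, with inverse given by passing to the closure.

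The work that remains is to match up the two sides of this bijection with the two sides of the statement we want. First I would check that the closed $U_q$-submodules of $\M$ are exactly the subobjects of $\M$ lying in $\hat{\Of}$: a subobject of $\M$ in $\hat{\Of}$ is by definition a coadmissible $\Uqcap$-submodule lying in $\hat{\Of}$, and by Proposition~\ref{O1}(v) such a submodule is precisely a closed submodule in the Fr\'echet topology. Any closed $\Uqcap$-submodule is in particular a closed $U_q$-submodule, and conversely a closed $U_q$-submodule of $\M$ is automatically $\Uqcap$-stable, since $U_q$ is dense in $\Uqcap$ and the $\Uqcap$-action is continuous, so the closure of $U_q\cdot m$ is $\Uqcap$-stable for every $m$; being closed, such a submodule is coadmissible by Proposition~\ref{FS2}(iii), and it lies in $\hat{\Of}$ by Proposition~\ref{O1}(v) again (or directly, since a closed submodule of $\M$ is topologically semisimple with finite-dimensional weight spaces and weights a subset of those of $\M$). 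On the other side, a $U_q^0$-submodule of $\M^{\text{ss}}$ that is $U_q$-stable is the same thing as a $U_q$-submodule of $\M^{\text{ss}}$, since $\M^{\text{ss}}=\bigoplus_\lambda\M_\lambda$ is already a $U_q^0$-module and $U_q^0\subseteq U_q$. Thus the restriction of the bijection of Proposition~\ref{semisimple3} to the $U_q$-invariant objects on each side is exactly the asserted bijection.

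The only genuinely substantive point — and the one I would present with a little care rather than waving at — is the identification of ``closed $U_q$-submodule of $\M$'' with ``subobject in $\hat{\Of}$'', i.e. the automatic $\Uqcap$-stability of closed $U_q$-submodules and the coadmissibility that follows. Everything else is bookkeeping against results already in hand: Proposition~\ref{semisimple3} does the hard analytic work (uniqueness of the weight decomposition, the approximation argument using the separating elements $p\in U_q^0$ from Proposition~\ref{integrable3}), and Proposition~\ref{O1} packages the closedness $\Leftrightarrow$ coadmissibility equivalence. So the proof is essentially: ``$\M$ satisfies the hypotheses of Proposition~\ref{semisimple3} with $\Hh=\widehat{U_q^0}$; apply it; translate closed $U_q$-submodule into subobject in $\hat{\Of}$ using Proposition~\ref{O1}(v); done.'' I expect no real obstacle, just the need to state the density/continuity argument for $\Uqcap$-stability cleanly.
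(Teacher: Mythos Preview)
Your proposal is correct and follows essentially the same approach as the paper: apply Proposition~\ref{semisimple3} (using finite-dimensional weight spaces and the continuous $U_q$-action) to get a bijection between closed $U_q$-invariant $\widehat{U_q^0}$-submodules and $U_q$-submodules of $\M^{\text{ss}}$, then identify the former with subobjects in $\hat{\Of}$ via Proposition~\ref{O1}(v). Your explicit density/continuity argument for $\Uqcap$-stability of closed $U_q$-submodules is exactly what the paper compresses into the phrase ``the former are just the closed $\Uqcap$-submodules of $\M$''.
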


\begin{proof}
We see from Proposition \ref{semisimple3} that the map
$$
\Nn\mapsto \Nn\cap \M^{\text{ss}}
$$
gives an inclusion preserving bijection between the closed, $U_q$-invariant, $\widehat{U_q^0}$-submodules of $\M$ and the $U_q$-submodules of $\M^{\text{\emph{ss}}}$. But the former are just the closed $\Uqcap$-submodules of $\M$, which are just the subobjects in $\hat{\Of}$ by Proposition \ref{O1}(v).
\end{proof}

\subsection{Verma modules} We may now define the objects which play the role of Verma modules. For each $\lambda\in P$, there is a one dimensional $U_q^{\geq 0}$-module $L_\lambda$ given by $u\cdot 1=\psi_\lambda(u)$, where we extend $\psi_\lambda$ to a character of $U_q^{\geq 0}$ by setting it to be $0$ on $U_q^+$. We can then define a Verma module $M(\lambda):= U_q\otimes_{U_q^{\geq 0}}L_\lambda$.

We now let $I_\lambda$ be the left ideal of $\Uqcap$ generated by all $E_{\alpha_i}$, $K_{\varpi_i}-\lambda(K_{\varpi_i})$ ($1\leq i\leq n$). Since it is finitely generated, it must be a coadmissible module and hence the quotient $\Uqcap/I_\lambda$ is coadmissible as well.

\begin{definition} We define the \emph{Verma module with highest weight $\lambda$} for $\Uqcap$ to be the quotient $\wideparen{M(\lambda)}:=\Uqcap/I_\lambda$, which is a coadmissible module.
\end{definition}

Note that $\wideparen{M(\lambda)}\cong\Uqcap\otimes_{U_q}M(\lambda)$. Indeed, if $J_\lambda$ denotes the left ideal of $U_q$ generated by all $E_{\alpha_i}$, $K_{\varpi_i}-\lambda(K_{\varpi_i})$ ($1\leq i\leq n$), then we have a short exact sequence
$$
0\to J_\lambda\to U_q\to M(\lambda)\to 0
$$
of $U_q$-modules, and our claim follows by tensoring it with $\Uqcap$.

We now want to show that $\wideparen{M(\lambda)}$ is an object of our category. To do this, we will need a tensor product decomposition of $\Uqcap$. Consider the filtration on $U^-$ given by assigning each $F_{\alpha_i}$ degree 1 (this is the same as the height filtration by Corollary \ref{proof_of_PBW}). The $n$-th deformation of $U^-$ with respect to this filtration is just $U_n^-$ for each $n\geq 0$. For $n\geq m$, by the PBW theorem (Theorem \ref{PBW}), we have that $\pi^aU_n\cap U_n^-=\pi^aU_n^-$ for every $a\geq 0$, so that there is an isometric embedding
$$
\widehat{U_{q,n}^-}:=\widehat{U_n^-}\otimes_R L\hookrightarrow \Uqnhat.
$$
Hence if we let $\wideparen{U_q^-}:=\varprojlim \widehat{U_{q,n}^-}$, then there is a strict embedding $\wideparen{U_q^-}\hookrightarrow \Uqcap$. Using Corollary \ref{proof_of_PBW}, we may describe $\wideparen{U_q^-}$ explicitly as follows:
\begin{equation}\label{Uqminus}
\wideparen{U_q^-}=\left\{\sum_{\boldsymbol{r}} a_{\boldsymbol{r}}F_{\beta_1}^{r_1}\cdots F_{\beta_N}^{r_N}: \abs{\pi^{-n\htt(F^{\boldsymbol{r}})}a_{\boldsymbol{r},\boldsymbol{s},\lambda}}\to 0 \text{ as } \htt(F^{\boldsymbol{r}})\to\infty \text{ for all } n\geq 0\right\}.
\end{equation}
We may completely analogously define the positive subalgebra of $\Uqcap$.

We can also do a similar construction for the positive Borel. For each $n\geq m$, the inclusion $U_n^{\geq 0}\subseteq U_n$ induces an isometric embedding
$$
\widehat{U_{q,n}^{\geq 0}}:=\widehat{U_n^{\geq 0}}\otimes_R L\hookrightarrow \Uqnhat
$$
and passing to the inverse limit, this gives a strict embedding $\wideparen{U_q^{\geq 0}}\hookrightarrow\Uqcap$ where $\wideparen{U_q^{\geq 0}}=\varprojlim \widehat{U_{q,n}^{\geq 0}}$.

\begin{lem}\label{tensor} The multiplication map defines a topological isomorphism
$$
\ten{\wideparen{U_q^-}}{\wideparen{U_q^{\geq 0}}}\to \Uqcap
$$
of bimodules.
\end{lem}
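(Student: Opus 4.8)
The plan is to prove the isomorphism at each finite level $n\geq m$ and then pass to the inverse limit. Fix $n\geq m$. By the PBW theorem (Theorem \ref{PBW}) the $R$-module $U_n$ is free on the deformed PBW monomials $\pi^{n\htt(M_{\boldsymbol{r},\boldsymbol{s},\lambda})}M_{\boldsymbol{r},\boldsymbol{s},\lambda}=\boldsymbol{F^r}K_\lambda\boldsymbol{E^s}$ (rescaled by the appropriate power of $\pi$), while $U_n^-$, respectively $U_n^{\geq 0}$, is free on the sub\nobreakdash-monomials $\boldsymbol{F^r}$, respectively $K_\lambda\boldsymbol{E^s}$. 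Hence the multiplication map $\mu_n\colon U_n^-\otimes_R U_n^{\geq 0}\to U_n$, $a\otimes b\mapsto ab$, carries one $R$-basis bijectively onto another and is therefore an isomorphism of $R$-modules; being an $R$-module isomorphism it carries $\pi^a(U_n^-\otimes_R U_n^{\geq 0})$ onto $\pi^aU_n$ for every $a\geq 0$, so it is a homeomorphism for the $\pi$-adic topologies. Since $U_n^-$ and $U_n^{\geq 0}$ are $\pi$-torsion free, Proposition \ref{monoidal} gives a canonical isomorphism of $L$-Banach spaces $\ten{\widehat{U_{q,n}^-}}{\widehat{U_{q,n}^{\geq 0}}}\cong \widehat{(U_n^-\otimes_R U_n^{\geq 0})_L}$; composing with the isomorphism induced by $\mu_n$ on $\pi$-adic completions and extending scalars to $L$ identifies the left-hand side with $\Uqnhat$, and chasing the definitions one checks that the composite sends $x\widehat{\otimes}y$ to $xy$, i.e.\ it is the continuous extension of multiplication.

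Next I would pass to the limit. For varying $n$ these level-$n$ isomorphisms are compatible with the transition maps of the inverse systems $\big(\ten{\widehat{U_{q,n}^-}}{\widehat{U_{q,n}^{\geq 0}}}\big)_n$ and $\big(\Uqnhat\big)_n$, since on both sides the transition maps are induced by the inclusions $U_{n+1}^{-}\hookrightarrow U_n^{-}$, $U_{n+1}^{\geq 0}\hookrightarrow U_n^{\geq 0}$ and $U_{n+1}\hookrightarrow U_n$, which are compatible with multiplication. Taking inverse limits and invoking \cite[Proposition 1.1.29]{Emerton} (recalled in \S\ref{Uqhopf}), we obtain a chain of topological isomorphisms
$$
\ten{\wideparen{U_q^-}}{\wideparen{U_q^{\geq 0}}}\;\cong\;\varprojlim_n\ten{\widehat{U_{q,n}^-}}{\widehat{U_{q,n}^{\geq 0}}}\;\cong\;\varprojlim_n\Uqnhat\;=\;\Uqcap,
$$
whose composite is again the continuous extension of the multiplication map. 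Finally, this map is a morphism of $(\wideparen{U_q^-},\wideparen{U_q^{\geq 0}})$-bimodules, where $\wideparen{U_q^-}$ acts on the left tensor factor, $\wideparen{U_q^{\geq 0}}$ on the right, and both act on $\Uqcap$ through the strict embeddings constructed above: this is immediate from associativity and joint continuity of multiplication in $\Uqcap$, which give $(mx)\widehat{\otimes}(ym')\mapsto m(xy)m'$ for $m\in\wideparen{U_q^-}$ and $m'\in\wideparen{U_q^{\geq 0}}$.

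Everything in this argument is soft; the only thing to watch is the bookkeeping of topologies, namely that $\mu_n$ really is a homeomorphism for the $\pi$-adic topologies (automatic, as noted) and that the two inverse-limit identifications are again homeomorphisms (supplied by Proposition \ref{monoidal} and by \cite[Proposition 1.1.29]{Emerton}). I do not expect a genuine obstacle here: the content of the statement is really the finite-level triangular decomposition furnished by Theorem \ref{PBW}, together with the fact that completed tensor products commute with the relevant inverse limits.
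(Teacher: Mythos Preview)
Your argument is correct and is essentially the same as the paper's, just unpacked: the paper observes that the PBW theorem gives a \emph{filtered} isomorphism $U_m^-\otimes_R U_m^{\geq 0}\cong U_m$ and then applies in one stroke the monoidality of the functor $M\mapsto\wideparen{M_L}$ (Theorem \ref{Uqhopf}), whereas you reproduce the proof of that monoidality in this special case by working level by level with Proposition \ref{monoidal} and then invoking \cite[Proposition 1.1.29]{Emerton} to pass to the inverse limit. Nothing is missing; the paper's version is simply the packaged form of your computation.
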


\begin{proof}
The PBW theorem (Theorem \ref{PBW}) for $U_m$ gives an isomorphism
$$
U_m^-\otimes_R U_m^{\geq 0}\cong U_m
$$
of filtered $R$-modules. The result follows from Theorem \ref{Uqhopf}.
\end{proof}

Note that, for every $\lambda\in P$, the one-dimensional $U_q^{\geq 0}$-module $L_\lambda$ is complete with respect to any Hausdorff locally convex topology, and so naturally extends to a $\wideparen{U_q^{\geq 0}}$-module.

\begin{prop}\label{verma} The module $\wideparen{M(\lambda)}$ lies in $\hat{\Of}$ and $\wideparen{M(\lambda)}^{\text{\emph{ss}}}=M(\lambda)$. There is a canonical inclusion preserving bijection between the subobjects of $\wideparen{M(\lambda)}$ and the $U_q$-submodules of $M(\lambda)$. In particular, $\wideparen{M(\lambda)}$ is an irreducible object if and only if $M(\lambda)$ is irreducible as a $U_q$-module.
\end{prop}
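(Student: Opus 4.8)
The plan is to produce an explicit PBW‑type description of $\wideparen{M(\lambda)}$, parallel to \eqref{Uqminus}, and then to read off all three assertions from the results already in hand, chiefly Lemma \ref{bijection}.

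First I would note that $\wideparen{M(\lambda)}\cong\Uqcap\otimes_{U_q}M(\lambda)\cong\Uqcap\otimes_{U_q^{\geq 0}}L_\lambda$, and that the generators $E_{\alpha_i}$ and $K_{\varpi_i}-\lambda(K_{\varpi_i})$ of $I_\lambda$ all lie in the subalgebra $\wideparen{U_q^{\geq 0}}\subseteq\Uqcap$. Feeding this into the tensor decomposition $\Uqcap\cong\ten{\wideparen{U_q^-}}{\wideparen{U_q^{\geq 0}}}$ of Lemma \ref{tensor}, one should identify $\wideparen{M(\lambda)}$ with $\ten{\wideparen{U_q^-}}{(\wideparen{U_q^{\geq 0}}/J)}$, where $J$ is the closed left ideal of $\wideparen{U_q^{\geq 0}}$ generated by those elements, and then check that $\wideparen{U_q^{\geq 0}}/J\cong L_\lambda$ (modulo $J$ every $E$‑monomial of positive degree vanishes and $K_\eta$ acts by $\psi_\lambda(K_\eta)$, while the image of $1$ survives). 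Writing $v_\lambda$ for the image of $1$ and arguing exactly as in the derivation of \eqref{Uqminus}, this would give an isomorphism of left $\wideparen{U_q^-}$‑modules
$$
\wideparen{M(\lambda)}=\left\{\sum_{\boldsymbol{r}}a_{\boldsymbol{r}}\,F^{\boldsymbol{r}}v_\lambda \;:\; \abs{\pi^{-n\htt(F^{\boldsymbol{r}})}a_{\boldsymbol{r}}}\to 0\ \text{as}\ \htt(F^{\boldsymbol{r}})\to\infty,\ \text{for all }n\geq 0\right\},
$$
with $\norm{\cdot}_n$ as in \eqref{Uqminus}; in particular the finite sums $F^{\boldsymbol{r}}v_\lambda$ span a $U_q$‑equivariant copy of $M(\lambda)=U_q^-\cdot v_\lambda$ inside $\wideparen{M(\lambda)}$.

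Next I would use this description to place $\wideparen{M(\lambda)}$ in $\hat{\Of}$. It is coadmissible by construction, and $F^{\boldsymbol{r}}v_\lambda$ has weight $\lambda-\sum_j r_j\beta_j\in\lambda-Q^+$; since $\htt(\sum_j r_j\beta_j)=\htt(F^{\boldsymbol{r}})$ depends only on this weight, and since $q$ is not a root of unity (so a series of the above shape is a weight vector only if all its monomials share a weight), the weight space $\wideparen{M(\lambda)}_{\lambda-\nu}$ for $\nu\in Q^+$ is the finite‑dimensional span of the $F^{\boldsymbol{r}}v_\lambda$ with $\sum_j r_j\beta_j=\nu$. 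Grouping the terms of a general element by weight and invoking the decay condition (there are only finitely many $\nu\in Q^+$ of a given height) shows that the grouping converges, so $\wideparen{M(\lambda)}$ is topologically $\widehat{U_q^0}$‑semisimple with weights in the single coset $\lambda-Q^+$; thus both conditions defining $\hat{\Of}$ hold. The same description yields $\wideparen{M(\lambda)}^{\text{ss}}=\bigoplus_{\nu\in Q^+}\wideparen{M(\lambda)}_{\lambda-\nu}$, which is the span of all $F^{\boldsymbol{r}}v_\lambda$, i.e.\ $M(\lambda)$ as a $U_q$‑module.

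Finally, since $\wideparen{M(\lambda)}\in\hat{\Of}$, Lemma \ref{bijection} would give an inclusion‑preserving bijection between its subobjects — equivalently, by Proposition \ref{FS2}(iii) and Proposition \ref{O1}(v), its closed $\Uqcap$‑submodules — and the $U_q$‑submodules of $\wideparen{M(\lambda)}^{\text{ss}}=M(\lambda)$. As this bijection matches $0$ with $0$ and $\wideparen{M(\lambda)}$ with $M(\lambda)$, proper nonzero subobjects correspond to proper nonzero $U_q$‑submodules, which gives the last assertion. I expect the only real difficulty to be the first step: making the identification $\wideparen{M(\lambda)}\cong\ten{\wideparen{U_q^-}}{L_\lambda}$ rigorous, i.e.\ transporting the decomposition of Lemma \ref{tensor} through the quotient by $I_\lambda$ and verifying $\wideparen{U_q^{\geq 0}}/J\cong L_\lambda$ at the level of the completed algebras. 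Once the \eqref{Uqminus}‑style formula is available, the remaining steps are formal consequences of Theorem \ref{integrable4}, Proposition \ref{O1} and Lemma \ref{bijection}.
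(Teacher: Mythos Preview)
Your proposal is correct and follows essentially the same route as the paper: use the tensor decomposition of Lemma \ref{tensor} to identify $\wideparen{M(\lambda)}$ with $\wideparen{U_q^-}\otimes_L L_\lambda$, read off membership in $\hat{\Of}$ and the equality $\wideparen{M(\lambda)}^{\text{ss}}=M(\lambda)$ from the explicit description \eqref{Uqminus}, and then invoke Lemma \ref{bijection}. The only cosmetic difference is in the first step: the paper observes directly that $\wideparen{M(\lambda)}=\Uqcap\otimes_{\wideparen{U_q^{\geq 0}}}L_\lambda\cong\Uqcap\,\widehat{\otimes}_{\wideparen{U_q^{\geq 0}}}L_\lambda$ (using completeness) and then applies associativity of $\widehat{\otimes}$, whereas you pass through the quotient $\wideparen{U_q^{\geq 0}}/J$; the paper's formulation avoids having to transport the completed tensor product through a quotient.
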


\begin{proof}
From the definition, we see that $\wideparen{M(\lambda)}=\Uqcap\otimes_{\wideparen{U_q^{\geq 0}}} L_\lambda$, and its topology is the quotient topology coming from $\Uqcap$. Since it's therefore complete, it follows that $\wideparen{M(\lambda)}\cong\Uqcap\widehat{\otimes}_{\wideparen{U_q^{\geq 0}}} L_\lambda$. By the Lemma and using the fact that the projective tensor product is associative, we obtain an isomorphism
$$
\wideparen{M(\lambda)}\cong \ten{\wideparen{U_q^-}}{L_\lambda}\cong \wideparen{U_q^-}\otimes_L L_\lambda
$$
as left $\wideparen{U_q^-}$-modules. By considering now the $\widehat{U_q^0}$-action on this, and using the description of $\wideparen{U_q^-}$in (\ref{Uqminus}), we see that $\wideparen{M(\lambda)}\in\hat{\Of}$ and that $\wideparen{M(\lambda)}^{\text{ss}}=U_q^-\otimes_L L_\lambda=M(\lambda)$. The final two statements follow immediately from Lemma \ref{bijection}.
\end{proof}

\begin{cor} Let $\lambda\in P$. Then the following are equivalent:
\begin{itemize}
\item $\wideparen{M(\lambda)}$ is an irreducible object in $\hat{\Of}$.
\item For every positive root $\beta$, $\langle \lambda+\rho, \beta^\vee\rangle\notin \N$.
\end{itemize}
\end{cor}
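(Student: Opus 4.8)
The approach is to reduce the statement to a known fact about category $\Of$ for $U_q$ by means of Proposition \ref{verma}. That proposition identifies the subobjects of $\wideparen{M(\lambda)}$ with the $U_q$-submodules of $M(\lambda)$, so $\wideparen{M(\lambda)}$ is irreducible in $\hat{\Of}$ if and only if the quantum Verma module $M(\lambda)$ is irreducible as a $U_q$-module. Hence everything comes down to the quantum analogue of the classical irreducibility criterion for Verma modules: $M(\lambda)$ is simple precisely when $\langle\lambda+\rho,\beta^\vee\rangle\notin\N$ for every positive root $\beta$, i.e. when $\lambda$ is antidominant. Since $q$ is not a root of unity, category $\Of$ for $U_q$ mirrors the classical BGG category, and this criterion is available in the literature; I would cite \cite[Section 6]{qcatO} and \cite[Chapters 9--10]{ChaPre}, but indicate the structure of the argument for completeness.

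For the two implications I would proceed as follows. Recall from the properties of $\Of$ recalled above that $M(\lambda)$ lies in $\Of$, has all weights in $\lambda-Q^+$ with finite-dimensional weight spaces, and has a unique simple quotient; in particular $M(\lambda)$ is reducible exactly when it admits a primitive vector of weight $\mu<\lambda$, equivalently a nonzero $U_q$-homomorphism $M(\mu)\to M(\lambda)$ with $\mu\neq\lambda$, since any nonzero submodule of $M(\lambda)$ is an object of $\Of$ and so contains a vector annihilated by $U_q^+$. If $\langle\lambda+\rho,\beta^\vee\rangle=n\in\N$ for some positive root $\beta$, then the dot-action weight $s_\beta\cdot\lambda=\lambda-n\beta$ is strictly below $\lambda$, and the quantum form of Verma's embedding theorem (or a direct construction of the relevant singular vector, as in \cite[Chapter 8]{Jantzen}) produces a nonzero map $M(s_\beta\cdot\lambda)\to M(\lambda)$, so $M(\lambda)$, and hence $\wideparen{M(\lambda)}$, is reducible. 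Conversely, if no such $\beta$ exists then $\lambda$ is antidominant, and by the strong linkage principle for $\Of$ the only composition factor of $M(\lambda)$ is its simple quotient, so $M(\lambda)$ is simple; alternatively one argues via the quantum Shapovalov (contravariant) form on $M(\lambda)$, whose radical is the maximal submodule and whose determinant on each weight space factors, up to units, into terms that vanish precisely when $\langle\lambda+\rho,\beta^\vee\rangle$ is a positive integer for some positive root $\beta$.

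The main obstacle is not in the reduction, which is immediate from Proposition \ref{verma}, but in the bookkeeping for the ``only if'' direction: matching the normalization of $\langle\,,\rangle$ and of $\beta^\vee$ so that the stated condition is exactly non-antidominance, and citing (or, failing a convenient reference for the general semisimple case with $q$ not a root of unity, reproving) either the factorization of the quantum Shapovalov determinant or the strong linkage principle for this category $\Of$. All of the analytic content, namely the passage from $U_q$ to $\Uqcap$, has already been absorbed into Proposition \ref{verma} and requires no further work here.
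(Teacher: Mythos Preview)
Your proposal is correct and follows essentially the same approach as the paper: reduce to the $U_q$-module $M(\lambda)$ via Proposition \ref{verma}, then invoke the known irreducibility criterion for quantum Verma modules. The paper's proof is simply a one-line citation of \cite[Corollary 10.1.11]{ChaPre} for that criterion, whereas you additionally sketch its proof; this extra detail is fine but not needed.
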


\begin{proof}
This is just the condition for $M(\lambda)$ to be irreducible, see \cite[Corollary 10.1.11]{ChaPre}.
\end{proof}

\subsection{Highest weight modules} Having defined the Verma modules, we now look more generally at highest weight modules.

\begin{definition} Given a coadmissible $\Uqcap$-module $\M$ and $\lambda\in P$, an element $0\neq m\in\M_\lambda$ is called a \emph{maximal vector} of weight $\lambda$ if $U_q^+\cdot m=0$. We say $\M$ is a \emph{highest weight module with highest weight $\lambda$} if it is the cyclic $\Uqcap$-module on a maximal vector in $\M_\lambda$.
\end{definition}

The next result follows directly from the definition of $\wideparen{M(\lambda)}$:

\begin{lem} The coadmissible module $\wideparen{M(\lambda)}$ is a highest weight module with highest weight $\lambda$.
\end{lem}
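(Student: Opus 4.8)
The plan is to verify directly that the image $\bar{1}$ of $1\in\Uqcap$ in $\wideparen{M(\lambda)}=\Uqcap/I_\lambda$ is a maximal vector of weight $\lambda$ which generates the whole module; all three requirements of the definition of a highest weight module then fall out.

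\emph{Cyclicity.} This is immediate: $\wideparen{M(\lambda)}=\Uqcap\cdot\bar 1$ by the very definition of the quotient, so $\wideparen{M(\lambda)}$ is the cyclic $\Uqcap$-module on $\bar 1$.

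\emph{Weight and maximality.} Since each $E_{\alpha_i}$ lies in $I_\lambda$, we have $E_{\alpha_i}\bar 1=0$; as $U_q^+$ is generated as an $L$-algebra by the $E_{\alpha_i}$, it follows that $U_q^+\cdot\bar 1=0$. Likewise, since $K_{\varpi_i}-\lambda(K_{\varpi_i})\in I_\lambda$ we get $K_{\varpi_i}\bar 1=\psi_\lambda(K_{\varpi_i})\bar 1$ for all $i$; because the fundamental weights $\varpi_i$ generate $P$, the elements $K_{\varpi_i}$ generate $U_q^0\cong LP$, and hence $u\bar 1=\psi_\lambda(u)\bar 1$ for every $u\in U_q^0$, i.e. $\bar 1\in\wideparen{M(\lambda)}_\lambda$.

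\emph{Non-vanishing.} The one point needing a little care --- and hence the main (albeit mild) obstacle --- is that $\bar 1\neq 0$, equivalently that $I_\lambda$ is a proper left ideal of $\Uqcap$. Here I would simply invoke Proposition \ref{verma}, which furnishes the isomorphism $\wideparen{M(\lambda)}\cong\ten{\wideparen{U_q^-}}{L_\lambda}\cong\wideparen{U_q^-}\otimes_L L_\lambda$ of left $\wideparen{U_q^-}$-modules, under which $\bar 1$ corresponds to the elementary tensor $1\otimes 1_\lambda$ (with $1_\lambda$ the canonical generator of $L_\lambda$); this is visibly nonzero, and it also records that $\wideparen{M(\lambda)}^{\mathrm{ss}}=M(\lambda)\neq 0$. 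Combining the four observations, $\bar 1$ is a nonzero element of $\wideparen{M(\lambda)}_\lambda$ annihilated by $U_q^+$ and generating $\wideparen{M(\lambda)}$ over $\Uqcap$, which is precisely the statement that $\wideparen{M(\lambda)}$ is a highest weight module with highest weight $\lambda$.
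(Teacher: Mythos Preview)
Your proof is correct and follows the same route the paper intends: the paper merely states that the lemma ``follows directly from the definition of $\wideparen{M(\lambda)}$'', and you have spelled out precisely those details---cyclicity from the quotient description, the weight and annihilation conditions from the generators of $I_\lambda$, and non-vanishing via Proposition~\ref{verma} (which does precede the lemma). There is nothing to add.
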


Note more generally that it is immediate from the definition of $\hat{\Of}$ that every object of $\hat{\Of}$ contains a maximal vector. Hence by Proposition \ref{O1}(iv), every irreducible object in $\hat{\Of}$ is a highest weight module.

\begin{prop}\label{highestweight} Let $\M\in\C(\Uqcap)$ be a highest weight module on a maximal vector $m\in\M$ of weight $\lambda\in P$. We have the following:
\begin{enumerate}
\item $\M$ is topologically $\widehat{U_q^0}$-semisimple with weights contained in $\lambda-Q^+$.
\item The weight spaces of $\M$ are finite dimensional and $\dim_L \M_\lambda=1$. In particular, $\M\in\hat{\Of}$ and $\M$ has finite length in $\hat{\Of}$.
\item Each non-zero quotient of $\M$ by a coadmissible submodule is again a highest weight module.
\item Each coadmissible submodule of $\M$ generated by a maximal vector $m'\in\M_\mu$ for some $\mu<\lambda$ is proper. In particular, if $\M$ is an irreducible object in $\hat{\Of}$ then all its maximal vectors lie in $Lm$, and hence $\text{\emph{End}}_{\Uqcap}(\M)=L$.
\item $\M$ has a unique maximal subobject and a unique irreducible quotient object and, hence, is an indecomposable object.
\item Let $\Nn$ be another highest weight module of weight $\mu$. Then
$$
\text{\emph{dim}}_L\text{\emph{Hom}}_{\Uqcap}(\M, \Nn)<\infty.
$$
If $\lambda\neq \mu$ then $\M$ and $\Nn$ are not isomorphic. If $\M$ and $\Nn$ are simple objects and $\lambda=\mu$, then $\M\cong \Nn$.
\end{enumerate}
\end{prop}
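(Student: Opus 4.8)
The plan is to reduce the whole proposition to two inputs: the structure of the Verma module $\wideparen{M(\lambda)}$ established in Proposition \ref{verma}, and the classical structure theory of highest weight modules in the category $\Of$ for $U_q$. The bridge is a surjection of coadmissible modules $\pi\colon\wideparen{M(\lambda)}\twoheadrightarrow\M$, which I would construct first. Since $m$ is a maximal vector of weight $\lambda$, it is annihilated by the left ideal $I_\lambda$ (all $E_{\alpha_i}$ kill $m$, and $(K_{\varpi_i}-\lambda(K_{\varpi_i}))m=0$ because $m\in\M_\lambda$), so cyclicity of $\M$ produces a $\Uqcap$-linear surjection $\wideparen{M(\lambda)}=\Uqcap/I_\lambda\to\M$. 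As a morphism of coadmissible modules it is strict with closed, hence coadmissible, kernel $\K$ by Proposition \ref{FS2}. Part (iii) is then immediate: for a proper coadmissible submodule $\Nn\subseteq\M$ the image of $m$ in $\M/\Nn$ is non-zero (otherwise $\Uqcap m=\M\subseteq\Nn$), is a maximal vector of weight $\lambda$, and generates $\M/\Nn$.

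With $\pi$ in hand, (i) and (ii) follow quickly. Proposition \ref{verma} gives $\wideparen{M(\lambda)}\in\hat{\Of}$ with $\wideparen{M(\lambda)}^{\text{ss}}=M(\lambda)$, so $\M=\wideparen{M(\lambda)}/\K\in\hat{\Of}$ by Proposition \ref{O1}(v), and Corollary \ref{semisimple2} identifies $\M^{\text{ss}}$ as a quotient of the $U_q$-Verma module $M(\lambda)$. From this I would read off that the weights of $\M$ lie in $\lambda-Q^+$, that all weight spaces are finite dimensional, and that $\dim_L\M_\lambda=1$ (at most $1$ since $\dim_L M(\lambda)_\lambda=1$, at least $1$ since $m\neq0$). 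Finite length in $\hat{\Of}$ then comes from the inclusion-preserving bijection of Lemma \ref{bijection} between the subobjects of $\M$ in $\hat{\Of}$ and the $U_q$-submodules of $\M^{\text{ss}}$, the latter having finite length as an object of $\Of$.

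For (iv) I would argue by weights: a coadmissible submodule generated by a maximal vector $m'\in\M_\mu$ with $\mu<\lambda$ cannot be all of $\M$, for otherwise $\M$ would be a highest weight module of weight $\mu$ and hence, by (i) applied to $\mu$, have all weights in $\mu-Q^+$, contradicting $\M_\lambda\neq0$. Irreducibility of $\M$ then forces $\Uqcap m'=\M$ for every maximal vector $m'$ (a non-zero subobject in $\hat{\Of}$ by Propositions \ref{FS2} and \ref{O1}), whence $m'\in\M_\lambda=Lm$; and in any case $\End_{\Uqcap}(\M)=L$ since an endomorphism sends $m$ into $\M_\lambda=Lm$ and is determined by its value on the generator $m$. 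For (v) the bijection of Lemma \ref{bijection} does the real work: $\M^{\text{ss}}$ is a highest weight $U_q$-module and therefore has a unique maximal submodule (standard, e.g.\ \cite[Chapter 10]{ChaPre} or \cite{qcatO}); transporting this through the inclusion-preserving bijection gives a unique maximal subobject $\Nn_{\max}\subseteq\M$ in $\hat{\Of}$, since proper subobjects correspond exactly to proper $U_q$-submodules of $\M^{\text{ss}}$. Then $\M/\Nn_{\max}$ has semisimplification $L(\lambda)$ by Corollary \ref{semisimple2}, hence only trivial subobjects by Lemma \ref{bijection}, so it is the unique irreducible quotient; and a module with a unique maximal subobject is indecomposable. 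Finally for (vi), a morphism $\M\to\Nn$ is $\widehat{U_q^0}$-linear and determined by the image of $m$, so $\Hom_{\Uqcap}(\M,\Nn)$ embeds into the finite dimensional weight space $\Nn_\lambda$; an isomorphism preserves the unique maximal weight, ruling out $\M\cong\Nn$ when $\lambda\neq\mu$; and if $\M,\Nn$ are both simple with $\lambda=\mu$, each is a simple quotient of $\wideparen{M(\lambda)}$ and hence isomorphic to its unique irreducible quotient, so $\M\cong\Nn$.

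I expect the only real obstacle to be the coadmissibility and continuity bookkeeping — keeping $\pi$, $\K$ and all quotients inside $\C(\Uqcap)$ and $\hat{\Of}$, so that Theorem \ref{integrable4}, Corollary \ref{semisimple2} and Lemma \ref{bijection} genuinely apply — together with, in (v), the careful use of the inclusion-preserving part of Lemma \ref{bijection} to match proper subobjects with proper $U_q$-submodules of $\M^{\text{ss}}$. Once the reduction to the $U_q$-module $\M^{\text{ss}}$ through the semisimplification functor is in place, everything else is a routine transfer of the classical category-$\Of$ facts.
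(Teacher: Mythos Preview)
Your proposal is correct and follows essentially the same route as the paper: construct the surjection $\wideparen{M(\lambda)}\twoheadrightarrow\M$, deduce $\M\in\hat{\Of}$ via Proposition~\ref{O1}(v), identify $\M^{\text{ss}}$ as a highest weight $U_q$-module in $\Of$ via Corollary~\ref{semisimple2} and Proposition~\ref{verma}, and then transfer all the classical properties through Lemma~\ref{bijection}. The paper compresses all of (i)--(vi) into the single sentence ``All properties therefore follow from the usual properties of $\Of$ by Lemma~\ref{bijection}'', whereas you spell out each part separately (and in some places, e.g.\ (iii), (iv) and (vi), argue directly rather than via the bijection), but the underlying strategy is the same.
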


\begin{proof}
By definition of highest weight modules, there is a surjection $\wideparen{M(\lambda)}\to \M$ which is a morphism in $\C(\Uqcap)$. Hence we see from Proposition \ref{O1}(v) that $\M\in\hat{\Of}$. From Corollary \ref{semisimple2} and Proposition \ref{verma}, we get a surjection
$$
M(\lambda)=\wideparen{M(\lambda)}^{\text{ss}}\to \M^{\text{ss}}.
$$
In particular, $\M^{\text{ss}}$ is a highest weight module of weight $\lambda$ in $\Of$. All properties therefore follow from the usual properties of $\Of$ by Lemma \ref{bijection}.
\end{proof}

If we write $\wideparen{V(\lambda)}$ to denote the unique irreducible quotient of $\wideparen{M(\lambda)}$, then we have $\wideparen{V(\lambda)}^{\text{ss}}\cong V(\lambda)$, where the latter denotes the unique irreducible quotient of $M(\lambda)$. Then we obtain:

\begin{cor} The map $\lambda\mapsto [\wideparen{V(\lambda)}]$ gives a bijection between $P$ and the set of isomorphism classes of irreducible objects in $\hat{\Of}$.
\end{cor}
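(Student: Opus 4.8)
The plan is to establish the three standard facts: that $\lambda \mapsto [\wideparen{V(\lambda)}]$ is well-defined with values in the set of isomorphism classes of irreducible objects of $\hat{\Of}$; that it is injective; and that it is surjective. The main tools will be the semisimplification functor $\M \mapsto \M^{\mathrm{ss}}$, the identity $\wideparen{V(\lambda)}^{\mathrm{ss}} \cong V(\lambda)$ recorded just above the statement, and the subobject bijection of Lemma \ref{bijection}.

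First I would check well-definedness. By Proposition \ref{verma}, $\wideparen{M(\lambda)}$ lies in $\hat{\Of}$, and it is nonzero since $\wideparen{M(\lambda)}^{\mathrm{ss}} = M(\lambda) \neq 0$. Proposition \ref{highestweight}(v) then gives it a unique irreducible quotient object $\wideparen{V(\lambda)}$, which is nonzero, lies in $\hat{\Of}$ by Proposition \ref{O1}(v), and is by construction an irreducible object there. Hence $[\wideparen{V(\lambda)}]$ is a bona fide isomorphism class of irreducible objects of $\hat{\Of}$.

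Next, injectivity. Suppose $\phi\colon \wideparen{V(\lambda)} \to \wideparen{V(\mu)}$ is an isomorphism in $\hat{\Of}$. Being a morphism in $\hat{\Of}$, $\phi$ is continuous and $\Uqcap$-linear, hence in particular $U_q^0$-equivariant, so it carries each weight space onto the corresponding weight space and therefore restricts to an isomorphism of $U_q$-modules $\wideparen{V(\lambda)}^{\mathrm{ss}} \to \wideparen{V(\mu)}^{\mathrm{ss}}$. Using $\wideparen{V(\lambda)}^{\mathrm{ss}} \cong V(\lambda)$ and $\wideparen{V(\mu)}^{\mathrm{ss}} \cong V(\mu)$ we obtain $V(\lambda) \cong V(\mu)$ as $U_q$-modules; comparing highest weights gives $\lambda = \mu$.

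Finally, surjectivity. Let $\M$ be an irreducible object of $\hat{\Of}$. By the remark preceding Proposition \ref{highestweight}, $\M$ contains a maximal vector, say of weight $\lambda \in P$; the cyclic $\Uqcap$-submodule it generates is in $\hat{\Of}$ by Proposition \ref{O1}(iv) and equals $\M$ by irreducibility, so $\M$ is a highest weight module of weight $\lambda$. Then, as in the proof of Proposition \ref{highestweight}, there is a surjection $\wideparen{M(\lambda)} \twoheadrightarrow \M$ in $\C(\Uqcap)$; its kernel is a maximal subobject of $\wideparen{M(\lambda)}$, which is unique by Proposition \ref{highestweight}(v), so $\M \cong \wideparen{V(\lambda)}$. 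I do not expect a genuine obstacle here, since every ingredient is already available; the only point that merits a sentence of care is the functoriality step in the injectivity argument — that an abstract isomorphism of objects of $\hat{\Of}$ induces an isomorphism of the $U_q$-modules $\M^{\mathrm{ss}}$ — which follows at once from continuity and $U_q^0$-equivariance preserving the weight-space decomposition.
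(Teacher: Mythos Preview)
Your proof is correct and follows the same approach as the paper, which treats the corollary as an immediate consequence of Proposition \ref{highestweight} and the identity $\wideparen{V(\lambda)}^{\mathrm{ss}}\cong V(\lambda)$ without writing out the details. Your injectivity argument via $(\cdot)^{\mathrm{ss}}$ is fine, though note that Proposition \ref{highestweight}(vi) already records directly that highest weight modules of distinct highest weights are non-isomorphic and that simple highest weight modules of the same highest weight are isomorphic, which gives both injectivity and surjectivity in one stroke once you know every irreducible object is highest weight.
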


\subsection{A functor $\Of\to\hat{\Of}$}

We now describe a functor between the categories $\Of$ and $\hat{\Of}$. It follows from Corollary \ref{flat} that the functor $M\mapsto \Uqcap\otimes_{U_q}M$ between the categories of $U_q$-modules and $\Uqcap$-modules is exact. If $M$ is a finitely generated $U_q$-modules, then $M$ is in fact finitely presented since $U_q$ is Noetherian and hence $\Uqcap\otimes_{U_q}M$ is also finitely presented. But this implies that $\Uqcap\otimes_{U_q}M$ is coadmissible. Thus there is an exact functor $F: M\mapsto \Uqcap\otimes_{U_q}M$ between the category of finitely generated $U_q$-modules and the category of coadmissible $\Uqcap$-modules.

Moreover we have already seen that $F(M(\lambda))=\wideparen{M(\lambda)}$. Thus, if $M\in\Of$ is a highest weight module of highest weight $\lambda$, then by exactness of $F$ we get that $F(M)$ is a quotient of $\wideparen{M(\lambda)}$ and hence is in $\hat{\Of}$. More generally, every object of $\Of$ has a finite filtration with highest weight subquotients. Hence there is a surjection $\oplus_i M_i\to M$ from a finite direct sum of highest weight modules to $M$, and since $F$ commutes with finite direct sums, it follows that $F(M)$ is a quotient of $\oplus_i F(M_i)$ and so lies in $\hat{\Of}$. Hence $F$ restricts to an exact functor
$$
F:\Of\to\hat{\Of}.
$$
Then we have:

\begin{prop}\label{Oandhat1} The functor $F:\Of\to\hat{\Of}$ is a fully faithful exact embedding with left inverse given by $\M\mapsto \M^{\text{\emph{ss}}}$.
\end{prop}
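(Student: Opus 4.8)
The plan is to exhibit a natural transformation $\eta\colon \id_{\Of}\Rightarrow (-)^{\text{ss}}\circ F$ and prove it is a natural isomorphism; full faithfulness will then follow formally. For $M\in\Of$, which is the sum of its weight spaces, the $U_q$-linear map $\eta_M\colon M\to F(M)=\Uqcap\otimes_{U_q}M$, $m\mapsto 1\otimes m$, carries a weight-$\mu$ vector to a weight-$\mu$ vector, hence has image inside the semisimple part $F(M)^{\text{ss}}=\bigoplus_{\mu\in P}F(M)_\mu$; this defines $\eta_M\colon M\to F(M)^{\text{ss}}$, manifestly natural in $M$. The crux is to show each $\eta_M$ is an isomorphism.

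First I would note that $(-)^{\text{ss}}$ is exact on $\hat{\Of}$: given a short exact sequence $0\to\Nn\to\M\to\M/\Nn\to 0$ in $\hat{\Of}$, the submodule $\Nn$ is closed in $\M$ (Proposition \ref{O1}(v), or directly: any injection of coadmissible modules is a closed embedding by Proposition \ref{FS2}), so Corollary \ref{semisimple2} gives $(\M/\Nn)^{\text{ss}}\cong\M^{\text{ss}}/\Nn^{\text{ss}}$ and hence a short exact sequence $0\to\Nn^{\text{ss}}\to\M^{\text{ss}}\to(\M/\Nn)^{\text{ss}}\to 0$. Next I would show $\eta_M$ is an isomorphism whenever $M$ admits a filtration with Verma subquotients, by induction on the length of such a filtration: the base case is $\eta_{M(\lambda)}$, an isomorphism onto $M(\lambda)=\wideparen{M(\lambda)}^{\text{ss}}$ by (the proof of) Proposition \ref{verma}; for the inductive step, a short exact sequence $0\to P'\to P\to M(\mu)\to 0$ with $P'$ Verma-filtered of shorter length is taken by the exact functor $F$, then by the exact functor $(-)^{\text{ss}}$, to a short exact sequence, and naturality of $\eta$ together with the five lemma give that $\eta_P$ is an isomorphism. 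Finally, to reach an arbitrary $M\in\Of$, I would invoke the standard structure theory of $\Of$ (see \cite{qcatO, ChaPre}): $\Of$ has enough projectives and projective objects of $\Of$ admit Verma flags, so $M$ has a presentation $P_1\to P_0\to M\to 0$ with $P_0,P_1$ Verma-filtered; applying the right-exact functors $F$ and $(-)^{\text{ss}}$ and using that $\eta_{P_0}$ and $\eta_{P_1}$ are isomorphisms, a diagram chase shows $\eta_M$ is an isomorphism. This identifies $\M\mapsto\M^{\text{ss}}$ as a left inverse of $F$.

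Full faithfulness is then formal. For faithfulness, naturality of $\eta$ gives $(F\phi)^{\text{ss}}=\eta_N\circ\phi\circ\eta_M^{-1}$ for $\phi\colon M\to N$, so $F\phi=0$ forces $\phi=0$. For fullness, given $\psi\colon F(M)\to F(N)$ in $\hat{\Of}$, its $\Uqcap$-linearity (in particular $\widehat{U_q^0}$-linearity) forces $\psi(F(M)^{\text{ss}})\subseteq F(N)^{\text{ss}}$, so $\psi$ restricts to a $U_q$-linear $\psi^{\text{ss}}$; set $\phi:=\eta_N^{-1}\circ\psi^{\text{ss}}\circ\eta_M\colon M\to N$. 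Then $F(\phi)$ and $\psi$ are two $\Uqcap$-module maps $F(M)\to F(N)$ agreeing on $1\otimes M$, which generates $F(M)=\Uqcap\otimes_{U_q}M$ as a $\Uqcap$-module; hence $F(\phi)=\psi$.

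The step I expect to be the main obstacle is the passage from Verma-filtered modules to all of $\Of$ — that is, upgrading the easy surjectivity of $\eta_M$ to an isomorphism for general $M$. This is what forces the use of a genuine two-term Verma-filtered presentation of $M$ (not merely a surjection from a sum of Vermas), and so rests on the availability of projective objects with Verma flags in the quantum category $\Of$; checking that this standard input, and the exactness bookkeeping for $(-)^{\text{ss}}$, go through cleanly is where the care is needed.
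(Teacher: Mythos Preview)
Your proof is correct, but the paper takes a somewhat different and more self-contained route. After the common step of checking $\eta_{M(\lambda)}$ is an isomorphism, the paper does \emph{not} climb up through Verma-filtered objects to projectives; instead it goes \emph{down} to highest weight modules. Given a short exact sequence $0\to N\to M(\lambda)\to M\to 0$, the paper argues directly that the image of $F(N)\hookrightarrow\wideparen{M(\lambda)}$ is the closure $\overline{N}$ of $N$ in $\wideparen{M(\lambda)}$, and then invokes the bijection of Proposition~\ref{semisimple3} (finite-dimensional weight spaces) to get $F(N)^{\text{ss}}=\overline{N}\cap M(\lambda)=N$; exactness of $F$ and of $(\cdot)^{\text{ss}}$ then gives $\eta_M$ an isomorphism for every highest weight $M$. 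The passage to arbitrary $M\in\Of$ then uses only that $M$ has a finite filtration with \emph{highest weight} subquotients, which follows from finite length and needs no projectives.

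So the trade-off is this: your argument is categorically cleaner (once you have projectives with Verma flags, everything is five-lemma bookkeeping), but it imports the BGG-type structure theory of quantum $\Of$ as a black box. The paper's argument avoids that dependency entirely---it never needs projectives---at the cost of a short hands-on topological step identifying $F(N)$ with a closure inside $\wideparen{M(\lambda)}$. In particular, the obstacle you flagged (upgrading from Verma-filtered to arbitrary $M$) simply does not arise in the paper's approach, since every object already has a highest-weight filtration.
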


\begin{proof}
It suffices to show that there is an isomorphism $M\cong F(M)^{\text{ss}}$ natural in $M$. First observe that there is such a natural $U_q$-module map, given by $m\mapsto 1\otimes m$. If $M=M(\lambda)$ for some $\lambda\in P$, that map is an isomorphism by the proof of Proposition \ref{verma}. If $M$ is a highest weight module, we have a short exact sequence
$$
0\to N\to M(\lambda)\to M\to 0
$$
for some $\lambda\in P$. Writing $N$ as a subquotient of $U_q$ and using the fact that $\wideparen{M(\lambda)}$ is the completion of $U_q/J_\lambda$ with the quotient locally convex topology, we see that the image of the map $F(N)\to \wideparen{M(\lambda)}$ is the closure of $N$ in $\wideparen{M(\lambda)}$. Hence $N\cong F(N)^{\text{ss}}$ by Proposition \ref{semisimple3} and it follows that $M\cong F(M)^{\text{ss}}$ by exactness of the two functors. Now if $M$ is arbitrary, it has a filtration whose subquotients are highest weight modules. By induction we may assume $M$ is an extension of highest weight modules. Then the result follows by the Five Lemma.
\end{proof}

Moreover we can easily identify the essential image of the functor $F$:

\begin{lem}\label{Oandhat2} The essential image of $F$ is the full subcategory of $\hat{\Of}$ whose objects are those modules $\M\in\hat{\Of}$ which have a finite filtration
$$
0=\M_0\subset \M_1\subset \cdots\subset \M_r=\M
$$
by subobjects such that the quotient $\M_i/\M_{i-1}$ is a highest weight module for each $i\geq 1$.
\end{lem}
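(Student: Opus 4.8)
The plan is to establish the two inclusions separately.

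\emph{The essential image is contained in the stated subcategory.} Starting from an arbitrary $M\in\Of$, I would use that $M$ has finite length in $\Of$ to choose a composition series $0=M_0\subset M_1\subset\cdots\subset M_r=M$ whose subquotients are simple, hence highest weight $U_q$-modules. Applying the exact functor $F$ gives a filtration $0=F(M_0)\subset\cdots\subset F(M_r)=F(M)$ with $F(M_i)/F(M_{i-1})\cong F(M_i/M_{i-1})$, and $F$ of a highest weight module is a highest weight module in $\hat\Of$ (established just before the statement). Since the described subcategory is full and closed under isomorphism, every object in the essential image lies in it.

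\emph{The stated subcategory is contained in the essential image.} I would induct on the filtration length $r$, proving the refined assertion that $\M^{\text{ss}}\in\Of$ and the natural $\Uqcap$-linear map $F(\M^{\text{ss}})\to\M$ extending $\M^{\text{ss}}\hookrightarrow\M$ is an isomorphism. For $r=1$, $\M$ is a highest weight module of some weight $\lambda$, so it is a quotient $p:\wideparen{M(\lambda)}=F(M(\lambda))\twoheadrightarrow\M$ in $\C(\Uqcap)$ with kernel $\K$; by Proposition \ref{FS2}, $\K$ is closed, hence coadmissible, and $N:=\K\cap M(\lambda)=\K^{\text{ss}}$. The restriction of $p$ to $M(\lambda)\subseteq\wideparen{M(\lambda)}$ is the natural surjection $q:M(\lambda)\to\M^{\text{ss}}$ with $\ker q=N$, so $\ker F(q)=F(N)$, whose image inside $F(M(\lambda))$ equals $\overline{N}$ by the density computation in the proof of Proposition \ref{Oandhat1}, which in turn equals $\K$ by Proposition \ref{semisimple3}. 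Hence $\ker p=\ker F(q)$, so $\M\cong F(\M^{\text{ss}})$ via the map induced by $q$; moreover $\M^{\text{ss}}$ is a quotient of $M(\lambda)$, so it lies in $\Of$. For the inductive step, write $\M'=\M_{r-1}$. Applying the exact functor $(-)^{\text{ss}}$ (Corollary \ref{semisimple2}) to $0\to\M'\to\M\to\M/\M'\to0$ produces a short exact sequence $0\to(\M')^{\text{ss}}\to\M^{\text{ss}}\to(\M/\M')^{\text{ss}}\to0$ whose outer terms lie in $\Of$ by the inductive hypothesis and the case $r=1$; since $\Of$ is closed under extensions (a standard property of $\Of$), $\M^{\text{ss}}\in\Of$. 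Applying the exact functor $F$ and using naturality of the transformation $F((-)^{\text{ss}})\to\id$ yields a commutative ladder of short exact sequences whose two outer vertical maps are isomorphisms, so the Five Lemma forces $F(\M^{\text{ss}})\cong\M$, completing the induction.

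\emph{Main obstacle.} The delicate point is the base case, where one must identify the kernel $\K$ of $\wideparen{M(\lambda)}\to\M$ with the image of $F$ applied to a $U_q$-submodule of $M(\lambda)$. This rests on combining three inputs already available: coadmissibility of $\K$ to know it is closed, the bijection of Proposition \ref{semisimple3} between closed $\widehat{U_q^0}$-submodules and their semisimple parts (with closure as inverse), and the closure computation carried out in the proof of Proposition \ref{Oandhat1}. Verifying that the resulting identifications are precisely the natural ones is exactly what is needed for the Five Lemma to apply in the inductive step.
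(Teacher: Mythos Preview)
Your proof is correct, and the first direction matches the paper exactly. For the reverse inclusion, however, you take a more laborious route than the paper does.

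The paper avoids induction entirely. It first observes, just as you do, that the filtration on $\M$ together with exactness of $(\cdot)^{\text{ss}}$ and Proposition \ref{highestweight} forces $\M^{\text{ss}}\in\Of$. But then, rather than inducting on the filtration length, it directly constructs the natural map $F(\M^{\text{ss}})\to\M$ (by tensoring the inclusion $\M^{\text{ss}}\hookrightarrow\M$ up to $\Uqcap$ and composing with the action map) and checks in one stroke that its kernel $\K$ and cokernel $\C$ vanish. The point is that Proposition \ref{Oandhat1} already gives $(F(\M^{\text{ss}}))^{\text{ss}}\cong\M^{\text{ss}}$, so applying the exact functor $(\cdot)^{\text{ss}}$ to the four-term exact sequence $0\to\K\to F(\M^{\text{ss}})\to\M\to\C\to0$ shows $\K^{\text{ss}}=\C^{\text{ss}}=0$; Proposition \ref{semisimple3} then forces $\K=\C=0$.

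Your inductive argument works, but it effectively reproves in the base case what Proposition \ref{Oandhat1} already established (the closure identification $\overline{N}=\K$), and the Five Lemma step is precisely what the paper's direct kernel--cokernel argument replaces. The paper's approach buys you a shorter proof with no case distinction, at the cost of the slightly less explicit observation that a coadmissible module in $\hat\Of$ with trivial semisimple part must itself be zero.
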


\begin{proof}
The essential image is contained in this since, for $M\in\Of$, we have an an analogous finite filtration in $\Of$ with subquotients equal to highest weight modules and so we obtain the filtration for $F(M)$ by applying $F$ to this filtration and using exactness. For the converse, suppose that $\M$ is as described. Then by exactness of $\M\mapsto \M^{\text{ss}}$ (Corollary \ref{semisimple2}) and by Proposition \ref{highestweight} and its proof, we see that $\M^{\text{ss}}\in \Of$. Thus it suffices to show that $F(\M^{\text{ss}})\cong \M$. Now by applying the functor $\Uqcap\otimes_{U_q}(\cdot)$ to the inclusion $\M^{\text{ss}}\subset \M$ and postcomposing with the action map $u\otimes m\mapsto um$, we get a morphism $F(\M^{\text{ss}})\to \M$ in $\hat{\Of}$. Let $\K$ and $\C$ denote its kernel and cokernel respectively. Then from Proposition \ref{Oandhat1} we get that $\K^{\text{ss}}=\C^{\text{ss}}=0$, and so $\K=\C=0$ by Proposition \ref{semisimple3}.
\end{proof}

We claim that the full subcategory described in Lemma \ref{Oandhat2} is the whole of $\hat{\Of}$:

\begin{thm}\label{mainthm} The functors $F$ and $(\cdot)^{\text{ss}}$ are quasi-inverse equivalence of categories between the categories $\Of$ and $\hat{\Of}$.
\end{thm}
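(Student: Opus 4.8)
The plan is to reduce the theorem to the two results already in hand, Proposition~\ref{Oandhat1} and Lemma~\ref{Oandhat2}. Since Proposition~\ref{Oandhat1} already shows $F$ is a fully faithful exact embedding with left inverse $(\cdot)^{\text{ss}}$, it remains only to prove that $F$ is essentially surjective, and by Lemma~\ref{Oandhat2} this is exactly the assertion that every $\M\in\hat{\Of}$ admits a finite filtration by subobjects whose successive quotients are highest weight modules. Now recall that $F$ is exact, that it carries a highest weight module of highest weight $\lambda$ to a quotient of $\wideparen{M(\lambda)}$, hence to a highest weight module (Proposition~\ref{highestweight}(iii)), and that every object of $\Of$ carries a finite filtration with highest weight subquotients. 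So it will suffice to establish the two facts: (1) $\M^{\text{ss}}\in\Of$, and (2) the canonical map $F(\M^{\text{ss}})\to\M$ is an isomorphism; for then applying the exact functor $F$ to a highest weight filtration of $\M^{\text{ss}}$ in $\Of$ produces the desired filtration of $\M\cong F(\M^{\text{ss}})$.

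The core of the argument is fact (1). Local finiteness of the $U_q^+$-action is the easy half: for $m\in\M_\mu$ the vectors of $U_q^+m$ have weights in the interval $\{\nu:\mu\le\nu\le\lambda_j\}$ as $\lambda_j$ ranges over the finitely many cosets containing the weights of $\M$, and each such interval is finite by linear independence of the simple roots, while every weight space of $\M$ is finite dimensional; hence $\dim_L U_q^+m<\infty$. The substantive point is that $\M^{\text{ss}}$ is finitely generated over $U_q$, and here I would exploit that $\M^{\text{ss}}$ is \emph{dense} in $\M$ (this is precisely topological $\widehat{U_q^0}$-semisimplicity) together with the Fr\'echet--Stein tower: $\M$ coadmissible means $\M=\varprojlim\M_n$ with each $\M_n$ finitely generated over the Noetherian Banach algebra $\Uqnhat$ from the defining tower; since the image of $\M^{\text{ss}}$ is dense in $\M_n$, the $\Uqnhat$-submodule it generates is finitely generated by Noetherianity, hence closed, hence all of $\M_n$, so finitely many weight vectors $m_1,\dots,m_s\in\M^{\text{ss}}$ generate $\M$ over $\Uqcap$. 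Then $\M=\overline{\sum_i U_qm_i}$, and since the weight spaces of $\M$ are finite dimensional, Proposition~\ref{semisimple3} gives $\M^{\text{ss}}=\M\cap\M^{\text{ss}}=\sum_{i=1}^s U_qm_i$, which is finitely generated. I expect the careful bookkeeping of this step — reducing the $\Uqcap$-generation of $\M$ to a fixed $\Uqnhat$ and checking that the chosen $m_i$ really span $\M$ (equivalently, that $\M\mapsto\M_n$ detects nonvanishing on objects of $\hat{\Of}$ via their $\text{ss}$-part) — to be the main obstacle; one may alternatively deduce finite length of objects of $\hat{\Of}$ directly following the central-character block decomposition as in \cite{Schmidt2}.

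Granting fact (1), the rest is formal. Since $U_q$ is Noetherian, $\M^{\text{ss}}$ is finitely presented, so $F(\M^{\text{ss}})=\Uqcap\otimes_{U_q}\M^{\text{ss}}$ is coadmissible, and the multiplication map $F(\M^{\text{ss}})\to\M$, $u\otimes m\mapsto um$, is a morphism in $\hat{\Of}$; its kernel $\K$ and cokernel $\C$ lie in $\hat{\Of}$, and applying the exact functor $(\cdot)^{\text{ss}}$ together with the natural isomorphism $(F(\M^{\text{ss}}))^{\text{ss}}\cong\M^{\text{ss}}$ of Proposition~\ref{Oandhat1} shows $\K^{\text{ss}}=\C^{\text{ss}}=0$, whence $\K=\C=0$ by Proposition~\ref{semisimple3}; this proves fact (2). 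Pushing a finite highest weight filtration of $\M^{\text{ss}}$ through $F$ now exhibits $\M\cong F(\M^{\text{ss}})$ with a finite filtration by highest weight subquotients, so $\M$ lies in the essential image of $F$ by Lemma~\ref{Oandhat2}. Hence $F$ is essentially surjective, and being fully faithful it is an equivalence of categories; since $(\cdot)^{\text{ss}}$ is a left inverse of an equivalence it is automatically a quasi-inverse, completing the proof.
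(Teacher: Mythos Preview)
Your overall strategy is sound and your reduction to essential surjectivity via Proposition~\ref{Oandhat1} and Lemma~\ref{Oandhat2} matches the paper exactly. The gap you yourself flag is genuine: from ``finitely many weight vectors $m_1,\dots,m_s$ generate $\M_n$ over $\Uqnhat$'' one cannot conclude that they generate $\M$ over $\Uqcap$ without knowing that the functor $\M\mapsto\M_n$ is conservative on $\hat{\Of}$. Concretely, setting $\Nn=\sum_i\Uqcap m_i$, you get $(\M/\Nn)_n=0$, and you need $\M/\Nn=0$; for general coadmissible modules this fails. What is required is precisely the paper's Lemma that $\wideparen{V(\lambda)}_n\neq 0$ for all $\lambda\in P$ and all $n\geq m$: any nonzero object of $\hat{\Of}$ contains a highest weight submodule surjecting onto some $\wideparen{V(\lambda)}$, so by flatness $\wideparen{V(\lambda)}_n$ would be a subquotient of $(\M/\Nn)_n=0$, a contradiction. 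Once you have that Lemma, your step (2b) goes through and the rest of your argument (the $U_q^+$-local finiteness, the identification $\M^{\text{ss}}=\sum_i U_q m_i$ via Proposition~\ref{semisimple3}, and the kernel/cokernel argument for $F(\M^{\text{ss}})\cong\M$) is correct.

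The paper takes your ``alternative'' route: it develops the central-character block decomposition (Propositions~\ref{artinian} and~\ref{mainpt}), proves each block $\hat{\Of}^\mu$ is Artinian and Noetherian by the classical argument, and then uses the \emph{same} Lemma $\wideparen{V(\lambda)}_n\neq 0$ together with Noetherianity of $\M_n$ to show only finitely many $\M^\mu$ are nonzero, whence $\M=\bigoplus_\mu\M^\mu$ and $\hat{\Of}$ is Artinian and Noetherian. Your direct approach, once patched with that Lemma, is actually leaner: it bypasses the block decomposition entirely. The paper's route buys the block decomposition as a byproduct (useful in its own right), while yours gets to the equivalence faster. Either way the single nontrivial input beyond what is already established is the nonvanishing of $\wideparen{V(\lambda)}_n$, and your proposal does not supply a proof of it.
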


The rest of this paper will be spent proving this theorem.

\subsection{Central characters} We now quickly recall some facts about central characters. Recall that the centre of $Z(U_q)$ is isomorphic to a polynomial algebra in $n$ variables (see \cite[Section 7.3, page 218]{josephbook} - note that this is only true for the simply connected form of the quantum group). For each $\lambda\in P$, $Z(U_q)$ acts on the Verma module $M(\lambda)$ by a central character $\chi_\lambda$ (see \cite[Lemma 6.3]{Jantzen}). These characters satisfy the usual property that $\chi_\lambda=\chi_\mu$ if and only if $\mu\in W\cdot\lambda$ (see \cite[Theorem 9.1.8]{ChaPre}) with respect the dot action $w\cdot\lambda=w(\lambda+\rho)-\rho$. Thus every character has a unique representative in $-\rho+P^+$.

For a given $\lambda\in -\rho+P^+$, the character $\chi_\lambda$ extends to a continuous character of the closure $\wideparen{Z(U_q)}$ of $Z(U_q)$ in $\Uqcap$, which we also denote by $\chi_\lambda$, using the fact that $\End_{\hat{\Of}}(\wideparen{M(\lambda)})=L$ from Proposition \ref{highestweight}(iv). Indeed it's clear from it that $\wideparen{Z(U_q)}$ acts on the Verma module by a continuous character, and we see that this character extends $\chi_\lambda$ by considering the semisimple part. Hence we see more generally from Proposition \ref{highestweight} that $\wideparen{Z(U_q)}$ acts on a highest weight module $\M$ by the character $\chi_\lambda$, and that every Jordan-Holder factor of $\M$ must necessarily have highest weight in $W\cdot \lambda$.

Now, if $\M\in\hat{\Of}$ then $Z(U_q)$ acts on each weight space $\M_\lambda$ and we may form the subspace
$$
\M_\lambda^{\chi}:=\{m\in \M_\lambda : (\ker \chi)^a\cdot m=0 \text{ for some }a=a(m)\geq 1\}
$$
where $\chi$ is a character of $Z(U_q)$. Since $\oplus_\lambda \M_\lambda^{\chi}$ is a $U_q$-submodule of $\M^{\text{ss}}$, its closure $\M^{\chi}$ inside $\M$ is a subobject in $\hat{\Of}$ by Lemma \ref{bijection}. Thus we may define the full subcategory $\hat{\Of}^\chi$ of $\hat{\Of}$ whose objects are those $\M\in\hat{\Of}$ such that $\M=\M^{\chi}$. When $\chi=\chi_\mu$ for some $\mu\in P$, we write $\hat{\Of}^\chi=\hat{\Of}^\mu$. We now establish a few facts about these subcategories.

\begin{lem}\label{chichimu} Suppose $\M\in\hat{\Of}$ and $\chi$ is a central character as above. If $\M^\chi\neq 0$, then $\chi=\chi_\mu$ for some $\mu\in P$.
\end{lem}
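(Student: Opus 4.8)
The plan is to reduce the statement to the classical fact that the central characters occurring in category $\Of$ for $U_q$ are precisely the $\chi_\mu$, $\mu\in P$. Since the closure of the zero subspace is zero, the hypothesis $\M^\chi\neq 0$ forces $\bigoplus_{\lambda\in P}\M_\lambda^{\chi}\neq 0$, so I can pick $\lambda\in P$ and a nonzero vector $m\in\M_\lambda^{\chi}$; by definition there is an integer $a\geq 1$ with $(\ker\chi)^a\cdot m=0$. Recall that $\M^{\text{ss}}=\bigoplus_\mu\M_\mu$ is a $U_q$-submodule of $\M$ (the generators $E_{\alpha_i},F_{\alpha_i},K_\nu$ send weight spaces to weight spaces), so I may form the cyclic $U_q$-submodule $N:=U_q m\subseteq\M^{\text{ss}}$.

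The first genuine step is to check that $N$ lies in $\Of$. It is finitely generated by construction; its weight spaces sit inside those of $\M$, hence are finite dimensional; and its weights lie among the finitely many cosets $\lambda_i-Q^+$ containing the weights of $\M$. Finally, for any $n\in N\cap\M_\nu$, the weights occurring in $U_q^+\cdot n$ are weights of $\M$ lying in $\nu+Q^+$, and each such weight $\mu$ lies in some $\lambda_i-Q^+$ and satisfies $\nu\leq\mu\leq\lambda_i$, so there are only finitely many of them; as each weight space of $\M$ is finite dimensional, $\dim_L U_q^+\cdot n<\infty$. Hence $N\in\Of$.

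Now $N$ has finite length, with composition factors irreducible highest weight modules $V(\nu_1),\dots,V(\nu_r)$ for some $\nu_j\in P$, and $Z(U_q)$ acts on $V(\nu_j)$ through the central character $\chi_{\nu_j}$. Filtering $N$ by a composition series and using that $Z(U_q)$ is central, the ideal $J:=\ker\chi_{\nu_1}\cdots\ker\chi_{\nu_r}$ annihilates $N$, so in particular $J\cdot m=0$. Suppose now for contradiction that $\chi\neq\chi_{\nu_j}$ for all $j$. Then $\ker\chi$ and the $\ker\chi_{\nu_j}$ are pairwise distinct maximal ideals of the polynomial algebra $Z(U_q)$, so $\ker\chi$ is comaximal with each $\ker\chi_{\nu_j}$, hence $(\ker\chi)^a$ is comaximal with the product $J$; thus $1=x+y$ for some $x\in(\ker\chi)^a$ and $y\in J$, whence $m=x\cdot m+y\cdot m=0$, contradicting $m\neq 0$. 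Therefore $\chi=\chi_{\nu_j}$ for some $j$, i.e. $\chi=\chi_\mu$ with $\mu=\nu_j\in P$.

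I expect the only mildly delicate point to be the verification that $N=U_q m$ really belongs to $\Of$: one must pass to this finitely generated submodule rather than work with $\M^{\text{ss}}$ directly (which is not yet known here to be an object of $\Of$), and use that the weights of $\M$ are bounded above within each of the finitely many cosets in which they lie. The remaining ingredients --- finite length in $\Of$, the central character of $V(\nu)$, and comaximality of powers and products of distinct maximal ideals in a commutative Noetherian ring --- are standard.
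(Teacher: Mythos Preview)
Your argument is correct. You pick a weight vector $m$ in some $\M_\lambda^\chi$, generate the cyclic $U_q$-module $N=U_q m\subseteq\M^{\text{ss}}$, verify $N\in\Of$ using the weight-boundedness and finite dimensionality of weight spaces built into the definition of $\hat{\Of}$, and then apply a Chinese-remainder style comaximality argument to the finitely many central characters appearing in a composition series of $N$.

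The paper's route is shorter and stays entirely within $\hat{\Of}$: it uses that any nonzero object of $\hat{\Of}$ has a maximal vector (this is immediate from the weight condition), so $\M^\chi$ itself has a maximal vector $m$ of some weight $\mu$; choosing $n$ minimal with $(\ker\chi)^n m=0$ and picking $0\neq m'\in(\ker\chi)^{n-1}m$ produces a maximal vector on which the centre acts \emph{exactly} by $\chi$; the highest weight module it generates is a quotient of $\wideparen{M(\mu)}$, on which the centre acts by $\chi_\mu$, forcing $\chi=\chi_\mu$. So the paper avoids descending to classical $\Of$ and the comaximality step by first arranging for a genuine (rather than generalised) eigenvector which is already maximal. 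Your approach, by contrast, does not need to locate a maximal vector at the outset, instead leaning on the finite-length structure of $\Of$ and standard commutative algebra; it is a perfectly natural reduction to the classical situation, at the cost of a few more lines.
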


\begin{proof}
Since $\M^\chi$ is an object in $\hat{\Of}$, it must have a maximal vector $m\in \M_\mu^\chi$. Let $n\geq 1$ be minimal such that $(\ker \chi)^n\cdot m=0$. Pick $0\neq m'\in (\ker \chi)^{n-1}\cdot m$. Then $m'$ is still a maximal vector and the centre acts on it by $\chi$. On the other hand, the highest weight module generated by $m'$ is a quotient of $\wideparen{M(\mu)}$ and hence the centre acts on it by $\chi_\mu$. This forces $\chi=\chi_\mu$.
\end{proof}

Hence we see that the only such subcategories which are non-zero are the $\hat{\Of}^\mu$ for $\mu\in -\rho+P^+$.

\begin{prop}\label{artinian} For every $\mu\in -\rho+P^+$, the category $\hat{\Of}^\mu$ is abelian and the functor $\hat{\Of}\to \hat{\Of}^\mu$ given by $\M\mapsto \M^{\chi_\mu}$ is exact. Moreover, $\hat{\Of}^\mu$ is Artinian and Noetherian.
\end{prop}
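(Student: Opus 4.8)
The plan is to transfer everything to the category $\Of$ for $U_q$ via the exact functor $(\cdot)^{\text{ss}}$ and the subobject bijection of Lemma \ref{bijection}. The crucial preliminary is that if $\M\in\hat{\Of}^\mu$, then the $U_q$-module $\M^{\text{ss}}$ lies in the block $\Of^\mu$. Since $\M=\M^{\chi_\mu}$, the centre $Z(U_q)$ acts on each finite dimensional weight space $\M^{\text{ss}}_\lambda$ with generalized eigenvalue $\chi_\mu$, so every element of $\M^{\text{ss}}$ generates a finitely generated $U_q$-submodule lying in $\Of^\mu$; thus $\M^{\text{ss}}$ is a sum of finite dimensional weight spaces on which $U_q^+$ acts locally finitely, with weights in $\bigcup_{w\in W}(w\cdot\mu-Q^+)$. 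To see it has finite length I would run the socle filtration $0=S_0\subseteq S_1\subseteq\cdots$: $\M^{\text{ss}}$ is locally Artinian, hence semiartinian, so $\M^{\text{ss}}=\bigcup_n S_n$, and each layer $S_n/S_{n-1}$ is a finite semisimple module with simple summands among the finitely many $V(w\cdot\mu)$. Since the weight functor commutes with the union and each $\M^{\text{ss}}_{w\cdot\mu}$ is finite dimensional, $\sum_n\dim(S_n/S_{n-1})_{w\cdot\mu}<\infty$, so $V(w\cdot\mu)$ occurs in only finitely many layers; as there are finitely many such $w$, the filtration terminates and $\M^{\text{ss}}$ has finite length, hence lies in $\Of^\mu$.

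Granting this, the \textbf{abelian} claim is formal: $\hat{\Of}^\mu$ is full in the abelian category $\hat{\Of}$ (Proposition \ref{O1}), so it suffices to check closure under kernels, cokernels and finite direct sums formed in $\hat{\Of}$. For a subobject $\Nn\subseteq\M$ in $\hat{\Of}$ with $\M\in\hat{\Of}^\mu$, exactness of $(\cdot)^{\text{ss}}$ (Corollary \ref{semisimple2}) gives $\Nn^{\text{ss}}\subseteq\M^{\text{ss}}\in\Of^\mu$, and $\Of^\mu$ is closed under submodules and quotients, so $\Nn,\M/\Nn\in\hat{\Of}^\mu$; closure under finite direct sums is clear, and kernels/cokernels of maps in $\hat{\Of}^\mu$ are subquotients in $\hat{\Of}$ of objects of $\hat{\Of}^\mu$. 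The same reasoning shows $\M\mapsto\M^{\chi_\mu}$ really lands in $\hat{\Of}^\mu$: $\M^{\chi_\mu}$ is a subobject of $\M$, hence in $\hat{\Of}$, and by Proposition \ref{semisimple3} its semisimple part is $(\M^{\chi_\mu})^{\text{ss}}=(\M^{\text{ss}})^{\chi_\mu}=\bigoplus_\lambda\M_\lambda^{\chi_\mu}$, the $\chi_\mu$-block summand of $\M^{\text{ss}}$, which lies in $\Of^\mu$.

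For \textbf{exactness} of $\M\mapsto\M^{\chi_\mu}$, take $0\to\M'\to\M\to\M''\to 0$ in $\hat{\Of}$. Applying the exact functor $(\cdot)^{\text{ss}}$, then the block projection $V\mapsto V^{\chi_\mu}$ on $U_q$-modules (exact, being projection onto a direct summand that is computed weight space by weight space), and using the identification $(\M^{\chi_\mu})^{\text{ss}}\cong(\M^{\text{ss}})^{\chi_\mu}$, one gets that $0\to(\M')^{\chi_\mu}\to\M^{\chi_\mu}\to(\M'')^{\chi_\mu}\to 0$ becomes exact after applying $(\cdot)^{\text{ss}}$. It then remains to observe that $(\cdot)^{\text{ss}}$ reflects exactness on $\hat{\Of}$: a morphism in $\hat{\Of}$ has closed image by Proposition \ref{FS2}, so its kernel, image and cokernel are again in $\hat{\Of}$, and by Proposition \ref{semisimple3} an object of $\hat{\Of}$ with vanishing semisimple part is zero; hence injectivity (resp. surjectivity) of $f^{\text{ss}}$ forces injectivity (resp. surjectivity) of $f$.

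Finally, \textbf{Artinian and Noetherian}: for $\M\in\hat{\Of}^\mu$, Lemma \ref{bijection} gives an inclusion-preserving bijection between the subobjects of $\M$ in $\hat{\Of}$ (equivalently, by the abelian part, in $\hat{\Of}^\mu$) and the $U_q$-submodules of $\M^{\text{ss}}$. Since $\M^{\text{ss}}\in\Of^\mu$ has finite length, its submodule lattice has finite length, hence so does the subobject lattice of $\M$; in particular $\M$ satisfies both chain conditions. I expect the one genuinely non-formal step to be the preliminary finite-length statement for $\M^{\text{ss}}$ (equivalently, that $\M^{\text{ss}}$ is finitely generated over $U_q$) — everything else is a bookkeeping consequence of results already in place — so the write-up should spend its effort on the socle-filtration argument and on pinning down the identity $(\M^{\chi_\mu})^{\text{ss}}=(\M^{\text{ss}})^{\chi_\mu}$ used throughout.
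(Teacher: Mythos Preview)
Your argument is correct, but it takes a different route from the paper's. For the Artinian/Noetherian claim, the paper does not first show that $\M^{\text{ss}}\in\Of^\mu$; instead it runs the classical category-$\Of$ argument directly on $\M$: set $V=\sum_{\lambda\in W\cdot\mu}\M_\lambda$ (finite dimensional), and observe that for any strict inclusion $\Nn'\subsetneq\Nn$ of subobjects, a maximal vector of $\Nn/\Nn'$ must have weight in $W\cdot\mu$, forcing $\dim(\Nn\cap V)>\dim(\Nn'\cap V)$; so any chain has length at most $\dim V$. Your socle-filtration proof that $\M^{\text{ss}}$ has finite length is essentially the same finiteness repackaged on the $U_q$-side, and then you transport it back via Lemma~\ref{bijection}. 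For exactness of $\M\mapsto\M^{\chi_\mu}$, the paper argues directly: the assignment is functorial weight-space by weight-space, and passage to the closure preserves exactness because maps of coadmissible modules are strict (\cite[1.1.9, Cor.~6]{BGR}); you instead check exactness after applying $(\cdot)^{\text{ss}}$ and use that this functor reflects exactness on $\hat{\Of}$. Both approaches rest on the same inputs (finite-dimensionality of the weight spaces at $W\cdot\mu$, Proposition~\ref{semisimple3}, Corollary~\ref{semisimple2}); the paper's is shorter, while yours has the side benefit of already producing $\M^{\text{ss}}\in\Of^\mu$, a step in the direction of Theorem~\ref{mainthm}. The identity $(\M^{\chi_\mu})^{\text{ss}}=\bigoplus_\lambda\M_\lambda^{\chi_\mu}$ that you flag is indeed needed and follows from the bijection in Proposition~\ref{semisimple3}; the paper uses it explicitly in the proof of Proposition~\ref{mainpt}.
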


\begin{proof}
Given a morphism $\M\to\Nn$ in $\hat{\Of}$ we have morphisms $\M_\lambda\to\Nn_\lambda$ for each $\lambda\in P$ and $\M_\lambda^{\chi_\mu}\to \Nn_\lambda^{\chi_\mu}$. Taking the sum over all $\lambda$ and passing to the closure, we see that the assignement $\M\mapsto \M^{\chi_\mu}$ is functorial. For the exactness, we apply the same argument again using the fact that module maps between coadmissible modules are automatically strict and so passage to the closure then preserves exactness by \cite[1.1.9, Corollary 6]{BGR}. As $\hat{\Of}^\mu$ is a full subcategory of $\hat{\Of}$, it is now clear that it is closed under passage to kernels and cokernels and, thus, abelian.

The last part follows using the classical argument for category $\Of$ (see \cite[Theorem 1.11]{catO}) as follows. Given $\M\in\hat{\Of}^\mu$, let $V=\sum_{\lambda\in W\cdot \mu} \M_\lambda$. Then $V$ is finite dimensional. Now if $0\neq \Nn'\subset \Nn$ is a strict inclusion of subobjects of $\M$, let $m\in \Nn_\lambda$ be such that its image in $\Nn/\Nn'$ is a maximal vector for some weight $\lambda$. The cyclic submodule of $\Nn/\Nn'$ generated by the image of $m$ is highest weight, hence $\wideparen{Z(U_q)}$ acts on it by $\chi_\lambda$. Hence it must be that $\chi_\lambda=\chi_\mu$ i.e. that $\lambda\in W\cdot \mu$. Thus by definition of $V$ we see that $m\in \Nn\cap V$ and so we obtain $\dim_L(\Nn\cap V)>\dim_L(\Nn'\cap V)$. The result now follows.
\end{proof}

They key step in the proof of Theorem \ref{mainthm} is the following:

\subsection{Proposition}\label{mainpt} \emph{The above functors $\hat{\Of}\to \hat{\Of}^\mu$ induce a faithful embedding of $\hat{\Of}$ into the direct product $\prod_{\mu \in -\rho+P^+} \hat{\Of}^\mu$.}

\begin{proof}
Choose polynomial generators $z_1, \ldots, z_n$ of $Z(U_q)$. Then for any $\M\in\hat{\Of}$, the vector space $\M_\lambda^{\chi_\mu}$ is the simultaneous generalised eigenspace of the finitely many commuting operators $z_1, \ldots, z_n$ with simultaneous generalised eigenvalues $\chi_\mu(z_1), \ldots, \chi_\mu(z_n)$. Now there is a finite field extension $L\subseteq L'$ such that
$$
\M_\lambda\otimes_L L'=\bigoplus_{\chi} (\M_\lambda\otimes_L L')^{\chi}
$$
where the sum runs over a finite number of $L'$-valued characters of $Z(U_q)$ and $(\M_\lambda\otimes_L L')^{\chi}$ is defined in the obvious way. Hence we just need to show that if $(\M_\lambda\otimes_L L')^{\chi}\neq 0$ then $\chi=\chi_\mu$ for some $\mu$. But this is Lemma \ref{chichimu}, noting that $\M\otimes_L L'$ is in $\hat{\Of}$ since $L'$ is a finite extension.

Thus we have that $\M_\lambda=\bigoplus_\mu \M_\lambda^{\chi_\mu}$. Moreover, the equality $\M^\mu \cap\M^{\text{ss}}=\bigoplus_\lambda \M_\lambda^{\chi_\mu}$ implies that $\M^{\text{ss}}=\bigoplus_\mu (\M^\mu\cap \M^{\text{ss}})$. Hence we see that from this and the usual properties of $(\cdot)^{\text{ss}}$ that the sum $\sum_\mu \M^\mu$ is direct and dense in $\M$. In particular the functor $\hat{\Of}\to\prod_\mu \hat{\Of}^\mu$ given by $\M\mapsto (\M^\mu)_\mu$ is faithful.
\end{proof}

We can now establish our main result. We first need a couple of preparatory results.

\begin{lem} For every $n\geq m$, there is a triangular decomposition
$$
\ten{\widehat{U_{q,n}^-}}{\ten{\widehat{U_q^0}}{\widehat{U_{q,n}^+}}}\overset{\cong}{\longrightarrow} \Uqnhat
$$
given by the multiplication map.
\end{lem}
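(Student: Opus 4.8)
The plan is to deduce this from the honest (uncompleted) PBW triangular decomposition of $U_n$ for $n\geq m$, together with the monoidality of the functor $M\mapsto\widehat{M_L}$ proved in Proposition \ref{monoidal}. First I would record that, by the PBW theorem (Theorem \ref{PBW}) and the triangular decomposition established in its proof (see \ref{proof_of_PBW}), for every $n\geq m$ the multiplication map
$$
U_n^-\otimes_R U_n^0\otimes_R U_n^+\longrightarrow U_n
$$
is an isomorphism of $R$-modules, where $U_n^0=U^0=R[K_\lambda:\lambda\in P]$. Each of the three tensor factors is free over $R$ by PBW, hence so is their tensor product; in particular everything in sight is torsion-free, so Proposition \ref{monoidal} applies. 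I would also note that for $n\geq m$ the algebras $\widehat{U_{q,n}^-}$, $\widehat{U_q^0}$, $\widehat{U_{q,n}^+}$ are isometrically embedded subalgebras of $\Uqnhat$ (established in the Verma module discussion and just before the definition of $\hat\Of$), so that the continuous map $\widehat{U_{q,n}^-}\times\widehat{U_q^0}\times\widehat{U_{q,n}^+}\to\Uqnhat$, $(a,h,b)\mapsto ahb$, extends to a continuous map on the completed tensor product, giving meaning to the ``multiplication map'' in the statement.

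Next I would apply Proposition \ref{monoidal} twice, using that $\widehat{\otimes}_L$ is associative. First it yields a canonical isomorphism $\ten{\widehat{U_q^0}}{\widehat{U_{q,n}^+}}\cong\widehat{(U^0\otimes_R U_n^+)_L}$, and then
$$
\ten{\widehat{U_{q,n}^-}}{\left(\ten{\widehat{U_q^0}}{\widehat{U_{q,n}^+}}\right)}\cong\widehat{(U_n^-\otimes_R U^0\otimes_R U_n^+)_L}.
$$
Completing the $R$-module PBW isomorphism above $\pi$-adically and tensoring with $L$ identifies the right-hand side with $\widehat{U_{n,L}}=\Uqnhat$. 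The composite of these identifications is the claimed topological isomorphism. To see that it is the multiplication map, I would chase through the construction, using that the isomorphism of Proposition \ref{monoidal} is induced, on the dense subspace $\widehat{M_L}\otimes_L\widehat{N_L}$, by the natural map $a\otimes b\mapsto a\otimes b$ into $\widehat{(M\otimes_R N)_L}$; hence the composite sends $a\otimes h\otimes b$ to $ahb$ on the dense subspace $\widehat{U_{q,n}^-}\otimes_L\widehat{U_q^0}\otimes_L\widehat{U_{q,n}^+}$, and therefore everywhere by continuity. Finally, being given by multiplication, the map intertwines left multiplication by $\widehat{U_{q,n}^-}$ and right multiplication by $\widehat{U_{q,n}^+}$, so it is an isomorphism of $(\widehat{U_{q,n}^-},\widehat{U_{q,n}^+})$-bimodules.

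The only point requiring genuine care is the bookkeeping in the last step, namely checking that the chain of canonical isomorphisms really composes to multiplication and not to some twist of it; this amounts to pinning down the normalisation of Proposition \ref{monoidal} and of the PBW identification $U_n^-\otimes_R U^0\otimes_R U_n^+\cong U_n$. There is nothing deep here: no new ideas beyond Theorem \ref{PBW} and Proposition \ref{monoidal} enter, and the result is simply the completed-Banach-algebra shadow of the triangular decomposition of $U_q$ itself, exactly as Lemma \ref{tensor} is its completed-Fr\'echet shadow.
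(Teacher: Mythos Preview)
Your proposal is correct and follows essentially the same approach as the paper's proof: invoke the PBW triangular decomposition $U_n^-\otimes_R U^0\otimes_R U_n^+\cong U_n$ for $n\geq m$ from Theorem \ref{PBW}, then apply the monoidality of $M\mapsto\widehat{M_L}$ from Proposition \ref{monoidal}. The paper's proof is a two-line sketch doing exactly this; you have simply fleshed out the details, in particular the verification that the composite isomorphism really is the multiplication map.
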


\begin{proof}
By the PBW theorem (Theorem \ref{PBW}), the multiplication map yields a triangular decomposition
$$
U_n^-\otimes_R U^0\otimes_R U_n^+\overset{\cong}{\longrightarrow} U_n
$$
for every $n\geq m$. The result now follows by Proposition \ref{monoidal}.
\end{proof}

Given any coadmissible $\Uqcap$-module $\M$, we write $\M_n:=\Uqnhat\otimes_{\Uqcap}\M$ which is a finitely generated Banach $\Uqnhat$-module. Moreover the canonical map $\M\to \M_n$ has dense image. We also remark that the map $\Uqcap\to\Uqnhat$ is flat for every $n\geq m$ (see \cite[Remark 3.2]{SchTeit03}).

\subsection{Lemma} \emph{For any $\lambda\in P$ and any $n\geq m$, we have $\wideparen{V(\lambda)}_n\neq 0$.}

\begin{proof}
Consider the kernel $\K$ of the surjection $\wideparen{M_\lambda}\to\wideparen{V(\lambda)}$. Since $\Uqcap\to\Uqnhat$ is flat, the kernel of $(\wideparen{M_\lambda})_n\to\wideparen{V(\lambda)}_n$ is $\K_n$ for every $n\geq m$. By the triangular decomposition for $\Uqnhat$ from the previous Lemma, we get
$$
(\wideparen{M_\lambda})_n\cong\Uqnhat\otimes_{U_q} M_\lambda\cong \widehat{U_{q,n}^-}\otimes_L L_\lambda
$$
and so $(\wideparen{M_\lambda})_n$ is topologically $\widehat{U_q^0}$-semisimple with $((\wideparen{M_\lambda})_n)^{\text{ss}}=M_\lambda$. By Corollary \ref{semisimple2}, both $\K_n$ and $\wideparen{V(\lambda)}_n$ are topologically semisimple and it suffices to show that $\K_n^{\text{ss}}\neq ((\wideparen{M_\lambda})_n)^{\text{ss}}=M_\lambda$. Now the composite $\K^{\text{ss}}\subset \K\to \K_n$ has dense image, so it follows from Proposition \ref{semisimple3} that its image is $\K_n^{\text{ss}}$. So we get $\K_n^{\text{ss}}\cong \K^{\text{ss}}$ as $U_q^0$-modules, and now we see that $\K_n^{\text{ss}}\neq M_\lambda$ as required because $\wideparen{V(\lambda)}^{\text{ss}}\neq 0$.
\end{proof}

\begin{prop} The category $\hat{\Of}$ is Artinian and Noetherian.
\end{prop}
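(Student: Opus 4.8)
The plan is to combine the faithful embedding $\hat{\Of}\hookrightarrow\prod_{\mu\in-\rho+P^+}\hat{\Of}^\mu$ of Proposition~\ref{mainpt} with the fact (Proposition~\ref{artinian}) that each block $\hat{\Of}^\mu$ is Artinian and Noetherian. Fix $\M\in\hat{\Of}$; by the proof of Proposition~\ref{mainpt} the sum $\sum_{\mu}\M^\mu$ is direct and dense in $\M$. The whole result follows once we show that the set $S:=\{\mu\in-\rho+P^+:\M^\mu\neq0\}$ is finite. Indeed, for finite $S$ the internal sum $\bigoplus_{\mu\in S}\M^\mu$ is a finite sum of coadmissible submodules of $\M$, hence coadmissible and therefore closed by Proposition~\ref{O1}(iii),(v), so it equals the dense submodule $\M$; thus $\M\cong\bigoplus_{\mu\in S}\M^\mu$ with each summand of finite length by Proposition~\ref{artinian}, and since $\M$ was arbitrary this shows $\hat{\Of}$ is Artinian and Noetherian.

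To prove $S$ is finite, fix $n\geq m$ and pass to $\M_n=\Uqnhat\otimes_{\Uqcap}\M$, a finitely generated module over the Noetherian ring $\Uqnhat$, hence a Noetherian $\Uqnhat$-module. Since $\Uqcap\to\Uqnhat$ is flat and $\Uqnhat\otimes_{\Uqcap}(-)$ commutes with direct sums, the inclusion $\bigoplus_{\mu\in S}\M^\mu\hookrightarrow\M$ yields an injection $\bigoplus_{\mu\in S}(\M^\mu)_n\hookrightarrow\M_n$, where $(\M^\mu)_n:=\Uqnhat\otimes_{\Uqcap}\M^\mu$. Now for $\mu\in S$ the module $\M^\mu$ is a nonzero coadmissible object of the Artinian category $\hat{\Of}^\mu$, so it has a simple subobject; by Proposition~\ref{highestweight} and the classification of irreducible objects of $\hat{\Of}$, every such simple object is of the form $\wideparen{V(\lambda)}$ for some $\lambda\in P$, so some $\wideparen{V(\lambda)}$ embeds in $\M^\mu$ as a $\Uqcap$-submodule. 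Applying the exact functor $\Uqnhat\otimes_{\Uqcap}(-)$ shows $\wideparen{V(\lambda)}_n$ is a submodule of $(\M^\mu)_n$, and by the Lemma above $\wideparen{V(\lambda)}_n\neq0$ for every $n\geq m$. Hence each $(\M^\mu)_n$ with $\mu\in S$ is nonzero, so if $S$ were infinite the Noetherian module $\M_n$ would contain an infinite internal direct sum of nonzero submodules, which is impossible. Therefore $S$ is finite.

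The only genuinely non-formal ingredient is the non-vanishing $\wideparen{V(\lambda)}_n\neq0$ for $n\geq m$ — equivalently, that the simple objects of $\hat{\Of}$ do not die upon passing to level $n$ — which is exactly the preceding Lemma, itself resting on the triangular decomposition of $\Uqnhat$ and on the identification $\wideparen{M(\lambda)}_n\cong\widehat{U_{q,n}^-}\otimes_L L_\lambda$. Everything else is the assembly of Propositions~\ref{mainpt}, \ref{artinian}, \ref{O1} together with the Noetherianity of $\Uqnhat$, so beyond that Lemma I do not anticipate any further obstacle.
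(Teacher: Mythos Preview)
Your proof is correct and follows the same approach as the paper: both show that only finitely many $\M^\mu$ are nonzero by passing to a fixed level $n\geq m$, using flatness of $\Uqcap\to\Uqnhat$ to get an injection $\bigoplus_\mu(\M^\mu)_n\hookrightarrow\M_n$, invoking the preceding Lemma to ensure each $(\M^\mu)_n$ is nonzero via a simple subobject $\wideparen{V(\lambda)}\subseteq\M^\mu$, and then concluding by Noetherianity of $\M_n$. Your write-up is slightly more explicit than the paper's about why the direct sum stays direct and injects after tensoring, but the argument is the same.
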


\begin{proof}
Let $\M\in\hat{\Of}$. We have from the proof of Proposition \ref{mainpt} that $\bigoplus_\mu \M^\mu$ is dense in $\M$. Now for any $n\geq m$, we have
$$
\M_n=\Uqnhat\otimes_{\Uqcap} \M\supseteq \Uqnhat\otimes_{\Uqcap} \left( \bigoplus _\mu \M^\mu\right)=\bigoplus_\mu (\M^\mu)_n.
$$
Any non-zero $\M^\mu$ has a composition series by Proposition \ref{artinian} and so $\wideparen{V(\lambda)}_n\subseteq (\M^\mu)_n$ for some $\lambda\in P$ and then we see that $(\M^\mu)_n\neq 0$ by the previous Lemma. Since $\M_n$ is a finitely generated $\Uqnhat$-module and $\Uqnhat$ is Noetherian, it follows that $\M^\mu=0$ for all but finitely many $\mu$. But then the sum $\bigoplus_\mu \M^\mu$ is finite and so closed by Proposition \ref{O1}(iii)\&(v).
\end{proof}

This now concludes the proof of Theorem \ref{mainthm}:

\begin{proof}[Proof of Theorem \ref{mainthm}]
This follows immediatly from the previous Proposition by Lemma \ref{Oandhat2}.
\end{proof}

\subsection{A Harish-Chandra isomorphism}

The analogue of Theorem \ref{mainthm} was proved for (non-quantum) Arens-Michael envelopes in \cite{Schmidt2}. One of the main ingredients was a version of the Harish-Chandra isomorphism. Recall that the centre of $Z(U_q)$ is isomorphic to a polynomial algebra in $n$ variables.

\begin{conj}\label{HarishChandra} The above isomorphism extends to a topological isomorphism $\wideparen{Z(U_q)}\to \Of(\mathbb{A}^{n,\text{\emph{an}}}_L)$ between the closure of $Z(U_q)$ in $\Uqcap$ and the algebra of rigid analytic functions on the analytification of affine $n$-space.
\end{conj}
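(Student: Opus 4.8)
\noindent\emph{Proof strategy.} The plan is to make the Fr\'echet topology on $\wideparen{Z(U_q)}$ completely explicit and to recognise it as that of $\Of(\mathbb{A}^{n,\text{an}}_L)$. Write $Z:=Z(U_q)$, and for $n\geq m$ let $Z_n$ be the closure of $Z$ inside $\Uqnhat$, so that $\wideparen{Z(U_q)}=\varprojlim_n Z_n$ with transition maps the natural continuous maps of dense image. On the geometric side one has $\Of(\mathbb{A}^{n,\text{an}}_L)=\varprojlim_N L\langle \pi^N y_1,\ldots,\pi^N y_n\rangle$, an inverse limit of Tate algebras over an increasing family of closed polydiscs exhausting $\mathbb{A}^n_L$, again with restriction maps of dense image. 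So it is enough to produce, for $n$ large, coordinates identifying $Z_n$ with a Tate algebra in $n$ variables whose polyradius tends to infinity with $n$, compatibly with the two inverse systems.

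\noindent The first step is the quantum Harish--Chandra theorem for the simply connected form: $Z\cong L[z_1,\ldots,z_n]$ is a polynomial algebra, and one may take the $z_i$ to be the Drinfeld (quantum Casimir) central elements attached to the fundamental representations $V(\varpi_1),\ldots,V(\varpi_n)$. The technical heart is then an analysis of the height filtration of \S\ref{PrelimonUq} restricted to $Z$. Using the standard factorisation of the universal $R$-matrix one computes that, in the triangular decomposition, the top-height part of $z_i$ is carried by the summands of weight $(-\gamma_i,\gamma_i)$ with $\gamma_i=\varpi_i-w_0\varpi_i$, so that $\htt(z_i)=2\,\htt(\gamma_i)=:h_i\geq 1$, and that the PBW coefficients occurring are units up to bounded powers of the $q_\beta-q_\beta^{-1}$; after rescaling each $z_i$ by a suitable power of $\pi$, its leading symbol $\bar z_i\in\gr U_q$ becomes $\pi$-primitive. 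Since $\gr U_q=U^{(1)}$ has $\gr U^{(1)}=U^{(2N+1)}$ a quantum affine space, hence a domain, $U^{(1)}$ and $U_q$ are domains; consequently $\gr$ is multiplicative, the $\bar z_i$ are algebraically independent, and $\gr Z\hookrightarrow\gr U_q$ is exactly the polynomial subalgebra on $\bar z_1,\ldots,\bar z_n$, graded so that $\deg\bar z_i=h_i$. The algebraic independence of the $\bar z_i$ is checked after the further reduction to $U^{(2N+1)}\otimes_R k$, which (as $q\equiv 1\pmod\pi$) is an ordinary (Laurent) polynomial ring and in which the leading terms of the $\bar z_i$ degenerate to the classical fundamental invariants in $S(\g)^{\g}$.

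\noindent Feeding this into the PBW description of $\Uqnhat$ from \S\ref{PBW} --- together with a Gauss-lemma / content-multiplicativity argument inside a suitable $R$-form of $\gr U_q$ --- one shows there is $n_0\geq m$ such that for all $n\geq n_0$ the restriction of $\|\cdot\|_n$ to $Z=L[z_1,\ldots,z_n]$ is the Gauss norm $\bigl\|\sum_\alpha a_\alpha z^\alpha\bigr\|_n=\max_\alpha|a_\alpha|\,|\pi|^{-n\sum_i\alpha_i h_i}$. The point that makes this hold for all $\alpha$ (and not merely asymptotically) is that the top-height component of $z^\alpha$ genuinely sits in degree $\sum_i\alpha_i h_i$ with $\pi$-primitive leading symbol, so nothing of strictly lower height can cancel against it once $n$ exceeds the (finitely many) denominators involved. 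Hence $Z_n\cong L\langle \pi^{nh_1}y_1,\ldots,\pi^{nh_n}y_n\rangle$, a Tate algebra of polyradius $(|\pi|^{-nh_1},\ldots,|\pi|^{-nh_n})$, and the maps $Z_{n+1}\to Z_n$ are the evident restriction maps. As every $h_i\geq 1$, the polydiscs $\prod_i D(0,|\pi|^{-nh_i})$ are cofinal in the family of all polydiscs $\prod_i D(0,|\pi|^{-N})$, so $\varprojlim_n Z_n\cong\varprojlim_N L\langle\pi^N y_1,\ldots,\pi^N y_n\rangle=\Of(\mathbb{A}^{n,\text{an}}_L)$ as topological $L$-algebras, and by construction this isomorphism restricts to the Harish--Chandra isomorphism on $Z$. (In particular $\wideparen{Z(U_q)}$ is Fr\'echet--Stein, which also recovers its abstract Fr\'echet--Stein property from the closed embedding $\wideparen{Z(U_q)}\hookrightarrow\Uqcap$.)

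\noindent The main obstacle is the middle step: one needs control of the top-height PBW components of the Drinfeld central elements that is sharp enough to pin down both the heights $h_i$ \emph{and} the integral structure of the leading symbols, and one must then verify their algebraic independence after reduction modulo $\pi$. It is quite possible that the naive height filtration is too coarse and that one should instead run the whole argument with the finer multifiltration of \S\ref{PrelimonUq} (whose associated graded is the quantum affine space $U^{(2N+1)}$), where the bookkeeping with the $R$-matrix coefficients is cleaner. In rank one everything is explicit --- $Z(U_q(\mathfrak{sl}_2))=L[C]$ with $C=F_\alpha E_\alpha+(q-q^{-1})^{-2}\bigl(qK_\alpha+q^{-1}K_\alpha^{-1}\bigr)$, of height $2$ with $\pi$-primitive leading symbol $\bar F_\alpha\bar E_\alpha$ --- so that each $Z_n$ is the Tate algebra $L\langle\pi^{2n}y\rangle$ for $n$ large, in agreement with the conjecture and with the computations of \cite{Lyubinin1}.
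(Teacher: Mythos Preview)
The statement you are attempting to prove is labelled a \emph{Conjecture} in the paper, and the paper does not prove it in general: it establishes only the case $\g=\mathfrak{sl}_2$ (Proposition~\ref{sl2}) and explains in Remark~\ref{HC2} precisely why the general case remains open. So there is no ``paper's own proof'' of the full statement to compare against; what there is agrees with your rank-one paragraph. The paper's $\mathfrak{sl}_2$ argument is exactly your Gauss-norm computation specialised to a single generator: one checks directly that $\pi^{2n}C_q\in U_n$ for $n\geq 2m$, and then an induction on the degree in $C_q$, reading off the coefficient of $F^aE^a$ via the PBW theorem for $U_n$, shows that $Z(U_q)\cap U_n=R[\pi^{2n}C_q]$, whence $\wideparen{Z(U_q)}=\varprojlim_n\widehat{R[\pi^{2n}C_q]}_L\cong\Of(\mathbb{A}^{1,\text{an}}_L)$.

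Your proposal is a reasonable strategy for the general case, and you correctly isolate the obstruction: the ``middle step'' of obtaining sharp integral control on the top-height PBW components of the Drinfeld central elements. This is exactly what the paper identifies as missing --- the author writes that the $\mathfrak{sl}_2$ argument works because one has ``a complete and explicit description of the polynomial generator for the centre in terms of the PBW basis'' and that for general $\g$ such a description ``we have not found in the literature''. Your claims that (a) the top-height part of $z_i$ sits in degree $h_i=2\,\htt(\varpi_i-w_0\varpi_i)$, (b) the leading PBW coefficients are units up to bounded powers of $(q_\beta-q_\beta^{-1})$, and (c) the leading symbols remain algebraically independent after reduction mod~$\pi$, are all plausible heuristics coming from the $R$-matrix construction and the classical degeneration, but none of them is proved here, and (b) in particular is delicate: the $R$-matrix factorisation produces quantum exponentials whose coefficients involve the divided powers $E_\beta^{(s)}$, $F_\beta^{(s)}$, and tracking the exact power of $\pi$ in the leading term across a product indexed by all positive roots is not a formality. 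Your acknowledgement that ``the naive height filtration may be too coarse'' is well taken; until the integral leading-term analysis is actually carried out, what you have written is a programme rather than a proof.
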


To justify that this conjecture might plausibly be true, we show it for $U_q(\mathfrak{sl}_2)$. In that case, the centre $Z(U_q)$ is a polynomial algebra in the quantum Casimir element
$$
C_q:=FE+\frac{qK^2+q^{-1}K^{-2}}{(q-q^{-1})^2},
$$
see \cite[Proposition 2.18]{joseph}. In this $\mathfrak{sl}_2$ setting, recall that we had set the number $m$ to be the least positive integer such that
$$
\frac{\pi^{2m}}{q-q^{-1}}\in R.
$$
Having recalled this, we can now show:

\begin{prop}\label{sl2} Conjecture \ref{HarishChandra} holds for $U_q(\mathfrak{sl}_2)$.
\end{prop}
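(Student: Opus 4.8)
The plan is to reduce the statement to a single norm computation on $Z(U_q)=L[C_q]$ and then recognise the completion as a ring of entire functions. Set $b:=v_\pi(q-q^{-1})$; since $q\equiv 1\pmod\pi$ and $p>2$ we have $q-q^{-1}=q^{-1}(q-1)(q+1)$ with $q+1$ a unit, so $b=v_\pi(q-1)\geq 1$, and one checks $m=\lceil b/2\rceil\leq b$, so $b\geq m$. The claim I want is: for every $n>b$ the element $\pi^{2n}C_q$ lies in $U_n$ and the powers $(\pi^{2n}C_q)^i$, $i\geq 0$, form an orthonormal family in the Banach algebra $\Uqnhat$; equivalently, on $L[C_q]$ one has $\norm{\sum_i a_iC_q^i}_n=\sup_i|a_i|\,|\pi|^{-2ni}$. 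Granting this, since the norms $\norm{\cdot}_n$ for $n>b$ are cofinal among all $\norm{\cdot}_n$, the closure $\wideparen{Z(U_q)}$ of $L[C_q]$ in $\Uqcap$ is the completion of $L[C_q]$ for this cofinal family, namely $\{\sum_{i\geq 0}a_iC_q^i:|a_i|\,|\pi|^{-2ni}\to 0\text{ for all }n>b\}$, which, because $|\pi|^{-2n}\to\infty$, equals $\{\sum a_iC_q^i:|a_i|\rho^i\to 0\text{ for all }\rho>0\}$ with the Fr\'echet topology given by the $\norm{\cdot}_n$. Sending $C_q$ to the coordinate $t$ identifies this, as a topological $L$-algebra, with $\varprojlim_n L\langle\pi^{2n}t\rangle=\Of(\mathbb{A}^{1,\text{an}}_L)$, and this identification manifestly extends the given isomorphism $Z(U_q)\cong L[t]$, $t\mapsto C_q$.

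So the work is the orthonormality claim. Writing $z:=\tfrac{qK^2+q^{-1}K^{-2}}{(q-q^{-1})^2}\in U_q^0$, we have $\pi^{2n}C_q=(\pi^nF)(\pi^nE)+\pi^{2n}z$; the first summand lies in $U_n$, and $\pi^{2n}z=\pi^{2(n-b)}(\pi^{2b}z)$ with $\pi^{2b}z\in R[K^{\pm 1}]=U^0\subseteq U_n$, so $\pi^{2n}C_q\in U_n$ for $n\geq b$ and moreover $\pi^{2n}z\in\pi U_n$ as soon as $n>b$. By the PBW theorem (Theorem~\ref{PBW}), $U_n=\U_n$ has the $R$-basis $\{\pi^{n(r+s)}F^rK_\lambda E^s\}$, and since $C_q$ is central its PBW expansion involves only weight-zero monomials $F^rK_\lambda E^r$, all of height $2r$; the unique monomial of maximal height $2i$ occurring in $C_q^i$ is $F^iK_0E^i$, with coefficient $1$, coming from the top term of $(FE)^i$ (any choice of a $z$-factor in $(FE+z)^i$ strictly lowers the $E$- and $F$-degrees). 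Writing $(FE)^i=F^iE^i+\sum_{r<i}c_{i,r}$ with $c_{i,r}\in U_q^0F^rE^r$, submultiplicativity gives $\norm{(FE)^i}_n\leq\norm{FE}_n^i=|\pi|^{-2ni}$, which forces the $U_q^0$-coefficients of $c_{i,r}$ to have $\pi$-valuation $\geq -2n(i-r)$; hence $\pi^{2ni}(FE)^i\in U_n$, with coefficient $1$ along the basis vector $\pi^{2ni}F^iK_0E^i$ and all other coefficients supported on basis vectors $\pi^{2nr}F^rK_\lambda E^r$ with $r<i$. Since $\pi^{2n}z\in\pi U_n$, we get $(\pi^{2n}C_q)^i\equiv\pi^{2ni}(FE)^i\pmod{\pi U_n}$; therefore the reductions $\overline{(\pi^{2n}C_q)^i}$ in $U_n/\pi U_n$ are "triangular'', each with a distinct unit-coefficient leading basis vector $\overline{\pi^{2ni}F^iK_0E^i}$, and are thus $k$-linearly independent. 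Because an $R$-lattice generated by elements whose reductions mod $\pi$ are $k$-linearly independent is precisely the orthonormal hull of those elements, this is exactly the asserted norm formula on $L[C_q]$.

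The main obstacle is this leading-term bookkeeping: verifying that no cancellation occurs in the top-height component of $C_q^i$ and that the error terms $\pi^{2ni}c_{i,r}$ genuinely land in the lattice $U_n$ (for which the estimate from submultiplicativity above is exactly what is needed). Everything else — the value $v_\pi(q-q^{-1})=b$, the inequality $b\geq m$, the equality $\norm{z}_n=|\pi|^{-2b}$, cofinality of $\{\norm{\cdot}_n:n>b\}$, and the final identification of the completion with $\Of(\mathbb{A}^{1,\text{an}}_L)$ — is routine. It is worth remarking that the same argument would go through for arbitrary $\g$ as soon as one can choose polynomial generators of $Z(U_q)$ whose top-height parts form a triangular family of PBW monomials over $U_q^0$; producing such generators over the integral form is essentially a quantum Harish-Chandra statement at the integral level, and is the point at which Conjecture~\ref{HarishChandra} remains open in general.
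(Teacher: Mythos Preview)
Your proof is correct and follows the same approach as the paper: both hinge on the observation that $C_q^i=F^iE^i+(\text{lower height terms})$ in the PBW basis, which pins down the restriction of $\norm{\cdot}_n$ to $L[C_q]$ and hence identifies the completion with $\Of(\mathbb{A}^{1,\text{an}}_L)$. Your packaging via orthonormality of $\{(\pi^{2n}C_q)^i\}$ established by reduction modulo $\pi$ (using the auxiliary fact $\pi^{2n}z\in\pi U_n$ for $n>b$ and a submultiplicativity bound on $(FE)^i$) is a mild variation on the paper's more direct downward induction on the degree in $C_q$, which simply reads the top coefficient $c_a$ off the PBW coefficient of $F^aE^a$ and subtracts.
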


\begin{proof}
By definition of $C_q$, for $n\geq 2m$, we have
$$
\pi^{2n}C_q=(\pi^nF)(\pi^nE)+\frac{\pi^{2n}(qK^2+q^{-1}K^{-2})}{(q-q^{-1})^2}\in U_n.
$$
Hence we see that the subalgebra of $Z(U_q)$ consisting of polynomials in $\pi^{2n}C_q$ with coefficients in $R$ is contained in the centre of $U_n$. Conversely, suppose that $z=\sum_{i=0}^a c_i C_q^i\in U_n$. We show by induction on $a$ that each coefficient $c_i$ belongs to $\pi^{2ni}R$. If $a=0$ this is obvious so assume $a\geq 1$. Now note that
$$
C_q^a=F^aE^a+(\text{terms of lower height})
$$
with respect to the PBW basis for $U_q$. Indeed this follows from the commutator relation between $E$ and $F$. Thus we see that the coefficient of $F^aE^a$ in the basis expression for $z$ is $c_a$. But by the PBW theorem for $U_n$ (Theorem \ref{PBW}) it follows that the coefficient of $F^aE^a$ in the basis expression for $z$ is in $\pi^{2na}R$. Hence $c_a\in \pi^{2na}R$ and it follows that $c_aC_q^a\in R (\pi^{2n}C_q)^a\subseteq U_n$. Thus we may consider
$$
\sum_{i=0}^{a-1} c_i C_q^i=z-c_aC_q^a\in U_n
$$
and get that the other coefficients satisfy the required property by induction hypothesis.

The above calculation shows that the centre of $U_n$ is $Z_n:=R[\pi^{2n}C_q]$ for every $n\geq 2m$. If we write $\widehat{Z_{q,n}}:=\widehat{Z_n}\otimes_R L$, we get that the closure $\wideparen{Z(U_q)}$ of $Z(U_q)$ in $\Uqcap$ is the projective limit $\varprojlim \widehat{Z_{q,n}}$. From our description of $Z_n$, it is clear that this is isomorphic to $\Of(\mathbb{A}^{1,\text{an}}_L)$.
\end{proof}

\begin{remark}\label{HC2} The non-quantum version of Harish-Chandra for the Arens-Michael envelope is due to Kohlhaase \cite[Theorem 2.1.6]{kohlhaase}. A completely similar construction to the initial Harish-Chandra homomorphism applies to the Arens-Michael envelope, and he shows it to be an isomorphism. In our quantum setting, we can do that construction as well. One can straightforwardly construct a continuous projection map $\wideparen{Z(U_q)}\to\widehat{U_q^0}$ and twist by $-\rho$, which gives a continuous algebra homomorphism with image in the Weyl group invariants. However all the defining norms of $\Uqcap$ are identical on $\widehat{U_q^0}$ and so it is not clear a priori how to see the Fr\'echet structure of this image (this is something that does not occur in the classical situation).

The above calculation for $\mathfrak{sl}_2$ works because we have a complete and explicit description of the polynomial generator for the centre in terms of the PBW basis. In order to perform a similar calculation for a general Lie algebra, we'd need to have a similar description of the polynomial generators of the centre, something which we have not found in the literature.
\end{remark}

\bibliographystyle{abbrv}
\bibliography{bibforme}

\end{document}